\newtheorem{theorem}{Theorem}[section]
\newtheorem{lemma}[theorem]{Lemma}
\newtheorem{proposition}[theorem]{Proposition}
\newtheorem{corollary}[theorem]{Corollary}
\newtheorem{assumption}[theorem]{Assumption}
\theoremstyle{definition}
\newtheorem{definition}[theorem]{Definition}
\newtheorem{remark}[theorem]{Remark}
\newtheorem{example}[theorem]{Example}
\newcommand{\R}{\mathbb{R}}
\newcommand{\IC}{\mathbb{C}}
\newcommand{\IN}{\mathbb{N}}
\newcommand{\IZ}{\mathbb{Z}}
\newcommand{\IL}{\mathbb{L}}
\newcommand{\IX}{\mathbb{X}}
\newcommand{\IW}{\mathbb{W}}
\newcommand{\Res}{\mathcal{R}}
\newcommand{\Ext}{\mathcal{E}}
\newcommand{\Pro}{\mathcal{P}}
\newcommand{\cF}{\mathcal{F}}
\newcommand{\cH}{\mathcal{H}}
\newcommand{\cD}{\mathcal{D}}
\newcommand{\cI}{\mathcal{I}}
\let\SS\S	%Keeping the paragraph symbol
\renewcommand{\S}{\mathcal{S}}
\newcommand{\AS}{\mathcal{AS}}
\renewcommand{\L}{\mathrm{L}}
\newcommand{\C}{\mathrm{C}}
\newcommand{\B}{\mathrm{B}}
\renewcommand{\H}{\mathrm{H}}
\newcommand{\W}{\mathrm{W}}
\newcommand{\F}{\mathrm{F}}
\newcommand{\X}{\mathrm{X}}
\newcommand{\e}{\mathrm{e}}
\renewcommand{\d}{\mathrm{d}}
\renewcommand{\r}{\mathrm{r}}
\newcommand{\eps}{\varepsilon}
\newcommand{\bracket}{\langle \cdot\,,\cdot\rangle}
\newcommand{\cl}[1]{\overline{#1}}
\DeclareMathOperator{\bd}{\partial \!}
\DeclareMathOperator{\supp}{supp}
\DeclareMathOperator{\dist}{d}
\DeclareMathOperator{\diam}{diam}
\DeclareMathOperator{\Id}{1}
\DeclareMathOperator{\Rg}{\mathsf{R}}
\DeclareMathOperator{\Ke}{\mathsf{N}}
\DeclareMathOperator{\dom}{\mathsf{D}}
\def\XXint#1#2#3{{\setbox0=\hbox{$#1{#2#3}{%
\int}$ }
\vcenter{\hbox{$#2#3$ }}\kern-.6\wd0}}
\title[Interpolation for Sobolev functions with partially vanishing trace] {Interpolation theory for Sobolev functions with partially vanishing trace on irregular open sets}
\author{Sebastian Bechtel}
\address{Fachbereich Mathematik, Technische Universit\"at Darmstadt, Schlossgartenstr. 7, 64289 Darmstadt, Germany}
\email{bechtel@mathematik.tu-darmstadt.de}
\author{Moritz Egert}
\address{Laboratoire de Math\'{e}matiques d'Orsay, Univ.\ Paris-Sud, CNRS, Universit\'{e} Paris-Saclay, 91405 Orsay, France}
\email{moritz.egert@math.u-psud.fr}
\subjclass[2010]{Primary: 46B70. Secondary: 46E35.}
\date{\today}
\dedicatory{}
\thanks{}
\keywords{Interpolation of Banach spaces, (fractional) Sobolev spaces, traces and extensions of Sobolev functions, porous sets, measure density conditions, Hardy's inequality}
\begin{document}
\begin{abstract}
A full interpolation theory for Sobolev functions with smoothness between $0$ and $1$ and vanishing trace on a part of the boundary of an open set is established. Geometric assumptions are mostly of measure theoretic nature and reach beyond Lipschitz regular domains. Previous results were limited to regular geometric configurations or Hilbertian Sobolev spaces. Sets with porous boundary and their characteristic multipliers on smoothness spaces play a major role in the arguments.
\end{abstract}
\maketitle
%%%%%%%%%%%%%%%%%%%%%%%%%%%%%%%%%%%%%%%%%%%%%%%%%%%%%%%%%%%%%%%%%%%%%%%%%%%%%%%%%%%%%%%%%%%%%%%%%%%%%%%%%%%%%%%%%%%%%%%%%%%%%%%%%%%%%%%%%%%%%%%%%%%%%%%%%%%%%%%%%%%%
\section{Introduction and main results}
\label{Sec: Introduction}

Recent years have witnessed an ever-growing interest in the treatment of quasilinear equations of parabolic type through maximal regularity techniques~\cite{DHP}. To a large extend this stems from the flexibility of the approach and its applicability to rough geometric configurations that arise in applications, for example reaction-diffusion models related to differential operators or systems in divergence form with mixed boundary conditions~\cite{Bonifacius-Neitzel, Disser, Disser-Meyries-Rehberg, Karo-Joachim, HJKR, RobertJDE, Termistor1, Termistor2, Rehberg-terElstADE}. There, the linear second order operator usually admits a proper theory of weak solutions on Sobolev spaces of type $\W^{1,p}$ carrying the boundary conditions, but in controlling non-linear terms of high order and reaction processes on lower-dimensional substructures of the boundary simultaneously, interpolation spaces of order $s \in (0,1)$ are most appropriate. We refer to \cite[Sec.~4.1]{Karo-Joachim} for a detailed account on this paradigm in the context of dynamics in a semiconductor device, see also \cite[Sec.~6]{RobertJDE}. This leads to the problem of identifying such interpolation spaces in the presence of rough boundaries to the fractional Sobolev- and Bessel potential spaces, in analogy with what is long known for function spaces on $\R^d$ or on smooth domains with pure Dirichet boundary condition~\cite{Triebel, BL, Seeley}. 

At the heart of the matter lies the following question. Given an open set $O \subseteq \R^d$ and a piece $D \subseteq \bd O$ of its boundary, define the Sobolev space $\W^{1,p}_D(O)$ as the $\W^{1,p}(O)$-closure of smooth functions whose support stays away from $D$. Under which geometric assumptions can one determine explicitly the interpolation spaces
\begin{align*}
 [\L^p(O), \W^{1,p}_D(O)]_s \quad \text{and} \quad (\L^p(O), \W^{1,p}_D(O))_{s,p}
\end{align*}
defined through Calder\'on--Lions' complex and Peetre's real method? The space $\W^{1,p}_D(O)$ should be thought of the collection of $\W^{1,p}(O)$-functions with homogeneous Dirichlet boundary condition on $D$.

Interpolation theory related to the spaces $\W_D^{1,p}(O)$ has recently been studied in \cite{AKM, TripleMitrea-Brewster, ABHR, HJKR, Griepentrog-InterpolationOnGroger, Darmstadt-KatoMixedBoundary}, but mostly with a focus on interpolating with respect to integrability. Interpolation in differentiability appears only in \cite{Darmstadt-KatoMixedBoundary, AKM} for $p=2$ and in \cite{Griepentrog-InterpolationOnGroger} for general $p$ on certain model sets. The main difficulty lies in that taking the boundary trace on $D$ can, if at all, be defined in a meaningful way only on the Sobolev space~\cite[Sec.~6.6]{BL}. This forbids to treat the question via purely functorial techniques.

We close this gap by establishing a full interpolation theory under geometric assumptions in the spirit of what has become standard for treating mixed boundary value problems~\cite{ABHR, TripleMitrea-Brewster, HJKR, Rehberg-terElstADE}. In particular, we confirm the formula for the complex interpolation spaces that was conjectured in connection with fractional powers of divergence form operators in \cite[Rem.~10.5]{ABHR} and listed as an open problem in \cite[Sec.~5.3]{Karo-Joachim}. We also treat interpolation simultaneously in differentiability and integrability. Some of our results appear to be new even on much more regular domains since we do not require that the interface of $D$ with the complementary boundary part $\bd O \setminus D$ can be parametrized by coordinate charts in any sense.

\subsection{Geometric setting}
\label{Subsec: Intro geometry}

We shall work on open sets $O \subseteq \R^d$, not necessarily connected or bounded, satisfying the thickness condition
\begin{align}
\label{eq: thickness condition}
 c \leq \frac{|B \cap O|}{|B|} \leq C
\end{align}
for some constants $0<c\leq C < 1$ and all balls $B$ of radius $\r(B) \leq 1$ centered at the boundary $\bd O$. This excludes that $O$ have interior or exterior cusps. We assume that the Dirichlet part $D \subseteq \bd O$ is a $(d-1)$-regular, not necessarily closed set in the sense of Jonsson--Wallin~\cite{JW}. Only around the complementary boundary part $\cl{\bd O \setminus D}$ we demand Lipschitz coordinate charts with uniformly controlled bi-Lipschitz constants, which on domains with compact boundary reduces to the usual weak Lipschitz condition. Finally, the interface $\bd D$ between the two boundary parts should be a \emph{porous} subset of the full boundary. This means that there should exist some $\kappa \in (0,1)$ with the property that every ball $B$ of radius $\r(B) \leq 1$ centered in $\bd D$ contains a ball of radius $\kappa r$ centered in $\bd O$ that avoids $\bd D$. A related condition appeared in~\cite{Rehberg-terElstADE}.

Porosity plays a fundamental role in our considerations and for the reader's convenience we have collect some folklore on this concept in Appendix~\ref{Sec: Porous sets}. We often take advantage of it in form of equivalent but less transparent conditions related to Aikawa- and Assouad dimension. In particular, all our results hold if $\bd D$ is $(d-2)$-regular as in Figure~\ref{fig: example}.
\begin{figure}[ht]
\label{Fig: Cone}
\centering
    \includegraphics[scale=1.0]{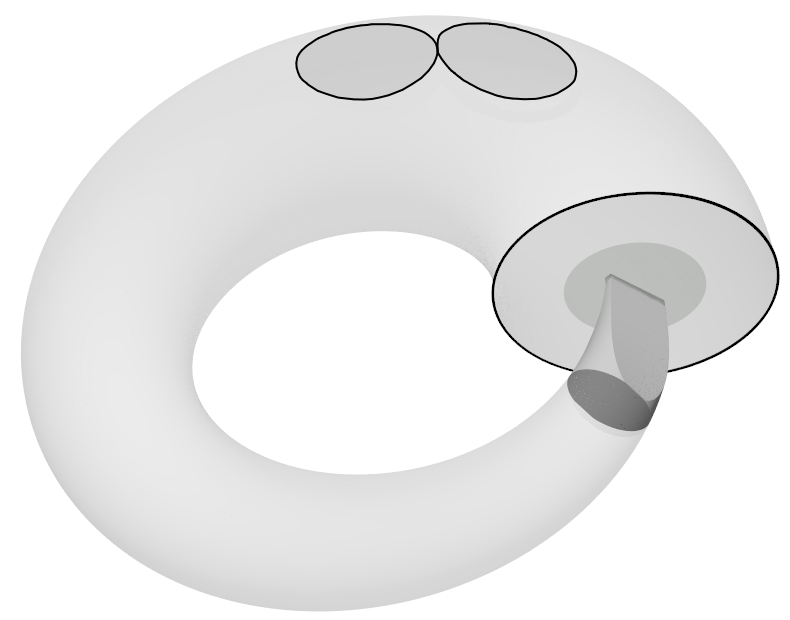}
    \caption{The domain $O \subseteq \R^3$ is obtained by transforming a cylinder such that one lateral boundary part degenerates to a line segment touching the opposed side from outside. The dark-shaded boundary parts carry the Dirichlet condition.}
\label{fig: example}
\end{figure}
We believe that this setting is rather common in applications. It includes the Gr\"oger regular sets~\cite{Groeger} used for instance in \cite{RobertJDE, Termistor1, Termistor2, Bonifacius-Neitzel}. Our interpolation results are new even in this context since compared to earlier work~\cite{Griepentrog-InterpolationOnGroger} we remove the requirement that the Lipschitz coordinate charts should be measure preserving.

\subsection{Main results}
\label{Subsec: Intro results}

Fractional Sobolev spaces $\W^{s,p}$ and Bessel potential spaces $\H^{s,p}$ on $O$, with and without Dirichlet boundary conditions on $D$ indicated by a subscript, are properly defined in Sections~\ref{Subsec: Not Dirichlet} - \ref{Subsec: Not spaces on O} through restriction from $\R^d$. The Dirichlet condition is understood for $s>1/p$ in virtue of Jonsson--Wallin's trace theory~\cite{JW}. For integer $s$ we have $\W^{s,p} = \H^{s,p}$ up to equivalent norms. In particular, we retrieve from $\L^p = \H^{0,p}$ and $\W^{1,p} = \H^{1,p}$ answers to the interpolation questions raised above.

In most interpolation results we shall have possibly different Lebesgue exponents $p_0,p_1 \in (1,\infty)$ and smoothness parameters $s_0, s_1 \in \R$ and we interpolate in both scales simultaneously. In order to straighten the presentation, we introduce here, given $\theta \in (0,1)$, the interpolating parameters $p \in (1,\infty)$ and $s \in \R$ through
\begin{align}
\label{eq: interpolating paramaters}
 \frac{1}{p} \coloneqq \frac{1-\theta}{p_0} + \frac{\theta}{p_1}, \qquad s \coloneqq (1-\theta)s_0 + \theta s_1.
\end{align}
In the presence of $p_i$, $s_i$ and $\theta$ as above we shall \emph{exclusively} use the symbols $p$ and $s$ in that very sense, sometimes without further mentioning.

Let us state the central result of our paper. The proof is given in Section~\ref{Sec: Complex}, including an informal outline in Section~\ref{Subsec: Roadmap complex}.  

\begin{theorem}
\label{thm: main result}
Assume the geometric setting of Section~\ref{Subsec: Intro geometry}. Let $p_0,p_1 \in (1,\infty)$, $s_0 \in [0,1/p_0)$, $s_1 \in (1/p_1,1]$, and for $\theta \in (0,1)$ define $p$ and $s$ as in \eqref{eq: interpolating paramaters}. If $\X$ denotes either $\H$ or $\W$, then the complex interpolation identity
\begin{align}
 \tag{a}\label{C3} [\X^{s_0,p_0}(O), \X_D^{s_1,p_1}(O)]_{\theta} &= \begin{cases} 
                                                                 \X_D^{s,p}(O) &(\text{if $s>1/p$})\\ \X^{s,p}(O)  &(\text{if $s<1/p$})
                                                                \end{cases}
\intertext{holds up to equivalent norms as well as the real interpolation identity}
\tag{b}\label{C4} \noeqref{C4} (\X^{s_0,p_0}(O), \X_D^{s_1,p_1}(O))_{\theta,p} &= \begin{cases} 
                                                                 \W_D^{s,p}(O) &(\text{if $s>1/p$})\\ \W^{s,p}(O)  &(\text{if $s<1/p$})                                                       
                                                                 \end{cases}
\end{align}
with the exception that $s_0 \neq 0$ and $s_1 \neq 1$ are required in \eqref{C3} for $\X = \W$.
\end{theorem}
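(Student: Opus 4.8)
The plan is to reduce the whole statement to the model case $O = \R^d$ by means of extension operators, where the classical interpolation formulas are available, and then to deal with the vanishing trace on $D$ — which, being defined only above the critical exponent, cannot be interpolated directly, as noted in the introduction — through Hardy's inequality and interpolation of weighted Lebesgue spaces. Concretely, I would first invoke the bounded linear extension operator for the scale $\X^{s,p}(O)$ afforded by the thickness condition~\eqref{eq: thickness condition}, refined so that on the subspaces $\X^{s,p}_D(O)$ it preserves the vanishing trace on $D$; this refinement is exactly where the Lipschitz charts around $\cl{\bd O\setminus D}$ and, above all, the porosity of $\bd D$ with the associated characteristic-multiplier estimates are consumed. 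Restriction to $O$ and this extension form a retraction–coretraction pair for every space occurring in the statement, so the standard interpolation theorem for retracts, in both the complex and the real formulation, reduces \eqref{C3} and \eqref{C4} to the corresponding identities over $\R^d$ for the scale of functions with vanishing trace on the $(d-1)$-regular set $D\subseteq\R^d$, the subscript $D$ being understood as vacuous whenever the smoothness stays below $1/p$.

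On $\R^d$ one inclusion is soft: interpolating the inclusions $\X^{s_i,p_i}_D(\R^d)\hookrightarrow\X^{s_i,p_i}(\R^d)$ and inserting the classical $\R^d$ formulas — Calderón–Lions for the Bessel/Triebel–Lizorkin scale, and the Peetre formula for real interpolation of Bessel potential spaces of different smoothness for the Besov scale — shows that the interpolation space embeds into $\X^{s,p}(\R^d)$ in the complex case and into $\W^{s,p}(\R^d)$ in the real case. The hypothesis $s_0\neq0$, $s_1\neq1$ for $\X=\W$ in~\eqref{C3} is precisely what keeps all endpoints inside a single $B_{p,p}$-type scale, where complex interpolation is again a Besov space; for $\X=\H$ and for the real method it is not needed.

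The key step is to upgrade this to the vanishing trace when $s>1/p$. Since the trace exists only on the upper endpoint, I would instead exploit two Hardy inequalities: the subcritical one ($s_0p_0<1$, with $D$ being $(d-1)$-regular) giving a bounded embedding $\X^{s_0,p_0}(\R^d)\hookrightarrow\L^{p_0}(\dist(\cdot,D)^{-s_0p_0})$, and the supercritical one ($s_1p_1>1$, vanishing trace, $D$ being $(d-1)$-regular) giving $\X^{s_1,p_1}_D(\R^d)\hookrightarrow\L^{p_1}(\dist(\cdot,D)^{-s_1p_1})$. Interpolating these weighted Lebesgue couples, with third index $p$ in the real case, lands the interpolation space in $\L^p(\dist(\cdot,D)^{-sp})$, since the weight exponents combine exactly to $-sp$; together with the ambient membership and the converse fact that an $\X^{s,p}(\R^d)$-function $f$ with $f\,\dist(\cdot,D)^{-s}\in\L^p$ must have vanishing trace on $D$ — valid precisely for $sp>1$ and $(d-1)$-regular $D$ — this forces membership in $\X^{s,p}_D(\R^d)$, respectively in $\W^{s,p}_D(\R^d)$. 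For $s<1/p$ nothing further is needed: $(d-1)$-regularity makes $\C_c^\infty(\R^d\setminus D)$ dense in $\X^{s,p}(\R^d)$, so the subscript is vacuous, and the borderline $s=1/p$ is deliberately excluded.

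For the reverse embedding $\X^{s,p}_D(\R^d)\hookrightarrow[\X^{s_0,p_0}(\R^d),\X^{s_1,p_1}_D(\R^d)]_\theta$ and its real-method analogue it suffices, by density of $\C_c^\infty(\R^d\setminus D)$, to equip each $f\in\C_c^\infty(\R^d\setminus D)$ with an admissible interpolation family for the ambient couple — supplied by the classical theory — and to multiply it by a fixed $\rho\in\C_c^\infty(\R^d\setminus D)$ with $\rho\equiv1$ near $\supp f$: this places the upper-endpoint components in $\X^{s_1,p_1}_D(\R^d)$, the tails thereby discarded being Schwartz and hence negligible in every norm, while a Gaussian factor $\e^{\eps(z-\theta)^2}$ absorbs the polynomial growth of the standard families; no geometry enters here. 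The soft inclusion, the trace upgrade and the reverse embedding establish the $\R^d$ identities, the extension operator transports them back to $O$, and a short reiteration argument covers the parameter configurations (such as $s_0=s_1$) not directly within reach of the quoted classical formulas. I expect the genuinely hard part to be the extension operator — bounded simultaneously across the entire range of smoothness and integrability \emph{and} compatible with the partial vanishing trace over the irregular interface $\bd D$, which is where porosity and characteristic multipliers on smoothness spaces carry the load — together with the Hardy package, whose role is to recast ``vanishing trace on $D$'' as a family of weighted $\L^p$ bounds that, unlike the trace operator itself, survives interpolation.
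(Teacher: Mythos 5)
The soft inclusion, the Hardy-inequality upgrade, and the retraction to $\R^d$ are all in order (the retraction being the paper's Lemmas~\ref{lem: Rychkov preserves boundary conditions}, \ref{lem: E and R localization}, \ref{lem: E and R localization BC} — though note that Lemma~\ref{lem: Rychkov preserves boundary conditions} uses only Sobolev embeddings, not porosity; porosity is consumed elsewhere). The genuine gap is in your argument for the reverse embedding $\X^{s,p}_D(\R^d) \hookrightarrow [\X^{s_0,p_0}(\R^d),\X^{s_1,p_1}_D(\R^d)]_\theta$, which is the real crux. You start from $f\in\C_c^\infty(\R^d\setminus D)$, take a classical admissible family $F$ for the couple without boundary conditions, and multiply by a cutoff $\rho$ supported away from $D$ and $\equiv1$ near $\supp f$. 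But $\rho$ necessarily depends on $f$: as $\dist(\supp f, D)\to 0$ the cutoff must sharpen, its pointwise-multiplier norm on $\X^{s_j,p_j}(\R^d)$ blows up like a power of $\dist(\supp f, D)^{-1}$, and the family $\rho F$ then only yields $\|f\|_{[\cdot,\cdot]_\theta}\leq C(f)\|f\|_{\X^{s,p}}$ with a constant that is not uniform over $\C_c^\infty(\R^d\setminus D)$. Density of that class in $\X^{s,p}_D(\R^d)$ does not repair this: set membership for a dense subclass without a uniform bound does not propagate to the closure. So the inclusion map is not shown to be bounded, and the bounded-inverse theorem is unavailable.

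The paper circumvents exactly this obstruction by never introducing an $f$-dependent cutoff. The key multiplier there is the \emph{fixed} operator $\mathds{1}_{{}^c U}$ (pure Dirichlet) and its boundary analogue $\mathds{1}_{{}^cE_i}$ on $\R^{d-1}$, bounded with $f$-independent constant on $\X^{s,p}$ by Sickel's theorem — and this is precisely where the porosity of $\bd D$ enters, through the class~$\cD^t$ condition needed for the multiplier to act on spaces of \emph{negative} smoothness in Lemma~\ref{lem: Bullet interpolation}. For mixed boundary conditions the paper first localizes to the half-space, then splits $f = (f-\Ext\Res f) + \Ext\Res f$ by its boundary trace: the first term has pure Dirichlet condition on $\bd\R^d_+$ (handled via the $\mathds{1}_{{}^cU}$-multiplier and Section~\ref{Subsec: Pure Dirichlet complex}), while the second is pushed to the boundary, interpolated there through negative-order spaces $\W^{-\eps,q}_\bullet({}^cE_i)$ (the step $(\heartsuit)$ in Figure~\ref{fig: road map complex}), and lifted back by the Poisson-semigroup extension of Section~\ref{Subsec: Ext-Res halfspace}. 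None of these ingredients — the $\mathds{1}$-multiplier, the trace decomposition, the detour through negative smoothness on $\R^{d-1}$ — has a counterpart in your proposal, and without them the uniform estimate for the hard inclusion remains unproved.
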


Interpolation theory for the spaces $\X^{s,p}(O)$ without boundary conditions becomes apparent from an extension result of Rychkov~\cite{Rychkov} that we shall review in Section~\ref{Subsec: Not spaces on O} and make extensive use of. We establish further structural properties of our function spaces in Section~\ref{Sec: First properties}. Abstract techniques then lead us in Section~\ref{Subsec: Symmetric interpolation} to the following interpolation results for two function spaces with Dirichlet condition. This only requires $O$ and $D$ to be regular in the sense of Jonsson--Wallin~\cite{JW}.

\begin{theorem}
\label{thm: main result large s}
Let $O \subseteq \R^d$ be an open, $d$-regular set, and let $D \subseteq \cl{O}$ be $(d-1)$-regular. Let $p_0,p_1 \in (1,\infty)$, $s_0 \in (1/p_0,1+1/p_0)$, $s_1 \in (1/p_1,1+1/p_1)$, and for $\theta \in (0,1)$ define $p$ and $s$ as in \eqref{eq: interpolating paramaters}. Let $\X$ denote either $\H$ or $\W$. Up to equivalent norms it follows that
\begin{align*}
 \tag{c}\label{C1} [\X_D^{s_0,p_0}(O), \X_D^{s_1,p_1}(O)]_{\theta} &= \X_D^{s,p}(O), \\[8pt]
 \tag{d}\label{C2} (\X_D^{s_0,p_0}(O), \X_D^{s_1,p_1}(O))_{\theta,p} &= \W_D^{s,p}(O),
\end{align*}
with the two exceptions that in \eqref{C1} for $\X = \W$ either all or none of $s_0,s_1,s$ have to be $1$ and that in \eqref{C2} the value $s = 1$ is only permitted when $s_0 = s_1 = 1$.
\end{theorem}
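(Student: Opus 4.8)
The plan is to deduce \eqref{C1} and \eqref{C2} from classical interpolation on $\R^d$ by a two-stage retraction argument; since both endpoint spaces carry the Dirichlet condition the situation is symmetric, and neither porosity nor a Hardy-type inequality enters. The first stage treats the model case $O = \R^d$. For $s \in (1/p, 1 + 1/p)$ --- the range to which $s_0$, $s_1$ and hence also $s$ belong --- Jonsson--Wallin's trace theory~\cite{JW} provides a trace operator $\operatorname{tr}_D$ mapping $\X^{s,p}(\R^d)$ boundedly onto the corresponding Besov trace space on $D$, together with a bounded linear right inverse $\operatorname{ext}_D$, and the construction can be arranged so that the \emph{same} pair of operators is bounded simultaneously for all admissible $(s,p)$. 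Hence $P \coloneqq \Id - \operatorname{ext}_D \operatorname{tr}_D$ is one fixed bounded operator that projects each of $\X^{s_0,p_0}(\R^d)$ and $\X^{s_1,p_1}(\R^d)$ onto $\Ke(\operatorname{tr}_D)$, which by the structural results of Section~\ref{Sec: First properties} coincides with $\X_D^{s_i,p_i}(\R^d)$ for $i = 0,1$. Feeding the classical identities $[\X^{s_0,p_0}(\R^d),\X^{s_1,p_1}(\R^d)]_\theta = \X^{s,p}(\R^d)$ and $(\X^{s_0,p_0}(\R^d),\X^{s_1,p_1}(\R^d))_{\theta,p} = \W^{s,p}(\R^d)$ (see \cite{Triebel, BL}; in the real method the specific value $p$ of the integrability exponent is what singles out the Sobolev--Slobodeckij space among the candidate targets) through $P$ by the standard principle for interpolation of complemented subspaces then yields \eqref{C1} and \eqref{C2} with $\R^d$ in place of $O$.

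The second stage descends from $\R^d$ to $O$, and this is where the $d$-regularity of $O$ is used. By the structural results of Section~\ref{Sec: First properties}, Rychkov's universal extension operator restricts to a bounded map $\X_D^{s,p}(O) \to \X_D^{s,p}(\R^d)$, uniformly over the $(s,p)$ in play, and it is a right inverse of the surjective restriction map $\X_D^{s,p}(\R^d) \to \X_D^{s,p}(O)$. Thus $(\X_D^{s_0,p_0}(O),\X_D^{s_1,p_1}(O))$ is a retract of $(\X_D^{s_0,p_0}(\R^d),\X_D^{s_1,p_1}(\R^d))$ through a single retraction--coretraction pair, and applying the retraction theorem for the complex and the real method~\cite[Sec.~1.2.4]{Triebel} to the identities obtained in the first stage produces exactly \eqref{C1} and \eqref{C2}.

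The exceptional values at $s = 1$ are inherited verbatim from the classical $\R^d$-theory invoked in the first stage. For $\X = \H$ nothing has to be excluded, complex interpolation of Bessel potential spaces being unconditional. For $\X = \W$ one uses that $\W^{t,p}(\R^d)$ is a genuine Besov space for non-integer $t$ but equals $\H^{t,p}(\R^d)$ only at $t = 1$: complex interpolation then stays inside the $\W$-scale precisely when all of $s_0,s_1,s$ avoid $1$ or all equal $1$, and the real method reproduces $\W^{s,p}$ only for $s \neq 1$ unless $s_0 = s_1 = 1$.

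I do not expect the difficulty to reside in the functorial argument above but in the structural ingredient it rests on, namely that Rychkov's extension is compatible with the Jonsson--Wallin trace on $D$, i.e.\ that it sends functions with vanishing Dirichlet trace to functions with vanishing Dirichlet trace. This comes down to identifying the intrinsic trace on $\X^{s,p}(O)$ --- a limit of averages over balls intersected with $O$ --- with the ambient trace on $D$ of an arbitrary extension, an identification that hinges on the measure density of $O$ together with $s > 1/p$ and must be established in Section~\ref{Sec: First properties} before the abstract machinery is switched on.
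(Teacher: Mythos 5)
Your proposal is correct and follows essentially the same two-stage strategy as the paper: first, project $\X^{s,p}(\R^d)$ onto $\X^{s,p}_D(\R^d)$ by $1 - \Ext_D\Res_D$ with the Jonsson--Wallin operators and apply the complemented-subspace interpolation principle (Lemma~\ref{lem: XspE complemented} and Corollary~\ref{cor: interpolation complemented}); second, descend to $O$ by the retraction--coretraction theorem with Rychkov's extension operator, the key structural input being that it preserves the Dirichlet condition (Lemma~\ref{lem: Rychkov preserves boundary conditions}). Your closing diagnosis of where the real difficulty lies is also on target, although the paper's proof of that compatibility lemma goes through density of $\C^\infty_D$-test functions, consistency of the extension across scales, Sobolev embedding, and $d$-regularity of $O$, rather than an explicit identification of intrinsic and ambient traces as limits of averages.
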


As a cautionary tale, let us remark that \emph{a priori} all function spaces are defined by restrictions. In particular, $\W^{1,p}(O) = \H^{1,p}(O)$ might be smaller than the collection of $\L^p(O)$-functions whose first-order distributional derivatives are in $\L^p(O)$ under the assumptions of Theorem~\ref{thm: main result large s}. Under the full set of geometric assumptions in Theorem~\ref{thm: main result}, however, there is no such ambiguity. We have collected further background on this issue in Appendix~\ref{Sec: Intrinsic characterizations}.

\subsection{Extensions and generalizations}
\label{Subsec: Intro applications}

Abstract reiteration and duality theorems~\cite{Triebel, BL, Wolff-refined} imply numerous further interpolation results that invoke our Theorems~\ref{thm: main result} and~\ref{thm: main result large s} ``off-the-shelf''. We leave the care of writing them down to the interested readers. Here, we only present one such result that turned out useful in the $\W^{-1,p}$-theory of divergence form operators~\cite{Disser, Termistor1, Termistor2} and previously was available only in the restrictive setup of \cite[Lemma~3.4]{Griepentrog-InterpolationOnGroger}. The proof of this result will be given in Section~\ref{Sec: W-1p}. We write $\W^{-1,p}_D(O)$ for the space of conjugate linear functionals on $\W^{1,p'}_D(O)$, where $1/p + 1/p' = 1$. 

\begin{theorem}
\label{thm: W1p-W-1p}
Assume the geometric setting of Section~\ref{Subsec: Intro geometry} and let $p \in (1,\infty)$. Up to equivalent norms it follows that
\begin{align}
    \tag{e}\label{C5} \noeqref{C5}
[\W^{-1,p}_D(O), \W_D^{1,p}(O)]_{1/2} = \L^p(O).
\end{align}
\end{theorem}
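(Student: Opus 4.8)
The plan is to deduce Theorem~\ref{thm: W1p-W-1p} from the already-established complex interpolation identity \eqref{C3} of Theorem~\ref{thm: main result} by an abstract duality argument. The key observation is that $\W^{-1,p}_D(O)$ is, by definition, the conjugate dual of $\W^{1,p'}_D(O)$, and $\L^p(O)$ is the conjugate dual of $\L^{p'}(O) = \H^{0,p'}(O)$. So one expects \eqref{C5} to fall out of dualising a complex interpolation identity of the shape $[\L^{p'}(O), \W^{1,p'}_D(O)]_{1/2} = \W^{s',p'}_D(O)$ or $\W^{s',p'}(O)$ with $s' = 1/2$. Since at $\theta = 1/2$ with $s_0 = 0$, $s_1 = 1$ we get $s = 1/2$, the condition $s > 1/p'$ versus $s < 1/p'$ becomes $1/2 > 1/p'$ versus $1/2 < 1/p'$, i.e.\ it depends on whether $p' > 2$ or $p' < 2$ (equivalently $p < 2$ or $p > 2$). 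This case split will have to be carried through the argument, and one should also check the borderline $p = 2$ separately, where $s = 1/p$ and Theorem~\ref{thm: main result} as stated does not directly apply — presumably one takes $s_0, s_1$ slightly off $0$ and $1$, or invokes a limiting/reiteration argument.

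\textbf{Step 1: a trace-space identification.} First I would record, from Jonsson--Wallin's trace theory as set up in Sections~\ref{Subsec: Not Dirichlet}--\ref{Subsec: Not spaces on O}, that for $s = 1/2 < 1/p'$ one has $\W^{1/2,p'}_D(O) = \W^{1/2,p'}(O)$ (no boundary condition is seen), while for $s = 1/2 > 1/p'$ the space $\W^{1/2,p'}_D(O)$ genuinely carries the vanishing trace. In either regime the point is the same: $[\L^{p'}(O), \W^{1,p'}_D(O)]_{1/2}$ equals a single explicitly named reflexive Banach space $Y_{p'}$ (namely $\W^{1/2,p'}_D(O)$ or $\W^{1/2,p'}(O)$), with norm equivalence constants independent of the internal bookkeeping. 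I would also note $\W^{1,p'}_D(O)$ is reflexive and that smooth functions vanishing near $D$ are dense in it, so that $\W^{1,p'}_D(O)$ is densely and continuously embedded in $\L^{p'}(O)$; this makes the couple $(\L^{p'}(O), \W^{1,p'}_D(O))$ a regular interpolation couple and legitimises the duality theorem.

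\textbf{Step 2: dualise.} Invoke the Calder\'on duality theorem for the complex method (e.g.\ \cite[Thm.~4.5.1]{BL}, valid because at least one endpoint, here $\W^{1,p'}_D(O)$, is reflexive and the couple is regular): $[A_0, A_1]_\theta' = [A_0', A_1']_\theta$ isometrically (up to equivalence). Applying this with $A_0 = \L^{p'}(O)$, $A_1 = \W^{1,p'}_D(O)$, $\theta = 1/2$ gives
\begin{align*}
[\W^{-1,p}_D(O), \W^{1,p}_D(O)]_{1/2} \cong \bigl([\L^{p'}(O), \W^{1,p'}_D(O)]_{1/2}\bigr)' = (Y_{p'})'.
\end{align*}
It then remains to identify $(Y_{p'})'$ with $\L^p(O)$. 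When $p' > 2$ (so $p < 2$) we have $Y_{p'} = \W^{1/2,p'}_D(O)$ and the claim is $(\W^{1/2,p'}_D(O))' = \W^{-1/2,p}(O)$ — but that is \emph{not} $\L^p$! So this naive route gives the wrong target, which tells me the argument must instead run the duality the \emph{other} way.

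\textbf{Revised plan.} The correct move is to apply Calder\'on duality directly to the couple in \eqref{C5}. Set $A_0 = \W^{-1,p}_D(O) = (\W^{1,p'}_D(O))'$ and $A_1 = \W^{1,p}_D(O) = (\W^{-1,p'}_D(O))'$; both are duals of reflexive spaces, hence reflexive, and the couple is regular (this needs a short density check: $\W^{1,p}_D(O) \hookrightarrow \L^p(O) \hookrightarrow \W^{-1,p}_D(O)$ densely, which follows from reflexivity and the density of test functions). Then $[A_0, A_1]_{1/2} = [(\W^{1,p'}_D(O))', (\W^{-1,p'}_D(O))']_{1/2} = \bigl([\W^{-1,p'}_D(O), \W^{1,p'}_D(O)]_{1/2}\bigr)'$ by duality again. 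This is circular unless one breaks the symmetry: instead I would apply \eqref{C3} to compute $[\W^{-1,p}_D(O), \L^p(O)]$ — no, $\W^{-1,p}$ is not among the spaces covered. So the genuinely right path is: use \eqref{C3} with endpoints $\L^p(O)$ and $\W^{1,p}_D(O)$ to get $[\L^p(O), \W^{1,p}_D(O)]_{1/2} = \W^{1/2,p}_{(D)}(O) =: Z_p$, and \emph{separately} $[\W^{-1,p}_D(O), \L^p(O)]_{1/2}$, which by Calder\'on duality equals $(Z_{p'})' = \W^{-1/2,p}_{(D)}(O)$ where the subscript $(D)$ is present iff $1/2 < 1/p'$. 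Then reiteration of the complex method (\cite[Thm.~4.6.1]{BL}) across the three-space scale $\W^{-1,p}_D \subset \W^{-1/2,p}_{(D)} \subset \L^p \subset \W^{1/2,p}_{(D)} \subset \W^{1,p}_D$ collapses $[\W^{-1,p}_D(O), \W^{1,p}_D(O)]_{1/2}$ to the midpoint $\L^p(O)$. The one subtlety in reiteration is that it requires the endpoints to re-embed as a \emph{retract pair}, i.e.\ that $[\W^{-1,p}_D, \W^{1,p}_D]_{1/4} = \W^{-1/2,p}_{(D)}$ and $[\W^{-1,p}_D,\W^{1,p}_D]_{3/4} = \W^{1/2,p}_{(D)}$; the first of these is again a duality statement dual to $[\L^{p'},\W^{1,p'}_D]_{1/2}$... which finally does reduce everything to one application of \eqref{C3} plus two applications of Calder\'on duality plus one reiteration.

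\textbf{Where the real work is.} Concretely: (1) identify $[\L^{p'}(O),\W^{1,p'}_D(O)]_{1/2}$ via \eqref{C3} of Theorem~\ref{thm: main result} — immediate, modulo the $p'=2$ borderline; (2) check regularity/reflexivity of the relevant couples so Calder\'on's duality theorem \cite[Thm.~4.5.1]{BL} applies — routine from density of test functions and reflexivity; (3) apply duality to pass from $[\L^{p'},\W^{1,p'}_D]_{1/2} = Y_{p'}$ to $[\W^{-1,p}_D, \L^p]_{1/2} = Y_{p'}'$ and identify $Y_{p'}'$ with the appropriate $\W^{\pm 1/2,p}_{(D)}(O)$, using the duality pairing between $\W^{s,p}_D$ and $\W^{-s,p'}$-type spaces (this identification of the dual of the fractional Dirichlet space with a fractional space with the \emph{opposite} smoothness sign and \emph{opposite} presence of the boundary condition is the one genuinely nontrivial functional-analytic input and should be stated as a lemma, or extracted from Jonsson--Wallin trace duality); and (4) reiterate. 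I expect step~(3) — pinning down exactly which spaces the duals are, with the correct $(D)$-subscript convention in each of the cases $p \lessgtr 2$, and handling $p = 2$ by an approximation of $s_0, s_1$ away from $\{0,1\}$ together with the stability of complex interpolation — to be the main obstacle; everything else is bookkeeping with the abstract machinery of \cite{BL}.
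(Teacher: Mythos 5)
Your revised plan gets the overall strategy right — compute the interpolation on the positive-smoothness side from Theorem~\ref{thm: main result}, pass to negative smoothness by duality (Proposition~\ref{prop: duality complex interpolation}), and then glue the two scales — but the gluing step has a genuine circularity. Reiteration in the sense of \cite[Thm.~4.6.1]{BL} (or Proposition~\ref{prop: 1-sided reiteration} here) presupposes that the intermediate spaces are already known to be complex interpolation spaces of the endpoint couple $(\W^{-1,p}_D, \W^{1,p}_D)$; it does not produce that information. You notice this yourself when you require the ``retract pair'' conditions $[\W^{-1,p}_D, \W^{1,p}_D]_{1/4} = \W^{-1/2,p}_{(D)}$ and $[\W^{-1,p}_D,\W^{1,p}_D]_{3/4} = \W^{1/2,p}_{(D)}$, but these are not dual to $[\L^{p'},\W^{1,p'}_D]_{1/2}$ (that duality only gives $[\W^{-1,p}_D,\L^p]_{1/2}$), and there is no way to obtain them without already having the global scale — which is the theorem. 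The correct tool for gluing two overlapping interpolation scales without this circularity is Wolff's theorem (Proposition~\ref{prop: Wolff}), which the paper applies twice.

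A second difference is how the borderline $p = 2$ is handled. You propose to interpolate $\L^p$ with $\W^{1,p}_D$ at $\theta = 1/2$, which at $p = 2$ lands on the excluded value $s = 1/p$; you sketch a limiting argument but do not carry it out. The paper sidesteps this entirely: it fixes $s \in (0,\min\{1/p, 1-1/p\})$ and runs the first Wolff diagram through $\H^{-s,p}$, $\L^p$, $\H^{s,p}$, $\W^{1,p}_D$, so the only facts needed are (a) the interpolation of $\H^{-s,p}$ and $\H^{s,p}$ around $s=0$, which involves no Dirichlet condition at all and is handled by the elementary Lemma~\ref{lem: interpolation on O around s=0}, and (b) the case $s < 1/p$ of Theorem~\ref{thm: main result}. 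A second Wolff diagram, using the duality consequence $[\W^{-1,p}_D,\L^p]_{1-s} = \H^{-s,p}$ together with the output $[\H^{-s,p},\W^{1,p}_D]_{s/(1+s)} = \L^p$ of the first, then finishes the proof uniformly in $p$. Your Step~(3), which you flagged as the main obstacle (identifying duals of fractional Dirichlet spaces), is thereby avoided completely.
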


In Section~\ref{Sec: Real} we present a method tailored for real interpolation of fractional Sobolev spaces with the same integrability that is in some sense dual to the proof of Theorem~\ref{thm: main result}. It bears the advantage of a more general geometric setting. 

\begin{theorem}
\label{thm: real interpolation via trace}
Let $O \subseteq \R^d$ be an open, $d$-regular set with $(d-1)$-regular boundary, and let $D \subseteq \cl{O}$ be uniformly $(d-1)$-regular. Let $p \in (1,\infty)$, $s_0 \in [0, 1/p)$, $s_1 \in (1/p,1]$, and $\theta \in (0,1)$. Up to equivalent norms it follows that
\begin{align}
\tag{f}\label{C6}
(\W^{s_0,p}(O), \W_D^{s_1,p}(O))_{\theta, p} &= \begin{cases} 
                                               \W_D^{s,p}(O) &(\text{if $s>1/p$})\\ \W^{s,p}(O)  &(\text{if $s<1/p$})
					      \end{cases},
\end{align}
where $s\coloneqq (1-\theta)s_0 + \theta s_1$.
\end{theorem}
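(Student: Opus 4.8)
The strategy is to realize the real interpolation space $(\W^{s_0,p}(O), \W_D^{s_1,p}(O))_{\theta,p}$ via a retraction–coretraction argument built from a \emph{trace operator onto $D$} together with a harmonic-type extension, which is why the geometric hypotheses are milder than in Theorem~\ref{thm: main result}: only $d$-regularity of $O$, $(d-1)$-regularity of $\bd O$, and uniform $(d-1)$-regularity of $D$ are needed, but no porosity of $\bd D$ and no Lipschitz charts near $\cl{\bd O\setminus D}$. First I would set up the exact sequence
\begin{equation*}
  0 \longrightarrow \W_D^{s_1,p}(O) \longrightarrow \W^{s_1,p}(O) \xrightarrow{\ \Tr_D\ } \B^{s_1-1/p,p}_{p}(D) \longrightarrow 0,
\end{equation*}
using Jonsson--Wallin's trace theory on the $(d-1)$-regular set $D$ (valid precisely because $s_1>1/p$), and record that this sequence is \emph{split}: there is a bounded linear extension operator $\Ext_D$ from the Besov trace space back into $\W^{s_1,p}(O)$, with $\Tr_D \Ext_D = \Id$. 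For the lower endpoint $s_0<1/p$ the trace is not defined, so here one simply has $\W^{s_0,p}(O)=\W_D^{s_0,p}(O)$ is the whole space and the sequence degenerates.

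The key step is then to interpolate the \emph{whole-space} pair and the \emph{boundary} pair and play them against each other. For the whole-space pair, Rychkov's extension operator (Section~\ref{Subsec: Not spaces on O}) is simultaneously bounded $\W^{s_i,p}(O)\to\W^{s_i,p}(\R^d)$ and restriction is bounded the other way, so $(\W^{s_0,p}(O),\W^{s_1,p}(O))_{\theta,p}=\W^{s,p}(O)$ by the classical real interpolation of Besov/Sobolev scales on $\R^d$ together with retraction–coretraction. For the boundary, Jonsson--Wallin give $(\L^p(D),\B^{s_1-1/p,p}_p(D))_{\theta,p}=\B^{s(\theta),p}_p(D)$ on the $(d-1)$-regular set $D$, where $s(\theta)=\theta(s_1-1/p)$; note $s(\theta)>0$ always. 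Now apply the real method to the morphism of short exact sequences: $(\,\cdot\,)_{\theta,p}$ is exact, and for a split exact sequence it maps the sub-object to the sub-object and the quotient to the quotient. Concretely, I would use the coretraction $(\Tr_D\oplus\iota)$ and retraction $(\Ext_D,\ \text{subtract})$ to exhibit $(\W^{s_0,p}(O),\W_D^{s_1,p}(O))_{\theta,p}$ as a complemented subspace of $(\W^{s_0,p}(O),\W^{s_1,p}(O))_{\theta,p}=\W^{s,p}(O)$, cut out exactly by the condition ``$\Tr_D=0$'' when that trace makes sense, i.e.\ when $s>1/p$.

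This produces the dichotomy in \eqref{C6}. When $s>1/p$, the trace $\Tr_D$ is defined and bounded on $\W^{s,p}(O)$ (again Jonsson--Wallin, since $s>1/p$), the quotient map interpolates to $\Tr_D$, and its kernel inside $\W^{s,p}(O)$ is by definition $\W_D^{s,p}(O)$; one then checks that the interpolated coretraction lands exactly there, giving $(\W^{s_0,p}(O),\W_D^{s_1,p}(O))_{\theta,p}=\W_D^{s,p}(O)$. When $s<1/p$ there is no trace constraint surviving in the limit: the Besov boundary space $\B^{s(\theta),p}_p(D)$ is ``absorbed'' because $\W^{s,p}$-functions with $s<1/p$ already form a space on which traces are not meaningful, so the $\Tr_D=0$ condition is vacuous and one gets the full space $\W^{s,p}(O)$. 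The borderline $s=1/p$ is excluded, matching the hypothesis $s_0<1/p<s_1$ only forcing $s\ne 1/p$ when $\theta$ is chosen away from the single critical value; I would state and dismiss that case as not arising under the stated parameters, or handle it by the usual observation that $\Tr_D$ fails to be bounded there.

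The main obstacle I anticipate is \textbf{verifying that the coretraction–retraction pair is genuinely bounded on the correct endpoint spaces}, in particular that $\Ext_D$ can be chosen to map into $\W^{s_1,p}(O)$ (not merely $\W^{s_1,p}(\R^d)$) and to be compatible — on the $s_0$ side — with $\L^p$, so that a \emph{single} operator interpolates. Jonsson--Wallin's extension is built by a Whitney-type summation on $\R^d$; making it respect $O$ requires combining it with Rychkov's intrinsic extension and checking the interaction near $\bd D$, which is exactly where one would normally invoke porosity. The point of this proof is that for the \emph{real} method one can avoid that, because real interpolation only needs the endpoint estimates plus exactness, not the finer cancellation that the complex method demands; but making this rigorous — i.e.\ that the sum decomposition $f = \Ext_D\Tr_D f + (f-\Ext_D\Tr_D f)$ is bounded simultaneously in the $K$-functional of both pairs — is the technical heart and should be done carefully, presumably by reducing to the known lemmas on $\W_D^{s,p}$ from Section~\ref{Sec: First properties}.
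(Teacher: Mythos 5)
Your proposal takes a genuinely different route from the paper's, but it has a gap that I do not see how to close as stated. The retraction--coretraction and complemented-subspace machinery (Proposition~\ref{prop: retraction-coretraction}, Corollary~\ref{cor: interpolation complemented}) requires a \emph{single} pair of operators that is bounded on \emph{both} endpoints of the couple. You propose to use the Jonsson--Wallin trace $\Res_D$ and an extension $\Ext_D$, but $\Res_D$ is only defined on $\W^{s_1,p}(O)$ because $s_1>1/p$; on the lower endpoint $\W^{s_0,p}(O)$ with $s_0<1/p$ there is no trace and no operator. Your attempted fix --- treating the lower endpoint as ``the sequence degenerates'' so that $Y_0=\{0\}$ or $\L^p(D)$ --- does not produce a consistent operator across the interpolation couple: the projection $1-\Ext_D\Res_D$ on $\W^{s_1,p}(O)$ and the identity on $\W^{s_0,p}(O)$ are different maps and do not extend to a bounded operator on $\W^{s_0,p}(O)+\W^{s_1,p}(O)=\W^{s_0,p}(O)$. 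Moreover, if one pushes the $Y_0=\{0\}$ choice through the framework naively, the boundary pair interpolates to $\{0\}$ and one would conclude $(\W^{s_0,p}(O),\W_D^{s_1,p}(O))_{\theta,p}=\W^{s,p}(O)$ for all $\theta$, contradicting the statement for $s>1/p$. So the ``exactness of the real method for split sequences'' idea, which you recognize as the technical heart, fails precisely at this point; the trace constraint on $D$ must enter the argument in a way that survives interpolation but does not presuppose a trace at the lower endpoint.

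The paper avoids this entirely by a different device: Grisvard's trace characterization of the real interpolation space (Proposition~\ref{prop: Grisvard trace theorem}) turns the problem into constructing, for each $f$ in the candidate space, a $\W^{s+1/p,p}(\R;\L^p(O))\cap\L^p(\R;\W^{s+1/p,p}_D(O))$-valued extension with $|_{t=0}=f$. This is done by zero-extending $f$ from $O$ to the $d$-regular set $O\perp D=(O\times\{0\})\cup(D\times\R)\subseteq\R^{d+1}$ (Proposition~\ref{prop: Zero extension to Omega-Odot-D}, which is where the fractional Hardy inequality of Proposition~\ref{prop: fractional Hardy} is used to encode the vanishing on $D$), then applying the $(d+1)$-dimensional Jonsson--Wallin extension, and finally exploiting the Fubini structure (Lemma~\ref{lem: Fubini property Wsp}) before restricting to $t=0$. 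Nothing here asks for the trace to $D$ to be defined on the $s_0$ endpoint, and the final reduction to $(\L^p,\W^{1,p}_D)_{s,p}$ is handled by reiteration (Proposition~\ref{prop: reiteration real}) and Wolff (Proposition~\ref{prop: Wolff}). If you want to repair your argument in this spirit, the step that is missing is a replacement for your $\Ext_D$-based decomposition that produces, from a bounded-$K$-functional decomposition, the vanishing on $D$ at the correct interpolated order; Grisvard's theorem plus the $O\perp D$ construction is exactly such a replacement.
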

Our proof simplifies \cite[Sec.~7]{Darmstadt-KatoMixedBoundary}, where the case $p=2$ was treated on bounded domains with a Lipschitz assumption around $\cl{\bd O \setminus D}$. Uniform $(d-1)$-regularity is defined in the next section. For bounded sets there is no difference with $(d-1)$-regularity. 

\subsection*{Acknowledgment}
Both authors are grateful to Joachim Rehberg for many fruitful discussions on and around the topic. The first named author thanks his Ph.D.\ advisor Robert Haller-Dintelmann for his support and the Laboratoire de Math\'{e}matiques d'Orsay for hospitality during a stay in March 2018 where this project got started.
%%%%%%%%%%%%%%%%%%%%%%%%%%%%%%%%%%%%%%%%%%%%%%%%%%%%%%%%%%%%%%%%%%%%%%%%%%%%%%%%%%%%%%%%%%%%%%%%%%%%%%%%%%%%%%%%%%%%%%%%%%%%%%%%%%%%%%%%%%%%%%%%%%%%%%%%%%%%%%%%%%%%
\section{Notation and background}
\label{Sec: Notation and background}

\subsection{Geometry}
\label{Subsec: Not Geometry}

We work in Euclidean space $\R^d$ of dimension $d \geq 2$. We write $B = \B(x,r)$ for the open ball of radius $\r(B) = r$ centered at $x \in \R^d$ and $cB$ for the concentric ball of radius $c \r(B)$. By $\cH^\ell$ we denote the $\ell$-dimensional \emph{Hausdorff measure} in $\R^d$ \cite[Ch.~7]{Yeh} and by $|\cdot|$ the Lebesgue measure in $\R^d$. We write $\diam(\,\cdot\,)$ and $\dist(\cdot \,,\cdot)$ for the \emph{diameter} and the \emph{(semi) distance} of sets induced by the Euclidean distance on $\R^d$.

\begin{definition}
\label{def: Lipschitz property}
An open set $O \subseteq \R^d$ satisfies a \emph{uniform Lipschitz condition} around a closed boundary part $F \subseteq \bd O$ if the following holds. For every $x \in F$ there is an open neighborhood $U_x \ni x$ and a bi-Lipschitz transformation $\Phi_x: U_x \to (-1,1)^d$ such that $\Phi_x(x) = 0$ and
\begin{align}
\label{eq: Lipschitz property}
 \Phi_x(U_x \cap O) = (0,1) \times (-1,1)^{d-1}, \qquad \Phi_x(U_x \cap \partial O) = \{0\} \times (-1,1)^{d-1},
\end{align}
and there exists a number $L$ that bounds the bi-Lipschitz constants of all $\Phi_x$. \emph{Bi-Lipschitz constant} refers to the maximum of the Lipschitz constants of $\Phi_x$ and $\Phi_x^{-1}$.
\end{definition}

\begin{remark}
\label{rem: Lipschitz property for compact boundary}
If $F$ is compact, then the uniform Lipschitz condition is equivalent to requiring existence of bi-Lipschitz maps $\Phi_x$ with \eqref{eq: Lipschitz property} for every $x \in F$. This so-called \emph{weak Lipschitz condition} is weaker than requiring that $O$ has a Lipschitz boundary near $x$, see \cite[Sec.~7.3]{RobertJDE} for a relevant example. 

To see the equivalence, we cover $F$ by finitely many corresponding neighborhoods. Let $L_0$ be the maximal bi-Lipschitz constant and fix a Lebesgue number $\delta>0$, that is, for $x \in F$ we have $B(x,\delta) \subseteq U$ for one pre-selected neighborhood $U$ with bi-Lipschitz transformation $\Phi$. Suitable dilations yield a bi-Lipschitz map $T_x$ of the unit cube onto itself that preserves the upper and lower half and maps $\Phi(x)$ to $0$. Due to $B(\Phi(x),\delta/L_0) \subseteq  (-1,1)^{d}$ its bi-Lipschitz constant is controlled by $\delta$ and $L$. So, we can take $U_x \coloneqq U$ and $\Phi_x \coloneqq T_x \circ \Phi$.
\end{remark}

\begin{definition}
\label{def: l-set}
A set $E \subseteq \R^d$ is called \emph{$\ell$-Ahlfors regular} or simply \emph{$\ell$-regular}, if there is comparability
\begin{align}
\label{eq: l-set}
 \cH^\ell(B \cap E) \approx \r(B)^\ell
\end{align}
uniformly for all open balls $B$ of radius $\r(B) \leq 1$ centered in $E$. If comparability holds for $\r(B) \leq \diam(E)$, then $E$ is called \emph{uniformly $\ell$-regular}.
\end{definition}

We remark that $\ell$-regular sets are $\ell$-sets in the sense of Jonsson--Wallin~\cite[Thm.~II.1.1]{JW}. Uniformly $\ell$-regular sets are $\ell$-regular and the converse holds for bounded sets, see Lemma~\ref{lem: Ahlfors radius bound}. Many authors consider only \emph{closed} regular sets, but most considerations adapt \emph{verbatim} since in the situation above the closure $\cl{E}$ is still $\ell$-regular and $\cl{E} \setminus E$ is an $\cH^\ell$ null set \cite[Prop.~VIII.1]{JW}. We shall frequently  use this result without further reference.

\begin{example}
\label{ex: d-set}
If $O \subseteq \R^d$ satisfies the thickness condition \eqref{eq: thickness condition}, then $O$ and ${}^cO$ are both $d$-regular. Indeed, by symmetry of \eqref{eq: thickness condition} it suffices to check $d$-regularity of $O$. For $B$ a ball centered in $O$ we distinguish whether or not the concentric ball of half the radius intersects $\bd O$. If it does not, $\cH^d(B \cap O) \approx \r(B)^d$ is obvious and if it does, the same is guaranteed by \eqref{eq: thickness condition} since $B$ contains a ball of radius $\r(B)/2$ centered in $\bd O$.
\end{example}

\begin{example}
\label{ex: l-set}
If, as in Section~\ref{Subsec: Intro geometry}, $D$ is a $(d-1)$-regular part of the boundary of an open set $O \subseteq \R^d$ that satisfies the uniform Lipschitz condition around $\cl{\bd O \setminus D}$, then the full boundary $\bd O$ is also $(d-1)$-regular. Indeed, we can use that bi-Lipschitz images have comparable $\cH^{d-1}$-measure~\cite[Thm.~28.10 a)]{Yeh} and that the bi-Lipschitz constants are uniformly bounded to show that $\cl{\bd O \setminus D}$ is $(d-1)$-regular. We conclude by the observation that the class of $(d-1)$-regular sets is closed under finite unions.
\end{example}

We recall with slight modification the notion of porous sets introduced by Vais\"al\"a~\cite{Vaisala-porous}.

\begin{definition}
\label{def: porosity}
Let $E \subseteq F \subseteq \R^d$. Then $E$ is \emph{porous in $F$} if there exists a constant $\kappa \in (0,1]$ with the following property:
\begin{align}
\label{eq: Def porosity}
 \forall x \in E, r \leq 1 \quad \exists y \in \B(x,r) \cap F \; : \; \B(y, \kappa r) \cap E = \emptyset.
\end{align}
If this holds for all $r \leq \diam(E)$, then $E$ is called \emph{uniformly porous in $F$}. If $F = \R^d$, then $E$ is simply called \emph{(uniformly) porous}. 
\end{definition}

\begin{remark}
\label{rem: porosity}
If $E$ is uniformly porous with constant $\kappa$, then it is porous with constant $\min\{\kappa, \kappa \diam(E)\}$. Condition \eqref{eq: Def porosity} implies the seemingly stronger statement 
\begin{align*}
 \forall x \in F, r \leq 1 \quad \exists y \in \B(x,r) \cap F \; : \; \B(y, \kappa r/4) \subseteq \B(x,r) \setminus E.
\end{align*}
This is seen by distinguishing whether or not $B(x,r/2)$ intersects $E$. An analogous remark applies to uniformly porous sets.
\end{remark}

Whenever necessary, the reader can refer to Appendix~\ref{Sec: Porous sets} for further background on porous sets. It is instructive to think of them as lower dimensional than the ambient space. This is made precise in Proposition~\ref{prop: porosity through assouad} via the following notions of Assouad dimension.

\begin{definition}
\label{def: Assouad dimension}
Let $E \subseteq \R^d$. Let $\overline{\AS}(E)$ denote the set of $\lambda > 0$ for which there exists $C \geq 0$ such that, if $0<r<R< 2\diam(E)$ and $x \in E$, then at most $C(R/r)^\lambda$ balls of radius $r$ centered in $E$ are needed to cover $E \cap\B(x,R)$. The number $\overline{\dim}_{\AS}(E) \coloneqq \inf \overline{\AS}(E)$ is called \emph{upper Assouad dimension} of $E$. The corresponding \emph{lower Assouad dimension} is defined as $\underline{\dim}_{\AS}(E) \coloneqq \sup \underline{AS}(E)$ with $\underline{AS}(E)$ the set of $\lambda>0$ for which there exists $C \geq 0$ such that in the former situation at least $C(R/r)^\lambda$ balls are needed.
\end{definition}

There is no ambiguity with uniformly $\ell$-regular sets since their dimension is $\ell$ for any of these concepts, see Proposition~\ref{prop: Assouad for Ahlfors}. The reader can readily check that Definitions~\ref{def: l-set}, \ref{def: porosity}, and \ref{def: Assouad dimension} are purely topological in that they do not change when replacing open by closed balls and/or balls by axis-aligned cubes.

\subsection{Banach spaces and interpolation}
\label{Subsec: Not Interpolation}

All Banach spaces are over the complex numbers. We assume some familiarity with real and complex interpolation of Banach spaces and refer to the textbooks \cite{BL, Triebel} for background. However, understanding this paper does not require the precise construction of interpolation spaces. We shall only need the general methodology, their fundamental properties, and standard results on interpolation of function spaces on $\R^d$ measuring smoothness to be recalled further below.

Let $(X_0,X_1)$ be an \emph{interpolation couple}, that is, a pair of Banach spaces that are included in a common linear Hausdorff space. Then the following Banach spaces can be defined between $X_0 \cap X_1$ and $X_0 + X_1$ with respect to continuous inclusion: For $\theta \in [0,1]$ the \emph{complex interpolation spaces} $[X_0, X_1]_\theta$ of Calder\'on--Lions~\cite[Sec.~4.1]{BL} and for $\theta \in (0,1)$ and $p \in [1,\infty]$ the \emph{real interpolation spaces} $(X_0,X_1)_{\theta,p}$ obtained from Peetre's $K$-method~\cite[Sec.~3.1]{BL}. In any of these spaces $X_0 \cap X_1$ is dense~\cite[Thm.~3.4.2 \& 4.4.2]{BL}. In particular, the endpoints $[X_0,X_1]_{j}$, $j \in \{0,1\}$, coincide with $X_j$ only if $X_0 \cap X_1$ is dense in $X_j$.

\subsection{Function spaces measuring smoothness}
\label{Subsec: Not Spaces}

We define the relevant function spaces of smoothness $s \in \R$ and integrability $p\in (1,\infty)$. The \emph{Bessel potential space} $\H^{s,p}(\R^d)$ consists of those tempered distributions $f \in \S'(\R^d)$ for which
\begin{align*}
 \|f\|_{\H^{s,p}} \coloneqq \|\cF^{-1}(1+|\cdot|^2)^{s/2} \cF f\|_{\L^p} < \infty.
\end{align*}
Here, $\cF$ denotes the Fourier transform. With $1/p' \coloneqq 1 - 1/p$ the spaces $\H^{-s,p}(\R^d)$ and $\H^{s,p'}(\R^d)$ are in a sesquilinear duality extending the $\L^2$ inner product~\cite[Sec.~2.4.2]{Triebel}.

If $k \geq 0$ is an integer, then $\H^{k,p}(\R^d)$ coincides up to equivalent norms with the \emph{Sobolev space} $\W^{k,p}(\R^d)$ of tempered distributions such that
\begin{align*}
 \|f\|_{\W^{k,p}} \coloneqq \bigg(\|f\|_{\L^p}^p + \sum_{j=1}^d \|\partial_j^k f\|_{\L^p}^p \bigg)^{1/p} < \infty,
\end{align*}
where we could have equivalently taken all derivatives up to order $k$ into account, see \cite[Sec.~2.3.3]{Triebel}. Note that we have $\H^{0,p}(\R^d) = \W^{0,p}(\R^d) = \L^p(\R^d)$. 

For $s = k + \sigma$ with $k \geq 0$ an integer and $\sigma \in (0,1)$ there is a \emph{fractional Sobolev space} $\W^{s,p}(\R^d)$ defined through requiring 
\begin{align*}
 \|f\|_{\W^{s,p}(\R^d)} \coloneqq \|f\|_{\W^{k,p}(\R^d)} + \bigg(\sum_{j=1}^d \iint_{\R^d \times \R^d} \frac{|\partial_j^k f(x) - \partial_j^k f(y)|^p}{|x-y|^{d+\sigma p}} \; \d x \; \d y \bigg)^{1/p} < \infty.
\end{align*}
We could also have restricted integration to $|x-y|<1$. Further common equivalent norms on these spaces exist~\cite[Sec.~2.5.1]{Triebel}. We define $\W^{-s,p}(\R^d)$ as the space of conjugate-linear functionals on $\W^{s,p'}(\R^d)$ in accordance with what we have seen for Bessel potential spaces. We could have also given an equivalent intrinsic definition using the scale of Besov spaces~\cite[Sec.~2.3.2/2.6.1]{Triebel} but the view point of dual spaces is better suited to our circumstances.

We collect all interpolation properties proved in \cite[Sec.~2.4.2]{Triebel} that shall be used ``off-the-shelf'' in the further course. In \cite{Triebel} the nomenclature is $\H^{s,p} = \F^{s}_{p,2}$ and $\W^{s,p} = \F^{s}_{p,p}$ for non-integer $s$.

\begin{proposition}
\label{prop: interpolation whole space}
Let $p_0, p_1 \in (1,\infty)$, $s_0,s_1 \in \R$, and $\theta \in (0,1)$. Let $\X$ denote either $\H$ or $\W$. Up to equivalent norms it follows that
\begin{align}
 \tag{i}\label{Ii} 	[\X^{s_0, p_0}(\R^d), \X^{s_1, p_1}(\R^d)]_{\theta} &= \X^{s,p}(\R^d), \\[8pt]
 \tag{ii}\label{Iii} \noeqref{Iii} 	(\X^{s_0, p_0}(\R^d), \X^{s_1, p_1}(\R^d))_{\theta,p} &= \W^{s,p}(\R^d),
\end{align}
with the two exceptions that in \eqref{Ii} for $\X = \W$ either all or none of $s_0,s_1,s$ have to be integers and that in \eqref{Iii} integer $s$ is only permitted when $s_0 = s_1 (=s)$.
\end{proposition}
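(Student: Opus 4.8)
The statement is a citation-level compilation of results from Triebel's monograph, so the plan is not to reprove interpolation of Triebel--Lizorkin spaces from scratch but to assemble the correct special cases and track the exceptional parameter ranges. First I would record the identifications $\H^{s,p}(\R^d) = \F^s_{p,2}(\R^d)$ for all $s \in \R$, $p \in (1,\infty)$, and $\W^{s,p}(\R^d) = \F^s_{p,p}(\R^d)$ for non-integer $s$, together with the classical fact $\W^{k,p}(\R^d) = \H^{k,p}(\R^d) = \F^k_{p,2}(\R^d)$ (up to equivalent norms) for integer $k \geq 0$ and, by duality, for all integer $k$. Here the duality $\W^{-s,p}(\R^d) = (\W^{s,p'}(\R^d))'$ is our definition and is compatible with the Besov-space description, as noted in the text.

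For \eqref{Ii} the key input is the complex interpolation formula for Triebel--Lizorkin spaces, $[\F^{s_0}_{p_0,q_0}, \F^{s_1}_{p_1,q_1}]_\theta = \F^s_{p,q}$ with $1/q = (1-\theta)/q_0 + \theta/q_1$ (valid for $p_i, q_i \in (1,\infty)$), which I would apply with $q_0 = q_1 = 2$ in the Bessel case and $q_0 = q_1 = p$ in the case $\X = \W$. In the latter case the interpolated microscopic index comes out as $1/q = (1-\theta)/p_0 + \theta/p_1 = 1/p$, giving $\F^s_{p,p} = \W^{s,p}$ precisely when $s$ is non-integer; if instead all of $s_0, s_1, s$ are integers one reinterprets everything through $\H^{s_i,p_i} = \F^{s_i}_{p_i,2}$ and lands in $\F^s_{p,2} = \W^{s,p}$. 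The exceptional clause in \eqref{Ii} is exactly the statement that these are the only two consistent readings: if, say, $s_0, s_1$ are integers but $s$ is not (possible only when $s_0 \neq s_1$ and $\theta$ is irrational-ish), the two endpoint spaces are $\F^{s_i}_{p_i,2}$ and complex interpolation forces the target $\F^s_{p,2}$, which is $\H^{s,p} \neq \W^{s,p}$; conversely mixed integer/non-integer endpoints are likewise inconsistent. I would state this as the content of the exception rather than belabor it.

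For \eqref{Iii} the relevant input is the real interpolation formula $(\F^{s_0}_{p_0,q_0}, \F^{s_1}_{p_1,q_1})_{\theta,p} = \B^s_{p,p} = \F^s_{p,p}$ whenever $s_0 \neq s_1$, independent of the microscopic indices $q_0, q_1$ — this is the standard fact that real interpolation of Triebel--Lizorkin (or Besov) spaces with distinct smoothness lands in the Besov scale with second index equal to the integrability index. Since $\F^s_{p,p} = \W^{s,p}$ for non-integer $s$, this covers all cases with $s_0 \neq s_1$ and $s$ non-integer, which includes the generic situation; the only way to reach integer $s$ with $s_0 \neq s_1$ is excluded by hypothesis, and when $s_0 = s_1 = s$ the real interpolation space $(\X^{s,p_0}, \X^{s,p_1})_{\theta,p}$ is computed by interpolating purely in integrability, again yielding $\W^{s,p} = \L^p$-type identification via the known Besov/Lebesgue real-interpolation results (e.g. $\F^s_{p_0,q},\F^s_{p_1,q})_{\theta,p} = \F^s_{p,p}$), so integer $s$ is admissible there.

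The main obstacle is purely bookkeeping rather than analytic: one must be careful that the parameters $p_i \in (1,\infty)$ keep all microscopic indices inside $(1,\infty)$ so that the cited Triebel--Lizorkin interpolation theorems apply, and one must correctly pattern-match which of $\H$ or $\W$ the output space is according to whether $s$ is an integer — this is the sole source of the two exceptional clauses. I would therefore structure the proof as: (1) reduce to Triebel--Lizorkin notation; (2) invoke \cite[Sec.~2.4.2]{Triebel} for complex interpolation, compute the interpolated second index, and identify the output; (3) invoke the same section for real interpolation with $s_0 \neq s_1$, landing in the Besov scale; (4) dispatch the $s_0 = s_1$ subcase of \eqref{Iii} separately; (5) in each step flag when the integer/non-integer dichotomy produces the stated exception. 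Since everything is "off-the-shelf" from Triebel, the write-up can be kept short, essentially a guided reading of the reference.
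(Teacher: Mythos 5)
The paper gives no separate proof of this proposition; it simply cites \cite[Sec.~2.4.2]{Triebel} immediately before the statement, after recalling the dictionary $\H^{s,p}=\F^s_{p,2}$ and $\W^{s,p}=\F^s_{p,p}$ for non-integer $s$, so your proposal (translate to Triebel--Lizorkin notation, invoke the complex and real interpolation theorems there, track which integer/non-integer combinations are consistent) is exactly the paper's approach made explicit. One small notational slip worth correcting: for $\X=\W$ the endpoint microscopic indices are $q_0=p_0$ and $q_1=p_1$ (not $q_0=q_1=p$), which is precisely what makes the interpolated index come out as $1/q=(1-\theta)/p_0+\theta/p_1=1/p$; the analogous slip appears in your parenthetical $(\F^s_{p_0,q},\F^s_{p_1,q})_{\theta,p}=\F^s_{p,p}$ for the $s_0=s_1$ case.
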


\subsection{Function spaces on \texorpdfstring{$\R^d$}{Rd} incorporating a Dirichlet condition}
\label{Subsec: Not Dirichlet}

We define analogous spaces of functions with positive smoothness on $\R^d$ that vanish on some $(d-1)$-regular set $E \subseteq \R^d$. All this is based on celebrated results of Jonsson--Wallin~\cite{JW}. 

We need the notion of fractional Sobolev spaces on $E$. They are denoted $\B_s^{p,p}(E)$ in \cite{JW} but to keep the analogy with the previous section we shall write $\W^{s,p}(E)$ instead. Having equipped $E$ with $(d-1)$-dimensional Hausdorff measure, we define for $s\in (0,1)$ and $p \in (1,\infty)$ this space as the Banach space of those $f \in \L^p(E)$ for which
\begin{align*}
 \|f\|_{\W^{s,p}(E)} \coloneqq &\bigg(\int_E |f(x)|^p \; \cH^{d-1}(\d x) \bigg)^{1/p} \\
 &+ \bigg(\iint_{\substack{x,y \in E \\ |x-y| < 1}} \frac{|f(x) - f(y)|^p}{|x-y|^{d-1 + sp}} \; \cH^{d-1}(\d x) \; \cH^{d-1}(\d y) \bigg)^{1/p} < \infty.
\end{align*}
If $E$ is closed, the following is proved in \cite[Thm.~VI.1 \& VII.1]{JW}. The general case follows from the discussion in Section~\ref{Subsec: Not Geometry}.

\begin{proposition}
\label{prop: JW}
Suppose $E \subseteq \R^d$ is $(d-1)$-regular. Let $p\in (1,\infty)$, $s \in (1/p, 1+1/p)$, and let $\X$ denote either $\H$ or $\W$.
\begin{enumerate}
 \item If $f \in \X^{s,p}(\R^d)$, then for $\cH^{d-1}$-almost every $x \in E$ the limit
 \begin{align*}
  (\Res_E f)(x) \coloneqq \lim_{r\to 0} \frac{1}{|\B(x,r)|} \int_{\B(x,r)} f(y) \; \d y
 \end{align*}
 exists. The restriction operator $\Res_E$ maps $\X^{s,p}(\R^d)$ boundedly into $\W^{s-1/p,p}(E)$. 
 
 \item Conversely, there exists a bounded extension operator $\Ext_E: \W^{s-1/p,p}(E) \to \X^{s,p}(\R^d)$ that serves as a right inverse for $\Res_E$. It does not depend on $p$, $s$.
\end{enumerate}
\end{proposition}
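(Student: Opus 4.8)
The plan is to reduce everything to the trace and extension theorems of Jonsson--Wallin~\cite[Thm.~VI.1 \& VII.1]{JW}, which already cover the case of a \emph{closed} $(d-1)$-regular set, and then to discharge the closedness assumption by a short measure-theoretic argument.

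First I would treat closed $E$. Here the task is essentially one of translation: in the ambient space $\R^d$ a $(d-1)$-regular set is a $(d-1)$-set in the sense of~\cite{JW}, the codimension equals $1$, and the trace spaces $\B^{p,p}_{s-1/p}(E)$ of~\cite{JW} are exactly our $\W^{s-1/p,p}(E)$; note that $s - 1/p \in (0,1)$ throughout the admissible range $s \in (1/p, 1+1/p)$, so no integer smoothness occurs on $E$. I would then record that~\cite{JW} handles the Bessel potential scale $\H^{s,p}(\R^d)$ on the same footing, its trace still landing in $\B^{p,p}_{s-1/p}(E)$, that the restriction operator there is the pointwise Lebesgue-average limit $\Res_E$ appearing in the statement, and that the extension operator is the universal Whitney-type operator, one and the same for all admissible $p$, $s$ and for both scales $\X \in \{\H,\W\}$. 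With these identifications, parts (i) and (ii) for closed $E$ are literally the cited theorems.

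Next I would remove closedness. For $(d-1)$-regular $E$, the discussion in Section~\ref{Subsec: Not Geometry} gives that $\cl{E}$ is again $(d-1)$-regular and that $\cl{E}\setminus E$ is an $\cH^{d-1}$-null set. Hence $\L^p(E) = \L^p(\cl{E})$ canonically, the double integral defining the $\W^{s-1/p,p}$-seminorm is insensitive to this null set, and so $\W^{s-1/p,p}(E) = \W^{s-1/p,p}(\cl{E})$ isometrically. Applying the closed case to $\cl{E}$, the Lebesgue limit exists $\cH^{d-1}$-a.e.\ on $\cl{E}$, in particular a.e.\ on $E$, and since the defining limit at a point of $E$ is given by the very same formula, I may put $\Res_E f \coloneqq (\Res_{\cl{E}} f)|_E$; boundedness of $\Res_E : \X^{s,p}(\R^d) \to \W^{s-1/p,p}(E)$ then transfers through the above isometry. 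Dually I would set $\Ext_E \coloneqq \Ext_{\cl{E}}$, read as an operator on $\W^{s-1/p,p}(E) = \W^{s-1/p,p}(\cl{E})$; it is bounded into $\X^{s,p}(\R^d)$, independent of $p$ and $s$, and a right inverse of $\Res_E$ because the corresponding identity holds on $\cl{E}$ and both operations commute with restriction to $E$.

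I do not expect a genuine obstacle: all the analytic substance sits in the Jonsson--Wallin theorems invoked. The only points that need attention are the notational dictionary between their Besov spaces on $E$ and ours, the observation that their $d$-set results apply equally to the Bessel potential scale with the same universal extension operator, and the routine bookkeeping in passing from $\cl{E}$ to $E$ via the null-set fact recalled in Section~\ref{Subsec: Not Geometry}.
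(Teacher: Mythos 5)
Your proposal is correct and takes essentially the same approach as the paper: the paper itself simply cites the Jonsson--Wallin theorems for the closed case (noting that both scales $\H^{s,p}$ and $\W^{s,p}$ are covered by \cite[Thm.~VI.1 \& VII.1]{JW}) and discharges the general case via the remark in Section~\ref{Subsec: Not Geometry} that $\cl{E}$ remains $(d-1)$-regular with $\cl{E}\setminus E$ an $\cH^{d-1}$-null set. You have simply written out explicitly the bookkeeping that the paper leaves implicit.
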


We often refer to $\Res_E$ and $\Ext_E$ as the \emph{Jonsson--Wallin operators for $E$}.

\begin{definition}
\label{def: XspE}
Let $E \subseteq \R^d$ be $(d-1)$-regular. Given $p\in (1,\infty)$ and $s \in (1/p, 1+1/p)$, define
\begin{align*}
 \X^{s,p}_E(\R^d) \coloneqq \{ f \in \X^{s,p}(\R^d) : \Res_E f = 0 \},
\end{align*}
where $\X$ denotes either $\H$ or $\W$, and equip it with the norm inherited from $\X^{s,p}(\R^d)$.
\end{definition}

\subsection{Function spaces on open sets with and without partially vanishing trace}
\label{Subsec: Not spaces on O}

As usual, let $\X$ denote either $\H$ or $\W$. Since for $s \geq 0$ we have $\X^{s,p}(\R^d) \subseteq \L^p(\R^d)$, the \emph{pointwise restriction} $|_O$ of functions to $O$ is defined on $\X^{s,p}(\R^d)$.

\begin{definition}
\label{def: spaces on open sets}
Let $O \subseteq \R^d$ be an open set and let $s \geq 0$, $p \in (1,\infty)$. Define $\X^{s,p}(O) \coloneqq\{f|_O : f \in \X^{s,p}(\R^d)\}$ with quotient norm
\begin{align*}
 \|f\|_{\X^{s,p}(O)} \coloneqq \inf \big\{\|F\|_{\X^{s,p}} : F \in \X^{s,p}(\R^d) \text{ and } F|_O = f \big\}.
 \end{align*}
If in addition $E \subseteq \cl{O}$ is $(d-1)$-regular, define $\X_E^{s,p}(O)\coloneqq\{f|_O : f \in \X_E^{s,p}(\R^d)\}$ for $s \in (1/p,1+1/p)$ with quotient norm
\begin{align*}
 \|f\|_{\X_E^{s,p}(O)} \coloneqq \inf \big\{\|F\|_{\X^{s,p}} : F \in \X_E^{s,p}(\R^d) \text{ and } F|_O = f \big\}.
\end{align*}
\end{definition}

By construction $|_O: \X^{s,p}(\R^d) \to \X^{s,p}(O)$ is bounded. To let $\X^{s,p}(O)$ inherit non-trivial properties of its whole space analogue, a bounded linear right inverse is needed. If $O$ is $d$-regular, this has been constructed in a beautiful paper of Rychkov~\cite[Thm.~5.1]{Rychkov}.

\begin{proposition}[Rychkov]
\label{prop: Rychkov}
Let $O \subseteq \R^d$ be an open, $d$-regular set. Let $\X$ denote either $\H$ or $\W$. For any $s > 0$ and $p \in (1,\infty)$ there exists a bounded linear extension operator $\Ext : \X^{s,p}(O) \to \X^{s,p}(\R^d)$ that serves as a right inverse for $|_O$. Moreover, if $m \geq 1$ is an integer, then $\Ext$ can be taken the same for all $p \in (1,\infty)$ and all $s \in (0,m)$.
\end{proposition}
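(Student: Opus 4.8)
\emph{Plan of proof.} The statement is Rychkov's theorem~\cite[Thm.~5.1]{Rychkov}; my plan is to follow his strategy and indicate that $d$-regularity enters at exactly one place. The object to be built is a \emph{universal} operator: a single linear map $\Ext$ that does not depend on $p$ or $s$, whose boundedness $\X^{s,p}(O)\to\X^{s,p}(\R^d)$ is then checked scale by scale. Its construction rests on a discrete Calder\'on-type reproducing formula. One fixes, in dependence only on $m$, compactly supported functions $\varphi_0,\varphi$ and $\psi_0,\psi$ --- with $\psi,\psi_0$ carrying vanishing moments up to order $m$ --- normalized so that $g=\sum_{j\geq0}\varphi_j*\psi_j*g$ for every $g\in\S'(\R^d)$ modulo polynomials, where $\varphi_j=2^{jd}\varphi(2^j\cdot)$ for $j\geq1$ and likewise for $\psi_j$. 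The heuristic is that the coefficients $\psi_j*g$ are \emph{local} averages of $g$ at scale $2^{-j}$; for an $f$ given only on $O$ these are directly available wherever the $2^{-j}$-neighborhood fits inside $O$, and the construction must supply substitutes for them near $\bd O$ and on $\,{}^cO$.

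First I would set up the Whitney machinery: a Whitney decomposition $\{Q\}$ of the open set $\,{}^c\cl{O}$ together with a reflection $Q\mapsto Q^*$ attaching to each Whitney cube a cube $Q^*$ with $\ell(Q^*)\approx\ell(Q)\approx\dist(Q,\bd O)$ and $\dist(Q^*,Q)\lesssim\ell(Q)$; this is the \emph{only} step using $d$-regularity, which guarantees $|Q^*\cap O|\approx|Q^*|$, so that the local means of $f$ obtained by integrating $f|_O$ against a bump adapted to $Q^*$ are bounded functionals of $f$, controlled by $\|f\|_{\X^{s,p}(O)}$. Next I would define $\Ext f$ by feeding these data into the reproducing formula: at scale $j$ the coefficients are the genuine $\psi_j*f$ wherever the $2^{-j}$-support lies in $O$ and the reflected local means elsewhere, the two being interpolated by a smooth partition of unity subordinate to a mild dilation of the Whitney cover. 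On $O$ this leaves the original coefficients untouched, so the formula reproduces $f$ there and $(\Ext f)|_O=f$. The remaining step is the norm estimate: expanding $\|\Ext f\|_{\X^{s,p}}$ through its Littlewood--Paley norm (for $\X=\H$) or its difference norm (for $\X=\W$), the vanishing moments of $\psi$ together with the geometric control $\ell(Q^*)\approx\ell(Q)\approx\dist(Q,\bd O)$ of the reflection turn the double sum over scales and cubes into a convergent geometric series, while a vector-valued maximal inequality on $O$ --- legitimate because a $d$-regular open set is a doubling metric measure space --- absorbs the averaging. One runs this for both the Triebel--Lizorkin scale (yielding $\H^{s,p}$, and $\W^{m,p}$ for integer $m$) and the Besov scale (yielding $\W^{s,p}$ for non-integer $s$); since $\Ext$ itself sees neither $p$ nor $s$, the universality assertion is automatic once the estimates hold for each admissible pair.

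I expect the norm estimate to be the main obstacle, and within it the delicate point is the interplay between the shrinking of the Whitney cubes near $\bd O$ and the order of cancellation carried by $\psi$: one needs $m$ strictly above the smoothness order for the geometric series to converge with the right power, which is exactly why the claim that a single $\Ext$ works for all $s$ is restricted to $s\in(0,m)$; the uniformity in $p$ in turn relies on the maximal inequality in its vector-valued form, with the doubling constant of $O$ as the only geometric input. The right-inverse property and the support and regularity of the off-$O$ part of $\Ext f$ are comparatively soft once the reproducing formula is in place. As a complete proof would essentially reproduce~\cite{Rychkov}, in the body of the paper we simply invoke that reference.
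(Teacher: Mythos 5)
Your proposal takes the same route as the paper: there is no proof in the text, only a citation of \cite[Thm.~5.1]{Rychkov} together with Remark~\ref{rem: uniformity in Rychkov}, which observes---exactly as you do---that the universality in $p$ and $s$ is not stated explicitly in \cite{Rychkov} but follows because the operator $\Lambda$ there is built once and for all, independently of the scale. Your road map of the reproducing-formula construction, the Whitney reflection, and the role of the measure-density condition supplied by $d$-regularity is a faithful summary of Rychkov's argument (up to the minor point that the vector-valued maximal inequality is used on $\R^d$, not intrinsically on $O$), and your concluding decision to simply invoke the reference is precisely what the paper does.
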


\begin{remark}
\label{rem: uniformity in Rychkov}
Indeed, though not stated explicitly in \cite{Rychkov}, the consistency of the extension operator becomes apparent from the construction of the operator $\Lambda$ on \cite[p.~155]{Rychkov}.
\end{remark}

Let us stress that Rychkov's operator is not defined on $\L^p(O)$. But in the low-regularity regime $s<1/p$ we can simply extend $\X^{s,p}(O) \to \X^{s,p}(\R^d)$ by zero as we shall see soon. The following definition goes back to Sickel~\cite{Sickel} and Jawerth--Frazier~\cite{Jawerth-Frazier}.

\begin{definition}
\label{def: Sickel class}
Let $t\in (0,1)$. An open set $O \subseteq \R^d$ belongs to the class $\mathcal{D}^t$ if
\begin{align}
\label{eq: Sickel condition}
 \sup_{x \in \bd O} \sup_{0 < r \leq 1} r^{t-d} \int_{\B(x,r) \setminus \bd O} \dist(y,\bd O)^{-t} \; \d y < \infty.
\end{align}
\end{definition}

The relevant examples for us are as follows. For a proof we refer to Proposition~\ref{prop: examples Dt} in the appendix.

\begin{example}
\label{ex: examples Dt class}
An open set with $(d-1)$-regular boundary is of class $\mathcal{D}^t$ for any $t \in (0,1)$. An open set with porous boundary is of class $\mathcal{D}^t$ for some $t \in (0,1)$.
\end{example}

We cite the following multiplier theorem for characteristic functions~\cite[Thm.~4.4]{Sickel}.

\begin{proposition}[Sickel]
\label{prop: Sickel}
Let $O\subseteq \R^d$ be of class $\mathcal{D}^t$ for some $t \in (0,1)$. Let $p \in (1,\infty)$ and $0 \leq s<t/p$. If $\X$ denotes either $\H$ or $\W$, then pointwise multiplication by $\mathds{1}_O$ is a bounded operator on $\X^{s,p}(\R^d)$. For $t(1/p-1) < s < 0$ the dual operator $\mathds{1}_O \varphi \coloneqq \varphi \circ \mathds{1}_O$ is also bounded on $\X^{s,p}(\R^d)$.
\end{proposition}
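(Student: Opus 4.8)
The plan is to reduce the claim to a single weighted $\L^p$-inequality, a fractional Hardy inequality relative to $\bd O$, and to extract that inequality from the $\mathcal{D}^t$-hypothesis, much as in Sickel's proof. The case $s=0$ needs no argument since multiplication by $\mathds{1}_O$ is a contraction on $\L^p(\R^d)$, so assume $0<s<t/p$; then automatically $0<s<1$ and $0<sp<t<1$. For $\X=\W$ I would use the Gagliardo seminorm together with the elementary splitting
\[
\mathds{1}_O(x)f(x)-\mathds{1}_O(y)f(y)=\mathds{1}_O(x)\bigl(f(x)-f(y)\bigr)+\bigl(\mathds{1}_O(x)-\mathds{1}_O(y)\bigr)f(y).
\]
The first term contributes at most $[f]_{\W^{s,p}(\R^d)}$ to the seminorm of $\mathds{1}_O f$ because $|\mathds{1}_O|\le 1$. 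In the second term $\mathds{1}_O(x)-\mathds{1}_O(y)$ is nonzero only when exactly one of $x,y$ lies in $O$, and since $O$ is open one has $\B\bigl(y,\dist(y,\bd O)\bigr)\subseteq O$ for $y\in O$, so $\int_{{}^cO}|x-y|^{-d-sp}\,\d x\lesssim\dist(y,\bd O)^{-sp}$ (and symmetrically for $y\notin O$). Carrying this out reduces the bound for $\|\mathds{1}_O f\|_{\W^{s,p}(\R^d)}$ to the \emph{fractional Hardy inequality}
\[
\int_{\R^d}|f(x)|^p\,\dist(x,\bd O)^{-sp}\,\d x\lesssim\|f\|_{\W^{s,p}(\R^d)}^p .
\]
For $\X=\H$ the same splitting works after replacing the Gagliardo seminorm by the characterization of $\H^{s,p}(\R^d)=\F^{s}_{p,2}(\R^d)$ by ball means of first-order differences (valid for $0<s<1$): pulling $|f(x)|$ out of the difference term and using that those means at scale $t$ vanish for $t<\dist(x,\bd O)$ yields the same Hardy inequality with the $\H^{s,p}$-norm on the right. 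Since $\H^{s,p}(\R^d)$ and $\W^{s,p}(\R^d)$ both embed into the Haj\l asz-Sobolev space $M^{s,p}(\R^d)$ (which coincides with $\F^{s}_{p,\infty}(\R^d)$), from here on one may work with a general $f$ in either space.

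Next I would prove the fractional Hardy inequality, which is the technical heart and is where the hypothesis $O\in\mathcal{D}^t$ with $sp<t$ is genuinely used. On $\{\dist(\cdot,\bd O)\ge 1\}$ the weight is at most $1$ and nothing has to be done. On the tube $\{0<\dist(\cdot,\bd O)<1\}$ I would either follow Sickel and use an atomic decomposition of $f$ in $\F^{s}_{p,q}(\R^d)$ --- the atoms straddling $\bd O$ become rough, zero-smoothness bumps supported in small balls, which because $s<1/p$ still lie in $\F^{s}_{p,q}(\R^d)$ with norms governed by the localized $\mathcal{D}^t$-integral $\int_{\B(\cdot,r)\setminus\bd O}\dist(y,\bd O)^{-t}\,\d y$ --- or, closer to Hardy's original argument, take a Whitney decomposition $\{Q_j\}$ of the tube with side lengths $\ell_j\approx\dist(Q_j,\bd O)$, anchor a chain of balls centered at a nearest boundary point $z_j\in\bd O$ running from scale $\ell_j$ up to scale $1$, telescope the averages of $f$ along the chain, and estimate each increment by a fractional Poincar\'e inequality (available on $M^{s,p}$, hence for both scales). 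Either way one is left with summing the dyadic contributions against the scaling factors $\ell_j^{d-sp}$, and the defining property of $\mathcal{D}^t$ --- used through $\int_{\B(\xi,\rho)\setminus\bd O}\dist(\cdot,\bd O)^{-\sigma}\,\d x\lesssim\rho^{d-\sigma}$ for $\xi\in\bd O$, $\rho\le 1$ and any $\sigma<t$ --- is precisely what makes that sum converge.

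For the remaining range $t(1/p-1)<s<0$ I would dualize. The hypothesis now reads $0<-s<t/p'$ with $1/p+1/p'=1$, so the first part already gives boundedness of multiplication by $\mathds{1}_O$ on $\X^{-s,p'}(\R^d)$. Since $\X^{s,p}(\R^d)$ is, by definition, the space of conjugate-linear functionals on $\X^{-s,p'}(\R^d)$ and $\mathds{1}_O$ is real-valued, the operator $\varphi\mapsto\varphi\circ\mathds{1}_O$ is exactly the adjoint of the former and hence bounded on $\X^{s,p}(\R^d)$.

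The hard part is the summation in the fractional Hardy inequality. One has to (a) balance the geometric series of fractional Poincar\'e energies (or of rough-atom contributions) against the scaling factors $\ell_j^{d-sp}$, which appears to need the \emph{strict} inequality $sp<t$ --- the missing decay is gained by splitting the weight $\dist(\cdot,\bd O)^{-sp}$ as a power of $\dist(\cdot,\bd O)^{-\sigma}$ with $sp<\sigma<t$ and applying H\"older --- and (b) keep the coarse, far-field averages estimated \emph{locally}, so that the bound survives an unbounded boundary (for instance a hyperplane) instead of collapsing into the global $\L^p$-norm. Everything else --- the splitting, the difference and atomic characterizations, the embedding into $M^{s,p}(\R^d)$, and the duality --- is soft.
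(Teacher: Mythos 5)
The paper does not supply a proof of this proposition --- it is cited verbatim from Sickel \cite[Thm.~4.4]{Sickel} --- so there is no in-text argument to compare your sketch against. What you propose is a reasonable reconstruction in the spirit of Sickel's theorem: the splitting of $\mathds{1}_Of(x)-\mathds{1}_Of(y)$, the reduction to a fractional Hardy inequality $\int_{\R^d\setminus\bd O}|f|^p\dist(\cdot,\bd O)^{-sp}\,\d x\lesssim\|f\|^p_{\X^{s,p}(\R^d)}$, the use of the gap $sp<t$ through an intermediate exponent $\sigma\in(sp,t)$ in the $\cD^t$-integral, and the duality step for $t(1/p-1)<s<0$ are all correct in outline, and the passage through $\F^s_{p,\infty}$ is a legitimate way to treat the $\H$ and $\W$ cases at once.

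Two points deserve to be recorded. First, the geometric bound $\int_{{}^cO}|x-y|^{-d-sp}\,\d x\lesssim\dist(y,\bd O)^{-sp}$ is only available for $y\notin\bd O$, and the $\cD^t$-condition \eqref{eq: Sickel condition} is itself formulated on $\B(x,r)\setminus\bd O$ and does not by itself force $|\bd O|=0$; you should therefore state and use the weighted estimate over $\R^d\setminus\bd O$, which is all the argument ever needs. Second, and more substantively, the fractional Hardy inequality is the technical core and your sketch describes rather than executes it: whichever route you choose --- Whitney decomposition with chaining and a fractional Poincar\'e input, or the atomic decomposition as in Sickel --- the summation of chain (or atom) contributions against the factors $\ell_j^{d-sp}$ is exactly where $sp<t$ is consumed, and that computation is deferred rather than carried out. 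As a road map the proposal is sound; to count as a proof the Hardy step has to be written in full.
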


\begin{corollary}
\label{cor: zero extension bounded}
Let $O \subseteq \R^d$ be an open set with $(d-1)$-regular boundary. Let $\X$ denote either $\H$ or $\W$. Then the zero extension operator
\begin{align*}
 \Ext_0 : \X^{s,p}(O) \to \X^{s,p}(\R^d), \quad \Ext_0 f(x) \coloneqq \begin{cases} 
                                                               f(x) &(\text{if $x \in O$}) \\
                                                               0 &(\text{if $x \in {}^cO$}) 
                                                              \end{cases}
\end{align*}
is bounded provided $p \in (1,\infty)$ and $s \in [0,1/p)$.
\end{corollary}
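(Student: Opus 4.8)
The plan is to realize the zero extension $\Ext_0 f$ of an $\X^{s,p}(O)$-function as the pointwise product of $\mathds{1}_O$ with an arbitrary whole-space representative of $f$, and then to invoke Sickel's multiplier theorem (Proposition~\ref{prop: Sickel}).

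First I would fix $f \in \X^{s,p}(O)$ and, using the definition of the quotient norm, choose $F \in \X^{s,p}(\R^d)$ with $F|_O = f$ and $\|F\|_{\X^{s,p}(\R^d)} \leq 2\|f\|_{\X^{s,p}(O)}$. Since $s \geq 0$ we have $F \in \L^p(\R^d)$, so the pointwise product $\mathds{1}_O F$ is a well-defined element of $\L^p(\R^d)$. As $\bd O$ is a Lebesgue null set (being contained in the $(d-1)$-regular, hence $(d-1)$-dimensional, set $\bd O$), this product agrees almost everywhere with $\Ext_0 f$, namely the function equal to $f$ on $O$ and to $0$ on ${}^cO$.

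Next I would produce an admissible exponent for Proposition~\ref{prop: Sickel}. By Example~\ref{ex: examples Dt class}, the open set $O$ belongs to $\mathcal{D}^t$ for every $t \in (0,1)$. Since $s < 1/p$ we have $sp < 1$, so we may fix some $t \in (\max\{sp,0\},1)$, for which $0 \leq s < t/p$. Proposition~\ref{prop: Sickel} then yields a constant $C = C(O,p,s)$ with $\|\mathds{1}_O G\|_{\X^{s,p}(\R^d)} \leq C\|G\|_{\X^{s,p}(\R^d)}$ for all $G \in \X^{s,p}(\R^d)$ (when $s=0$ one may instead use the trivial bound $\|\mathds{1}_O G\|_{\L^p} \leq \|G\|_{\L^p}$).

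Combining these observations, $\Ext_0 f = \mathds{1}_O F$ lies in $\X^{s,p}(\R^d)$ with
$\|\Ext_0 f\|_{\X^{s,p}(\R^d)} \leq C\|F\|_{\X^{s,p}(\R^d)} \leq 2C\|f\|_{\X^{s,p}(O)}$,
and clearly $(\Ext_0 f)|_O = f$, so $\Ext_0$ is a bounded right inverse of $|_O$ as asserted. I do not expect a genuine obstacle here: the statement is a direct consequence of Example~\ref{ex: examples Dt class} and Proposition~\ref{prop: Sickel}, and the only points deserving a word are the identification of $\Ext_0 f$ with $\mathds{1}_O F$ away from the Lebesgue null set $\bd O$, and the elementary remark that the hypothesis $s<1/p$ is exactly what permits choosing $t<1$ with $s<t/p$.
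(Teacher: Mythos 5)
Your argument is correct and is exactly the intended route: the paper states this corollary without a written proof, treating it as an immediate consequence of Example~\ref{ex: examples Dt class} and Proposition~\ref{prop: Sickel}, which is precisely how you argue. Two small remarks: the step invoking that $\bd O$ is a Lebesgue null set is not actually needed, since $\mathds{1}_O F$ and $\Ext_0 f$ agree by definition on $O$ and both vanish on ${}^cO$; and when $s=0$ the Sickel multiplier result still applies (with any $t\in(0,1)$), so the separate trivial case is optional.
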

%%%%%%%%%%%%%%%%%%%%%%%%%%%%%%%%%%%%%%%%%%%%%%%%%%%%%%%%%%%%%%%%%%%%%%%%%%%%%%%%%%%%%%%%%%%%%%%%%%%%%%%%%%%%%%%%%%%%%%%%%%%%%%%%%%%%%%%%%%%%%%%%%%%%%%%%%%%%%%%%%%%%
\section{First properties of function spaces with partially vanishing trace}
\label{Sec: First properties}

We establish first properties and techniques dealing with the spaces introduced in Section~\ref{Subsec: Not spaces on O}. They will frequently be used in the bulk of the paper.

\subsection{Structural properties}
\label{Subsec: Structur results}

Throughout, $\X$ denotes either $\H$ or $\W$. We begin by showing that incorporating boundary conditions leads to complemented subspaces.

\begin{lemma}
\label{lem: XspE complemented}
Let $E \subseteq \R^d$ be $(d-1)$-regular and let $\Res$ and $\Ext$ be the corresponding Jonsson--Wallin operators. Then $\Pro\coloneqq 1 - \Ext \Res$ is a bounded projection
from $\X^{s,p}(\R^d)$ onto $\X^{s,p}_E(\R^d)$  for any $p\in (1,\infty)$ and $s \in (1/p, 1+1/p)$. In particular, $\X^{s,p}_E(\R^d)$ is a closed subspace of $\X^{s,p}(\R^d)$.
\end{lemma}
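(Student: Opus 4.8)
The whole statement is a formal consequence of the two properties of the Jonsson--Wallin operators recorded in Proposition~\ref{prop: JW}: boundedness of $\Res = \Res_E \colon \X^{s,p}(\R^d) \to \W^{s-1/p,p}(E)$ and of $\Ext = \Ext_E \colon \W^{s-1/p,p}(E) \to \X^{s,p}(\R^d)$, together with the identity $\Res \Ext = 1$ on $\W^{s-1/p,p}(E)$. First I would note that $\Pro = 1 - \Ext\Res$ is bounded on $\X^{s,p}(\R^d)$, being built from bounded operators. Then I would check idempotency by the standard computation
\begin{align*}
 \Pro^2 = 1 - 2\Ext\Res + \Ext(\Res\Ext)\Res = 1 - 2\Ext\Res + \Ext\Res = \Pro,
\end{align*}
where the middle step uses $\Res\Ext = 1$.

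Next I would identify the range with $\X^{s,p}_E(\R^d)$. For any $f \in \X^{s,p}(\R^d)$ one has $\Res(\Pro f) = \Res f - (\Res\Ext)\Res f = 0$, so $\Pro f \in \X^{s,p}_E(\R^d)$; conversely, if $f \in \X^{s,p}_E(\R^d)$, i.e.\ $\Res f = 0$, then $\Pro f = f$. Hence $\Pro$ acts as the identity on $\X^{s,p}_E(\R^d)$ and maps $\X^{s,p}(\R^d)$ onto it, so $\Pro$ is a bounded projection onto $\X^{s,p}_E(\R^d)$.

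Finally, closedness is immediate: $\X^{s,p}_E(\R^d) = \{f : \Res f = 0\} = \Ke(\Res)$ is the kernel of a bounded operator, or alternatively the range of a bounded projection, hence closed in $\X^{s,p}(\R^d)$. There is no real obstacle here; the only point requiring care is making sure the range of parameters $s \in (1/p, 1+1/p)$, $p \in (1,\infty)$ is exactly the range in which Proposition~\ref{prop: JW} guarantees $\Res$, $\Ext$ and the right-inverse relation, which it is, so the argument applies verbatim for both choices $\X = \H$ and $\X = \W$.
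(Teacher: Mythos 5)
Your argument is correct and follows essentially the same route as the paper: both derive boundedness and idempotency of the projection from the retraction–coretraction identity $\Res\Ext = 1$ in Proposition~\ref{prop: JW}, and both identify the range of $\Pro = 1 - \Ext\Res$ with the kernel of $\Res$, which is $\X^{s,p}_E(\R^d)$ by definition. The paper phrases this via $\Ext\Res$ being a projection with the same nullspace as $\Res$, while you verify $\Pro^2 = \Pro$ directly, but this is only a cosmetic difference.
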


\begin{proof}
The operator $\Ext \Res$ is bounded on $\X^{s,p}(\R^d)$ by Proposition~\ref{prop: JW}. Since $\Ext$ is a right inverse for $\Res$, we have $(\Ext \Res)^2 = \Ext \Res$, that is to say, $\Ext \Res$ is a projection with the same nullspace as $\Res$. Now, on the one hand, the nullspace of $\Res$ is $\X_E^{s,p}(\R^d)$ and on the other hand, the nullspace of $\Ext \Res$ equals the range of $\Pro$. The conclusion follows.
\end{proof}

Next, we introduce test functions with \emph{support} $\supp(\cdot)$ away from a given set $E$.

\begin{definition}
\label{def: Def CEinfty}
Given $E \subseteq \R^d$, define 
\begin{align*}
 \C_E^\infty(\R^d) \coloneqq \big\{f \in \C_0^\infty(\R^d) : \dist(\supp(f),E) > 0 \big\}
\end{align*}
and if $O \subseteq \R^d$ is any open set, let $\C_E^\infty(O) \coloneqq \{f|_O : f \in \C_E^\infty(\R^d)\}$.
\end{definition}

\begin{lemma}
\label{lem: testfunctions dense}
Let $O \subseteq \R^d$ be open and $E \subseteq \cl{O}$ be $(d-1)$-regular. For $p \in (1,\infty)$ and $s\in (1/p,1]$ the set $\C_E^\infty(O)$ is dense in $\X^{s,p}_E(O)$.
\end{lemma}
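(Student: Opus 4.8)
The plan is to prove density of $\C_E^\infty(O)$ in $\X^{s,p}_E(O)$ by lifting the problem to $\R^d$, proving density of $\C_E^\infty(\R^d)$ in $\X^{s,p}_E(\R^d)$, and then restricting. The restriction step is immediate: since $|_O \colon \X^{s,p}(\R^d) \to \X^{s,p}(O)$ is bounded, and since $\X^{s,p}_E(O)$ carries the quotient norm, a sequence $f_n \in \C_E^\infty(\R^d)$ converging to $F \in \X^{s,p}_E(\R^d)$ has restrictions $f_n|_O \to F|_O$ in $\X^{s,p}_E(O)$; and every element of $\X^{s,p}_E(O)$ is by definition of the form $F|_O$ for some $F \in \X^{s,p}_E(\R^d)$. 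So the whole content is the following: \emph{for $p \in (1,\infty)$ and $s \in (1/p,1]$, $\C_E^\infty(\R^d)$ is dense in $\X^{s,p}_E(\R^d)$.}

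First I would handle the case $E$ bounded, or more precisely reduce to a local statement near $E$. Given $F \in \X^{s,p}_E(\R^d)$, i.e.\ $F \in \X^{s,p}(\R^d)$ with $\Res_E F = 0$, the goal is to approximate it by smooth functions vanishing near $E$. Since $\C_0^\infty(\R^d)$ is dense in $\X^{s,p}(\R^d)$, the issue is purely local around $E$: away from a neighborhood of $E$ one can already mollify and truncate freely. The natural device is a Hardy-type inequality: for $s \in (1/p,1]$ and $(d-1)$-regular $E$, functions in $\X^{s,p}_E(\R^d)$ should satisfy
\begin{align*}
 \int_{\{0 < \dist(x,E) < 1\}} \frac{|F(x)|^p}{\dist(x,E)^{sp}} \, \d x \lesssim \|F\|_{\X^{s,p}(\R^d)}^p.
\end{align*}
Granting this (it is precisely the kind of statement the paper advertises in its keywords, and should be available from Jonsson--Wallin trace theory together with the characterization of the null space of $\Res_E$), one constructs a cutoff $\eta_\eps$ that equals $0$ on $\{\dist(x,E) < \eps\}$, equals $1$ on $\{\dist(x,E) > 2\eps\}$, and satisfies $|\nabla \eta_\eps| \lesssim 1/\eps$. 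Then $\eta_\eps F \to F$ in $\X^{s,p}(\R^d)$ as $\eps \to 0$: the difference $(1-\eta_\eps)F$ is supported in $\{\dist(x,E) < 2\eps\}$, and one estimates its $\X^{s,p}$-norm — for $s < 1$ using the Gagliardo seminorm directly, splitting the double integral into near-diagonal and far-diagonal parts, and for $s = 1$ using the product rule $\nabla((1-\eta_\eps)F) = (1-\eta_\eps)\nabla F - F \nabla\eta_\eps$, where the first term vanishes in $\L^p$ by dominated convergence and the second is controlled by the Hardy inequality since $|F \nabla \eta_\eps| \lesssim |F|/\dist(x,E)$ on the annular support. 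Once $F$ has been replaced by $\eta_\eps F$, which vanishes on an open neighborhood of $E$, a standard mollification gives a function in $\C_0^\infty(\R^d)$ still vanishing near $E$, hence in $\C_E^\infty(\R^d)$, and converging to $\eta_\eps F$. A diagonal argument finishes the approximation.

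The main obstacle is establishing, or correctly invoking, the Hardy-type inequality on $\X^{s,p}_E(\R^d)$ with $E$ merely $(d-1)$-regular (and possibly unbounded, non-closed) — this is what forces the restriction $s > 1/p$ and is the genuinely nontrivial analytic input; it is presumably proved in the paper's Section on Hardy inequalities or quoted from the literature. A secondary care point is the endpoint $s = 1$ versus $s \in (1/p,1)$, which need slightly different truncation estimates as indicated above, and the fact that $E$ need not be closed — but since $\cl{E}\setminus E$ is an $\cH^{d-1}$ null set and $\dist(\cdot, E) = \dist(\cdot,\cl E)$, one may freely replace $E$ by $\cl E$ in all distance-function arguments, and $\C_E^\infty = \C_{\cl E}^\infty$, so no real difficulty arises there. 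Finally one should note that if $F$ is not compactly supported, one first truncates at spatial infinity by an ordinary cutoff (harmless since $s \leq 1$), reducing to the compactly supported case before applying the $E$-cutoff.
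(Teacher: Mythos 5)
Your reduction to $O = \R^d$ and the overall strategy (cut off near $E$, then mollify, then truncate at infinity) are fine in outline, but the key analytic input you invoke is not available at this point and your route is actually circular with respect to the paper's development. The Hardy-type inequality
\begin{align*}
 \int_{\{0 < \dist(x,E) < 1\}} \frac{|F(x)|^p}{\dist(x,E)^{sp}} \, \d x \lesssim \|F\|_{\X^{s,p}(\R^d)}^p
 \qquad \text{for all } F \in \X^{s,p}_E(\R^d)
\end{align*}
is precisely what the paper establishes in Proposition~\ref{prop: fractional Hardy}, and its proof there (Cases~2 and~3, where $s > 1/p$) uses Lemma~\ref{lem: testfunctions dense} to pass from test functions $\C_D^\infty(\R^d)$ to the full class $\W^{s,p}_D(\R^d)$: Dyda--V\"ah\"akangas (Proposition~\ref{prop: DV}) only asserts the inequality for functions whose Hardy integral is \emph{a priori finite}, and to cover a general element of the null space of $\Res_E$ one needs exactly the density you are trying to prove. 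Moreover, Proposition~\ref{prop: DV} requires plumpness of the complement and control of Assouad dimensions, which in the paper is obtained from \emph{uniform} $(d-1)$-regularity of $D$ (Assumption~\ref{ass: real}), whereas Lemma~\ref{lem: testfunctions dense} assumes only $(d-1)$-regularity; so even granting the logic, your route would not cover the lemma's stated generality (in particular unbounded, non-uniformly-regular $E$). A further secondary issue: your truncation estimate is written for the Gagliardo seminorm of $\W^{s,p}$, but $\X$ also stands for $\H$, and the $\H^{s,p}$-norm has no such integral expression for non-integer $s$; one would have to dominate by the $\W^{1,p}$-norm throughout, which forces a Hardy inequality at $s=1$, not covered by the cited fractional results.

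The paper's actual proof avoids Hardy inequalities entirely. It observes that $\Pro(\C_0^\infty(\R^d))$ is dense in $\X^{s,p}_E(\R^d)$, where $\Pro = 1 - \Ext_E\Res_E$ is the Jonsson--Wallin projection, and that because $\Pro$ acts consistently across $(s,p)$, each $\Pro g$ with $g$ smooth lies in $(\W^{1,d+1}_E \cap \W^{1,p}_E)(\R^d)$. Sobolev embedding gives a continuous representative that vanishes $\cH^{d-1}$-a.e.\ on $E$, hence (by Ahlfors regularity) everywhere on $\cl{E}$, and one then quotes the classical fact that a continuous $\W^{1,p}$-function vanishing on a closed set $F$ is approximable in $\W^{1,p}$-norm by $\C^\infty_F$-functions. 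Since the $\W^{1,p}$-norm dominates the $\X^{s,p}$-norm for $s \le 1$, this closes the argument with only elementary inputs. If you want to keep a cutoff-based argument, the way to make it non-circular would be to restrict it to $\Pro g$ as above, where continuity gives the a priori finiteness needed for Dyda--V\"ah\"akangas; but at that stage the Adams--Hedberg result is the shorter path and requires weaker geometric hypotheses.
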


\begin{proof}
Since the restriction $|_O: \X^{s,p}_E(\R^d) \to \X^{s,p}_E(O)$ is bounded and onto, it suffices to treat $O = \R^d$. In this case we shall reduce the claim to the fact that any continuous function $f \in \W^{1,p}(\R^d)$ that  vanishes everywhere on a closed set $F \subseteq \R^d$ can be approximated by $\C_F^\infty(\R^d)$-functions in $\W^{1,p}(\R^d)$-norm. This is easily proved by using that $\W^{1,p}(\R^d)$ is closed under truncation, see \cite[Sec.~9.2]{Adams-Hedberg}.

Let $\Pro: \X^{s,p}(\R^d) \to \X_E^{s,p}(\R^d)$ be the bounded projection provided by Lemma~\ref{lem: XspE complemented}. Since $\C_0^\infty(\R^d)$ is dense in $\X^{s,p}(\R^d)$, it suffices to approximate elements in $\Pro(\C_0^\infty(\R^d))$ by test functions from $\C_E^\infty(\R^d)$. Moreover, it suffices to achieve this for the $\W^{1,p}(\R^d)$-norm, which is stronger than the $\X^{s,p}(\R^d)$-norm for we have $s \leq 1$. Since the projection $\Pro$ in Lemma~\ref{lem: XspE complemented} is the same for all admissible values of $s$ and $p$, we have in particular 
\begin{align*}
 \Pro(\C_0^\infty(\R^d)) \subseteq \Pro((\W^{1,d+1} \cap \W^{1,p})(\R^d)) = (\W^{1,d+1}_E \cap \W^{1,p}_E)(\R^d).
\end{align*}
Sobolev embeddings yield for every function in the right-hand space a continuous representative $f$ that vanishes $\cH^{d-1}$-almost everywhere on $E$. By Ahlfors-regularity the intersection of $E$ with arbitrarily small balls centered in $E$ still has positive $\cH^{d-1}$-measure. Thus every point on $F \coloneqq \cl{E}$ is an accumulation point of zeros of $f$. It follows that $f$ vanishes everywhere on $F$ and the above-mentioned approximation result kicks in.
\end{proof}

By a similar argument we prove the surprising feature that Rychkov's extension operator automatically preserves Dirichlet conditions on $(d-1)$-regular sets. Once again, this comes as a byproduct of consistency of the extension operator and Sobolev embeddings and has nothing to do with the particular construction.

\begin{lemma}
\label{lem: Rychkov preserves boundary conditions}
Let $O \subseteq \R^d$ be an open, $d$-regular set, and let $E \subseteq \cl{O}$ be $(d-1)$-regular. Suppose $p \in (1,\infty)$ and $s \in (1/p, 1 +1/p)$. If $\Ext: \X^{s,p}(O) \to \X^{s,p}(\R^d)$ is the extension operator of Proposition~\ref{prop: Rychkov} constructed with $m \geq 2$, then
\begin{align*}
 \Ext : \X^{s,p}_E(O) \to \X^{s,p}_E(\R^d)
\end{align*}
is bounded for the $\X^{s,p}(O) \to \X^{s,p}(\R^d)$-norm. In particular, $\X^{s,p}_E(O)$ is a closed subspace of $\X^{s,p}(O)$.
\end{lemma}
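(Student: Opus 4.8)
The plan is to mimic the proof of Lemma~\ref{lem: testfunctions dense}, exploiting once more the $p$- and $s$-independence of Rychkov's operator together with Sobolev embeddings. First I would fix $f \in \X^{s,p}_E(O)$ and let $F \coloneqq \Ext f \in \X^{s,p}(\R^d)$; the goal is to show $\Res_E F = 0$. The subtlety is that the trace $\Res_E$ is only available for smoothness above $1/p$, so I cannot argue directly by density of test functions as I did before. Instead I would first treat the high-integrability regime and then descend.

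The key steps: (1) Assume first that $f$ lies in the dense subset $\C_E^\infty(O) \subseteq \X^{s,p}_E(O)$ provided by Lemma~\ref{lem: testfunctions dense}. Such $f$ extends to a function $g \in \C_E^\infty(\R^d)$, so $g|_O = f$. Then $\Ext f - g$ is an $\X^{s,p}(\R^d)$-function vanishing on $O$; I want to conclude $\Res_E(\Ext f) = \Res_E g = 0$, where the last equality holds because $g$ vanishes on a neighborhood of $E$. The point is that $\Ext f$ and $g$ have the same $\Res_E$-trace because they agree on $O$ and, by Proposition~\ref{prop: JW}, the trace of a function in $\X^{s,p}(\R^d)$ is given $\cH^{d-1}$-a.e.\ by Lebesgue averages over balls, and since $E \subseteq \cl O$ these averages only see values in $O$ in the limit (more precisely, using the thickness/$d$-regularity, the average over $\B(x,r)$ for $x \in E$ is asymptotically determined by the restriction to $O$ — but this needs care when $E \not\subseteq \cl O$ is not assumed; here it is). Actually the cleanest route is: $\Ext f$ restricted to $O$ equals $f$, which is the restriction of the $\C_E^\infty$-function $g$; since $m \geq 2$, Rychkov's operator is consistent across $s,p$, so I may also view $\Ext$ acting on $(\W^{1,d+1}_E \cap \W^{1,p}_E)(O) \supseteq \Pro(\C_0^\infty)$-type elements and invoke Sobolev embedding to get a continuous representative of $\Ext f$ that vanishes $\cH^{d-1}$-a.e.\ on $E$, hence everywhere on $\cl E$ by Ahlfors regularity, hence $\Res_E(\Ext f)=0$.

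(2) For general $f \in \X^{s,p}_E(O)$, I approximate $f$ by $f_n \in \C_E^\infty(O)$ in $\X^{s,p}(O)$-norm using Lemma~\ref{lem: testfunctions dense}; by boundedness of $\Ext$ on $\X^{s,p}(O)$, $\Ext f_n \to \Ext f$ in $\X^{s,p}(\R^d)$; by boundedness of $\Res_E : \X^{s,p}(\R^d) \to \W^{s-1/p,p}(E)$ from Proposition~\ref{prop: JW}(i), $0 = \Res_E(\Ext f_n) \to \Res_E(\Ext f)$, whence $\Res_E(\Ext f) = 0$, i.e.\ $\Ext f \in \X^{s,p}_E(\R^d)$. (3) Finally, $\X^{s,p}_E(O)$ is closed: it is the image of the closed subspace $\X^{s,p}_E(\R^d)$ under $|_O$ and, more to the point, it equals $\Ext^{-1}$ of... — cleaner: $f \mapsto \Ext f$ followed by $\Res_E$ is a bounded map $\X^{s,p}(O) \to \W^{s-1/p,p}(E)$ that we have shown vanishes precisely on $\X^{s,p}_E(O)$; being the kernel of a bounded operator, $\X^{s,p}_E(O)$ is closed. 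Actually even simpler: $\X^{s,p}_E(O) = \{f \in \X^{s,p}(O) : \Res_E(\Ext_{\mathrm{Ry}} f) = 0\}$ once we know $\Ext$ maps into $\X^{s,p}_E(\R^d)$ on that set and $|_O$ maps $\X^{s,p}_E(\R^d)$ onto it — combine with continuity.

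The main obstacle I anticipate is Step~(1): making rigorous that the extension of a function vanishing near $E$ (or more generally of an $\X^{s,p}_E(O)$-function) again has vanishing $\Res_E$-trace, \emph{without} appealing to the internal structure of Rychkov's operator. The trick, exactly as in Lemma~\ref{lem: testfunctions dense}, is to first pass to the higher-regularity space $(\W^{1,d+1}_E \cap \W^{1,p}_E)(O)$ — legitimate because consistency of $\Ext$ (requiring $m \geq 2$, so that $s < 1 < m$ and $1 < m$ are both covered) gives $\Ext f \in (\W^{1,d+1} \cap \W^{1,p})(\R^d)$ whenever $f$ is in the intersection space — then use the Sobolev embedding $\W^{1,d+1} \hookrightarrow \C$ to obtain a continuous representative of $\Ext f$ which must vanish $\cH^{d-1}$-a.e.\ on $E$ (because its restriction to $O$ is $f$, which has vanishing trace), and finally invoke Ahlfors regularity of $E$ to upgrade ``a.e.\ on $E$'' to ``everywhere on $\cl E$'', so that $\Res_E(\Ext f) = 0$ for the $\W^{1,p}(\R^d)$-trace and a fortiori for the $\X^{s,p}(\R^d)$-trace by consistency. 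A density argument in $\X^{s,p}(O)$ then removes the extra integrability assumption, and the closedness assertion is immediate from boundedness of the maps involved.
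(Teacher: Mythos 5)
Your plan for $s\in(1/p,1]$ is essentially the paper's: use density of $\C_E^\infty(O)$ from Lemma~\ref{lem: testfunctions dense}, exploit the $p$- and $s$-consistency of Rychkov's operator (hence $m\ge 2$) to land in $(\W^{1,d+1}\cap\X^{s,p})(\R^d)$, invoke the Sobolev embedding $\W^{1,d+1}(\R^d)\hookrightarrow\C(\R^d)$, and conclude from $d$-regularity of $O$ that the continuous representative of $\Ext(f|_O)$ vanishes at every point of $E$. One local remark: for $f\in\C_E^\infty(O)$ you do not need an ``a.e.\ on $E$, then upgrade by Ahlfors regularity'' step; since $\supp(f)$ stays at positive distance from $E$ and every small ball $B(x,r)$ with $x\in E$ meets $O$ in positive Lebesgue measure, the representative of $\Ext(f|_O)$ is directly $0$ at each $x\in E$. (Ahlfors regularity was needed in Lemma~\ref{lem: testfunctions dense}, where the function was only known to vanish $\cH^{d-1}$-a.e.\ on $E$; here it is unnecessary.)

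The genuine gap is the range $s\in(1,1+1/p)$, which the lemma covers but your argument does not. Your step~(2) approximates $f$ in $\X^{s,p}(O)$ by elements of $\C_E^\infty(O)$ ``using Lemma~\ref{lem: testfunctions dense}''. That lemma is only stated (and only proved, via the truncation property of $\W^{1,p}$) for $s\le 1$; for $s>1$ density of $\C_E^\infty(O)$ in $\X^{s,p}_E(O)$ is not available from the paper's toolkit, and your parenthetical ``$s<1<m$'' suggests you were tacitly assuming $s<1$ throughout. The paper sidesteps this by not attempting density for $s>1$ at all: given $f\in\X^{s,p}_E(O)$ with $s\in(1,1+1/p)$, it uses the trivial inclusion $\X^{s,p}_E(O)\subseteq\X^{1,p}_E(O)$ together with the already-proven case $s=1$ to get $\Ext f\in\X^{1,p}_E(\R^d)$, and then reads off $\Ext f\in\X^{s,p}_E(\R^d)$ from $\Ext f\in\X^{s,p}(\R^d)$ and the definition of $\X^{s,p}_E(\R^d)$ in terms of $\Res_E$ (which is consistent across $s$). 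You should add this reduction; without it, the lemma is only established for $s\le 1$. Also note that the closedness statement implicitly asserts equivalence of the two natural norms on $\X^{s,p}_E(O)$ (the one inherited from $\X^{s,p}_E(\R^d)$ and the one from $\X^{s,p}(O)$); the paper's chain $\|f\|_{\X^{s,p}(O)}\le\|f\|_{\X^{s,p}_E(O)}\le\|\Ext f\|_{\X^{s,p}(\R^d)}\lesssim\|f\|_{\X^{s,p}(O)}$ makes this explicit, and it is worth recording since the whole point of the lemma is that these a priori different norms coincide.
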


\begin{proof}
By definition of the quotient norm we obtain $\X^{s,p}_E(O) \subseteq \X^{s,p}(O)$ with continuous inclusion of Banach spaces from the fact that $\X^{s,p}_E(\R^d)$ is a closed subspace of $\X^{s,p}(\R^d)$.

We begin with the case $s \leq 1$. Since $\Ext: \X^{s,p}(O) \to \X^{s,p}(\R^d)$ is bounded, it suffices to check that $\Ext$ maps a dense subset of $\X^{s,p}_E(O)$ into $\X^{s,p}_E(\R^d)$. Owing to Lemma~\ref{lem: testfunctions dense} we can take this subset to be $\C_E^\infty(O) = \C_E^\infty(\R^d)|_O$. So, let $f \in \C_E^\infty(\R^d)$. Since $\Ext$ acts consistently, we obtain $\Ext (f|_O) \in (\W^{1,d+1} \cap \X^{s,p})(\R^d)$. Due to Sobolev embeddings $\Ext (f|_O)$ admits a continuous representative and we need to check that it vanishes everywhere on $E$. To this end, we let $B \subseteq \R^d$ be an arbitrary open ball centered in $E \subseteq \cl{O}$ with radius $\r(B) < \dist(\supp(f),E)$. Since $O$ is $d$-regular, $B \cap O$ has positive Lebesgue measure but on this set we have $\Ext (f|_O) = f = 0$ almost everywhere. The conclusion follows.

If $s \in (1, 1+1/p)$ and $f \in \X^{s,p}_E(O)$, then we can use Proposition~\ref{prop: Rychkov} to get $\Ext f \in \X^{s,p}(\R^d)$ and from the inclusion $\X^{s,p}_E(O) \subseteq \X^{1,p}_E(O)$ and the first part of the proof we get $\Ext f \in \X^{1,p}_E(\R^d)$. According to Definition~\ref{def: XspE} this implies $\Ext f \in \X^{s,p}_E(\R^d)$.

As for the final statement, given $f \in \X^{s,p}_E(O)$ we have already seen $\|f\|_{\X^{s,p}_E(O)} \geq \|f\|_{\X^{s,p}(O)}$ and we have just proved $\|f\|_{\X^{s,p}_E(O)} \leq \|\Ext f\|_{\X^{s,p}_E(\R^d)} \lesssim \|f\|_{\X^{s,p}(O)}$.
\end{proof}

\subsection{Symmetric interpolation results}
\label{Subsec: Symmetric interpolation}

We establish symmetric interpolation results for the spaces $\X^{s,p}(O)$ and $\X^{s,p}_E(O)$. \emph{Symmetric} means that either both or none of the spaces are with vanishing trace on $E$. In particular, we prove Theorem~\ref{thm: main result large s}.

The whole theory relies on the retraction-coretraction principle. Given two Banach spaces $X$ and $Y$, a bounded linear operator $\Res: X \to Y$ is called \emph{retraction} if it has a bounded left-inverse $\Ext: Y \to X$ such that $\Res \Ext = 1$ is the identity on $Y$. In this case $\Ext$ is called the associated \emph{coretraction}. It is instructive to think of $\Res$ as a restriction and $\Ext$ a compatible extension operator. The following is proved in \cite[Sec.~1.2.4]{Triebel}. 

\begin{proposition}
\label{prop: retraction-coretraction}
Let $(X_0,X_1)$ and $(Y_0, Y_1)$ be interpolation couples and $\Res: X_0+X_1 \to Y_0 + Y_1$, $\Ext: Y_0 + Y_1 \to X_0+X_1$ be linear operators such that $\Res: X_j \to Y_j$ is a retraction with associated coretraction $\Ext: Y_j \to X_j$ for $j=0,1$. Let $\langle \cdot \,,\cdot \rangle$ denote either a complex or a real interpolation bracket and put $X = \langle X_1, X_2 \rangle$ and $Y = \langle Y_1, Y_2 \rangle$. Then $\Ext \Res$ restricts to a bounded projection in $X$ and 
\begin{align*}
 \Ext : Y \to \Ext \Res(X)
\end{align*}
is an isomorphism of Banach spaces, where $\Ext \Res(X)$ carries the norm of $X$.
\end{proposition}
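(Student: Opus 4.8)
The plan is to deduce everything from the fundamental interpolation property together with the algebraic identities satisfied by $\Res$ and $\Ext$. First I would invoke that both the complex and the real interpolation brackets are exact interpolation functors, as recalled in Section~\ref{Subsec: Not Interpolation}: since $\Res$ maps $X_j \to Y_j$ boundedly for $j = 0,1$, the restriction $\Res \colon X \to Y$ is bounded, and likewise $\Ext \colon Y \to X$ is bounded. This is the one genuinely ``analytic'' input; everything that follows is formal.

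Next I would record that $\Res \Ext = 1$ holds not only on each $Y_j$ but on the whole sum $Y_0 + Y_1$: writing $y = y_0 + y_1$ with $y_j \in Y_j$ and using linearity gives $\Res \Ext y = \Res\Ext y_0 + \Res\Ext y_1 = y_0 + y_1 = y$. In particular $\Res \Ext$ restricted to $Y$ is the identity on $Y$. Consequently $P \coloneqq \Ext \Res$ satisfies $P^2 = \Ext (\Res \Ext) \Res = \Ext \Res = P$ on $X_0 + X_1$, hence on $X$; being bounded on $X$ by the previous step, $P$ is a bounded projection of $X$, so its range $\Ext \Res(X) = P(X)$ is a closed subspace of $X$, which gives the first assertion.

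Finally I would check that $\Ext \colon Y \to P(X)$ is an isomorphism by exhibiting an explicit inverse rather than appealing to the open mapping theorem. It does map into $P(X)$, because $P \Ext = \Ext \Res \Ext = \Ext$, and it is bounded by the first step. The operator $\Res|_{P(X)} \colon P(X) \to Y$ is bounded as the restriction of the bounded operator $\Res \colon X \to Y$, and the two identities $\Res \Ext = 1_Y$ and $\Ext \Res|_{P(X)} = P|_{P(X)} = 1_{P(X)}$ show that $\Res|_{P(X)}$ and $\Ext$ are mutually inverse. Hence $\Ext \colon Y \to \Ext\Res(X)$ is a bounded bijection with bounded inverse, i.e.\ an isomorphism of Banach spaces when $\Ext\Res(X)$ carries the norm of $X$.

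The argument is essentially functorial, so the \emph{main obstacle} — in the sense of being the only substantive ingredient — is precisely the first step: that the interpolation bracket transfers boundedness on the endpoint couples to boundedness on $X$ and $Y$. Once this is in place, the projection identities and the manufactured inverse $\Res|_{P(X)}$ finish the proof with no further estimates.
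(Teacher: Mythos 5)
Your argument is correct and is essentially the standard retraction--coretraction proof; the paper itself gives no proof but cites Triebel~\cite[Sec.~1.2.4]{Triebel}, where precisely this reasoning appears. The only small stylistic choice you make differently is to exhibit $\Res|_{\Ext\Res(X)}$ as the explicit bounded inverse of $\Ext$ rather than invoking the open mapping theorem, which is both valid and arguably cleaner since it yields the norm equivalence with quantitative constants.
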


\begin{remark}
\label{rem: retraction-coretraction}
Above we may apply $\Res$ to the equality of sets $\Ext(Y) = \Ext \Res(X)$ provided by the invertibility of $\Ext : Y \to \Ext \Res(X)$. This yields $Y = \Res(X)$ as sets along with comparability $\|y\|_Y \approx \|\Ext y\|_X$ for $y \in Y$. In particular, if $\Res(X)$ carries the quotient norm inherited from $X/\Ke(\Res)$, then the inclusion $Y \subseteq \Res(X)$ is continuous and the open mapping theorem yields $Y = \Res(X)$ as Banach spaces with equivalent norms.
\end{remark}

An important special case arises when $\Res = \Pro$ is a projection and $\Ext = 1$ is the identity, compare with \cite[Sec.~1.17.1]{Triebel}.

\begin{corollary}
\label{cor: interpolation complemented}
Let $(X_0,X_1)$ be an interpolation couple and $\Pro$ a bounded projection in $X_0 + X_1$ with range $Z$. Then $(Z \cap X_0, Z \cap X_1)$ is an interpolation couple and if $\langle \cdot \,,\cdot \rangle$ denotes either a complex or a real interpolation bracket, then up to equivalent norms
\begin{align*}
 \langle Z \cap X_0, Z \cap X_1 \rangle = Z \cap \langle X_0, X_1 \rangle.
\end{align*}
\end{corollary}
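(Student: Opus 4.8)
The final statement to prove is Corollary~\ref{cor: interpolation complemented}, which I will derive as a direct consequence of Proposition~\ref{prop: retraction-coretraction} and the accompanying Remark~\ref{rem: retraction-coretraction}.

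The plan is to apply the retraction--coretraction principle with the \emph{second} couple taken to be the ranges $Z \cap X_j$. Concretely, set $Y_j \coloneqq Z \cap X_j$ with the norm inherited from $X_j$, and take $\Res \coloneqq \Pro$ regarded as a map $X_0 + X_1 \to Z \cap (X_0+X_1)$ together with the coretraction $\Ext \coloneqq \iota$, the inclusion of $Z \cap (X_0+X_1)$ back into $X_0+X_1$. First I would check that $(Y_0, Y_1)$ really is an interpolation couple: since $\Pro$ is bounded on $X_0 + X_1$, its range $Z$ is a closed subspace, and $Z \cap X_j$ is then a closed subspace of $X_j$; both embed continuously into the common Hausdorff space $X_0 + X_1$, so the couple axiom holds. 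Next I would verify the retraction property: for $j = 0,1$ the operator $\Pro$ maps $X_j$ boundedly into $Y_j = Z \cap X_j$ (boundedness on $X_j$ follows from boundedness on $X_0+X_1$ by restriction, and the range lands in $Z$ by definition of $\Pro$), the inclusion $\iota$ maps $Y_j$ boundedly into $X_j$ trivially, and $\Pro \circ \iota = \mathrm{id}$ on $Y_j$ because $\Pro$ fixes elements of its range. This is exactly the hypothesis of Proposition~\ref{prop: retraction-coretraction}.

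Having set this up, Proposition~\ref{prop: retraction-coretraction} gives that $\iota \circ \Pro = \Pro$ restricts to a bounded projection on $X \coloneqq \langle X_0, X_1\rangle$ and that $\iota \colon Y \to \Pro(X)$ is an isomorphism, where $Y \coloneqq \langle Y_0, Y_1 \rangle = \langle Z \cap X_0, Z \cap X_1\rangle$. Since $\iota$ is just the inclusion, this says $Y = \Pro(X)$ with equivalent norms. It remains to identify $\Pro(X)$ with $Z \cap X$. The inclusion $\Pro(X) \subseteq Z \cap X$ is immediate. For the reverse, if $x \in Z \cap X$ then $\Pro x = x$ because $x$ lies in the range of $\Pro$ (here $\Pro$ denotes the original projection on $X_0 + X_1$, restricted to $X$), so $x \in \Pro(X)$; moreover the norms are comparable since $\Pro$ is bounded on $X$ and is the identity on $Z \cap X$. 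Chaining the two identifications yields $\langle Z \cap X_0, Z \cap X_1\rangle = Z \cap \langle X_0, X_1\rangle$ up to equivalent norms, as claimed.

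The only genuinely delicate point — and the one I would be most careful about — is the bookkeeping of norms on the two candidate spaces $Z \cap X$ and $\Pro(X)$ and making sure the two interpolation functors ($[\cdot,\cdot]_\theta$ and $(\cdot,\cdot)_{\theta,p}$) are both covered uniformly, since Proposition~\ref{prop: retraction-coretraction} is stated for an abstract bracket $\langle\cdot,\cdot\rangle$ ranging over both. There is also a small subtlety in that $\Pro(X)$ a priori carries the norm of $X$ (per the statement of Proposition~\ref{prop: retraction-coretraction}), whereas $Z \cap X$ is naturally normed as a subspace of $X$ as well; since $\Pro$ is bounded on $X$ and acts as the identity on $Z \cap X = \Pro(X)$, these two norms coincide up to equivalence, which closes the argument. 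Everything else is formal.
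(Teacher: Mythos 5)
Your approach --- applying Proposition~\ref{prop: retraction-coretraction} with $\Res \coloneqq \Pro$ and $\Ext \coloneqq \iota$ the inclusion --- is exactly what the paper has in mind: the text offers no separate proof, only the remark immediately preceding the corollary that this is the special case $\Res = \Pro$, $\Ext = 1$ of the retraction--coretraction principle, compare Triebel Sec.~1.17.1. The structure of your verification (couple axiom, retraction identity $\Pro \circ \iota = \mathrm{id}$ on $Y_j$, identification $\Pro(X) = Z \cap X$, norm bookkeeping) is the right one.

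There is, however, one genuinely flawed step, buried in a parenthetical: you assert that boundedness of $\Pro$ on $X_j$ ``follows from boundedness on $X_0+X_1$ by restriction.'' This is false. A bounded operator on $X_0 + X_1$ need not even map $X_j$ into $X_j$, and if it does there is in general no estimate $\|\Pro x\|_{X_j} \lesssim \|x\|_{X_j}$, because the $X_j$-norm and the $X_0+X_1$-norm are unrelated on $X_j$. A standard example: for $X_0 = \ell^1$, $X_1 = \ell^\infty$ one has $X_0 + X_1 = \ell^\infty$, and a bounded projection on $\ell^\infty$ need not leave $\ell^1$ invariant, let alone be $\ell^1$-bounded. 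The hypothesis that actually makes the argument run --- and what the corollary's loose phrasing is intended to convey, consistent with the citation of Triebel~Sec.~1.17.1 and with every application in the paper (Lemma~\ref{lem: XspE complemented} and Lemma~\ref{lem: Bullet interpolation} both construct $\Pro$ bounded on each $X_j$) --- is that $\Pro$ restricts to a bounded projection on $X_0$ and on $X_1$ separately. Boundedness on $X_0 + X_1$ is then a consequence, not a sufficient hypothesis on its own. With the hypothesis read in that intended sense, your remaining steps are correct; just delete the ``by restriction'' justification and instead invoke the per-component boundedness directly.
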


As a first application, we obtain a result similar to Proposition~\ref{prop: interpolation whole space} for spaces on $d$-regular sets. We repeat the well-known argument since it will be re-used several times.

\begin{proposition}
\label{prop: interpol X on subset}
Let $O \subseteq \R^d$ be open and $d$-regular. Let $p_0,p_1 \in (1,\infty)$, $s_0, s_1 \in (0,\infty)$, and $\theta \in (0,1)$. Let $\X$ denote either $\H$ or $\W$. Up to equivalent norms it follows that
\begin{align}
 \tag{i}\label{RychkovInterpol1} [\X^{s_0,p_0}(O), \X^{s_1,p_1}(O)]_{\theta} &= \X^{s,p}(O), \\[8pt]
 \tag{ii}\label{RychkovInterpol2} (\X^{s_0,p_0}(O), \X^{s_1,p_1}(O))_{\theta,p} &= \W^{s,p}(O),
\end{align}
with the two exceptions that in \eqref{RychkovInterpol1} for $\X = \W$ either all or none of $s_0,s_1,s$ have to be integers and in \eqref{RychkovInterpol2} integer $s$ is only permitted when $s_0 = s_1 (=s)$.
\end{proposition}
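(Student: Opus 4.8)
The argument is the standard transfer from $\R^d$ via the retraction--coretraction principle, which is why the authors announce it will be re-used. Fix an integer $m > \max\{s_0,s_1\}$; then $s_0$, $s_1$ and their convex combination $s$ all lie in $(0,m)$. Since $O$ is $d$-regular, Proposition~\ref{prop: Rychkov} together with Remark~\ref{rem: uniformity in Rychkov} provides a single extension operator $\Ext$, independent of the smoothness in $(0,m)$ and of the integrability in $(1,\infty)$, such that $\Ext : \X^{t,q}(O) \to \X^{t,q}(\R^d)$ is bounded with $(\Ext g)|_O = g$. As $\X^{t,q}(\R^d) \hookrightarrow \L^q(\R^d) \hookrightarrow \L^1_{\loc}(\R^d)$ for $t \ge 0$, and correspondingly $\X^{t,q}(O) \hookrightarrow \L^1_{\loc}(O)$, the pairs $(\X^{s_0,p_0}(\R^d),\X^{s_1,p_1}(\R^d))$ and $(\X^{s_0,p_0}(O),\X^{s_1,p_1}(O))$ are interpolation couples, and the pointwise restriction $|_O$ is a retraction from the first couple onto the second with $\Ext$ as a common coretraction for $j=0,1$.

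I would then apply Proposition~\ref{prop: retraction-coretraction} with either the complex bracket $[\cdot\,,\cdot]_\theta$ or the real bracket $(\cdot\,,\cdot)_{\theta,p}$. By Remark~\ref{rem: retraction-coretraction} the resulting interpolation space equals $|_O$ applied to the corresponding interpolation space on $\R^d$, with norm comparable to $\|\Ext\,(\cdot)\|$ measured in that whole-space interpolation space. Proposition~\ref{prop: interpolation whole space} identifies the latter as $\X^{s,p}(\R^d)$ in the complex case and as $\W^{s,p}(\R^d)$ in the real case, subject to precisely the stated integer exceptions for $\X=\W$; applying $|_O$ and invoking Definition~\ref{def: spaces on open sets} yields $\X^{s,p}(O)$, respectively $\W^{s,p}(O)$, as sets. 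Since these spaces carry the quotient norm inherited from their whole-space analogues and $|_O:\X^{s,p}(\R^d)\to\X^{s,p}(O)$ is bounded, the identity map from the interpolation space into $\X^{s,p}(O)$ (resp.\ $\W^{s,p}(O)$) is continuous, and the open mapping theorem upgrades this to equality up to equivalent norms, exactly as recorded in Remark~\ref{rem: retraction-coretraction}.

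There is no genuine obstacle here; the proof is a template that recurs in the bulk of the paper. The points requiring attention are merely that $\Ext$ must be chosen consistently across the whole range of indices involved --- which is what forces the choice of $m$ and relies on Remark~\ref{rem: uniformity in Rychkov} --- and that the exceptional integer configurations for $\X=\W$ are inherited verbatim from Proposition~\ref{prop: interpolation whole space} rather than being new phenomena. It is also worth noting that one and the same Rychkov operator serves as coretraction in both the complex and the real setting and for both choices of $\X$, so that the real interpolation identity correctly lands in $\W^{s,p}(O)$ regardless of whether $\X=\H$ or $\X=\W$.
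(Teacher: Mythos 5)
Your proof is correct and follows the paper's own argument essentially line for line: apply the retraction--coretraction principle (Proposition~\ref{prop: retraction-coretraction}) with $\Res = |_O$ and $\Ext$ Rychkov's operator constructed for an integer $m > \max\{s_0,s_1\}$, identify the whole-space interpolation space via Proposition~\ref{prop: interpolation whole space}, and conclude through Remark~\ref{rem: retraction-coretraction} since $\X^{s,p}(O)$ carries the quotient norm. The only addition you make --- checking explicitly that the pairs form interpolation couples via $\L^1_{\loc}$ --- is a harmless verification the paper leaves implicit.
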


\begin{proof}
We apply Proposition~\ref{prop: retraction-coretraction} with $X_j \coloneqq \X^{s_j,p_j}(\R^d)$, $Y_j \coloneqq \X^{s_j,p_j}(O)$, $\Res \coloneqq |_O$ the pointwise restriction, and $\Ext$ Rychkov's extension operator from Proposition~\ref{prop: Rychkov} constructed with an integer $m > \max\{s_0,s_1\}$.

Let us prove \eqref{RychkovInterpol1}. According to Proposition~\ref{prop: interpolation whole space} we have $X \coloneqq [X_0, X_1]_{\theta} = \X^{s,p}(\R^d)$. By definition, $\Res(X) = \X^{s,p}(O)$ carries the quotient norm inherited from $X / \Rg(\Res)$. Hence, Remark~\ref{rem: retraction-coretraction} yields $Y \coloneqq [Y_0,Y_1]_{\theta} = \X^{s,p}(O)$ with equivalent norms. The proof of \eqref{RychkovInterpol2} follows \emph{verbatim} from the identity $(X_0,X_1)_{\theta,p} = \W^{s,p}(\R^d)$ also provided by Proposition~\ref{prop: interpolation whole space}.
\end{proof}

\begin{remark}
\label{rem: interpol X on subset}
Suppose that in addition $O$ has $(d-1)$-regular boundary. In the proof above we could then replace Rychkov's extension operator $\Ext$ with the zero extension operator $\Ext_0$ discussed in Corollary~\ref{cor: zero extension bounded}. Consequently, Proposition~\ref{prop: interpol X on subset} remains valid for parameters $s_j \in [0, 1/p_j)$, which includes the case of Lebesgue spaces. 
\end{remark}

The same technique yields the

\begin{proof}[Proof of Theorem~\ref{thm: main result large s}]
First, we assume $O = \R^d$. Proposition~\ref{prop: interpolation whole space} provides the identities analogous to \eqref{C1} and \eqref{C2} for the spaces without Dirichlet conditions. Hence, the claim follows from Corollary~\ref{cor: interpolation complemented} applied to the projection $\Pro$ provided by Lemma~\ref{lem: XspE complemented}. 

Having established the interpolation identities on $\R^d$, we can now pass to the spaces on $O$ via Proposition~\ref{prop: retraction-coretraction} as in the proof of Proposition~\ref{prop: interpol X on subset}. Indeed, if we take $X_j \coloneqq \X^{s_j, p_j}_E(\R^d)$, $Y_j \coloneqq \X^{s_j,p_j}_E(O) = (X_j)|_O$, $\Res \coloneqq |_O$, and $\Ext$ as Rychkov's extension operator, then the only property that needs to be checked is that $\Ext$ maps $Y_j$ boundedly into $X_j$. But the latter is precisely the statement of Lemma~\ref{lem: Rychkov preserves boundary conditions}.
\end{proof}

\subsection{Gluing interpolation scales}
\label{Subsec: Gluing interpolation scales}

We recall a general interpolation technique due to Wolff~\cite{Wolff}. Here, we cite (with adapted notation) the refined version proved in \cite[Thm.~1\&2]{Wolff-refined}. The statement is visualized in Figure~\ref{fig: dich Wolff} for complex interpolation.

\begin{proposition}[Wolff]
\label{prop: Wolff}
Let $X_0,X_\theta,X_\eta, X_1$ be Banach spaces included in a common linear Hausdorff space. Suppose $\theta, \eta, \lambda, \mu \in (0,1)$ satisfy $\theta = \lambda \eta$ and $\eta = (1-\mu)\theta + \mu$, and let $p_\theta, p_\eta \in [1,\infty]$.
\begin{enumerate}
 \item If $X_\theta = [X_0,X_ \eta]_\lambda$ and $X_\eta = [X_\theta, X_1]_\mu$, then also $X_\theta = [X_0,X_1]_\theta$ and $X_\eta = [X_0, X_1]_\eta$.
 
 \item If $X_\theta = (X_0,X_ \eta)_{\lambda, p_\theta}$ and $X_\eta = (X_\theta, X_1)_{\mu,p_\eta}$, then also $X_\theta = (X_0,X_1)_{\theta, p_\theta}$ and $X_\eta = (X_0, X_1)_{\eta, p_\eta}$. 
\end{enumerate}
All equalities above are in the sense of equal sets with equivalent norms.
\end{proposition}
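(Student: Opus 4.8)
Since the statement is quoted from \cite{Wolff,Wolff-refined}, the plan is to reconstruct the idea of Wolff's argument. We describe the complex case; the real case runs in parallel, with the $J$\nobreakdash-representation of elements taking the place of analytic functions on the strip.

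\emph{Reduction to one identity.} It suffices to prove $X_\theta=[X_0,X_1]_\theta$ with equivalent norms. Granting this, the reiteration theorem for the complex method (see \cite{BL,Triebel}), applied with one endpoint kept equal to $X_1$, gives $X_\eta=[X_\theta,X_1]_\mu=[[X_0,X_1]_\theta,X_1]_\mu=[X_0,X_1]_{(1-\mu)\theta+\mu}=[X_0,X_1]_\eta$, the last equality by the hypothesis $\eta=(1-\mu)\theta+\mu$; symmetrically, from $X_\eta=[X_0,X_1]_\eta$ one recovers $X_\theta$ through $\theta=\lambda\eta$. (A density verification, of the kind recalled in Section~\ref{Subsec: Not Interpolation}, enters the use of reiteration.) We stress that reiteration alone is \emph{not} enough: it presupposes that $X_\theta$ and $X_\eta$ already lie on the scale $\{[X_0,X_1]_t\}_t$, which is exactly what has to be shown.

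\emph{The bridging iteration.} Fix $x\in X_\theta$; the goal is $\|x\|_{[X_0,X_1]_\theta}\lesssim\|x\|_{X_\theta}$, the reverse inequality following from a parallel iteration. Using $X_\theta=[X_0,X_\eta]_\lambda$, represent $x$ as the value at level $\lambda$ of a Calder\'on function on the strip with boundary values controlled in $X_0$ and in $X_\eta$. The only obstruction to reading off a bound in $[X_0,X_1]_\theta$ is that the boundary data on the ``$X_1$\nobreakdash-side'' lie in $X_\eta$ rather than in $X_1$. Since $X_\eta=[X_\theta,X_1]_\mu$, each such boundary value is itself the level\nobreakdash-$\mu$ value of a Calder\'on function with boundary in $X_\theta$ and $X_1$; splicing these in --- a vector\nobreakdash-valued Calder\'on construction applied to the $X_\eta$\nobreakdash-valued boundary datum of the first function --- produces a new Calder\'on function, on a suitably enlarged strip, whose boundary now meets $X_0$, $X_1$, and $X_\theta$. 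One has thus traded an ``$X_\eta$ on the boundary'' defect for an ``$X_\theta$ on the boundary'' defect which, because of the algebraic relations $\theta=\lambda\eta$ and $\eta=(1-\mu)\theta+\mu$, is of strictly smaller size. Alternating the two moves --- resolving an $X_\theta$\nobreakdash-defect by $X_\theta=[X_0,X_\eta]_\lambda$ and an $X_\eta$\nobreakdash-defect by $X_\eta=[X_\theta,X_1]_\mu$ --- produces corrections that decay geometrically, and summing the series delivers an honest element of the Calder\'on class for $(X_0,X_1)$ taking the value $x$ at level $\theta$, with norm $\lesssim\|x\|_{X_\theta}$.

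\emph{Expected main obstacle.} The delicate points are all in the bridging step: (i) carrying out the splice ``boundary point by boundary point'' so that the outcome is again analytic and bounded up to the boundary, which needs the continuous (or measurable) selection built into the vector\nobreakdash-valued Calder\'on construction, or Wolff's explicit device of composing with a change of strip; (ii) the norm bookkeeping that forces the geometric series to converge, which is precisely where the algebraic relations among $\theta,\eta,\lambda,\mu$ get used, and is what fixes their exact form; and (iii), for the real\nobreakdash-method statement and the refined form, propagating the secondary exponents $p_\theta$ and $p_\eta$ \emph{separately} through the $\ell^{p}$\nobreakdash-sums in the $J$\nobreakdash-representation --- Wolff's original argument handled only $p_\theta=p_\eta$, and decoupling the two exponents is the improvement carried out in \cite{Wolff-refined}.
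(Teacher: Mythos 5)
The paper does not prove this proposition; it is quoted directly from \cite{Wolff} and the refined version in \cite{Wolff-refined}, which serve as the proof. Your sketch is therefore a reconstruction of what those references do, and must be judged on its own merits.

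As a heuristic, the global shape is right: prove one of the two identities, recover the other by one-sided reiteration (noting, as you do, that the endpoint form of reiteration at $X_1=[X_0,X_1]_1$ needs density of $X_0\cap X_1$ in $X_1$, which is not free from the stated hypotheses and deserves its own argument), and make the direct step a bootstrap whose convergence is governed by $\theta=\lambda\eta$ and $\eta=(1-\mu)\theta+\mu$. But the proposed ``splicing'' mechanism for the direct step does not work as described. If $F$ is a Calder\'on function for $(X_0,X_\eta)$ with $F(\lambda)=x$, then $F(1+it)\in X_\eta=[X_\theta,X_1]_\mu$ is indeed of the form $G_t(\mu)$ for a Calder\'on function $G_t$ on the $(X_\theta,X_1)$-strip --- but $\mu$ lies in the \emph{interior} of that strip, not on a boundary line, so $F$ and $G_t$ do not meet along a common edge and cannot be glued into a single analytic function on an enlarged strip. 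Wolff's own argument (for the real method, where the theorem originates) runs instead through inequalities for the $K$-functional controlled by a Hardy-type estimate, and \cite{Wolff-refined} handles the complex case and the decoupling of $p_\theta$ from $p_\eta$ by a related but genuinely different device, not by pointwise surgery on analytic representatives. You correctly name the obstructions --- measurable selection, norm bookkeeping, propagation of the secondary exponents --- but the sketch does not supply the idea that overcomes them, so the ``bridging iteration'' as written is a gap rather than an outline.
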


\begin{figure}[ht]
\centering
\begin{tikzpicture}[scale=0.7]
\coordinate[label=left:$X_0$] (X0) at (0,0);
\coordinate[label=left:$X_\theta$] (Xtheta) at (3,0);
\coordinate[label=right:$X_\eta$] (Xeta) at (5,0);
\coordinate[label=right:$X_1$] (X1) at (9,0);
\coordinate[label=above:${[X_0, X_\eta]_\lambda}$] (I1) at (3,0.7);
\coordinate[label=below:${[X_\theta, X_1]_\mu}$] (I1) at (5,-0.7);

\draw (0,0) -- (0,0.7);
\draw (0,0.7) -- (5,0.7);
\draw (5,0.7) -- (5,0);
\draw[dashed] (3,0) -- (3,0.7);

\draw (3,0) -- (3,-0.7);
\draw (3,-0.7) -- (9,-0.7);
\draw (9,-0.7) -- (9,0);
\draw[dashed] (5,-0.7) -- (5,0);

\fill (X0) circle (3pt);
\fill (Xtheta) circle (3pt);
\fill (Xeta) circle (3pt);
\fill (X1) circle (3pt);
\end{tikzpicture}
\caption{Assuming the interpolation identities indicated by dashed lines, Wolff's result recovers $X_\theta$ and $X_\eta$ as interpolation spaces associated with the couple $(X_0, X_1)$ for the correct convex combination parameters $\theta$ and $\eta$, respectively.}
\label{fig: dich Wolff}
\end{figure}
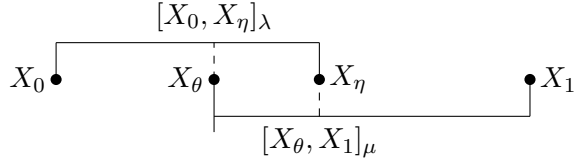

For further reference we demonstrate once in detail how the results of Proposition~\ref{prop: interpol X on subset} and Remark~\ref{rem: interpol X on subset} can be patched together using Wolff's result.

\begin{proposition}
\label{prop: interpol X on subset complete}
If in the setting of Proposition~\ref{prop: interpol X on subset} the boundary $\bd O$ is $(d-1)$-regular, then the conclusion remains valid for $s_0, s_1 \in [0,\infty)$.
\end{proposition}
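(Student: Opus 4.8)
Proposition~\ref{prop: interpol X on subset} already provides the interpolation identities \eqref{RychkovInterpol1} and \eqref{RychkovInterpol2} for strictly positive smoothness parameters $s_0,s_1>0$, while Remark~\ref{rem: interpol X on subset} covers the opposite regime $s_0,s_1\in[0,1/p_j)$ that includes Lebesgue spaces. The cases that remain open are precisely the ``mixed'' ones where one of the parameters, say $s_0$, lies in $[0,1/p_0)$ and is allowed to be $0$, while $s_1>0$ may be large; in those cases we cannot directly apply either statement because $s_0=0$ is not covered by Proposition~\ref{prop: interpol X on subset} and $s_1$ may exceed $1/p_1$, so Remark~\ref{rem: interpol X on subset} does not apply either. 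The idea is to glue the two known regimes together with Wolff's Proposition~\ref{prop: Wolff}, choosing an auxiliary smoothness level in the overlap where \emph{both} known results are available.

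\textbf{Key steps.} First I would reduce to the case $s_0<s_1$ by symmetry of the interpolation functors (swapping the two endpoints and replacing $\theta$ by $1-\theta$); the remaining degenerate case $s_0=s_1$ is trivial since then also $p_0=p_1$ forces nothing and $s=s_0$, but actually if $s_0=s_1$ with different $p_j$ the statement is already inside Proposition~\ref{prop: interpol X on subset} unless $s_0=s_1=0$, which is pure Lebesgue interpolation handled by Remark~\ref{rem: interpol X on subset}. So assume $0\le s_0<s_1$ and, WLOG, $s_0<1/p_0$ (otherwise Proposition~\ref{prop: interpol X on subset} already applies). Pick an intermediate exponent pair: choose $\eta\in(\theta,1)$ and set $s_\eta:=(1-\eta)s_0+\eta s_1$, $1/p_\eta:=(1-\eta)/p_0+\eta/p_1$. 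We want to arrange $s_\eta>0$ — automatic as soon as $s_1>0$ and $\eta>0$ — and, crucially, we want the interpolation at level $s=s_\theta$ to be expressible from the couple $(\X^{s_0,p_0},\X^{s_\eta,p_\eta})$ (where Remark~\ref{rem: interpol X on subset} applies to the pair if additionally $s_\eta<1/p_\eta$, or Proposition~\ref{prop: interpol X on subset} if $s_0>0$) and simultaneously at level $s_\eta$ from $(\X^{s_\theta,p_\theta},\X^{s_1,p_1})$ via Proposition~\ref{prop: interpol X on subset}. Concretely: apply the first part of Wolff with $X_0=\X^{s_0,p_0}(O)$, $X_\theta=\X^{s,p}(O)$, $X_\eta=\X^{s_\eta,p_\eta}(O)$, $X_1=\X^{s_1,p_1}(O)$, after verifying $X_\theta=[X_0,X_\eta]_\lambda$ and $X_\eta=[X_\theta,X_1]_\mu$ for the unique Wolff parameters $\lambda,\mu$ determined by $\theta=\lambda\eta$, $\eta=(1-\mu)\theta+\mu$. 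The first of these two identities is the delicate one: if $s_0=0$ and we only know $\X^{s_\eta,p_\eta}$ interpolates with $\X^{0,p_0}=\L^{p_0}$ when $s_\eta<1/p_\eta$, we must choose $\eta$ small enough that $s_\eta<1/p_\eta$; this is possible because at $\eta\to0^+$ we have $s_\eta\to s_0<1/p_0=1/p_\eta|_{\eta=0}$, so by continuity there is a threshold $\eta_*>0$ with $s_\eta<1/p_\eta$ for $\eta<\eta_*$. Then Remark~\ref{rem: interpol X on subset} gives $[X_0,X_\eta]_\lambda=\X^{(1-\lambda)s_0+\lambda s_\eta,\,p'}$ with the right interpolated parameters, and one checks $(1-\lambda)s_0+\lambda s_\eta=s$ and the corresponding integrability matches $p$ — this is just the arithmetic $\theta=\lambda\eta$. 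The second identity $X_\eta=[X_\theta,X_1]_\mu$ is Proposition~\ref{prop: interpol X on subset} applied to the pair $(\X^{s,p},\X^{s_1,p_1})$, both of strictly positive smoothness since $s>0$ when $s_1>0$ (and if $s=0$ then $s_0=s_1=0$, excluded). The real-interpolation identity \eqref{RychkovInterpol2} follows identically using part~(ii) of Proposition~\ref{prop: Wolff} with all secondary parameters $p_\theta=p$, $p_\eta=p_\eta$, combining \eqref{RychkovInterpol2} and its analogue from Remark~\ref{rem: interpol X on subset}.

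\textbf{The main obstacle.} The genuine subtlety is bookkeeping the integer exceptions for $\X=\W$ and the threshold $s=1/p$ vs.\ $s_j=1/p_j$: Wolff's theorem needs the two ``local'' interpolation identities to hold as stated, and those carry the exclusions ``either all or none integer'' and ``$s_j<1/p_j$''. One must therefore verify that the intermediate level $s_\eta$ can always be chosen to avoid the bad configuration — e.g.\ if $s_1$ is an integer and $s_0$ is not, one perturbs $\eta$ slightly so that $s_\eta$ is non-integer, keeping $s_\eta\in(0,1/p_\eta)$ or $s_\eta>0$ as needed; the set of forbidden $\eta$ is finite, so a good choice exists in any non-degenerate subinterval. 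I would also double-check that the case $s_0>0$, $s_1>0$ but with $s_0$ or $s_1$ possibly $1$ for $\X=\W$ is entirely inside Proposition~\ref{prop: interpol X on subset} already, so that the Wolff argument is only invoked when $s_0=0$, which confines the integer issue to $s_0=0$ (always allowed as a non-problematic endpoint since $0$ is handled by the zero-extension of Corollary~\ref{cor: zero extension bounded} and Remark~\ref{rem: interpol X on subset}). Apart from this case analysis the proof is a routine transfer of Proposition~\ref{prop: interpol X on subset complete}'s $\R^d$-free content through the retraction $|_O$ with Rychkov's coretraction, exactly as in the proof of Proposition~\ref{prop: interpol X on subset}.
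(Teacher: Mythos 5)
Your overall strategy—gluing the two known regimes of Proposition~\ref{prop: interpol X on subset} (positive smoothness) and Remark~\ref{rem: interpol X on subset} (smoothness below $1/p$) via Wolff's theorem—is the same as the paper's. However, your execution has two genuine gaps.

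First, your choice of auxiliary parameter $\eta$ is inconsistent precisely when the target has high smoothness. You require both $\eta\in(\theta,1)$ (forced by Wolff's parametrization $\theta=\lambda\eta$, $\lambda\in(0,1)$) and $\eta<\eta_*$, where $\eta_*$ is the threshold at which $s_\eta=1/p_\eta$, so that Remark~\ref{rem: interpol X on subset} applies to the couple $(X_0,X_\eta)$. When $s_1>1/p_1$ the threshold $\eta_*$ lies strictly in $(0,1)$, and for any target $\theta\geq\eta_*$—equivalently $s\geq 1/p$—the two requirements conflict and no admissible $\eta$ exists; your continuity argument for the existence of $\eta_*$ does not address the additional constraint $\eta_*>\theta$. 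The paper resolves this by a two-stage Wolff argument: it fixes the target at $\eta$ and chooses the auxiliary $\theta<\eta$; in the case $s_\eta<1/p_\eta$ both endpoints of $(X_0,X_\eta)$ are in the low regime and Remark~\ref{rem: interpol X on subset} applies directly, and in the case $s_\eta\geq 1/p_\eta$ it picks $\theta$ small enough that $s_\theta<1/p_\theta$ and invokes the \emph{just established} result on the sub-couple $(X_0,X_\eta)$ to supply the Wolff hypothesis $[X_0,X_\eta]_\lambda=X_\theta$.

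Second, your claim that the real interpolation identity \eqref{RychkovInterpol2} ``follows identically'' via Wolff is wrong for $\X=\H$. Since real interpolation of $\H$-spaces yields a $\W$-space, the intermediate entries $X_\theta$, $X_\eta$ in the Wolff quadruple must be $\W$-spaces, but then the required hypothesis $(X_\theta,X_1)_{\mu,p_\eta}=X_\eta$ concerns the mixed couple $(\W^{s_\theta,p_\theta}(O),\H^{s_1,p_1}(O))$, which is not covered by Proposition~\ref{prop: interpol X on subset}, Remark~\ref{rem: interpol X on subset}, or anything else in the paper. The Wolff argument only works verbatim for $\X=\W$, where the whole quadruple stays in the $\W$-scale. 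For $\X=\H$ the paper uses a different route: one-sided reiteration (Proposition~\ref{prop: 1-sided reiteration}) to rewrite $(\H^{s_0,p_0}(O),\H^{s_1,p_1}(O))_{\theta,p_\theta}$ as a real interpolation space of $([\H^{s_0,p_0}(O),\H^{s_1,p_1}(O)]_\eta,\H^{s_1,p_1}(O))$, inserting the complex interpolation identity from the first part of the proof, and concluding with the symmetric identity of Proposition~\ref{prop: interpol X on subset} for the resulting couple of positive-smoothness $\H$-spaces.
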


\begin{proof}
In view of Proposition~\ref{prop: interpol X on subset}, Remark~\ref{rem: interpol X on subset}, and symmetry of the assumption, we only have to treat the case $s_0 = 0$ and $s_1>0$. For any $\eta \in (0,1)$ we abbreviate the relevant convex combinations by $s_\eta \coloneqq \eta s_1$ and $1/p_\eta \coloneqq (1-\eta)/p_0 + \eta/p_1$. 

We begin with \eqref{RychkovInterpol1}. Since $s_0$ is an integer, we are only claiming something new in the case $\X = \H$. We have to prove for all $\eta \in (0,1)$ the equality
\begin{align}
\label{eq1: interpol s large complete}
 [\H^{s_0, p_0}(O), \H^{s_1,p_1}(O)]_\eta = \H^{s_\eta, p_\eta}(O).
\end{align}
Throughout, the reader should keep in mind Figure~\ref{fig: dich Wolff}. Let us first suppose $s_\eta < 1/p_\eta $ so that $\H^{s_\eta,p_\eta}(\R^d)$ belongs to the regime covered by Remark~\ref{rem: interpol X on subset}. We pick $\theta \in (0,\eta)$ and $\lambda, \mu \in (0,1)$ such that $\theta = \lambda \eta$ and $\eta = (1-\mu)\theta + \mu$. The quadruple of spaces $(X_i)_i \coloneqq (\H^{s_i, p_i}(O))_i$ satisfies the assumption in part (i) of Wolff's result owing to Remark~\ref{rem: interpol X on subset} and Proposition~\ref{prop: interpol X on subset}. Hence, we obtain \eqref{eq1: interpol s large complete}. Now, suppose $s_\eta \geq 1/p_\eta$. Due to $s_0 = 0$ we can pick $\theta \in (0,\eta)$ to arrange $s_\theta  < 1/p_\theta$. The first part of the proof applies to $s_\eta$ in place of $s_1$ and yields $[X_0,X_\eta]_\lambda = X_\theta$. Consequently, we can apply Wolff's result with the same numerology as before to obtain~\eqref{eq1: interpol s large complete}.

As for \eqref{RychkovInterpol2}, the claim for $\W$-spaces follows \emph{verbatim} by using part (ii) of Wolff's result with $p_\theta, p_\eta$ corresponding to $\theta,\eta$ as above and systematically replacing $\H$ by $\W$. 

Real interpolation of $\H$-spaces requires a different argument since the result will be a $\W$-space. We rely on the one-sided reiteration theorem in Proposition~\ref{prop: 1-sided reiteration} below. Indeed, given $\theta \in (0,1)$ we pick $\eta \in (0,\theta)$ and write $\theta = (1-\lambda)\eta + \lambda$ with $\lambda \in (0,1)$. Then we use in succession one-sided reiteration, complex interpolation of $\H$-spaces established above, and Proposition~\ref{prop: interpol X on subset}, to give
\begin{align*}
 \big(\H^{s_0,p_0}(O), \H^{s_1,p_1}(O) \big)_{\theta,p_\theta} 
 &= \big([\H^{s_0,p_0}(O), \H^{s_1,p_1}(O)]_\eta, \H^{s_1,p_1}(O) \big)_{\lambda, p_\theta} \\
 &= \big(\H^{s_\eta,p_\eta}(O), \H^{s_1,p_1}(O) \big)_{\lambda, p_\theta}\\
 &= \W^{s_\theta, p_\theta}(O).
\end{align*}
Concerning the last line we remark that $(1-\lambda)s_\eta + \lambda s_1 = s_\theta$ and $(1-\lambda)/p_\eta + \lambda/p_1 = 1/p_\theta$ hold by construction.
\end{proof}

The reiteration result that we have invoked above is as follows. We refer to \cite[Sec.~1.10.3, Thm.~2]{Triebel} for real interpolation and to \cite{Cwikel-Reiteration} for complex interpolation, noting that in the latter case the density of $X_0 \cap X_1$ in $X_1$ guarantees $[X_0,X_1]_1 = X_1$.

\begin{proposition}
\label{prop: 1-sided reiteration}
Let $(X_0, X_1)$ be an interpolation couple. Let $\eta, \lambda \in (0,1)$ and put $\theta = (1-\lambda)\eta + \lambda$. The interpolation identity
\begin{align*}
 \langle [X_0,X_1]_\eta, X_1 \rangle_\lambda = \langle X_0, X_1 \rangle_{\theta}
\end{align*}
holds up to equivalent norms in the following cases. If $\langle \cdot \,,\cdot \rangle$ is a $(\cdot\,,p)$-real interpolation bracket with $p \in [1,\infty]$ fixed or if $\langle \cdot \,,\cdot \rangle$ is the complex interpolation bracket and $X_0 \cap X_1$ is dense in $X_1$. 
\end{proposition}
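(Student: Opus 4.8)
The plan is to recognize both identities as instances of the abstract reiteration theorem, with $X_1$ playing the role of the endpoint interpolation space ``$\langle X_0,X_1\rangle_1$''. Concretely, one invokes reiteration with parameters $\theta_0=\eta$, $\theta_1=1$ and convex-combination parameter $\lambda$, so that $(1-\lambda)\theta_0+\lambda\theta_1=(1-\lambda)\eta+\lambda=\theta$ and the output $\langle X_0,X_1\rangle_\theta$ is precisely the claimed right-hand side. The whole difficulty is thus concentrated in legitimizing the use of $X_1$ as an endpoint, which behaves differently for the two brackets.

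For the real interpolation bracket this is clean. One first records that $X_1$ is of class $\mathcal{C}(1;(X_0,X_1))$: the $K$-functional bound $K(t,a;X_0,X_1)\le t\|a\|_{X_1}$ for $a\in X_1$ and the $J$-functional bound $t\|a\|_{X_1}\le J(t,a;X_0,X_1)$ for $a\in X_0\cap X_1$ are immediate from the definitions, while $(X_0,X_1)_{\eta,p}$ is of class $\mathcal{C}(\eta;(X_0,X_1))$ by the standard sandwiching $(X_0,X_1)_{\eta,1}\subseteq(X_0,X_1)_{\eta,p}\subseteq(X_0,X_1)_{\eta,\infty}$. Since $\eta\neq 1$, the reiteration theorem in the endpoint form of \cite[Sec.~1.10.3, Thm.~2]{Triebel} applies to the couple $\big((X_0,X_1)_{\eta,p},X_1\big)$ and yields $\big((X_0,X_1)_{\eta,p},X_1\big)_{\lambda,p}=(X_0,X_1)_{\theta,p}$ with equivalent norms. (Alternatively one could grind this out by hand via the endpoint version of Holmstedt's formula for $K(t,\cdot;(X_0,X_1)_{\eta,p},X_1)$, but invoking the abstract theorem is far cleaner.)

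For the complex interpolation bracket one first disposes of the endpoint: under the hypothesis that $X_0\cap X_1$ is dense in $X_1$ one has $[X_0,X_1]_1=X_1$ with equal norms, since on $X_0\cap X_1$ the norms agree and $X_0\cap X_1$ is dense in both spaces by \cite[Thm.~4.2.2]{BL}. Hence the left-hand side equals $\big[[X_0,X_1]_\eta,[X_0,X_1]_1\big]_\lambda$, a genuine reiteration among complex interpolation spaces of the couple $(X_0,X_1)$ with distinct parameters $\eta\neq 1$. Cwikel's reiteration theorem \cite{Cwikel-Reiteration}, valid for arbitrary couples without regularity or density assumptions, then identifies this with $[X_0,X_1]_{(1-\lambda)\eta+\lambda}=[X_0,X_1]_\theta$, again up to equivalent norms.

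I expect the only delicate points to be external to the argument: that $X_1$ genuinely qualifies as a class-$\mathcal{C}(1)$ space so the real reiteration theorem accepts it as an endpoint, and that the complex reiteration theorem is available with no auxiliary hypotheses on the couple --- the latter being a nontrivial result taken off the shelf. Granting these, the proof is pure bookkeeping of interpolation parameters, and the density hypothesis in the complex case enters \emph{solely} to guarantee $[X_0,X_1]_1=X_1$.
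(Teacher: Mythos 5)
Your approach matches the paper's: the paper's "proof" is exactly the sentence preceding the statement, citing \cite[Sec.~1.10.3, Thm.~2]{Triebel} for the real bracket and \cite{Cwikel-Reiteration} for the complex one, and noting that density of $X_0 \cap X_1$ in $X_1$ forces $[X_0,X_1]_1 = X_1$. Your proposal just fleshes out those citations.

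One slip in the real case, though. The proposition has the \emph{complex} interpolation space $[X_0,X_1]_\eta$ as the left endpoint of the bracket in both the real and the complex case, whereas you applied Triebel's reiteration theorem to the couple $\bigl((X_0,X_1)_{\eta,p}, X_1\bigr)$, i.e.\ with the \emph{real} space $(X_0,X_1)_{\eta,p}$ in its place. This is visible in how the proposition is used later in the paper (e.g.\ in the proofs of Propositions~\ref{prop: interpol X on subset complete} and \ref{prop: easy inclusion}), where the real bracket always encloses a complex inner space. The repair is immediate and costs nothing: the very sandwich you invoked, $(X_0,X_1)_{\eta,1} \subseteq [X_0,X_1]_\eta \subseteq (X_0,X_1)_{\eta,\infty}$, shows that $[X_0,X_1]_\eta$ is likewise of class $\mathcal{C}(\eta;(X_0,X_1))$, so the endpoint form of Triebel's reiteration theorem applies with $[X_0,X_1]_\eta$ in place of $(X_0,X_1)_{\eta,p}$ and yields $\bigl([X_0,X_1]_\eta, X_1\bigr)_{\lambda,p} = (X_0,X_1)_{\theta,p}$ as required. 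Your treatment of the complex case, with the endpoint reduction $[X_0,X_1]_1=X_1$ via density followed by Cwikel's reiteration theorem, is correct and exactly what the paper intends.
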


\subsection{Non-symmetric interpolation: The easy inclusion}
\label{Subsec: Easy inclusion}

The main difficulty in Theorem~\ref{thm: main result} lies in proving the inclusion ``$\supseteq$''. Indeed, here we can already prove

\begin{proposition}
\label{prop: easy inclusion}
Let $O \subseteq \R^d$ be an open, $d$-regular set with $(d-1)$-regular boundary, and let $D \subseteq \cl{O}$ be $(d-1)$-regular. Let $p_0,p_1 \in (1,\infty)$, $s_0 \in [0, 1/p_0)$, $s_1 \in (1/p_1,1]$, and $\theta \in (0,1)$. Define $p$ and $s$ as in \eqref{eq: interpolating paramaters}.
Then there are continuous inclusions
\begin{align}
\tag{i}\label{Easy1}[\X^{s_0,p_0}(O), \X_D^{s_1,p_1}(O)]_{\theta} &\subseteq \begin{cases} 
                                                                 \X_D^{s,p}(O) &(\text{if $s>1/p$})\\ \X^{s,p}(O)  &(\text{if $s<1/p$})
                                                                \end{cases}
\intertext{and}
\tag{ii}\label{Easy2}\noeqref{Easy2}
(\X^{s_0,p_0}(O), \X_D^{s_1,p_1}(O))_{\theta,p} &\subseteq \begin{cases} 
                                                                 \W_D^{s,p}(O) &(\text{if $s>1/p$})\\ \W^{s,p}(O)  &(\text{if $s<1/p$})
                                                                \end{cases}                                                      
\end{align}
with the exception that $s_0 \neq 0$ and $s_1 \neq 1$ are required in \eqref{Easy1} for $\X = \W$.
If $p_0 = p_1$, then the result remains true for all $s_1 \in (1/p_1,1+1/p_1)$ with the additional exception that only in \eqref{Easy1} for $\X = \H$ the value $s = 1$ is permitted.
\end{proposition}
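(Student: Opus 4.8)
The plan is to prove the two displayed inclusions by a duality/interpolation bootstrap that reduces everything to facts already available on $\R^d$, using the zero-extension and Rychkov operators as the bridge. Since the spaces $\X^{s,p}_D(O)$ and $\X^{s,p}(O)$ are quotient spaces of their $\R^d$-analogues, the natural strategy is: (1) establish the inclusion on $\R^d$ at the level of the ambient spaces, then (2) push it down to $O$ via the restriction map $|_O$, which is a bounded surjection. The subtlety is that the two endpoint spaces $\X^{s_0,p_0}(O)$ and $\X^{s_1,p_1}_D(O)$ are \emph{not} simultaneously images of a single retraction: the low-smoothness space ($s_0<1/p_0$) is handled by the zero extension $\Ext_0$ of Corollary~\ref{cor: zero extension bounded}, while the high-smoothness space ($s_1>1/p_1$) is handled by Rychkov's $\Ext$, which by Lemma~\ref{lem: Rychkov preserves boundary conditions} preserves the Dirichlet condition. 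These two operators do not agree, so I cannot directly invoke Proposition~\ref{prop: retraction-coretraction}.

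The way around this is to work with just \emph{one} coretraction, namely Rychkov's $\Ext$, and note that it is bounded $\X^{s_0,p_0}(O)\to\X^{s_0,p_0}(\R^d)$ as well (Proposition~\ref{prop: Rychkov}, with $m\geq 2$ so it is consistent across the scale), and by Lemma~\ref{lem: Rychkov preserves boundary conditions} it maps $\X^{s_1,p_1}_D(O)\to\X^{s_1,p_1}_D(\R^d)$. Hence $\Ext$ is a coretraction for the couple $(\X^{s_0,p_0}(O),\X^{s_1,p_1}_D(O))$ into the couple $(\X^{s_0,p_0}(\R^d),\X^{s_1,p_1}_D(\R^d))$, with common retraction $|_O$. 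By the interpolation property of $\Ext$ and $|_O$, the interpolation space $\langle \X^{s_0,p_0}(O),\X^{s_1,p_1}_D(O)\rangle$ embeds continuously (via $\Ext$) into $\langle\X^{s_0,p_0}(\R^d),\X^{s_1,p_1}_D(\R^d)\rangle$, and applying $|_O$ afterwards shows that the interpolation space on $O$ is continuously contained in the restriction to $O$ of the interpolation space on $\R^d$. So it remains to identify, or rather to \emph{upper-bound}, the interpolation space $\langle\X^{s_0,p_0}(\R^d),\X^{s_1,p_1}_D(\R^d)\rangle$ on the whole space. Here one uses that $\X^{s_1,p_1}_D(\R^d)\subseteq\X^{s_1,p_1}(\R^d)$ (closed subspace, Lemma~\ref{lem: XspE complemented}), so monotonicity of interpolation gives $\langle\X^{s_0,p_0}(\R^d),\X^{s_1,p_1}_D(\R^d)\rangle\subseteq\langle\X^{s_0,p_0}(\R^d),\X^{s_1,p_1}(\R^d)\rangle$, and the right-hand side equals $\X^{s,p}(\R^d)$ (resp.\ $\W^{s,p}(\R^d)$ for the real method) by Proposition~\ref{prop: interpolation whole space}. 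This handles the case $s<1/p$ immediately after restricting to $O$: one gets $\langle\cdots\rangle(O)\subseteq\X^{s,p}(O)$ (resp.\ $\W^{s,p}(O)$).

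For the case $s>1/p$ one additionally needs the Dirichlet condition to survive, i.e.\ one must show $\langle\X^{s_0,p_0}(\R^d),\X^{s_1,p_1}_D(\R^d)\rangle\subseteq\X^{s,p}_D(\R^d)$ before restricting. The clean argument is via the Jonsson--Wallin restriction operator $\Res_D$: for $s>1/p$ it is bounded $\X^{s,p}(\R^d)\to\W^{s-1/p,p}(D)$ (Proposition~\ref{prop: JW}), and by interpolation one checks that on the interpolation space built from $\X^{s_0,p_0}$ and $\X^{s_1,p_1}_D$ the composition with $\Res_D$ is the zero operator — because $\Res_D$ annihilates $\X^{s_1,p_1}_D(\R^d)$ by definition, and on the low-smoothness endpoint the trace is not even defined, which one circumvents by instead realizing the interpolation space as sitting inside $\X^{s,p}(\R^d)$ first (the argument just given) and then observing that it is the closure of, or is contained in, the kernel of $\Res_D$; concretely, $\C^\infty_D(\R^d)$ is dense in $\X^{s_1,p_1}_D(\R^d)$ and also lies in every $\X^{s_0,p_0}(\R^d)$, hence in $X_0\cap X_1$, so it is dense in the interpolation space, and each such test function has $\Res_D f=0$; continuity of $\Res_D$ on $\X^{s,p}(\R^d)$ then forces the whole interpolation space into $\X^{s,p}_D(\R^d)$. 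Restricting to $O$ gives the claimed inclusion into $\X^{s,p}_D(O)$ (resp.\ $\W^{s,p}_D(O)$).

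The exceptional cases and the extension to $p_0=p_1$ with $s_1\in(1/p_1,1+1/p_1)$ require small adjustments. The requirement $s_0\neq 0$, $s_1\neq 1$ for $\X=\W$ in \eqref{Easy1} is exactly the restriction on endpoints in Proposition~\ref{prop: interpolation whole space}(i) (avoiding the need for one or both of $s_0,s_1,s$ to be integers), so it propagates mechanically. When $p_0=p_1=:q$, the whole-space complex interpolation of $\X^{s_0,q}$ and $\X^{s_1,q}$ with $s_1\in(1/q,1+1/q)$ still lands in $\X^{s,q}(\R^d)$ with $s\in(s_0,s_1)$, and now $s<1+1/q$ automatically, so the trace theory of Proposition~\ref{prop: JW} still applies on all of the relevant range; the extra exception ``$s=1$ only for $\X=\H$'' reflects that $\W^{1,q}=\H^{1,q}$ is allowed as an \emph{interior} point of the scale only in the Bessel-potential formulation when one endpoint is genuinely fractional (again an instance of the integer-exception in Proposition~\ref{prop: interpolation whole space}(i)). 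The density step ($\C^\infty_D$ dense in $\X^{s_1,q}_D(\R^d)$) is furnished by Lemma~\ref{lem: testfunctions dense} only for $s_1\leq 1$, so for $s_1\in(1,1+1/q)$ I would instead use that $\X^{s_1,q}_D(\R^d)\subseteq\X^{1,q}_D(\R^d)$ together with the reiteration/monotonicity of interpolation, or directly invoke that $\Res_D$ is continuous up to $s<1+1/p$ and kills the subspace $\X^{s_1,q}_D$ by definition, so no density is needed at all — one simply restricts $\Res_D$ to the interpolation couple and uses the interpolation property of bounded operators to conclude $\Res_D=0$ on the interpolation space.

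I expect the main obstacle to be the bookkeeping around \emph{which} coretraction to use and verifying that a single operator ($\Ext$ = Rychkov) is simultaneously a coretraction for the \emph{mismatched} couple; this is where Lemma~\ref{lem: Rychkov preserves boundary conditions} (and its consistency across the scale, Proposition~\ref{prop: Rychkov} with $m\geq 2$) does the decisive work. The rest — monotonicity of interpolation, the whole-space identifications from Proposition~\ref{prop: interpolation whole space}, and the continuity of $\Res_D$ to transport the vanishing-trace condition through interpolation — is routine once the retraction picture is set up correctly.
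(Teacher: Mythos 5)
Your proposal has genuine gaps, both of which the paper resolves by a single reiteration move that your outline does not make. First, you propose using Rychkov's $\Ext$ as the sole coretraction, asserting it is bounded $\X^{s_0,p_0}(O)\to\X^{s_0,p_0}(\R^d)$. But Proposition~\ref{prop: Rychkov} requires $s>0$, whereas the statement allows $s_0=0$ (i.e., $\L^{p_0}(O)$), and the paper explicitly remarks that Rychkov's operator is not defined on $\L^p(O)$. So the single-coretraction picture is unavailable over the full parameter range. Second, for $s>1/p$ your transport of the Dirichlet condition relies either on interpolating $\Res_D$ across the couple $(\X^{s_0,p_0}(\R^d),\X^{s_1,p_1}_D(\R^d))$, or on density of $\C^\infty_D(\R^d)$ in the interpolation space. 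Neither is justified: the Jonsson--Wallin trace is not even defined on $\X^{s_0,p_0}(\R^d)$ when $s_0<1/p_0$ (Proposition~\ref{prop: JW} requires $s>1/p$), so there is no bounded operator on the couple to interpolate; and density of $\C^\infty_D(\R^d)$ in $X_1$ together with $\C^\infty_D(\R^d)\subseteq X_0\cap X_1$ does \emph{not} give density in $X_0\cap X_1$ when $p_0\neq p_1$ (an $\X^{s_1,p_1}$-approximating sequence need not converge in $\X^{s_0,p_0}$), so density of $\C^\infty_D$ in $\langle X_0,X_1\rangle_\theta$ is not established.

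The paper's proof removes both obstacles simultaneously via the one-sided reiteration of Proposition~\ref{prop: 1-sided reiteration}: pick $\eta\in(0,\theta)$ close enough to $\theta$ that $s_\eta>1/p_\eta$, write $\langle X_0,X_1\rangle_\theta=\langle[X_0,X_1]_\eta,X_1\rangle_\lambda$, and then bound $[X_0,X_1]_\eta\subseteq\X^{s_\eta,p_\eta}(O)$ by the already-settled low-smoothness case. After this step the effective low endpoint has smoothness $s_\eta>1/p_\eta>0$: Rychkov's $\Ext$ is now defined there (this is what your retraction step needed), and $\Res_D$ is bounded on both whole-space endpoints (this is what your trace step needed), so interpolating $\Res_D$ lands in $\langle\W^{s_\eta-1/p_\eta,p_\eta}(D),\{0\}\rangle_\lambda=\{0\}$ without any appeal to density of test functions. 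Without this reiteration the extension and trace arguments fail on the original couple; with it, the rest of your outline matches the paper.
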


\begin{proof}
First, we check that Proposition~\ref{prop: 1-sided reiteration} applies in its real and its complex version to the couple $(\X^{s_0,p_0}(O),\X^{s_1,p_1}(O))$. If $s_1 \leq 1$ then $\X^{s_0,p_0}(O) \cap \X_D^{s_1,p_1}(O) \supseteq \C_D^\infty(O)$ is dense in $\X_D^{s_1,p_1}(O)$ by Lemma~\ref{lem: testfunctions dense} and if $p_0 = p_1$ then $\X^{s_0,p_0}(O) \cap \X_D^{s_1,p_1}(O) = \X_D^{s_1,p_1}(O)$ for all $s_1 \in (1/p_1,1+1/p_1)$. This being said, we denote by $\langle \cdot \,,\cdot \rangle$ either the $(\cdot\,,p)$-real or the complex interpolation bracket and treat all assertions simultaneously. 
 
By definition we have $\X^{s_1,p_1}_D(O) \subseteq \X^{s_1,p_1}(O)$ and hence we get
\begin{align}
\label{eq1: easy inclusion}
 \langle \X^{s_0,p_0}(O),\X^{s_1,p_1}_D(O) \rangle_\theta \subseteq \langle \X^{s_0,p_0}(O),\X^{s_1,p_1}(O) \rangle_\theta
\end{align}
with continuous inclusion. The interpolation space on the right has been determined in Proposition~\ref{prop: interpol X on subset complete}. It coincides (up to equivalent norms) with $\W^{s,p}(O)$ in case of real interpolation and with $\X^{s,p}(O)$ in case of complex interpolation. In the case $s<1/p$ this already is the desired conclusion. 
 
Let now $s>1/p$. We fix $\eta \in (0,\theta)$ sufficiently close to $\theta$, so to arrange $1/p_\eta \coloneqq (1-\eta)/p_0 + \eta/p_1$ and $s_\eta \coloneqq (1-\eta)s_0 + \eta s_1$ satisfying $s_\eta > 1/p_\eta $. We write $\theta = (1-\lambda)\eta + \lambda$ with $\lambda \in (0,1)$. From Proposition~\ref{prop: 1-sided reiteration} and the reasoning in the first case we obtain 
\begin{align*}
 \langle \X^{s_0,p_0}(O),\X^{s_1,p_1}_D(O) \rangle_\theta 
&= \langle [\X^{s_0,p_0}(O),\X^{s_1,p_1}_D(O)]_\eta, \X^{s_1,p_1}_D(O) \rangle_\lambda \\
&\subseteq \langle \X^{s_\eta,p_\eta}(O), \X^{s_1,p_1}_D(O) \rangle_\lambda
\end{align*}
with continuous inclusion. Let $\Ext$ be Rychkov's extension operator for $O$. From Lemma~\ref{lem: Rychkov preserves boundary conditions} and the above we can infer by interpolation that
\begin{align}
\label{eq2: easy inclusion}
 \Ext: \langle \X^{s_0,p_0}(O),\X^{s_1,p_1}_D(O) \rangle_\theta \to \langle \X^{s_\eta,p_\eta}(\R^d),\X^{s_1,p_1}_D(\R^d) \rangle_\lambda \eqqcolon Y
\end{align}
is bounded. As before, we see that $Y$ is continuously included into $\W^{s,p}(\R^d)$ in case of real interpolation and into $\X^{s,p}(\R^d)$ in case of complex interpolation. 

Consider the Jonsson--Wallin restriction operator to $D$, see Proposition~\ref{prop: JW}. It maps $\X^{s_\eta, p_\eta}(\R^d)$ boundedly into $\W^{s_\eta-1/p_\eta,p_\eta}(D)$ since we have $s_\eta  > 1/p_\eta$ and it maps $\X^{s_1,p_1}_D(\R^d)$ into $\{0\}$ by definition. By interpolation it maps $Y$ into $\langle \W^{s_\eta-1/p_\eta,p_\eta}(D), \{0\} \rangle_\lambda$. This interpolation space equals $\{0\}$ since it contains $\{0\}$ as a dense subspace. Hence, we have continuous inclusion of $Y$ into $\W^{s,p}_D(\R^d)$ in case of real interpolation and into $\X^{s,p}_D(\R^d)$ in case of complex interpolation. By \eqref{eq2: easy inclusion} every function in $\langle \X^{s_0,p_0}(O),\X^{s_1,p_1}_D(O) \rangle_\theta$ has an extension in $Y$ in virtue of a bounded extension operator. The required continuous inclusion follows.
\end{proof}
%%%%%%%%%%%%%%%%%%%%%%%%%%%%%%%%%%%%%%%%%%%%%%%%%%%%%%%%%%%%%%%%%%%%%%%%%%%%%%%%%%%%%%%%%%%%%%%%%%%%%%%%%%%%%%%%%%%%%%%%%%%%%%%%%%%%%%%%%%%%%%%%%%%%%%%%%%%%%%%%%%%%
\section{Proof of Theorem~\ref{thm: main result}}
\label{Sec: Complex}

It will be convenient to reformulate the assumptions of Theorem~\ref{thm: main result} as follows, relying on Examples~\ref{ex: d-set} and \ref{ex: l-set}.

\begin{assumption}
\label{ass: complex}
The set $O\subseteq \R^d$ is open, $d$-regular, has $d$-regular complement, and $(d-1)$-regular boundary. The Dirichlet part $D \subseteq \bd O$ is $(d-1)$-regular and $\bd D$ is porous in $\bd O$. Moreover, $O$ satisfies a uniform Lipschitz condition around $\overline{\bd O \setminus D}$.
\end{assumption}

Throughout the whole section let $\X$ denote either $\H$ or $\W$. We are given $p_0, p_1 \in (1,\infty)$, $s_0 \in [0,1/p_0)$, $s_1 \in (1/p_1,1]$, and $\theta \in (0,1)$. When concerned with complex interpolation for $\X = \W$, we implictly restrict ourselves to $s_0 \neq 0$ and $s_1 \neq 1$. Our goal is to establish set inclusions
\begin{align} \label{Ip complex}
    [\X^{s_0,p_0}(O), \X^{s_1,p_1}_D(O)]_\theta \supseteq \begin{cases}
                                       \X^{s,p}_D(O) &(\text{if $s > 1/p$}) \\
                                       \X^{s,p}(O) &(\text{if $s < 1/p$})
                                      \end{cases}
\end{align}
and
\begin{align} \label{Ip real}
    (\X^{s_0,p_0}(O), \X^{s_1,p_1}_D(O))_{\theta,p} \supseteq \begin{cases}
                                       \W^{s,p}_D(O) &(\text{if $s > 1/p$}) \\
                                       \W^{s,p}(O) &(\text{if $s < 1/p$})
                                      \end{cases}.
\end{align}
This will complete the proof of Theorem~\ref{thm: main result} since under Assumption~\ref{ass: complex} the converse inclusions are continuous due to Proposition~\ref{prop: easy inclusion} and hence become equalities with equivalent norms thanks to the bounded inverse theorem.

\subsection{Road map to the proof}
\label{Subsec: Roadmap complex}

We give the outline for complex interpolation. The real case will be treated in the same way up to replacing the complex interpolation bracket with the $(\cdot\,,p)$-real interpolation bracket and keeping in mind that real interpolation spaces of $\X$-spaces are always $\W$-spaces.

First, we show in Section \ref{Subsec: Pure Dirichlet complex} that \eqref{Ip complex} and \eqref{Ip real} hold in the case $D=\bd O$ of pure Dirichlet boundary condition. Then the inclusion with general $D$ and $s\in (0,1/p)$ follows readily:
\begin{align} \label{Eqn: small s from Dirichlet}
    \X^{s,p}(O) \subseteq 
    \bigl[\X^{s_0,p_0}(O),\X^{s_1,p_1}_{\bd O}(O)\bigr]_\theta 
    \subseteq \bigl[\X^{s_0,p_0}(O),\X^{s_1,p_1}_D(O)\bigr]_\theta.
\end{align}
In the case $s \in (1/p,1)$ we localize in order to reduce the problem to pure Dirichlet interpolation and interpolation with mixed boundary conditions, but for a simpler geometry. Precisely, we will have $O = \R^d_+$ the upper half-space and $E_i$ a transformed version of a portion of $D$ with a security area for good measure that is still $(d-1)$-regular and has porous boundary in $\bd \R^d_+ \cong \R^{d-1}$. Then we have to show that
\begin{align} \label{Eqn: half-space inclusion}
    \X^{s,p}_{E_i}(\R^d_+) \subseteq \bigl[\X^{s_0,p_0}(\R^d_+),\X^{s_1,p_1}_{E_i}(\R^d_+)\bigr]_\theta.
\end{align}
This reduction will be done in Section~\ref{Subsec: Localization}. 

The heart of the matter lies in showing \eqref{Eqn: half-space inclusion} in Section~\ref{Subsec: conclusion}. To do so, we decompose $f\in \X^{s,p}_{E_i}(\R^d_+)$ as $f=(f-\Ext\Res f)+\Ext\Res f$, where $\Res$ is the restriction to $\bd \R^d_+$ and $\Ext$ is a corresponding extension operator. The term $f-\Ext\Res f$ will be in $[\X^{s_0,p_0}(\R^d_+),\X^{s_1,p_1}_{E_i}(\R^d_+)]_\theta$ because it satisfies pure Dirichlet boundary conditions on $\bd \R^d_+$. The argument for $\Ext\Res f$ happens completely at the boundary and is displayed in Figure~\ref{fig: road map complex}.

\begin{figure}[ht]
\centering
\[ \begin{tikzcd}
        \X^{s,p}_{E_i}(\R^d_+) \ar{dd}{\Res} & \left[\X^{s_0,p_0}(\R^d_+),\X^{s_1,p_1}_{E_i}(\R^d_+)\right]_\theta \\
                        & \left[\X^{1/q-\varepsilon,q}(\R^d_+),\X^{s_1,p_1}_{E_i}(\R^d_+)\right]_\eta \ar[equal]{u}{\text{reiteration}} \\
        \W^{s-1/p,p}_\bullet({}^c E_i) \ar[equal]{r}{(\heartsuit)} & \left[\W^{-\varepsilon,q}_\bullet({}^c E_i),\W^{s_1-1/p_1,p_1}_\bullet({}^c E_i)\right]_\eta \ar{u}{\Ext}
\end{tikzcd}\]
\caption{Schematic presentation of the main argument to prove the inclusion ``$\supseteq$'' in part \eqref{C3} of Theorem~\ref{thm: main result}.}
\label{fig: road map complex}
\end{figure}

Here, $\W^{s,p}_\bullet({}^c E_i)$ is a subspace of $\W^{s,p}(\R^{d-1})$ with zero condition on the full dimensional set $E_i \subseteq \R^{d-1}$ and $q$, $\varepsilon$ and $\eta$ are parameters yet to be determined. We need to establish
\begin{itemize}
    \item the construction of an extension operator $\Ext$ from $\bd \R^d_+$ to $\R^d_+$ which is consistent in $s\in \R \setminus \IZ$ and $p\in (1,\infty)$ and
    \item the precise definition of the spaces $\W^{s,p}_\bullet({}^cE_i)$ for a suitable range of $s$ including verification of the interpolation identity $(\heartsuit)$.
\end{itemize}

The passage through spaces of negative order in $(\heartsuit)$ is inevitable and can be implemented in virtue of Proposition~\ref{prop: Sickel} only because $\bd D$ is porous in $\bd O$.

\subsection{Spaces of functions vanishing on a full-dimensional subset}
\label{Subsec: Bullet spaces}

For this part we work with a $d$-regular set $U \subseteq \R^d$ whose boundary is a Lebesgue null set and whose interior $\mathring{U}$ is of class $\cD^t$ for some $t\in (0,1)$, compare with Definition~\ref{def: Sickel class}. 

We remark that most results stated in Section~\ref{Subsec: Not spaces on O} for open sets still apply in this context. Pointwise multiplication by the characteristic functions of $U$ and $\mathring{U}$ coincide on $\L^p(\R^d)$. Moreover, $\mathring{U}$ is $d$-regular and the corresponding Rychkov's extension operators can also be regarded as extension operators for functions defined on $U$.

Let $\Res$ denote the pointwise restriction operator $|_U$ and let $\Ext$ denote \emph{some} extension operator $\X^{s,p}(U)\to \X^{s,p}(\R^d)$. We will specify consistency requirements later on. For $p\in (1,\infty)$ and $s\in (0,\infty)$ we define
\begin{align}
    \X^{s,p}_\bullet({}^cU) \coloneqq \{ f\in \X^{s,p}(\R^d): \Res f = 0 \}
\end{align}
with subspace topology. This subspace is complemented in virtue of the projection $1- \Ext \Res$. 

The pointwise multiplier $\mathds{1}_U$ is bounded on $\X^{s,p}(\R^d)$ for $t(1/p-1) < s < t/p$ due to Proposition~\ref{prop: Sickel}. This allows us to extend the definition of $\X^{s,p}_\bullet({}^cU)$ to such $s$ by
\begin{align}
    \X^{s,p}_\bullet({}^cU) \coloneqq \{ f-\mathds{1}_U f: f\in \X^{s,p}(\R^d) \},
\end{align}
where the topology is again the subspace topology. Note that for $s\in (0,t/p)$ this gives the same space as before and that now $1-\mathds{1}_U$ becomes the complementing projection.

The following lemma captures the interpolation behavior of these spaces.

\begin{lemma} \label{lem: Bullet interpolation}
    Let $p_0,p_1 \in (1,\infty)$, $s_0\in (t(1/p_0-1),\infty)$, $s_1\in (t(1/p_1-1),\infty)$, and $\theta \in (0,1)$. Up to equivalent norms it follows that
    \begin{align}
       \tag{i}\label{Bullet1}[\X^{s_0,p_0}_\bullet({}^cU), \X^{s_1,p_1}_\bullet({}^cU)]_{\theta} &= \X^{s,p}_\bullet({}^cU), \\[8pt]
       \tag{ii}\label{Bullet2}(\X^{s_0,p_0}_\bullet({}^cU), \X^{s_1,p_1}_\bullet({}^cU))_{\theta,p} &= \W^{s,p}_\bullet({}^cU),
    \end{align}
    with the two exceptions that in \eqref{Bullet1} for $\X = \W$ either all or none of $s_0,s_1,s$ have to be integers and that in \eqref{Bullet2} integer $s$ is only permitted when $s_0 = s_1 (=s)$.
\end{lemma}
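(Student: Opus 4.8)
The plan is to reduce Lemma~\ref{lem: Bullet interpolation} to the interpolation theory of the spaces $\X^{s,p}(\R^d)$ recorded in Proposition~\ref{prop: interpolation whole space}, using the retraction--coretraction machinery of Proposition~\ref{prop: retraction-coretraction} together with Corollary~\ref{cor: interpolation complemented}. The key point is that on the range of admissible smoothness parameters the spaces $\X^{s,p}_\bullet({}^cU)$ are \emph{complemented} in $\X^{s,p}(\R^d)$, but the complementing projection is not the same for all parameters: for $s\in(0,\infty)$ it is $1-\Ext\Res$, while for $s\in(t(1/p-1),t/p)$ it is $1-\mathds{1}_U$. So the first step is to pin down a \emph{single} family of consistent retractions/coretractions that works simultaneously, then apply the abstract principle, and finally stitch the two regimes together with Wolff's theorem where needed.

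Concretely, I would argue as follows. First treat the case where all of $s_0,s_1$ (and hence $s$) lie in the ``low'' regime $(t(1/p_i-1),t/p_i)$: there the complementing projection is pointwise multiplication by $1-\mathds{1}_U$, which by Proposition~\ref{prop: Sickel} is bounded on every $\X^{s_i,p_i}(\R^d)$ and is manifestly the same operator for all parameters. Apply Corollary~\ref{cor: interpolation complemented} with $Z = \Rg(1-\mathds{1}_U)$ and $X_i = \X^{s_i,p_i}(\R^d)$; combined with Proposition~\ref{prop: interpolation whole space} this gives $\langle \X^{s_0,p_0}_\bullet({}^cU),\X^{s_1,p_1}_\bullet({}^cU)\rangle = \Rg(1-\mathds{1}_U)\cap\langle X_0,X_1\rangle$, which is exactly $\X^{s,p}_\bullet({}^cU)$ resp.\ $\W^{s,p}_\bullet({}^cU)$, with the stated integer exceptions inherited verbatim from Proposition~\ref{prop: interpolation whole space}. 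Next, handle the ``high'' regime $s_0,s_1\in(0,\infty)$: here the complementing projection is $1-\Ext\Res$ with $\Res=|_U$ and $\Ext$ a Rychkov extension operator; by Remark~\ref{rem: uniformity in Rychkov} we may choose $\Ext$ consistently for all $s\in(0,m)$ and all $p\in(1,\infty)$, so $1-\Ext\Res$ is again one fixed operator, bounded on each $X_i$, and Corollary~\ref{cor: interpolation complemented} applies in the same way. The only subtlety is that on the overlap $s\in(0,t/p)$ the two definitions of $\X^{s,p}_\bullet({}^cU)$ agree (stated in the text), so the two projections have the same range there and the identifications are compatible.

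It remains to cover mixed situations, e.g.\ $s_0$ in the low regime and $s_1$ in the high regime, or the interpolated $s$ landing in a different regime than $s_0,s_1$; this is where Wolff's theorem (Proposition~\ref{prop: Wolff}) and the one-sided reiteration theorem (Proposition~\ref{prop: 1-sided reiteration}) enter, exactly as in the proof of Proposition~\ref{prop: interpol X on subset complete}. Given $\theta\in(0,1)$ with $s_0$ low and $s_1$ high, I would pick an intermediate exponent $\eta$ and smoothness $s_\eta$ inside the low regime, use the low-regime case to get $[\X^{s_0,p_0}_\bullet,\X^{s_\eta,p_\eta}_\bullet]_\lambda = \X^{s_?,p_?}_\bullet$ for an auxiliary split, and use one-sided reiteration to relate $\langle\X^{s_0,p_0}_\bullet,\X^{s_1,p_1}_\bullet\rangle_\theta$ to a two-step interpolation through $\X^{s_\eta,p_\eta}_\bullet$, at which point Wolff glues the low-regime identity and the high-regime identity into the full claim. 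The density of $X_0\cap X_1$ in $X_1$ needed for the complex one-sided reiteration holds because $\C_0^\infty(\R^d)\cap \X^{s,p}_\bullet({}^cU)$ is dense (it is the image under the bounded projection of the dense set $\C_0^\infty(\R^d)$), and the $\W$-space integer exceptions and the real-interpolation ``$s$ integer only if $s_0=s_1$'' clause are tracked through each step exactly as in Proposition~\ref{prop: interpolation whole space} and Proposition~\ref{prop: interpol X on subset complete}.

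The main obstacle I anticipate is bookkeeping rather than a genuine new idea: making sure the \emph{same} concrete projection is used across all parameters in each regime (so that Corollary~\ref{cor: interpolation complemented} is literally applicable and not just morally so), verifying that the two regime-definitions of $\X^{s,p}_\bullet({}^cU)$ really coincide as \emph{topological} subspaces on the overlap (equal sets \emph{and} equivalent norms, which follows since both are closed complemented subspaces of $\X^{s,p}(\R^d)$ with the same underlying set), and correctly propagating the integer/$\W$ exceptions through the Wolff gluing. I would expect the write-up to mirror the proof of Proposition~\ref{prop: interpol X on subset complete} almost line for line, with Proposition~\ref{prop: Sickel} playing the role that Corollary~\ref{cor: zero extension bounded} played there.
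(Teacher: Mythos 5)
Your proposal is correct and mirrors the paper's own proof: the paper likewise uses $1-\Ext\Res$ (with a consistent Rychkov operator) when $s_0>0$, $1-\mathds{1}_U$ when both smoothness parameters fall in the multiplier regime, and Wolff's theorem to patch the two scales when $s_0\le 0$ and $s_1\ge t/p_1$, via an intermediate point in the overlap region. The only cosmetic difference is the way you partition the cases (by regime membership rather than by the sign of $s_0$), but the resulting case analysis is equivalent.
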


\begin{proof}
    By symmetry we may assume $s_0\leq s_1$. In virtue of Corollary~\ref{cor: interpolation complemented} we shall transfer the interpolation identities of Proposition~\ref{prop: interpolation whole space} for the $\X^{s,p}(\R^d)$-spaces to the $\X^{s,p}_\bullet({}^c U)$-spaces. We only have to identify suitable projections $\Pro$. 
    
    If $s_0 > 0$, then we pick a Rychkov's extension operator $\Ext$ that is consistent up to a positive integer greater $s_1$ and use $\Pro \coloneqq 1-\Ext\Res$. 
    
    Now assume $s_0\leq 0$. If $s_1 < t/p_1$, then we can directly use $\Pro \coloneqq 1-\mathds{1}_U$. Otherwise, there are $p\in (1,\infty)$ and $s \in (0,t/p)$ such that $(s,1/p)^\top$ lies on the segment connecting $(s_0,1/p_0)^\top$ and $(s_1,1/p_1)^\top$ in the $(s,1/p)$-plane. If necessary, we can arrange that $s$ is not an integer. We have just obtained interpolation for the spaces on the segment connecting $(s_0,1/p_0)^\top$ and $(s,1/p)^\top$ and in order to conclude, we patch this interpolation scale together with the one for positive differentiability by the technique illustrated in the proof of Proposition~\ref{prop: interpol X on subset complete}.
\end{proof}

\subsection{The case of pure Dirichlet conditions}
\label{Subsec: Pure Dirichlet complex}

For this part we strengthen our previous requirements on $U \subseteq \R^d$ to the effect that it should be a closed $d$-regular set with $(d-1)$-regular boundary. 

It follows that $\bd U$ is a Lebesgue null set and we claim that $\mathring{U}$ is of class $\cD^t$ for all $t\in (0,1)$. Indeed, by Example~\ref{ex: examples Dt class} the open set ${}^c U$ has this property and since we have $\bd \mathring{U} \subseteq \bd U = \bd \,({}^c U)$ with set difference of zero Lebesgue measure, we see by the very definition that if ${}^c U$ is of class $\cD^t$, then so is $\mathring{U}$.

We start out with a reformulation of Corollary~\ref{cor: zero extension bounded}.

\begin{lemma} \label{lem: Restriction HspBullet small s}
    If $p \in (1,\infty)$ and $s\in [0,1/p)$, then $\X^{s,p}({{}^cU})=\X^{s,p}_\bullet({{}^cU})|_{{}^cU}$ with equivalent norms.
\end{lemma}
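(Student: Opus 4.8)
The plan is to identify the restriction map $|_{{}^cU}$ as a retraction onto $\X^{s,p}(\,{}^cU)$ with the zero extension as a compatible coretraction, and then to recognize $\X^{s,p}_\bullet(\,{}^cU)$ as exactly the range of the associated projection. Concretely, since ${}^cU$ is an open set whose boundary $\bd\,({}^cU) = \bd U$ is $(d-1)$-regular, Corollary~\ref{cor: zero extension bounded} applies and provides a bounded zero extension operator $\Ext_0 : \X^{s,p}(\,{}^cU) \to \X^{s,p}(\R^d)$ for every $p \in (1,\infty)$ and $s \in [0,1/p)$. By construction $\Ext_0$ is a right inverse for the pointwise restriction $\Res = |_{{}^cU} : \X^{s,p}(\R^d) \to \X^{s,p}(\,{}^cU)$, which is bounded by Definition~\ref{def: spaces on open sets}.

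First I would check the inclusion ``$\supseteq$''. If $f \in \X^{s,p}_\bullet(\,{}^cU)$, then by the definition of that space (in the regime $s \in (t(1/p-1), t/p)$, which contains $[0,1/p)$ since $t < 1$ forces $t/p < 1/p$ — wait, one must instead simply use the original definition $\X^{s,p}_\bullet(\,{}^cU) = \{f \in \X^{s,p}(\R^d) : \Res f = 0\}$ valid for $s \in (0,\infty)$, together with the extended definition for negative $s$; for $s \in [0,1/p)$ both descriptions agree) we have $f \in \X^{s,p}(\R^d)$, hence $f|_{{}^cU} \in \X^{s,p}(\,{}^cU)$ with $\|f|_{{}^cU}\|_{\X^{s,p}(\,{}^cU)} \le \|f\|_{\X^{s,p}}$. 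This shows $\X^{s,p}_\bullet(\,{}^cU)|_{{}^cU} \subseteq \X^{s,p}(\,{}^cU)$ with norm control. For ``$\subseteq$'', given $f \in \X^{s,p}(\,{}^cU)$, apply $\Ext_0$: then $\Ext_0 f \in \X^{s,p}(\R^d)$ vanishes almost everywhere on $U$, so in particular $\Res(\Ext_0 f) = 0$ (note $\mathring U$ and $U$ differ by a Lebesgue null set, so vanishing a.e.\ on $U$ is the same as vanishing a.e.\ on $\mathring U = $ the interior, and $\Res$ is a pointwise a.e.\ restriction). Hence $\Ext_0 f \in \X^{s,p}_\bullet(\,{}^cU)$, and since $\Ext_0 f|_{{}^cU} = f$, we get $f \in \X^{s,p}_\bullet(\,{}^cU)|_{{}^cU}$ with $\|\Ext_0 f\|_{\X^{s,p}_\bullet(\,{}^cU)} = \|\Ext_0 f\|_{\X^{s,p}} \lesssim \|f\|_{\X^{s,p}(\,{}^cU)}$.

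Combining the two bounded inclusions yields equality of the two spaces as sets together with equivalence of norms; the open mapping theorem is not even needed since the inequalities go in both directions explicitly. I do not anticipate a genuine obstacle here — the only point requiring a word of care is the bookkeeping of which definition of $\X^{s,p}_\bullet(\,{}^cU)$ is in force for $s = 0$ versus $s \in (0,1/p)$ and the remark (already made in Section~\ref{Subsec: Bullet spaces}) that multiplication by $\mathds{1}_U$ and by $\mathds{1}_{\mathring U}$ agree on $\L^p(\R^d)$, so the two definitions of the bullet space coincide on the overlap. This is why the lemma is phrased as ``a reformulation of Corollary~\ref{cor: zero extension bounded}'': the entire content is that the zero extension lands in the bullet space and restores $f$ upon restriction.
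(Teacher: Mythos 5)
Your proof is correct and takes the same route as the paper: the key step is to use the zero extension $\Ext_0$ (equivalently, multiplication by $\mathds{1}_{{}^cU}$ applied to an arbitrary whole-space extension) from Corollary~\ref{cor: zero extension bounded} to push $f\in\X^{s,p}({}^cU)$ into $\X^{s,p}_\bullet({}^cU)$, then observe that restriction recovers $f$. Your parenthetical bookkeeping about which definition of the bullet space is in force is sound, though the paper states the argument a bit more economically by taking any extension $F$ and multiplying by $\mathds{1}_{{}^cU}$ before minimizing over $F$; the two formulations are operationally identical.
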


\begin{proof}
    The inclusion $\X^{s,p}_\bullet({{}^cU})|_{{}^cU} \subseteq \X^{s,p}({{}^cU})$ is clear. For the converse let $f\in \X^{s,p}({{}^cU})$ and $F$ an extension of $f$ in $\X^{s,p}(\R^d)$. We get $\mathds{1}_{{}^cU} \, F \in \X^{s,p}(\R^d)$ owing to Corollary \ref{cor: zero extension bounded}. Hence, we have $\mathds{1}_{{}^cU} \,F \in \X^{s,p}_\bullet({{}^cU})$ and $f=(\mathds{1}_{{}^cU}\, F)|_{{}^cU} \in \X^{s,p}_\bullet({{}^cU})|_{{}^cU}$ follows. For the boundedness, we calculate
    \begin{align*}
     \|f\|_{\X^{s,p}_\bullet({{}^cU})|_{{}^cU}} \leq \|\mathds{1}_{{}^cU}\, F\|_{\X^{s,p}(\R^d)} \lesssim \|F\|_{\X^{s,p}(\R^d)}
    \end{align*}
    and take the infimum over all such extensions $F$.
\end{proof}

In order to proceed, we need a generic re-norming lemma and its consequence for the pointwise multiplication by $\mathds{1}_{{}^cU}$. To fix ideas for the following, we decided to include a proof even though the result is known in the literature.

\begin{lemma} \label{lem: Xsp norm with gradient}
    If $p \in (1,\infty)$ and $s \in \R$, then
    \begin{align}
    \label{eq1: Xsp norm with gradient}
    \| f \|_{\X^{s,p}} \approx \|f\|_{\X^{s-1,p}} + \| \nabla f \|_{\X^{s-1,p}} \qquad (f \in \S'(\R^d)).
    \end{align}
\end{lemma}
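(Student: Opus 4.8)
The plan is to prove the equivalence \eqref{eq1: Xsp norm with gradient} by exhibiting it as a multiplier statement on the Fourier side, using that all operators involved are Fourier multipliers with symbols that are smooth away from the origin and have the right homogeneity at infinity, hence bounded on every $\X^{s,p}(\R^d)$ by the Mikhlin--H\"ormander multiplier theorem (which applies to both the Bessel-potential scale $\H^{s,p}$ and, componentwise, the Triebel--Lizorkin scale $\W^{s,p}=\F^s_{p,p}$). The key point is that $\X^{s,p}$ is, by definition for $\H$ and by standard lifting for $\W$, the image of $\L^p$ (resp.\ of $\X^{0,p}$) under the Bessel potential $J^{-s}=\cF^{-1}(1+|\cdot|^2)^{-s/2}\cF$, so that $\|f\|_{\X^{s,p}}\approx\|J^{s}f\|_{\X^{0,p}}$ and, more flexibly, $\|f\|_{\X^{s,p}}\approx\|J^{\sigma}f\|_{\X^{s-\sigma,p}}$ for any $\sigma\in\R$.

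First I would record the easy direction: the operators $J=\cF^{-1}(1+|\cdot|^2)^{1/2}\cF$ and $\partial_j J^{-1}$ have symbols $(1+|\xi|^2)^{1/2}$ and $\mathrm{i}\xi_j(1+|\xi|^2)^{-1/2}$, both of which (together with all their derivatives appropriately decaying) are Mikhlin multipliers. Applying this to $f$ in the $\X^{s-1,p}$-norm and using the lifting property gives $\|f\|_{\X^{s-1,p}}+\|\nabla f\|_{\X^{s-1,p}}\lesssim\|J f\|_{\X^{s-1,p}}\approx\|f\|_{\X^{s,p}}$, i.e.\ the inequality ``$\gtrsim$'' in \eqref{eq1: Xsp norm with gradient}. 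For the reverse inequality I would exploit the elementary symbol identity
\begin{align*}
 (1+|\xi|^2)^{1/2}=\frac{1}{(1+|\xi|^2)^{1/2}}+\sum_{j=1}^d\frac{-\mathrm{i}\xi_j}{(1+|\xi|^2)^{1/2}}\cdot\mathrm{i}\xi_j,
\end{align*}
which on the operator level reads $J=J^{-1}+\sum_j R_j\,\partial_j$ with $R_j=\cF^{-1}\big(-\mathrm{i}\xi_j(1+|\xi|^2)^{-1/2}\big)\cF$ again a Mikhlin multiplier. Hence
\begin{align*}
 \|f\|_{\X^{s,p}}\approx\|J f\|_{\X^{s-1,p}}\leq\|J^{-1}f\|_{\X^{s-1,p}}+\sum_{j=1}^d\|R_j\,\partial_j f\|_{\X^{s-1,p}}\lesssim\|f\|_{\X^{s-1,p}}+\|\nabla f\|_{\X^{s-1,p}},
\end{align*}
using $\|J^{-1}f\|_{\X^{s-1,p}}\approx\|f\|_{\X^{s-2,p}}\lesssim\|f\|_{\X^{s-1,p}}$ and the boundedness of each $R_j$ on $\X^{s-1,p}$.

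The only genuine subtlety, and the step I expect to cost the most care, is justifying that Mikhlin-type multipliers act boundedly on the spaces $\X^{s,p}(\R^d)$ uniformly in the relevant range and, for $\X=\W=\F^s_{p,p}$, that this includes vector-valued/componentwise applications — but this is entirely classical (see \cite[Sec.~2.3.7, 2.6.1]{Triebel}), so in the write-up I would simply cite it rather than reprove it; alternatively one may reduce everything to $p=2$-type manipulations only where legitimate and otherwise defer to Triebel. Everything else is bookkeeping with the lifting property $J^{\sigma}:\X^{s,p}\to\X^{s-\sigma,p}$ isomorphically, which is part of the same body of results already invoked in Section~\ref{Subsec: Not Spaces}. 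A remark worth making in passing is that \eqref{eq1: Xsp norm with gradient} for $\X=\W$ and $s\in(0,1)$ also follows from the intrinsic difference-quotient norm, but the Fourier-multiplier route is uniform in $s\in\R$ and covers both scales at once, which is what the later arguments need.
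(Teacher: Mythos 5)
Your proof is correct and follows essentially the same route as the paper: lift by the Bessel potential operator $J=\cI_{-1}$, reduce to a Fourier multiplier comparison, then invoke Mikhlin for both directions. The one genuine difference is in the harder direction (``$\lesssim$''): the paper multiplies and divides by $1+\sum_j\chi(\xi_j)|\xi_j|$ with a cutoff $\chi$ and the auxiliary multiplier $\chi(\xi_j)|\xi_j|/\xi_j$, whereas your identity $(1+|\xi|^2)^{1/2}=(1+|\xi|^2)^{-1/2}+\sum_j\bigl(-\i\xi_j(1+|\xi|^2)^{-1/2}\bigr)\cdot\i\xi_j$ achieves the same decomposition purely algebraically, with multipliers whose Mikhlin bounds are immediate and without the cutoff — a slightly cleaner route to the same estimate. (Your extra step $\|J^{-1}f\|_{\X^{s-1,p}}\approx\|f\|_{\X^{s-2,p}}\lesssim\|f\|_{\X^{s-1,p}}$ is not even needed: $J^{-1}$ is itself a Mikhlin multiplier, so $\|J^{-1}f\|_{\X^{s-1,p}}\lesssim\|f\|_{\X^{s-1,p}}$ directly.)
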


\begin{proof}
    In the following all function spaces are on $\R^d$ and we omit the dependence. The operator $\cI_{-1} f \coloneqq \cF^{-1}(1+|\xi|^2)^{1/2} \cF f$ is invertible from $\S'$ into itself.  By definition it restricts to an isomorphism $\cI_{-1}: \H^{s,p} \to \H^{s-1,p}$. By interpolation the same holds for $\cI_{-1}: \W^{s,p} \to \W^{s-1,p}$, see Proposition~\ref{prop: interpolation whole space}. Hence, we find for all $f \in \S'$, 
    \begin{align*}
     \|f\|_{\X^{s,p}} \approx \|\cF^{-1}(1+|\xi|^2)^{1/2} \cF f\|_{\X^{s-1,p}}.
    \end{align*}
    Comparing with \eqref{eq1: Xsp norm with gradient}, we see that it remains to prove
    \begin{align}
    \label{eq2: Xsp norm with gradient}
     \|f\|_{\X^{s-1,p}} + \sum_{j=1}^d \| \cF^{-1} \xi_j \cF f \|_{\X^{s-1,p}} \approx \|\cF^{-1}(1+|\xi|^2)^{1/2} \cF f\|_{\X^{s-1,p}}.
    \end{align}
    To this end we consider Fourier multipliers $f \mapsto \cF^{-1}m\cF f$, defined on $\S'$ via a smooth and bounded function $m: \R^d \to \IC$, to pass from one side to the other. If such multiplier is bounded on $\L^p$, then it is bounded on $\H^{k,p}$ for all integers $k$ since it commutes with $\cI_{-1}$ and its inverse.  Hence, it is bounded on $\X^{s,p}$ for all $s \in \R$ by interpolation. 
    This being said, we obtain ``$\lesssim$'' in \eqref{eq2: Xsp norm with gradient} by considering the Fourier multipliers associated with $(1+|\xi|^2)^{-1/2}$ and $\xi_j (1+|\xi|^2)^{-1/2}$. Their $\L^p$-boundedness follows easily from the Mihlin multiplier theorem~\cite[Thm.~6.1.6]{BL}. Next, we pick a smooth function $\chi: \R \to [0,1]$ that vanishes on $(-1,1)$ and is identically $1$ outside of $[-2,2]$ in order to write
    \begin{align*}
     (1+|\xi|^2)^{1/2} = \bigg(\frac{(1+|\xi|^2)^{1/2}}{1+\sum_{j=1}^d \chi(\xi_j) |\xi_j|} \bigg) + \sum_{j=1}^d \bigg(\frac{(1+|\xi|^2)^{1/2}}{1+\sum_{j=1}^d \chi(\xi_j) |\xi_j|} \bigg) \bigg(\frac{\chi(\xi_j)|\xi_j|}{\xi_j} \bigg) \xi_j.
    \end{align*}
    Again by Mihlin's theorem each bracket corresponds to an $\L^p$-bounded Fourier multiplier. This yields the converse estimate ``$\gtrsim$''.
\end{proof}

\begin{lemma} \label{lem: multiplier large s}
    For $p \in (1,\infty)$ and $s\in (1/p,1+1/p)$ pointwise multiplication by $\mathds{1}_{{}^cU}$ is $\X^{s,p}_{\bd U}(\R^d) \to \X^{s,p}_\bullet({{}^cU})$-bounded.
\end{lemma}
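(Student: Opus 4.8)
\emph{Plan of proof.} Once we show that $\mathds{1}_{{}^cU}f \in \X^{s,p}(\R^d)$ with $\|\mathds{1}_{{}^cU}f\|_{\X^{s,p}} \lesssim \|f\|_{\X^{s,p}}$ for $f \in \X^{s,p}_{\bd U}(\R^d)$, nothing more is needed: $\mathds{1}_{{}^cU}f$ vanishes almost everywhere on $U$, so $\Res(\mathds{1}_{{}^cU}f) = 0$ and hence $\mathds{1}_{{}^cU}f \in \X^{s,p}_\bullet({}^cU)$ automatically. The $\X^{s,p}$-bound rests on two ingredients. First, recall from the beginning of Section~\ref{Subsec: Pure Dirichlet complex} that $\mathring{U}$ is of class $\cD^t$ for every $t \in (0,1)$; since $s \in (1/p,1+1/p)$ we have $s-1 \in (1/p-1,1/p)$, hence $t(1/p-1) < s-1 < t/p$ once $t<1$ is taken close enough to $1$, and Proposition~\ref{prop: Sickel} yields boundedness of $\mathds{1}_{\mathring U}$, and thus of $\mathds{1}_{{}^cU} = 1 - \mathds{1}_{\mathring U}$, on $\X^{s-1,p}(\R^d)$. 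Second, the re-norming Lemma~\ref{lem: Xsp norm with gradient} reduces the estimate to controlling $\mathds{1}_{{}^cU}f$ and $\nabla(\mathds{1}_{{}^cU}f)$ in $\X^{s-1,p}(\R^d)$.

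The heart of the matter is the Leibniz-type identity
\[
 \nabla(\mathds{1}_{{}^cU}f) = \mathds{1}_{{}^cU}\nabla f \qquad \text{in }\mathcal{D}'(\R^d)
\]
for every $f \in \X^{s,p}_{\bd U}(\R^d)$. This is the only place where the vanishing trace on $\bd U$ enters, and it enters essentially, since $\mathds{1}_{{}^cU}$ is in general \emph{not} a bounded multiplier on $\X^{s,p}(\R^d)$ for $s>1/p$; the trace condition is exactly what annihilates the singular jump of $\mathds{1}_{{}^cU}f$ across $\bd U$. First I would verify the identity for $f \in \C_{\bd U}^\infty(\R^d)$: such $f$ vanishes on a neighborhood of $\bd U$, so $\mathds{1}_{{}^cU}f$ is a genuine $\C_0^\infty(\R^d)$-function (equal to $f$ near ${}^cU$, to $0$ on $\mathring{U}$ and near $\bd U$), and the classical product rule applies with no boundary term. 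Then I would pass to general $f$ by density: for $s \in (1/p,1]$ directly, $\C_{\bd U}^\infty(\R^d)$ being dense in $\X^{s,p}_{\bd U}(\R^d)$ by Lemma~\ref{lem: testfunctions dense}; for $s \in (1,1+1/p)$, one has the embedding $\X^{s,p}(\R^d) \subseteq \X^{1,p}(\R^d)$ and, by consistency of the Jonsson--Wallin trace (Proposition~\ref{prop: JW}), $\X^{s,p}_{\bd U}(\R^d) \subseteq \X^{1,p}_{\bd U}(\R^d)$, so one may approximate in the $\X^{1,p}$-norm. If $f_n \to f$ in the relevant norm, then $\mathds{1}_{{}^cU}f_n \to \mathds{1}_{{}^cU}f$ and $\mathds{1}_{{}^cU}\nabla f_n \to \mathds{1}_{{}^cU}\nabla f$ in $\X^{s-1,p}(\R^d)$ when $s\le 1$ and in $\L^p(\R^d)$ when $s>1$ (using Sickel's theorem, respectively boundedness of $\mathds{1}_{{}^cU}$ on $\L^p$), whence the identity persists in $\mathcal{D}'(\R^d)$.

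To conclude, apply Lemma~\ref{lem: Xsp norm with gradient} to $g \coloneqq \mathds{1}_{{}^cU}f \in \L^p(\R^d) \subseteq \S'(\R^d)$, insert the Leibniz identity and then the multiplier bound from Proposition~\ref{prop: Sickel}:
\[
 \|\mathds{1}_{{}^cU}f\|_{\X^{s,p}} \approx \|\mathds{1}_{{}^cU}f\|_{\X^{s-1,p}} + \|\mathds{1}_{{}^cU}\nabla f\|_{\X^{s-1,p}} \lesssim \|f\|_{\X^{s-1,p}} + \|\nabla f\|_{\X^{s-1,p}} \approx \|f\|_{\X^{s,p}},
\]
the last equivalence being Lemma~\ref{lem: Xsp norm with gradient} applied to $f$. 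The main obstacle is the density step when $s>1$: Lemma~\ref{lem: testfunctions dense} supplies density of $\C_{\bd U}^\infty$ only up to smoothness $1$, so one must first descend to the $\X^{1,p}$-level, and it is precisely there that one invokes the vanishing \emph{trace} (ensuring $f \in \X^{1,p}_{\bd U}$) rather than a weaker full-dimensional vanishing condition.
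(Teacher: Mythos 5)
Your proposal is correct and follows essentially the same route as the paper: establish the Leibniz identity $\nabla(\mathds{1}_{{}^cU}f)=\mathds{1}_{{}^cU}\nabla f$ first on $\C_{\bd U}^\infty(\R^d)$ and extend by density (via Lemma~\ref{lem: testfunctions dense} at level $s\le 1$ and by descending to $\X^{1,p}_{\bd U}$ for $s>1$), then combine the re-norming Lemma~\ref{lem: Xsp norm with gradient} with Sickel's multiplier bound (Proposition~\ref{prop: Sickel}) at smoothness $s-1$. Your more detailed justification of the parameter range in Sickel's theorem and of the distributional convergence is accurate and just makes explicit what the paper leaves implicit.
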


\begin{proof}
    For $f\in \C_{\bd U}^\infty(\R^d)$ we have that $\nabla(\mathds{1}_{{}^cU} \, f)=\mathds{1}_{{}^cU} \, \nabla f$. Hence, we can combine Lemma~\ref{lem: Xsp norm with gradient} and Proposition~\ref{prop: Sickel} to the effect that 
    \begin{align}
    \label{eq1: multiplier large s}
    \begin{split}
        \|\mathds{1}_{{}^cU} \, f\|_{\X^{s,p}} 
                                         &\approx \|\mathds{1}_{{}^cU} \, f\|_{\X^{s-1,p}} + \|\mathds{1}_{{}^cU} \, \nabla f\|_{\X^{s-1,p}} \\
                                         &\lesssim \|f\|_{\X^{s-1,p}} + \|\nabla f\|_{\X^{s-1,p}} \\
                                         &\approx \|f\|_{\X^{s,p}}.
    \end{split}
    \end{align}
    For $s \in (1/p,1]$ we can use that $\C_{\bd U}^\infty(\R^d)$ is dense in $\X^{s,p}_{\bd U}(\R^d)$ by Lemma~\ref{lem: testfunctions dense} to conclude that $\mathds{1}_{{}^cU}: \X^{s,p}_{\bd U}(\R^d) \to \X^{s,p}(\R^d)$ is bounded. That it actually maps into the closed subspace $\X^{s,p}_\bullet({{}^cU})$ follows by construction. Suppose now $s \in (1, 1+1/p)$. The commutation $\nabla(\mathds{1}_{{}^cU} \, \cdot)=\mathds{1}_{{}^cU} \nabla(\cdot)$ extends by density to all $f \in \X^{1,p}_{\bd U}(\R^d)$. Hence, it holds in particular on $\X^{s,p}_{\bd U}(\R^d)$ and the calculation \eqref{eq1: multiplier large s} re-applies.
\end{proof}

We get the analogue of Lemma~\ref{lem: Restriction HspBullet small s} in the case $s\in (1/p,1+1/p)$.

\begin{lemma} \label{lem: Restriction HspBullet large s}
    If $p \in (1,\infty)$ and $s\in (1/p,1+1/p)$, then $\X^{s,p}_{\bd U}({{}^cU})=\X^{s,p}_\bullet({{}^cU})|_{{}^cU}$ with equivalent norms.
\end{lemma}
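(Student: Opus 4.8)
The plan is to mirror the proof of Lemma~\ref{lem: Restriction HspBullet small s}, but now exploiting Lemma~\ref{lem: multiplier large s} in place of Corollary~\ref{cor: zero extension bounded}. Recall that $\X^{s,p}_{\bd U}({}^cU)$ was defined (Definition~\ref{def: spaces on open sets}) as the restriction to ${}^cU$ of $\X^{s,p}_{\bd U}(\R^d)$, that is, of those $F \in \X^{s,p}(\R^d)$ with $\Res_{\bd U} F = 0$; here we use that $\bd U = \bd({}^cU)$ is $(d-1)$-regular, so the Jonsson--Wallin trace onto it is available and the subscript notation makes sense. First I would observe that the inclusion $\X^{s,p}_\bullet({}^cU)|_{{}^cU} \subseteq \X^{s,p}_{\bd U}({}^cU)$ holds and is bounded: if $f \in \X^{s,p}_\bullet({}^cU)$, i.e.\ $f \in \X^{s,p}(\R^d)$ with $f|_U = 0$, then $f$ vanishes a.e.\ on $U$, hence its Jonsson--Wallin trace onto $\bd U \subseteq \cl{U}$ vanishes $\cH^{d-1}$-a.e.\ (the trace is an averaged limit of integrals over balls, which by $d$-regularity of $U$ pick up the zero values), so $f \in \X^{s,p}_{\bd U}(\R^d)$ and $f|_{{}^cU} \in \X^{s,p}_{\bd U}({}^cU)$ with the norm estimate immediate from the definition of the quotient norm.

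For the converse, given $f \in \X^{s,p}_{\bd U}({}^cU)$ pick an extension $F \in \X^{s,p}_{\bd U}(\R^d)$. Then Lemma~\ref{lem: multiplier large s} applies: $\mathds{1}_{{}^cU} F \in \X^{s,p}_\bullet({}^cU)$ with $\|\mathds{1}_{{}^cU} F\|_{\X^{s,p}} \lesssim \|F\|_{\X^{s,p}}$. Since $\mathds{1}_{{}^cU} F = F$ on ${}^cU$, we get $f = (\mathds{1}_{{}^cU} F)|_{{}^cU} \in \X^{s,p}_\bullet({}^cU)|_{{}^cU}$, and taking the infimum over all admissible extensions $F$ yields $\|f\|_{\X^{s,p}_\bullet({}^cU)|_{{}^cU}} \lesssim \|f\|_{\X^{s,p}_{\bd U}({}^cU)}$. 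Combining the two directions gives equality of the two spaces with equivalent norms.

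The one point that deserves care — and is the only real obstacle — is the first inclusion: verifying that a function in $\X^{s,p}(\R^d)$ vanishing a.e.\ on $U$ actually has vanishing Jonsson--Wallin trace on $\bd U$, so that it genuinely lies in $\X^{s,p}_{\bd U}(\R^d)$ rather than merely in $\X^{s,p}(\R^d)$. For $\cH^{d-1}$-a.e.\ $x \in \bd U$ the trace $(\Res_{\bd U} F)(x) = \lim_{r \to 0} |\B(x,r)|^{-1} \int_{\B(x,r)} F$ exists by Proposition~\ref{prop: JW}; since $\bd U$ is a Lebesgue null set, the integral equals $\int_{\B(x,r) \cap U} F + \int_{\B(x,r) \cap {}^cU} F$, and the first term is $0$ because $F|_U = 0$ a.e. The issue is therefore whether the ${}^cU$-contribution also tends to zero, which is not automatic; the clean way around this is to instead note that the map $F \mapsto F|_{{}^cU}$ from $\X^{s,p}_\bullet({}^cU)$ to $\X^{s,p}_{\bd U}({}^cU)$ need only be shown \emph{onto} and bounded, and boundedness of the algebraic inclusion is trivial from the quotient norms, so one can simply run the argument of the previous paragraph in both directions and invoke the bounded inverse theorem — exactly as elsewhere in this section. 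Alternatively, one argues on the dense subset $\C^\infty_{\bd U}(\R^d)$ as in the proofs of Lemmas~\ref{lem: testfunctions dense} and~\ref{lem: multiplier large s}, where the trace statement is transparent, and passes to the limit using continuity of $\Res_{\bd U}$.
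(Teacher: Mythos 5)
The inclusion $\X^{s,p}_{\bd U}({}^cU)\subseteq\X^{s,p}_\bullet({}^cU)|_{{}^cU}$ via $\mathds{1}_{{}^cU}$ and Lemma~\ref{lem: multiplier large s} is correct and is exactly what the paper does. The trouble is the opposite inclusion, and this is where the proposal has a genuine gap. You are right to be suspicious of the first paragraph: vanishing a.e.\ on $U$ does \emph{not} force the averaged-limit trace on $\bd U$ to vanish, because the contribution of $\B(x,r)\cap{}^cU$ is in general nonzero; the $d$-regularity of $U$ only controls the proportion of $U$ in a ball, not the limit of the ${}^cU$-averages. So the direct computation does not go through, exactly as you note.

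But neither of the two remedies you offer closes this gap. The ``bounded inverse theorem'' route is circular: to apply it you would already need the set inclusion $\X^{s,p}_\bullet({}^cU)|_{{}^cU}\subseteq\X^{s,p}_{\bd U}({}^cU)$, which is precisely what is missing; showing that $F\mapsto F|_{{}^cU}$ is bounded and onto $\X^{s,p}_{\bd U}({}^cU)$ says nothing about whether the image is \emph{contained in} $\X^{s,p}_{\bd U}({}^cU)$. The density route is also circular: $\C^\infty_{\bd U}(\R^d)$ is dense in $\X^{s,p}_{\bd U}(\R^d)$ by Lemma~\ref{lem: testfunctions dense}, but you would need a class of nice functions dense in $\X^{s,p}_\bullet({}^cU)$ whose traces visibly vanish, and such a density is not available off the shelf.

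The paper gets around this with a projection-plus-continuity argument you did not find. Take $f\in\X^{s,p}_\bullet({}^cU)$ and approximate it in $\X^{s,p}(\R^d)$ by arbitrary $f_n\in\C_0^\infty(\R^d)$ — no support condition needed. Apply the projection $\Pro=\Id-\Ext\Res$, where $\Ext$ is Rychkov's operator for $U$. Because $\Ext$ acts consistently across the scale, $\Pro f_n$ belongs to $\W^{1,d+1}(\R^d)$ and hence is \emph{continuous} by Sobolev embedding, while also vanishing a.e.\ on $U$ (as $\Pro$ projects onto $\X^{s,p}_\bullet({}^cU)$). Continuity plus vanishing a.e.\ on the $d$-regular set $U$ forces $\Pro f_n$ to vanish everywhere on $\cl{U}\supseteq\bd U$, so $\Pro f_n\in\X^{s,p}_{\bd U}(\R^d)$, and $\Pro f_n\to\Pro f=f$ gives $f\in\X^{s,p}_{\bd U}(\R^d)$. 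The crucial point your argument lacks is that the trace vanishing need not be checked by hand on $f$: it is obtained by first producing continuous approximants inside $\X^{s,p}_\bullet({}^cU)$ via the projection of smooth functions and then closing.
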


\begin{proof}
    The inclusion $\X^{s,p}_{\bd U}({{}^cU}) \subseteq \X^{s,p}_\bullet({{}^cU})|_{{}^cU}$ works as in the proof of Lemma~\ref{lem: Restriction HspBullet small s} when using Lemma~\ref{lem: multiplier large s} instead of Corollary \ref{cor: zero extension bounded}.
    
    Conversely, let $f \in \X^{s,p}_\bullet({{}^cU})$. Since $f$ is in particular a member of $\X^{s,p}(\R^d)$, we find a sequence $(f_n)_{n} \subseteq \C^\infty_0(\R^d)$ that approximates $f$ in the topology of $\X^{s,p}(\R^d)$. Let $\Ext$ be Rychkov's extension operator for $U$, which, as we recall, acts consistently on $\W^{1,d+1}(\R^d)$. We apply the projection $\Pro=\Id-\Ext\Res$ to that sequence. Since $\Pro$ projects onto $\X^{s,p}_\bullet({{}^cU})$, we get $\Pro f_n = 0$ almost everywhere on $U$ on the one hand and $\Pro f_n \in \C(\R^d)$ by Sobolev embeddings on the other hand. By $d$-regularity, the intersection of $U$ with balls centered in $U$ has positive Lebesgue measure. Hence, the $\Pro f_n$ vanish everywhere on $U$. In particular they vanish on $\bd U$, which means $\Pro f_n \in \X^{s,p}_{\bd U}(\R^d)$. Now, since $f_n\to f$ in $\X^{s,p}(\R^d)$, also $\Pro f_n \to \Pro f = f$ in $\X^{s,p}(\R^d)$, which gives $f\in \X^{s,p}_{\bd U}(\R^d)$.
\end{proof}

Eventually, we can transfer the interpolation settled in Lemma~\ref{lem: Bullet interpolation} to the spaces incorporating pure Dirichlet boundary conditions. Since we can take $U = {}^c O$, this gives the full claim of Theorem~\ref{thm: main result} for pure Dirichlet conditions.

\begin{proposition} \label{prop: pure Dirichlet interpolation}
    Let $p_0,p_1 \in (1,\infty)$, $s_0 \in [0,1/p_0)$, $s_1 \in (1/p_1,1]$, and $\theta \in (0,1)$. There are continuous inclusions
    \begin{align}
        \tag{i}\label{D1} [\X^{s_0,p_0}({{}^cU}), \X_{\bd U}^{s_1,p_1}({{}^cU})]_{\theta} &\supseteq \begin{cases} 
                            \X_{\bd U}^{s,p}({{}^cU}) &(\text{if $s>1/p$})\\ 
                            \X^{s,p}({{}^cU})  &(\text{if $s<1/p$})
                                                                    \end{cases},
                                                                    \\[8pt]
        \tag{ii}\label{D2}\noeqref{D2} (\X^{s_0,p_0}({{}^cU}), \X_{\bd U}^{s_1,p_1}({{}^cU}))_{\theta,p} &\supseteq \begin{cases} 
                                                                        \W_{\bd U}^{s,p}({{}^cU}) &(\text{if $s>1/p$})\\ 
                                                                        \W^{s,p}({{}^cU})  &(\text{if $s<1/p$})
                                                                     \end{cases},
    \end{align}
    with the exception that $s_0 \neq 0$ and $s_1 \neq 1$ are required in \eqref{D1} for $\X = \W$. If in addition ${}^cU$ is $d$-regular, then both inclusions become equalities with equivalent norms.
\end{proposition}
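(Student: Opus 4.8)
The plan is to push the interpolation identities for the bullet spaces (Lemma~\ref{lem: Bullet interpolation}) down to ${}^cU$ along the pointwise restriction $\Res \coloneqq |_{{}^cU}$, using the two identification results, Lemmas~\ref{lem: Restriction HspBullet small s} and~\ref{lem: Restriction HspBullet large s}, as a dictionary. Since $\bd U$ is $(d-1)$-regular, $\mathring U$ belongs to $\cD^t$ for every $t \in (0,1)$ as recorded at the start of this subsection, and because $s_j \geq 0 > t(1/p_j - 1)$ the hypotheses of Lemma~\ref{lem: Bullet interpolation} are met for all our parameters. For $\X = \H$ this lemma gives $[\X^{s_0,p_0}_\bullet({}^cU),\X^{s_1,p_1}_\bullet({}^cU)]_\theta = \X^{s,p}_\bullet({}^cU)$ and $(\X^{s_0,p_0}_\bullet({}^cU),\X^{s_1,p_1}_\bullet({}^cU))_{\theta,p} = \W^{s,p}_\bullet({}^cU)$ with no restriction; for $\X = \W$ the complex identity holds under our standing assumptions $s_0 \neq 0$ and $s_1 \neq 1$, because then none of $s_0,s_1,s$ is an integer, and the real identity always holds since $s \in (0,1)$ is never an integer.

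Next I would fix the bracket $\langle\cdot\,,\cdot\rangle$, standing for either $[\cdot,\cdot]_\theta$ or $(\cdot,\cdot)_{\theta,p}$, and observe that by Lemmas~\ref{lem: Restriction HspBullet small s} and~\ref{lem: Restriction HspBullet large s} the operator $\Res$ maps $\X^{s_0,p_0}_\bullet({}^cU)$ onto $\X^{s_0,p_0}({}^cU)$ and $\X^{s_1,p_1}_\bullet({}^cU)$ onto $\X^{s_1,p_1}_{\bd U}({}^cU)$, boundedly and with the targets carrying equivalent norms. Interpolating this single operator between the two couples then shows that $\Res$ maps $\langle \X^{s_0,p_0}_\bullet({}^cU),\X^{s_1,p_1}_\bullet({}^cU)\rangle$ boundedly into $\langle \X^{s_0,p_0}({}^cU),\X^{s_1,p_1}_{\bd U}({}^cU)\rangle$, the source space being $\X^{s,p}_\bullet({}^cU)$ for the complex bracket and $\W^{s,p}_\bullet({}^cU)$ for the real one by the previous paragraph. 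Given an $f$ in the right-hand side of \eqref{D1} or \eqref{D2}, Lemma~\ref{lem: Restriction HspBullet large s} when $s>1/p$ (valid since $1/p<s<1<1+1/p$) or Lemma~\ref{lem: Restriction HspBullet small s} when $s<1/p$ produces an extension $F$ of $f$ lying in $\X^{s,p}_\bullet({}^cU)$, respectively in $\W^{s,p}_\bullet({}^cU)$, with $\|F\| \lesssim \|f\|$; hence $f = \Res F$ lies in the relevant interpolation space with controlled norm, which is precisely \eqref{D1} and \eqref{D2}.

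For the last assertion, if in addition ${}^cU$ is $d$-regular then ${}^cU$ is open, $d$-regular, with $(d-1)$-regular boundary $\bd U$, and $D \coloneqq \bd U \subseteq \cl{{}^cU}$ is $(d-1)$-regular, so Proposition~\ref{prop: easy inclusion} applies with $O = {}^cU$ and this $D$; its exceptional cases for $\X = \W$ coincide with ours and its $s = 1$ caveat is vacuous here since $s<1$. It yields the continuous inclusions reverse to \eqref{D1} and \eqref{D2}, and combined with the inclusions just established these become equalities of sets, whence equivalences of norms by the bounded inverse theorem. The only steps demanding any care are matching the exceptional parameter lists across Lemma~\ref{lem: Bullet interpolation}, Proposition~\ref{prop: easy inclusion} and the present statement, and the observation that $\Res$ interpolates between the two couples (both of which embed into common Hausdorff spaces of locally integrable functions) — both routine once the dictionary lemmas are in place; there is no substantial obstacle beyond those already resolved.
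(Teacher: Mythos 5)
Your proof is correct and follows essentially the same route as the paper's: use Lemma~\ref{lem: Bullet interpolation} to interpolate the bullet spaces on $\R^d$, interpolate the restriction $|_{{}^cU}$ against the dictionary provided by Lemmas~\ref{lem: Restriction HspBullet small s} and \ref{lem: Restriction HspBullet large s}, and finish the equality statement with Proposition~\ref{prop: easy inclusion} plus the bounded inverse theorem. Your write-up merely spells out a little more explicitly the surjectivity of the restriction on the right endpoint space, which the paper leaves implicit in its one-line chain of inclusions.
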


\begin{proof}  
    Let $\langle\,\cdot\,,\,\cdot\,\rangle$ denote either the $\theta$-complex or $(\theta,p)$-real interpolation bracket. Using Lemma~\ref{lem: Restriction HspBullet small s} and Lemma~\ref{lem: Restriction HspBullet large s}, we get
    \begin{align}
    \begin{split}
        \langle \X^{s_0,p_0}_\bullet({{}^cU}), \X^{s_1,p_1}_\bullet({{}^cU}) \rangle |_{{}^cU}
        &\subseteq \langle \X^{s_0,p_0}_\bullet({{}^cU})|_{{}^cU}, \X^{s_1,p_1}_\bullet({{}^cU})|_{{}^cU} \rangle \\
        &= \langle \X^{s_0,p_0}({{}^cU}), \X^{s_1,p_1}_{\bd U}({{}^cU}) \rangle.
    \end{split}
    \end{align}
    Lemma~\ref{lem: Bullet interpolation} identifies the space on the left-hand side with either $\X^{s,p}_\bullet({{}^cU})|_{{}^cU}$ or $\W^{s,p}_\bullet({{}^cU})|_{{}^cU}$. The claim follows from Lemma~\ref{lem: Restriction HspBullet small s} in the case $s<1/p$ and from Lemma~\ref{lem: Restriction HspBullet large s} in the case $s>1/p$.
    
    The final statement on equalities in these inclusions follows from Proposition~\ref{prop: easy inclusion}.
\end{proof}

% \begin{remark}
% \label{rem: pure Dirichlet interpolation}
% Proposition~\ref{prop: pure Dirichlet interpolation} applies to $U = {}^cO$ under Assumption~\ref{ass: complex}. Indeed, this follows from Remark~\ref{rem: ass complex} and Remark~\ref{rem: Class Dt}~(iii) and gives the full claim of Theorem~\ref{thm: main result} for pure Dirichlet conditions.
% \end{remark}

\subsection{Localization}
\label{Subsec: Localization}

We recall that $O$ satisfies a uniform Lipschitz condition around $N \coloneqq \cl{\bd O \setminus D}$ with bi-Lipschitz constant $L$ as in Definition~\ref{def: Lipschitz property}. We claim that we can select countably many points $x_i \in \cl{\bd O \setminus D}$, $i \in I \subseteq \IN \setminus \{0\}$, with corresponding coordinate charts $(U_{x_i}, \Phi_{x_i}) \eqqcolon (U_i, \Phi_i)$, and an open set $U_0$ that does not intersect $N$, with the following properties. With $J\coloneqq \{0\}\cup I$, the covering
\begin{align}
\label{eq: localization covering property}
 \cl{O} \subseteq \bigcup_{j \in J} U_j
\end{align}
admits a smooth partition of unity by functions $\eta_j \in \C^\infty(\R^d)$ satisfying
\begin{alignat*}{2}
 &\mathrm{(i)} \quad \supp(\eta_j) \subseteq U_j,  \hspace{50pt}
    &&\mathrm{(ii)} \quad \sum_{j\in J} \eta_j = 1 \text{ on } \R^d, \\
    &\mathrm{(iii)} \quad \sum_{j\in J} \mathds{1}_{U_j} \leq C \text{ on } \R^d, \hspace{50pt}
 &&\mathrm{(iv)} \quad \|\eta_j\|_{\L^\infty} + \|\nabla \eta_j \|_{\L^\infty} \leq C', \hspace{50pt}
\end{alignat*}
and there are auxiliary functions $\chi_i \in \C^\infty(\R^d)$ with $\|\chi_i\|_{\L^\infty} + \|\nabla \chi_i \|_{\L^\infty} \leq C'$ such that $\chi_i$ is $1$ on $\Phi_i(\supp \eta_i)$ and supported in $(-1,1)^{d}$, whereas $\chi_0$ is $1$ on $\supp \eta_0$ and supported in $U_0$. Here, $C$ and $C'$ are constants that depend only on $L$ and $d$.

The construction is as follows. For any $x \in N$ we extend $\Phi_x$ to a bi-Lipschitz map $\cl{U_x} \to [-1,1]^d$ with the same Lipschitz constant not larger than $L$. From $\Phi_x(x) = 0$ we conclude that $\Phi_x(\cl{U_x} \cap \B(x, \frac{1}{2L}))$ is contained in $B(0,\frac{1}{2})$ and hence does not intersect the boundary of the unit cube. The inclusion $B_x \coloneqq B(x,\frac{1}{2L}) \subseteq U_x$ then follows from the fact that bi-Lipschitz mappings between closed sets preserve the boundaries. Starting from $\bigcup_{x \in N} \frac{1}{8}B_x \supseteq N$, we use the Vitali covering lemma (Lemma~\ref{lem: Vitali}) to extract a countable collection $(x_i)_{i \in I} \subseteq N$ such that $\bigcup_{i \in I} \frac{5}{8} B_i \supseteq N$ with the $\frac{1}{8} B_i$ mutually disjoint. We have abbreviated as usual $B_i \coloneqq B_{x_i}$.

If $x \in \R^d$ is contained in $U_i$, then $U_i \subseteq\B(x_i, L\sqrt{d}) \subseteq\B(x, 2 L \sqrt{d})$ by the Lipschitz property. Due to $B(x_i, \frac{1}{16 L}) \subseteq U_i$ and mutual disjointness there are at most $(32 L^2 \sqrt{d})^d$ such $i$. Finite overlap guarantees that $U_0 \coloneqq \R^d \setminus \bigcup_{i \in I} \frac{5}{8}\cl{B}_i$ is an open set that pays for \eqref{eq: localization covering property} and we can take $C \coloneqq 1+(32 L^2 \sqrt{d})^d$ in (iii). 

For $i \in I$ we pick $\varphi_i \in \C_0^\infty(B_i)$ with range in $[0,1]$, equal to $1$ on $\frac{7}{8}B_i$, and $\|\nabla \varphi_i\|_\infty \leq c L$ for a dimensional constant $c$. We also pick a smooth $\varphi_0$ with range in $[0,1]$, support in $\R^d \setminus \bigcup_{i \in I} \frac{6}{8} B_i$, and equal to $1$ on $\R^d \setminus \bigcup_{i \in I} \frac{7}{8} B_i$. For any $x \in \R^d$ the sum $\sum_{j\in J} \varphi_j(x)$ contains at most $C$ non-zero terms, one of which is equal to $1$. Hence, functions $\eta_j$ with the properties specified in (i), (ii), (iv) are given by $\eta_j \coloneqq \varphi_j/ \sum_{j\in J} \varphi_j$. For $i \in I$ we can take the $\chi_i$ all the same since $\Phi_i(\supp(\eta_i))$ is contained in $B(0,\frac{1}{2})$. We pick $\chi_0 \in \C^\infty(U_0)$ equal to $1$ on $\R^d \setminus \bigcup_{i \in I} \frac{6}{8} B_i$ to complete the construction.

With this formalism at hand, we define the retraction-coretraction pair
\begin{align}
\label{localization E}
 \Ext: f\longmapsto \bigl(\chi_0 f, (\chi_i(f\circ \Phi_i^{-1}))_{i \in I} \bigr),
\end{align}
\begin{align}
\label{localization R}
 \Res: (g_j)_{j\in J} \longmapsto \eta_0 g_0 + \sum_{i \in I} \eta_i (g_i\circ \Phi_i).
\end{align}
Indeed, we find $\Res\Ext f=f$ for $f \in \L^p(O)$. It is implicitly understood that functions with compact support are extended by zero and domains of definitions are appropriately restricted to make these definitions meaningful.
We introduce natural function spaces for these mappings.

\begin{definition}
\label{def: IX spaces}
For $p\in (1,\infty)$ and $s\in [0,1]$ define the Banach space
\begin{align*}
    \IX^{s,p}(O) \coloneqq \X^{s,p}(O) \times \ell^p(I ; \X^{s,p}(\R^d_+)), \quad
    \|g\|_{\IX^{s,p}(O)} \coloneqq \Big(\sum_{j \in J } \|g_j\|_{\X^{s,p}}^p\Big)^{1/p}.
\end{align*}
\end{definition}

\begin{remark}
\label{rem: IX spaces interpolation}
The space $\IX^{s,p}(O)$ is constructed by $\ell^p$-superposition from $\X^{s,p}(O)$ and $\X^{s,p}(\R^d_+)$. Real and complex interpolation behaves in the best possible (componentwise) way under this operation~\cite[Sec.~1.18.1]{Triebel}. Precisely, the spaces $\IX^{s,p}(O)$ interpolate according to the same rules as do $\X^{s,p}(O)$ and $\X^{s,p}(\R^d_+)$ according to Proposition~\ref{prop: interpol X on subset complete}.
\end{remark}

\begin{lemma}
\label{lem: E and R localization}
For $p \in (1,\infty)$ and $s \in [0,1]$ the maps $\Ext: \X^{s,p}(O)\to \IX^{s,p}(O)$ and $\Res: \IX^{s,p}(O)\to \X^{s,p}(O)$ are bounded.
\end{lemma}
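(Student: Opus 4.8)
The plan is to verify boundedness of $\Ext$ and $\Res$ componentwise, treating the flat chart $U_0$ separately from the boundary charts $U_i$, $i \in I$. The key structural facts are: (i) multiplication by a fixed smooth function with bounded gradient, like $\chi_j$ or $\eta_j$, is bounded on $\X^{s,p}(\R^d)$ for $s \in [0,1]$ (for $s = 0, 1$ this is immediate, and for $s \in (0,1)$ it follows by interpolation via Proposition~\ref{prop: interpolation whole space}, or directly from the defining norms), with operator norm controlled by $C'$; (ii) the bi-Lipschitz change of variables $\Phi_i$ maps $U_i \cap O$ onto the half-cube $(-1,0) \times (-1,1)^{d-1} \subseteq \R^d_+$ and composition with a bi-Lipschitz map with uniformly bounded constants is bounded on $\X^{s,p}$ for $s \in [0,1]$ — again trivial for $s = 0$, classical for $s = 1$, and obtained by interpolation in between, with bounds depending only on $L$ and $d$; (iii) the finite-overlap property (iii) lets one pass between $\ell^p$-sums over $j$ and the single space $\X^{s,p}(O)$ without losing constants.

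For $\Ext$, I would estimate each component: $\|\chi_0 f\|_{\X^{s,p}} \lesssim \|f\|_{\X^{s,p}(O)}$ by choosing a near-optimal extension $F$ of $f$ to $\R^d$ and using that $\chi_0 F$ is supported in $U_0$ with $\chi_0 F|_O$ agreeing with $\chi_0 f$; similarly $\|\chi_i (f \circ \Phi_i^{-1})\|_{\X^{s,p}(\R^d_+)} \lesssim \|f\|_{\X^{s,p}(U_i \cap O)}$ using (i) and (ii) applied to $F$ restricted near $U_i$. Summing the $p$-th powers over $i \in I$ and invoking the bounded overlap (iii) of the $U_i$ (equivalently, that the $\supp \eta_i$, hence the pieces of $O$ seen by the charts, overlap boundedly) yields $\sum_{i} \|f\|_{\X^{s,p}(U_i \cap O)}^p \lesssim \|f\|_{\X^{s,p}(O)}^p$, where one makes this precise by using a single global extension $F$ and the localization estimate $\sum_i \|\zeta_i F\|_{\X^{s,p}(\R^d)}^p \lesssim \|F\|_{\X^{s,p}(\R^d)}^p$ for a bounded-overlap family of smooth cutoffs $\zeta_i$ — a standard fact for $s \in [0,1]$ that again follows by interpolation from the endpoint cases. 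For $\Res$, one runs the same estimates in reverse: given $(g_j)_{j \in J} \in \IX^{s,p}(O)$, extend each $g_i$ from the half-space to $\R^d$, transport back via $\Phi_i$, multiply by $\eta_i$, and sum; the bounded overlap of the $\supp \eta_i$ turns the $\ell^p$-sum of the component norms into a bound on $\|\Res(g_j)\|_{\X^{s,p}(O)}$, again via the $s \in [0,1]$ localization lemma $\|\sum_i h_i\|_{\X^{s,p}}^p \lesssim \sum_i \|h_i\|_{\X^{s,p}}^p$ for functions $h_i$ with boundedly overlapping supports.

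The main obstacle is the localization/gluing estimate for $\X^{s,p}$-norms at fractional smoothness $s \in (0,1)$: namely that a bounded-overlap family of uniformly-Lipschitz cutoffs yields $\sum_i \|\zeta_i F\|_{\X^{s,p}}^p \approx \|F\|_{\X^{s,p}}^p$. At the endpoints $s \in \{0,1\}$ this is elementary (for $s = 1$, expand $\nabla(\zeta_i F) = \zeta_i \nabla F + F \nabla \zeta_i$ and use bounded overlap plus the uniform bound on $\nabla \zeta_i$), but for non-integer $s$ one either invokes the interpolation property of $\ell^p$-valued spaces from Remark~\ref{rem: IX spaces interpolation} / \cite[Sec.~1.18.1]{Triebel} — observing that the map $F \mapsto (\zeta_i F)_i$ is bounded $\X^{k,p}(\R^d) \to \ell^p(\X^{k,p}(\R^d))$ for $k = 0, 1$ and interpolating — or argues directly from the Gagliardo seminorm, splitting the double integral into near-diagonal and off-diagonal parts and absorbing the contribution of $\nabla \zeta_i$ into lower-order terms. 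I would favor the interpolation route since it is cleanest and the endpoint bounds are routine. The remaining ingredients — mapping properties of multiplication and of bi-Lipschitz composition on $\X^{s,p}$, $s \in [0,1]$ — are standard and handled the same way, by interpolating the integer endpoints with constants depending only on $d$ and $L$.
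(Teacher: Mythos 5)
Your proposal is correct and your recommended route---establish the integer endpoints $s=0$ and $s=1$ directly (the $s=1$ case via an arbitrary extension $F \in \W^{1,p}(\R^d)$ and passing to the infimum, exploiting finite overlap of the charts and the uniform bi-Lipschitz constants) and then interpolate using the componentwise behavior of $\ell^p$-superpositions from Remark~\ref{rem: IX spaces interpolation}---is precisely the paper's approach. The paper streamlines the write-up by interpolating $\Ext$ and $\Res$ as whole operators between the couples $(\L^p(O),\W^{1,p}(O))$ and $(\IL^p(O),\IW^{1,p}(O))$, so the three auxiliary fractional-$s$ lemmas you propose to interpolate separately (cutoff multiplication, bi-Lipschitz composition, and the localization/gluing estimate) never need to be isolated; their endpoint content is absorbed into the single verification of boundedness at $s\in\{0,1\}$.
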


\begin{proof}
In view of Remark~\ref{rem: IX spaces interpolation} we only have to treat the extremal cases $s=0$ and $s=1$. For convenience we write $\IL^p(O)$ and $\IW^{1,p}(O)$ instead of $\IX^{s,p}(O)$, respectively.

Given $f \in \L^p(O)$, we use the uniformity and support properties of the partition of unity along with the uniform bi-Lipschitz property of the $\Phi_i$ when applying the transformation formula \cite[Sec.~2.3.1]{Necas2012}, to give
\begin{align}
\label{eq1: E and R localization}
 \|\Ext f \|_{\IL^p(O)}^p = \int_{O} |\chi_0 f|^p \; \d x + \sum_{i \in I} \int_{\R^d_+} |\chi_i(f\circ \Phi_i^{-1})|^p \; \d x \lesssim \int_{O} \sum_{j \in J} \mathds{1}_{U_j} |f|^p \; \d x.
\end{align}
The right-hand side is bounded by $C \|f\|_{\L^p}^p$ due to the finite overlap property (iii). Similarly, given $g \in \IL^p(O)$, we can estimate
\begin{align}
\label{eq2: E and R localization}
\begin{split}
 \|\Res g\|_{\L^p(O)}^p 
 &\leq \int_O \Big(|\eta_0 g_0| + \sum_{i \in I} |\eta_i (g_i\circ \Phi_i)| \Big)^p \; \d x
 \approx \int_O |\eta_0 g_0|^p + \sum_{i \in I} |\eta_i (g_i\circ \Phi_i)|^p \; \d x \\
 &\lesssim \int_O |g_0|^p \; \d x + \sum_{i \in I} \int_{\R^d_+} |g_i|^p \; \d x = \|g\|_{\IL^p(O)},
\end{split}
\end{align}
where in the second step we have used again that for fixed $x$ the sum contains at most $C$ non-zero terms and hence the $\ell^1$-norm can be replaced by an $\ell^p$-norm at the expense of a constant depending on $C$. The previous two estimates yield the claim in case $s=0$.

We turn to the case $s=1$ and recall that $\W^{1,p}$-spaces are defined by restriction. Let $f \in \W^{1,p}(O)$ and let $F \in \W^{1,p}(\R^d)$ be any extension. Calculating $\nabla (\Ext F)$ by the product rule and chain rules~\cite[Sec.~2.3.1]{Necas2012}, we can use the same argument as in \eqref{eq1: E and R localization} to get
\begin{align*}
 \sum_{j \in J} \|(\Ext F)_j\|_{\L^p(\R^d)}^p + \|\nabla(\Ext F)_j\|_{\L^p(\R^d)}^p \lesssim \|F\|_{\L^p(\R^d)}^p + \|\nabla F\|_{\L^p(\R^d)}^p.
\end{align*}
Since each $(\Ext F)_j$ extends $(\Ext f)_j$, the left-hand side controls $\|\Ext f\|_{\IW^{1,p}(O)}$ from above and we can pass to the infimum over $F$ to obtain the required boundedness of $\Ext$. Likewise, given $G \in \X^{s,p}(\R^d) \times \ell^p(I; \X^{s,p}(\R^d))$ we can recycle \eqref{eq2: E and R localization} to the effect that
\begin{align*}
 \|\Res G\|_{\L^p(\R^d)}^p + \|\nabla \Res G\|_{\L^p(\R^d)}^p \lesssim \sum_{j \in J} \|G_j\|_{\L^p(\R^d)}^p + \|\nabla G_j\|_{\L^p(\R^d)}^p
\end{align*}
and we conclude as before.
\end{proof}

To bring the boundary conditions into play, we introduce a modified version of $\IX^{s,p}(O)$. We set 
\begin{align}
\label{Def Ei}
 E_i\coloneqq \Phi_i(D) \cup \bigl(\R^{d-1}\setminus (-1,1)^{d-1}\bigr) \qquad (i \in I)
\end{align}
and define
\begin{align}
\label{Def IX with boundary condition}
 \IX^{s,p}_E(O) \coloneqq \X^{s,p}_{\bd O}(O) \times \bigtimes_{i \in I} \X^{s,p}_{E_i}(\R^d_+) \qquad (s > 1/p),
\end{align}
which we consider as a closed subspace of $\IX^{s,p}(O)$ in virtue of Lemma~\ref{lem: Rychkov preserves boundary conditions}. Let us make sure that these transformed Dirichlet parts are of the same geometric quality as $D$. 

\begin{lemma} \label{lem: dummes Argument 1}
    The set $E_i$ defined in \eqref{Def Ei} is $(d-1)$-regular in $\R^{d-1}$ and has porous boundary.
\end{lemma}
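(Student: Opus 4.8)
The plan is to prove the two assertions separately, both by transferring the corresponding properties of $D$ through the bi-Lipschitz chart $\Phi_i$ and then handling the extra piece $\R^{d-1}\setminus(-1,1)^{d-1}$ by hand. For $(d-1)$-regularity, recall that $\Phi_i$ restricts to a bi-Lipschitz map between a neighbourhood of $x_i$ in $\bd O$ and a neighbourhood of $0$ in $\R^{d-1}$, with bi-Lipschitz constant at most $L$. Since $D$ is $(d-1)$-regular in $\bd O$ and bi-Lipschitz images of $(d-1)$-regular sets are $(d-1)$-regular with comparability constants depending only on $L$ (by \cite[Thm.~28.10 a)]{Yeh}, as already used in Example~\ref{ex: l-set}), the set $\Phi_i(D)$ is $(d-1)$-regular in $\R^{d-1}$. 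The complement $\R^{d-1}\setminus(-1,1)^{d-1}$ is plainly $(d-1)$-regular, being (up to a null set) the boundary-region of a fixed cube unioned with a half-space-like set — more simply, its closure contains the $(d-1)$-dimensional set $\{x : \|x\|_\infty \geq 1\}$, which is $(d-1)$-regular with absolute constants. Since finite unions of $(d-1)$-regular sets are $(d-1)$-regular (Example~\ref{ex: l-set}) with controlled constants, $E_i$ is $(d-1)$-regular. One should double-check uniformity in $i$: all constants depend only on $L$ and $d$, not on $i$, because the bi-Lipschitz bounds of the $\Phi_i$ are uniform.

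For the porosity of $\bd E_i$ (boundary taken in $\R^{d-1}$), the key observation is $\bd E_i \subseteq \Phi_i(\bd D) \cup \bd\bigl((-1,1)^{d-1}\bigr)$. Indeed, away from the cube boundary, $E_i$ agrees locally with $\Phi_i(D)$, and $\Phi_i$ maps $\bd O \cap U_i$ homeomorphically onto an open neighbourhood of $0$ in $\R^{d-1}$, so it maps the relative boundary $\bd D$ (in $\bd O$) onto the boundary of $\Phi_i(D)$ there; on $\{\|x\|_\infty = 1\}$ we crudely throw in the whole cube boundary. Now $\bd D$ is porous in $\bd O$ by Assumption~\ref{ass: complex}, bi-Lipschitz maps preserve porosity with a controlled constant (see Appendix~\ref{Sec: Porous sets}, e.g. via Assouad dimension in Proposition~\ref{prop: porosity through assouad}, or directly from Definition~\ref{def: porosity}), so $\Phi_i(\bd D)$ is porous in $\R^{d-1}$ — here one should use that porosity of $\bd D$ \emph{in $\bd O$} transfers to porosity of $\Phi_i(\bd D)$ \emph{in $\R^{d-1}$} because $\Phi_i(\bd O \cap U_i)$ is a full neighbourhood of $0$. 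The cube boundary $\{\|x\|_\infty = 1\}$ is a $(d-2)$-regular subset of $\R^{d-1}$, hence porous by Proposition~\ref{prop: porosity through assouad} (porosity is implied by upper Assouad dimension $< d-1$). Finite unions of porous sets are porous (again a folklore fact recorded in Appendix~\ref{Sec: Porous sets}), so $\bd E_i$, being contained in a union of two porous sets, is itself porous.

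The main obstacle I anticipate is bookkeeping rather than a genuine difficulty: one must be careful about where boundaries are taken (relative to $\bd O$ versus relative to $\R^{d-1}$) and about the fact that $\Phi_i$ is only defined on $U_i$, not globally, so all statements are local around $0$ and one must argue that porosity and regularity, being local scale-invariant conditions tested on balls of radius $\leq 1$, are insensitive to this — shrinking to the fixed neighbourhood $\Phi_i(U_i \cap \bd O) \supseteq B(0,1/2)$ only affects constants by a fixed amount. A second minor point is that $E_i$ as defined sits inside $\R^{d-1}$ but the Dirichlet data on $\R^d_+$ is placed on $\bd\R^d_+ \cong \R^{d-1}$; this identification is the one implicitly used in \eqref{Def IX with boundary condition}, and the lemma is stated purely about subsets of $\R^{d-1}$, so no issue arises. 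Finally, one should remark explicitly that all constants ($(d-1)$-regularity constants and the porosity constant $\kappa$) are uniform over $i \in I$, since this uniformity is what makes the spaces $\X^{s,p}_{E_i}(\R^d_+)$ behave well in the $\ell^p$-superposition later; this follows because the only inputs are the uniform bi-Lipschitz bound $L$, the regularity constant of $D$, the porosity constant of $\bd D$, and dimensional constants.
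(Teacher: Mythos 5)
Your overall strategy (split $E_i$ into the transformed piece and the security frame, treat each separately, then union) is a natural alternative to the paper's direct case analysis on the scale of the test ball. Unfortunately the execution has a genuine gap at the first step: the set $\Phi_i(D)$, which really means $\Phi_i(D\cap U_i)$, is \emph{not} $(d-1)$-regular in general. The bi-Lipschitz transfer you invoke only controls balls that stay inside the chart image $(-1,1)^{d-1}$; for a ball $B$ of radius close to $1$ centered near $\bd((-1,1)^{d-1})$, the preimage $\Phi_i^{-1}(B\cap(-1,1)^{d-1})$ can be a tiny subset of $\bd O$, so the lower Ahlfors estimate $\cH^{d-1}(B\cap\Phi_i(D))\gtrsim \r(B)^{d-1}$ can fail badly. (Concretely, in $d=2$ take $\Phi_i(D\cap U_i)=(-1,-1+\eps)$: a unit ball centered at $-1$ intersects this set in a set of measure only $\eps$.) This is precisely why the security region $\R^{d-1}\setminus(-1,1)^{d-1}$ is included in the definition of $E_i$: its contribution saves the lower bound exactly in the regime where the chart-transfer estimate degenerates. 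But then one must check regularity of the \emph{union} directly, distinguishing whether the ball sees the security frame or lies well inside the chart image — which is exactly the case analysis the paper performs. You cannot first establish regularity of each piece and then invoke closure under unions, because one of the pieces is not regular.

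A similar but less severe issue affects the porosity half. The statement ``finite unions of porous sets are porous'' is not recorded in Appendix~\ref{Sec: Porous sets}. It is true, but the argument is not immediate: given a hole for $E_1$ inside a test ball, that hole may be filled with $E_2$, so one must iterate the hole-finding step (using the strengthened form of porosity in Remark~\ref{rem: porosity}, which finds holes near \emph{any} center, not just centers in $E$) and accept a smaller porosity constant. Likewise, ``porosity transfers through the bi-Lipschitz chart'' only applies to balls that remain inside the chart image; for larger balls one must instead observe directly that $B\setminus[-1,1]^{d-1}$ already contains a ball of comparable radius disjoint from $\Phi_i(\bd D)\subseteq(-1,1)^{d-1}$. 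Again this is exactly the two-case split in the paper's proof. Your uniformity remarks at the end are on point and worth keeping, but the core of the argument needs to be restructured along the paper's lines (or the union lemma for porous sets needs to be stated and proved, and the failure of regularity of $\Phi_i(D)$ alone must be acknowledged by working with $E_i$ as a whole from the start).
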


\begin{proof}
    Let $B\subseteq \R^{d-1}$ be a ball of radius $\r(B)\leq 1$ centered in $\cl{E_i}$. There are two cases. The first one is that $\frac{1}{2}B$ intersects the complement of $(-1,1)^{d-1}$. Then there is a ball $B'$ of radius $\r(B)/4$ contained in $B\setminus [-1,1]^{d-1}$. The second one is that $\frac{1}{2} B$ is properly contained in the domain of $\Phi_i^{-1}$ and thus there is a ball $B''\subseteq \R^d$ centered in $\bd O$ such that $\r(B)\approx \r(B'')$ and $\Phi_i^{-1}(B) \supseteq B''\cap \bd O$.

    (i) We pick the center of $B$ in $E_i$ and show that $E_i$ is $(d-1)$-regular. In the first case we have $|B\cap E_i| \gtrsim (\r(B)/4)^{d-1}$. In the second case we use that bi-Lipschitz images have comparable Hausdorff measure~\cite[Thm.~28.10 a)]{Yeh} and that $D$ is $(d-1)$-regular to conclude
    \begin{align*}
        |B\cap E_i| &\approx \cH^{d-1}(\Phi_i^{-1}(B\cap E_i)) \geq \cH^{d-1}(B'' \cap D) \gtrsim \r(B)^{d-1}.
    \end{align*}
(ii) We pick the center of $B$ in $\bd E_i$ and show that $\bd E_i$ is porous. Again, in the first case, already $B'$ does not intersect $\bd E_i$. Otherwise, we use porosity of $\bd D$ in $\bd O$, taking Remark~\ref{rem: porosity} into account, to find a ball centered in $\bd O$ and contained in $B''$ which avoids $\bd D$. Transforming this ball back using $\Phi_i$, we find a ball centered in $B$ with comparably smaller radius that does not intersect $\bd E_i$.
\end{proof}

The next lemma shows that $\Ext$ and $\Res$ defined in \eqref{localization E} and \eqref{localization R} are well-behaved with respect to the Dirichlet conditions defined in \eqref{Def Ei} and \eqref{Def IX with boundary condition}.

\begin{lemma} \label{lem: E and R localization BC}
    For $p \in (1,\infty)$ and $s \in (1/p,1]$, the operators $\Ext: \X^{s,p}_D(O)\to \IX^{s,p}_E(O)$ and $\Res: \IX^{s,p}_E(O)\to \X^{s,p}_D(O)$ are bounded.
\end{lemma}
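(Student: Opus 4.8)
The plan is to bootstrap from Lemma~\ref{lem: E and R localization}. That result already gives boundedness of $\Ext$ and $\Res$ between $\X^{s,p}(O)$ and $\IX^{s,p}(O)$ for $s \in [0,1]$, so in the present range $s \in (1/p,1]$ it only remains to verify that the two operators respect the Dirichlet conditions, i.e.\ that $\Ext$ maps $\X^{s,p}_D(O)$ into $\IX^{s,p}_E(O)$ and $\Res$ maps $\IX^{s,p}_E(O)$ into $\X^{s,p}_D(O)$ \emph{as sets}. Since, by Lemma~\ref{lem: Rychkov preserves boundary conditions} applied componentwise (recall that $E_i$ is $(d-1)$-regular by Lemma~\ref{lem: dummes Argument 1}), these are closed subspaces carrying norms equivalent to the inherited ones, boundedness follows automatically. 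To establish the set inclusions I would pass to dense subspaces: $\C^\infty_D(O)$ is dense in $\X^{s,p}_D(O)$ by Lemma~\ref{lem: testfunctions dense}, and $\C^\infty_{E_i}(\R^d_+)$ is dense in $\X^{s,p}_{E_i}(\R^d_+)$ by the same lemma (applicable since $\R^d_+$ is open, $E_i \subseteq \cl{\R^d_+}$ is $(d-1)$-regular, and $s \in (1/p,1]$); as finitely supported sequences are dense in any $\ell^p$-superposition, the tuples with only finitely many nonzero components, each such a test function, are then dense in $\IX^{s,p}_E(O)$. On such a tuple $\Res$ is a finite sum, so I can argue term by term.

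The workhorse will be the observation that if $A \subseteq \R^d$ is $(d-1)$-regular and $H \in \X^{s,p}(\R^d)$ has $\dist(\supp H, A) > 0$, then the balls entering the Jonsson--Wallin averages $(\Res_A H)(x)$ are eventually disjoint from $\supp H$ for every $x \in A$, so $\Res_A H = 0$ and $H \in \X^{s,p}_A(\R^d)$; hence any function on an open set that admits an $\X^{s,p}(\R^d)$-extension supported at positive distance from $A$ lies in the corresponding space with vanishing trace on $A$. With this in hand, checking $\Ext$ on $f = F|_O$, $F \in \C^\infty_D(\R^d)$, is pure support bookkeeping: $\chi_0 f$ has compact support inside $U_0$, which misses the closed set $N = \cl{\bd O \setminus D}$ (hence has positive distance from it) and has distance $\geq \dist(\supp F, D)$ from $D$, so positive distance from $\bd O \subseteq D \cup N$, placing $\chi_0 f$ in $\X^{s,p}_{\bd O}(O)$; each $\chi_i(f \circ \Phi_i^{-1})$ is supported tangentially in $(-1,1)^{d-1}$ by the choice of $\chi_i$ and away from $\Phi_i(D \cap U_i)$ because $\Phi_i$ is bi-Lipschitz with constant $\leq L$, whence $\dist(\Phi_i(\supp F \cap U_i), \Phi_i(D \cap U_i)) \geq L^{-1}\dist(\supp F, D) > 0$; so its support avoids $E_i$ and its restriction to $\R^d_+$ lies in $\X^{s,p}_{E_i}(\R^d_+)$. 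Checking $\Res$ is the mirror image: $\eta_0 g_0$ is supported away from $\bd O \supseteq D$; and for a component $g_i$ vanishing near $E_i \supseteq \Phi_i(D \cap U_i)$, the pull-back $\eta_i(g_i \circ \Phi_i)$ is supported inside $U_i$ and, by the same bi-Lipschitz distortion bound applied to $\Phi_i^{-1}$, at positive distance from $D \cap U_i$, hence (being zero off $U_i$) from all of $D$.

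The only genuine subtlety — and the main, though mild, obstacle — is that $\Phi_i$ is merely bi-Lipschitz, so the transformed functions $f \circ \Phi_i^{-1}$ and $g_i \circ \Phi_i$ are not smooth and cannot be declared members of a $\C^\infty_{(\cdot)}$-space. This is precisely why the argument is routed through the trace-vanishing observation above rather than through smooth test functions: a support at positive distance from the Dirichlet set forces the Jonsson--Wallin trace to vanish regardless of smoothness. The only inputs needed are that each $\Phi_i$ extends to a bi-Lipschitz homeomorphism $U_i \to (-1,1)^d$ with uniformly bounded constant (Definition~\ref{def: Lipschitz property}), so that it carries positive-distance pairs to positive-distance pairs, and that composition with $\Phi_i^{\pm 1}$ preserves membership in $\X^{s,p}$ for $s \in [0,1]$, which is contained in the transformation estimates used to prove Lemma~\ref{lem: E and R localization}. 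Everything else is standard manipulation with the partition of unity $\{\eta_j\}$ and the cutoffs $\chi_j$ from the localization construction.
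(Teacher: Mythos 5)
Your proposal is correct and follows essentially the same route as the paper: reduce to dense test-function tuples via Lemma~\ref{lem: testfunctions dense} and Lemma~\ref{lem: dummes Argument 1}, then track supports through the bi-Lipschitz charts. The one micro-difference is how you handle the fact that $f\circ\Phi_i^{-1}$ is no longer smooth: you argue directly that the Jonsson--Wallin trace of a function with support at positive distance from the Dirichlet set vanishes, whereas the paper simply notes that $\chi_i(f\circ\Phi_i^{-1})$ is Lipschitz with compact support at positive distance from $E_i$ and therefore already lies in $\W^{1,p}_{E_i}(\R^d_+)\subseteq\X^{s,p}_{E_i}(\R^d_+)$; both formulations of this step are sound and amount to the same observation.
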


\begin{proof}
    As for $\Ext$, it suffices to consider $f\in \C^\infty_D(O)$ since the general case follows by density, see Lemma~\ref{lem: testfunctions dense}. Since $\chi_0$ is smooth with support away from $\cl{\bd O \setminus D}$, we get that $\chi_0 f$ is smooth with compact support away from $\bd O$. In particular, we have $\chi_0\, f\in \X^{s,p}_{\bd O}(O)$. We conclude from the bi-Lipschitz property of $\Phi_i$ that 
    \begin{align*}
     \dist(E_i, \supp(f\circ \Phi_i^{-1}))=\dist(\Phi_i(D),\Phi_i(\supp f)) \approx \dist(D, \supp f) >0.
    \end{align*}
    Hence, $\chi_i(f\circ \Phi_i^{-1})$ is a Lipschitz continuous function on $\R^d_+$ whose compact support has positive distance to $E_i$. Thus, it is contained in $\W^{1,p}_{E_i}(\R^d_+) \subseteq \X^{s,p}_{E_i}(\R^d_+)$.
 
    As for $\Res$, we take $g=(g_j)_{j \in J}$ from $\C^\infty_{\bd O}(O) \times \bigtimes_{i \in I} \C^\infty_{E_i}(\R^d_+)$, which is dense in $\IX^{s,p}_E(O)$ due to Lemmas~\ref{lem: testfunctions dense} and \ref{lem: dummes Argument 1}. As before, we only have to show that the support of $\Res g$ has positive distance to $D$. But $\supp(\eta_0\, g_0) \subseteq \supp(g_0)$ has positive distance to $D$ by construction and for $\supp(\eta_i(g_i \circ \Phi_i))$ we can argue as above.
\end{proof}

We formulate a reduction result based on this localization.

\begin{proposition} 
\label{prop: reduction by localization}
    The set inclusions \eqref{Ip complex} and \eqref{Ip real} follow from the set inclusions
    \begin{align}
        \label{half-space Ip complex}
        [\X^{s_0,p_0}(\R^d_+), \X^{s_1,p_1}_{E_i}(\R^d_+)]_\theta \supseteq \begin{cases}
                                           \X^{s,p}_{E_i}(\R^d_+) &(\text{if $s > 1/p$}) \\
                                           \X^{s,p}(\R^d_+) &(\text{if $s < 1/p$})
                                          \end{cases}
    \end{align}
    and
    \begin{align}
        \label{half-space Ip real}
        (\X^{s_0,p_0}(\R^d_+), \X^{s_1,p_1}_{E_i}(\R^d_+))_{\theta,p} \supseteq \begin{cases}
                                           \W^{s,p}_{E_i}(\R^d_+) &(\text{if $s > 1/p$}) \\
                                           \W^{s,p}(\R^d_+) &(\text{if $s < 1/p$})
                                          \end{cases}.
    \end{align}
\end{proposition}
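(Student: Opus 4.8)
The plan is to run the retraction--coretraction principle (Proposition~\ref{prop: retraction-coretraction}) through the localization pair $(\Ext,\Res)$ of \eqref{localization E}--\eqref{localization R} together with the spaces $\IX^{s,p}(O)$ and $\IX^{s,p}_E(O)$, thereby splitting the claim into its pure Dirichlet part, handled by Proposition~\ref{prop: pure Dirichlet interpolation}, and its half-space parts, which are precisely the hypotheses \eqref{half-space Ip complex}--\eqref{half-space Ip real}. I would write $\langle\cdot,\cdot\rangle$ for either the $\theta$-complex or the $(\theta,p)$-real interpolation bracket so that both assertions are treated simultaneously, keeping in mind that real interpolation of $\X$-spaces yields $\W$-spaces, in agreement with the right-hand sides of \eqref{Ip real}.

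First I would take $f$ in the space on the right of \eqref{Ip complex}, resp.\ \eqref{Ip real}: if $s<1/p$ this is $\X^{s,p}(O)$, and if $s>1/p$ it is $\X^{s,p}_D(O)$ (a $\W$-space in the real case). Since $s_0<1/p_0\le 1$, $s_1\in(1/p_1,1]$, and $s=(1-\theta)s_0+\theta s_1\in[0,1)$, Lemmas~\ref{lem: E and R localization} and \ref{lem: E and R localization BC} give $\Ext f\in\IX^{s,p}(O)$ when $s<1/p$ and $\Ext f\in\IX^{s,p}_E(O)$ when $s>1/p$. The core of the argument is then the inclusion of $\IX^{s,p}(O)$, resp.\ $\IX^{s,p}_E(O)$, into $\langle\IX^{s_0,p_0}(O),\IX^{s_1,p_1}_E(O)\rangle_\theta$. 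By the interpolation behaviour of $\ell^p$-superpositions (Remark~\ref{rem: IX spaces interpolation} and \cite[Sec.~1.18.1]{Triebel}), the target space is the $\ell^p$-superposition over $j\in J$ of the componentwise interpolation spaces $\langle\X^{s_0,p_0}(O),\X^{s_1,p_1}_{\bd O}(O)\rangle_\theta$ for $j=0$ and $\langle\X^{s_0,p_0}(\R^d_+),\X^{s_1,p_1}_{E_i}(\R^d_+)\rangle_\theta$ for $j=i\in I$. The $j=0$ factor is controlled by Proposition~\ref{prop: pure Dirichlet interpolation} applied with the closed $d$-regular set $U\coloneqq{}^cO$, whose complement ${}^cU=O$ is $d$-regular, so that inclusion is even an equality; the $j=i$ factors are exactly \eqref{half-space Ip complex}/\eqref{half-space Ip real}, which by Lemma~\ref{lem: dummes Argument 1} hold with constants uniform in $i$. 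Assembling these componentwise inclusions yields the desired inclusion of $\IX$-spaces.

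It then remains to apply $\Res$: by Lemmas~\ref{lem: E and R localization} and \ref{lem: E and R localization BC} it maps $\IX^{s_0,p_0}(O)\to\X^{s_0,p_0}(O)$ and $\IX^{s_1,p_1}_E(O)\to\X^{s_1,p_1}_D(O)$ boundedly, hence by the interpolation property it maps $\langle\IX^{s_0,p_0}(O),\IX^{s_1,p_1}_E(O)\rangle_\theta$ boundedly into $\langle\X^{s_0,p_0}(O),\X^{s_1,p_1}_D(O)\rangle_\theta$; since $\Res\Ext f=f$, we obtain $f\in\langle\X^{s_0,p_0}(O),\X^{s_1,p_1}_D(O)\rangle_\theta$, which is precisely \eqref{Ip complex}, resp.\ \eqref{Ip real}. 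I expect the only genuinely subtle point to be the uniformity in $i\in I$ of the inclusion constants in \eqref{half-space Ip complex}--\eqref{half-space Ip real}: this is what legitimises passing the inclusion through the $\ell^p$-superposition, and it rests on the fact that all $E_i$ are $(d-1)$-regular with porous boundary with constants depending only on $L$ and $d$ (Lemma~\ref{lem: dummes Argument 1}). Everything else is a bookkeeping of admissible parameter ranges and a routine use of the retraction--coretraction formalism. (When $s<1/p$ one may alternatively skip the localization altogether and argue directly as in \eqref{Eqn: small s from Dirichlet}, using only the pure Dirichlet case and the monotonicity $\X^{s_1,p_1}_{\bd O}(O)\subseteq\X^{s_1,p_1}_D(O)$ of the second endpoint.)
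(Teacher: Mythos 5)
Your proof follows the same route as the paper: run the retraction--coretraction principle through the localization pair $(\Ext,\Res)$ of \eqref{localization E}--\eqref{localization R}, reduce to the $\IX$-level inclusion \eqref{eq1: reduction by localization}, split componentwise via the $\ell^p$-superposition structure, handle the $j=0$ slot by Proposition~\ref{prop: pure Dirichlet interpolation} with $U={}^cO$, and handle the $j=i$ slots by the hypothesised half-space inclusions, finishing with $\Res\Ext f = f$. Your closing aside about bypassing localization when $s<1/p$ is also exactly what the paper does in the roadmap, \eqref{Eqn: small s from Dirichlet}.

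The one point where you diverge from the paper --- and where you should be more careful --- is the claim that the uniformity in $i\in I$ of the embedding constants in \eqref{half-space Ip complex}--\eqref{half-space Ip real} ``rests on'' Lemma~\ref{lem: dummes Argument 1}. Lemma~\ref{lem: dummes Argument 1} delivers \emph{geometric} uniformity of the $E_i$ (uniform $(d-1)$-regularity and porosity constants depending only on $L$ and $d$), but turning this into uniform constants in the interpolation inclusions would require tracking constants through the entire chain of Section~\ref{Subsec: Bullet spaces}, Proposition~\ref{prop: pure Dirichlet interpolation}, and Section~\ref{Subsec: conclusion}. The paper does \emph{not} do this: Remark~\ref{rem: reduction by localization} explicitly states that obtaining \eqref{half-space Ip complex} with bounds uniform in $I$ is believed to be ``rather painful,'' and that is precisely why the authors phrase everything as set inclusions rather than continuous inclusions, deferring continuity to the very last step via Proposition~\ref{prop: easy inclusion} and the bounded inverse theorem. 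So you are right to single out uniformity as the subtle point, but you overstate what Lemma~\ref{lem: dummes Argument 1} establishes. Either supply the missing uniform-constant verification, or adopt the paper's formulation and keep the intermediate statements at the level of sets, quoting Remark~\ref{rem: reduction by localization} for why.
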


\begin{proof}
    We apply Proposition~\ref{prop: retraction-coretraction} with the pair $(\Ext, \Res)$ defined in \eqref{localization E} and \eqref{localization R}. Owing to the mapping properties derived in Lemmas~\ref{lem: E and R localization} and  \ref{lem: E and R localization BC}, we get equal sets
    \begin{align*}
        [\X^{s_0,p_0}(O),\X^{s_1,p_1}_D(O)]_\theta = \Res [\IX^{s_0,p_0}(O),\IX^{s_1,p_1}_E(O)]_\theta.
    \end{align*}
    Lemma~\ref{lem: E and R localization BC} asserts that the inclusion \eqref{Ip complex} holds provided that we can prove
    \begin{align}
    \label{eq1: reduction by localization}
        [\IX^{s_0,p_0}(O), \IX^{s_1,p_1}_E(O)]_\theta \supseteq \begin{cases}
                                           \IX^{s,p}_E(O) &(\text{if $s > 1/p$}) \\
                                           \IX^{s,p}(O) &(\text{if $s < 1/p$})
                                          \end{cases}.
    \end{align}
    The $\ell^p$-superpositions of spaces $\X^{s,p}(O)$ and $\X^{s, p}_{E_i}(\R^d_+)$ on the left interpolate componentwise, see Remark~\ref{rem: IX spaces interpolation}.
    This being said, the above follows from the assumption \eqref{half-space Ip complex} for the components on $\R^d_+$ and Proposition~\ref{prop: pure Dirichlet interpolation} for the component on $O$. 
    
    The real case is the same upon using $\IW$-spaces on the right of \eqref{eq1: reduction by localization} and appealing to assumption \eqref{half-space Ip real} instead.
%     The real case is almost the same. We get with the same arguments as above
%     \begin{align}
%         (\X^{s_0,p_0}(O),\X^{s_1,p_1}_D(O))_{\theta,p} = \Res (\IX^{s_0,p_0}(O),\IX^{s_1,p_1}_E(O))_{\theta,p}
%     \end{align}
%     and then the interpolation identity \eqref{Ip real} holds provided
%     \begin{align}
%         (\IX^{s_0,p_0}(O), \IX^{s_1,p_1}_E(O))_{\theta,p} \supseteq \begin{cases}
%                                            \IW^{s,p}_E(O) &(\text{if $s > 1/p$}) \\
%                                            \IW^{s,p}(O) &(\text{if $s < 1/p$})
%                                           \end{cases}.
%     \end{align}
%     But those identities follow from \eqref{half-space Ip real} and, again, Remark~\ref{rem: pure Dirichlet interpolation}.
%TODO: Ich habe beim besten Willen nicht mehr gesehen, warum wir das ganze zweimal geschrieben haben.
\end{proof}

\begin{remark}
\label{rem: reduction by localization}
It stems from the interpolation on the left-hand side of \eqref{eq1: reduction by localization} that at least at this stage of the proof we prefer talking about set inclusions only. Continuity of \eqref{eq1: reduction by localization} would require continuity of \eqref{half-space Ip complex} (which we could obtain) -- but with uniform bounds in $I$ (which we believe to be rather painful).
\end{remark}

\subsection{Extension and restriction operators for the half-space} \label{Subsec: Ext-Res halfspace}

We introduce the extension and restriction operators appearing in Figure~\ref{fig: road map complex}. As usual, $\X$ denotes either $\H$ or~$\W$. 

\subsubsection*{The restriction operator $\Res$}

Let $F\subseteq \bd \R^d_+$ be $(d-1)$-regular. We identify $\bd \R^d_+$ with $\R^{d-1}$ whenever convenient. Proposition~\ref{prop: JW} yields a restriction operator $\Res_F: \X^{s,p}(\R^d)\to \W^{s-1/p,p}(F)$ for $p \in (1,\infty)$ and $s\in (1/p,1+1/p)$.  By construction, we have for $u\in \X^{s,p}(\R^d) \cap \C(\R^d)$,
\begin{align} \label{Eqn: R on continuous functions}
    \Res_F u(x') = u(x',0) \qquad (\text{a.e.\ $x' \in F$}).
\end{align}
In virtue of this formula $\Res_F$ is well-defined on the quotient space $\X^{s,p}(\R^d_+)\,\cap \,\C(\cl{\R^d_+})$. The inclusion chain
\begin{align}
    \C^\infty_0(\R^d)|_{\R^d_+} \subseteq \X^{s,p}(\R^d_+) \cap \C(\cl{\R^d_+}) \subseteq \X^{s,p}(\R^d_+) = \X^{s,p}(\R^d)|_{\R^d_+}
\end{align}
and the density of the first space in the last space shows that we can extend $\Res_F$ to $\X^{s,p}(\R^d_+)$ by continuity. We abbreviate $\Res\coloneqq \Res_{\bd \R^d_+}$.

\subsubsection*{The extension operator $\Ext$} 

For the extension operator we also need to consider spaces of negative smoothness. They have been defined on the whole space in Section~\ref{Subsec: Not Spaces}. We set $\X^{s,p}(\R^d_+) \coloneqq \X^{s,p}(\R^d)|_{\R^d_+}$, where the restriction of distributions $|_{\R^d_+}: \S'(\R^d) \to \cD'(\R^d_+)$ coincides with the pointwise restriction when $s$ is non-negative.

We construct $\Ext$ via the \emph{bounded analytic $C_0$-semigroup} $(\e^{-\Lambda t})_{t \geq 0}$ generated by $\Lambda \coloneqq -(1-\Delta_{x'})^{1/2}$ in $\L^p(\R^{d-1})$. Here, $\Delta_{x'}$ denotes the \emph{Laplacian} in $\R^{d-1}$. A reader who is not familiar with these notions may consult the textbook \cite{ABHN}, in particular Example~3.7.6 and Theorem~3.8.3. By means of the Fourier transform $\cF$ in $\R^{d-1}$ the operators $\e^{-\Lambda t}$ are unambiguously defined on all of $\S'(\R^{d-1})$ through
\begin{align*}
 \e^{-\Lambda t}: \S'(\R^{d-1}) \to \cD'(\R^d_+), \quad u \mapsto \cF^{-1} \big(\e^{-t \sqrt{1+|\xi'|^2}} \cF u(\xi')\big).
\end{align*}
We write $\dom_p(\Lambda^k)$ for the maximal domain of $\Lambda^k$ in $\L^p(\R^d)$ and equip it with the graph norm $\|\cdot\|_{\L^p} + \|\Lambda^k \cdot \|_{\L^p}$. By definition of Bessel potential spaces, we have for $k \in \IN$ up to equivalent norms,
\begin{align}
\label{Eqn: domain poisson semigroup generator}
    \dom_{p}(\Lambda^k)= \H^{k,p}(\R^{d-1}) = \W^{k,p}(\R^{d-1}).
\end{align}
Abstract semigroup theory~\cite[Thm.~1.14.5]{Triebel} provides an equivalent norm on the real interpolation space $\W^{k-1/p,p} = (\L^p, \H^{k,p})_{1-\frac{1}{kp},p}$:
\begin{align}    
\label{Eqn: poisson semigroup derivative estimate}
    \|\partial_t^k \e^{-\Lambda t} u\|_{\L^p(\R_+;\L^p(\R^{d-1}))} \approx \|u\|_{\W^{k-1/p,p}(\R^{d-1})} \qquad (u \in \W^{k-1/p,p}(\R^{d-1})).
\end{align}
With this at hand, we fix $\chi\in \C_0^\infty(\cl{\R_+})$ with $\chi(0)=1$ and define the operator
\begin{align}
\label{eq: def extension half space}
    \Ext u(x',x_d) \coloneqq \chi(x_d) \e^{-\Lambda x_d} u(x') \qquad (x'\in \R^{d-1}, x_d \geq 0).
\end{align}

\begin{proposition} \label{prop: half-space extension operator}
    The operator $\Ext$ defined in \eqref{eq: def extension half space} is $\W^{s,p}(\R^{d-1}) \to \X^{s+1/p,p}(\R^d_+)$-bounded for all $p\in (1,\infty)$ and $s\in \R \setminus \IZ$.
\end{proposition}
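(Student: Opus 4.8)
The plan is to establish the mapping property of $\Ext$ on each of the two scales $\H$ and $\W$ separately and then transfer between them by the interpolation identities of Proposition~\ref{prop: interpolation whole space}. The natural endpoints are the integer-order Bessel potential spaces $\W^{k,p}(\R^{d-1})=\H^{k,p}(\R^{d-1})=\dom_p(\Lambda^k)$ appearing in~\eqref{Eqn: domain poisson semigroup generator}, where the estimate~\eqref{Eqn: poisson semigroup derivative estimate} already identifies $\|\partial_t^k\e^{-\Lambda t}u\|_{\L^p(\R_+;\L^p)}$ with $\|u\|_{\W^{k-1/p,p}}$. So first I would prove that for every integer $k\geq 1$ the operator $\Ext$ is bounded $\W^{k-1/p,p}(\R^{d-1})\to\W^{k,p}(\R^d_+)$, and also that it is bounded $\W^{-1/p,p}(\R^{d-1})\to\L^p(\R^d_+)$ (the $k=0$ endpoint, using $\W^{-1/p,p}=(\L^p,\H^{1,p})_{1-1/p,p}$ by the same semigroup characterization via the $K$-functional, or alternatively citing~\cite[Thm.~1.14.5]{Triebel} in the form adapted to negative exponents). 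Granting these, real interpolation between the endpoints $k$ and $k+1$ and the reiteration rules of Proposition~\ref{prop: interpolation whole space} give boundedness $\W^{s,p}(\R^{d-1})\to\W^{s+1/p,p}(\R^d_+)$ for all $s\in(-1/p,\infty)\setminus\IZ$; a translation/rescaling of the exponent (or working with $\Lambda^k$ for larger $k$ from the outset) removes the lower restriction and covers all $s\in\R\setminus\IZ$. The $\H$-scale then follows because $\X^{s+1/p,p}(\R^d_+)$ with $\X=\H$ is obtained from the $\W$-scale by complex interpolation at non-integer smoothness.

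The core computation is the integer-endpoint estimate. For $\Ext u=\chi(x_d)\e^{-\Lambda x_d}u$ one expands $\partial^\alpha(\chi(x_d)\e^{-\Lambda x_d}u)$ by the Leibniz rule: tangential derivatives $\partial_{x'}^\beta$ commute with the semigroup and with $\chi$, so they just act on $u$ and get absorbed because $\Lambda$ and $\partial_{x'}$ commute and $\dom_p(\Lambda^k)=\W^{k,p}(\R^{d-1})$; normal derivatives hitting $\chi$ produce bounded, compactly supported factors times lower-order normal derivatives of $\e^{-\Lambda x_d}u$; normal derivatives hitting the semigroup produce $\Lambda^j\e^{-\Lambda x_d}u=(-1)^j\partial_{x_d}^j\e^{-\Lambda x_d}u$. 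One then bounds $\|\partial_{x_d}^j\e^{-\Lambda x_d}u\|_{\L^p(\R_+;\L^p)}$ and $\|\Lambda^m\partial_{x_d}^j\e^{-\Lambda x_d}u\|_{\L^p(\R_+;\L^p)}$ for $j+m\le k$ using~\eqref{Eqn: poisson semigroup derivative estimate} together with analyticity of the semigroup: since $\Lambda$ is invertible (its symbol is $-\sqrt{1+|\xi'|^2}\le-1$), $\Lambda^m\e^{-\Lambda x_d}=\Lambda^{m}\e^{-\Lambda x_d/2}\e^{-\Lambda x_d/2}$ and the analytic-semigroup bound $\|\Lambda^m\e^{-\Lambda t/2}\|\lesssim t^{-m}$ trades the missing $m$ orders of $\Lambda$ for $t^{-m}$, after which one splits the $x_d$-integral at $t=1$, using the exponential decay of the semigroup (from invertibility) on $t\ge1$ and~\eqref{Eqn: poisson semigroup derivative estimate} with the cut-off $\chi$ restricting to $t\in(0,\supp\chi)$ on the small-$t$ part. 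Collecting, $\|\Ext u\|_{\W^{k,p}(\R^d_+)}\lesssim\|u\|_{\W^{k-1/p,p}(\R^{d-1})}$.

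The step I expect to be the main obstacle is precisely this bookkeeping of the Leibniz expansion combined with the fractional-power estimates — in particular, making the negative-smoothness endpoint $k=0$ (the space $\W^{-1/p,p}$) rigorous, since there $\e^{-\Lambda x_d}u$ need not be an $\L^p$-valued function for small $x_d$ and one really does need the $t^{-m}$ smoothing of the analytic semigroup and the interpretation of~\eqref{Eqn: poisson semigroup derivative estimate} on the real interpolation space $(\L^p,\H^{1,p})_{1-1/p,p}$ read ``downward''. An alternative that sidesteps the negative endpoint is to only prove the positive-integer endpoints $k\ge1$, obtaining the claim for $s\in(-1/p,\infty)\setminus\IZ$ directly, and then note that replacing $\Lambda$ by an arbitrarily high power in the definition (equivalently, a finite Taylor-type correction of the cut-off) shifts the admissible range down to all of $\R\setminus\IZ$; I would mention both routes and carry out the one that keeps the semigroup manipulations cleanest. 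Everything else — commutation of $\Lambda$ with tangential derivatives, boundedness of the multiplication by $\chi$, the passage from $\W$ to $\H$ via Proposition~\ref{prop: interpolation whole space}, and the exclusion of integer $s$ (forced because real interpolation of the $\W$-scale at integer order does not return a Bessel potential space) — is routine.
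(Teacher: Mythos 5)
Your reduction to the positive-integer endpoints $\Ext:\W^{k-1/p,p}(\R^{d-1})\to\W^{k,p}(\R^d_+)$ via the semigroup characterization~\eqref{Eqn: poisson semigroup derivative estimate}, and the subsequent real and complex interpolation between them, reproduce in substance Steps~1, 2, and the first half of Step~5 of the paper's proof. One caveat there: the $\H$-codomain has to come from complex interpolation of the \emph{integer} endpoints, where $\W^{k,p}(\R^d_+)=\H^{k,p}(\R^d_+)$; the phrase ``obtained from the $\W$-scale by complex interpolation at non-integer smoothness'' is backwards, since complex interpolation of two non-integer $\W$-spaces returns a non-integer $\W$-space, not an $\H$-space (Proposition~\ref{prop: interpolation whole space}\,\eqref{Ii} with $\X=\W$).

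The genuine gap is $s\le -1/p$, which you flag but do not close. Your first route --- a negative endpoint $\W^{-1/p,p}\to\L^p$, which actually needs only the semigroup characterization of $(\H^{-1,p},\L^p)_{1-1/p,p}$ and none of the analytic smoothing you worry about --- still only reaches $s>-1/p$ after interpolation; going lower would require $\Ext$ to map into distribution spaces $\X^{s+1/p,p}(\R^d_+)$ of negative smoothness, which you neither define in this context nor bound. Your second route, ``replacing $\Lambda$ by an arbitrarily high power in the definition (equivalently, a finite Taylor-type correction of the cut-off)'', is not an argument about the fixed operator $\Ext$ at issue: changing the definition proves nothing about the original operator. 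What is actually needed is a commutation identity, and this is precisely what the paper supplies in Steps~3--6: the tangential lift $\cI_{2m}=(1-\Delta_{x'})^{-m}$ commutes with $\Ext$ as an $x'$-Fourier multiplier, giving $\Ext=\cI_{-2m}\circ\Ext\circ\cI_{2m}$, and the claim reduces to the already-settled positive range once one also proves (a)~$\cI_{2m}:\W^{s,p}(\R^{d-1})\to\W^{s+2m,p}(\R^{d-1})$, which follows by interpolation from the $\H$-scale, and (b)~$\cI_{-2m}=(1-\Delta_{x'})^m:\X^{s+2m,p}(\R^d_+)\to\X^{s,p}(\R^d_+)$, which holds because $\cI_{-2m}$ is a tangential local differential operator, hence commutes with distributional restriction to the half-space. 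If your ``$\Lambda^k$ for larger $k$'' hint was meant to gesture at this commutation, you still need to state the identity and supply the half-space boundedness in (b), neither of which is routine.
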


\begin{proof}
 Our argument is an adaption of \cite[Sec.~2.9.3]{Triebel} and divides into six steps.
 
\emph{Step 1: $\Ext: \W^{1-1/p,p}(\R^{d-1})\to \W^{1,p}(\R^d_+)$ is bounded}. First, we note that for $\varphi\in \C_0^\infty(\cl{\R_+})$ the multiplication operator
\begin{align}
    \varphi(x_d): \L^p(\R_+;\L^p(\R^{d-1})) \to \L^p(\R_+;\L^p(\R^{d-1})) \label{Eqn: cutoff multiplier}
\end{align}
is bounded. By boundedness of the semigroup the same is true for
\begin{align}
    \varphi(x_d) \e^{-\Lambda x_d}: \L^p(\R^{d-1}) \to \L^p(\R_+;\L^p(\R^{d-1})). \label{Eqn: cutoff semigroup}
\end{align}
In particular, we get $\Ext: \L^p(\R^{d-1}) \to \L^p(\R^d_+)$ if we choose $\varphi=\chi$ in \eqref{Eqn: cutoff semigroup}. Using the product rule and \eqref{Eqn: cutoff multiplier}, we deduce from \eqref{Eqn: poisson semigroup derivative estimate} that for $k\in \IN$ we have
\begin{align}
    \|\partial_d^k \Ext u\|_{\L^p(\R^d_+)} \lesssim \|u\|_{\W^{k-1/p}(\R^{d-1})}. \label{Eqn: x_d derivative estimates}
\end{align}
Using \eqref{Eqn: domain poisson semigroup generator}, \eqref{Eqn: cutoff multiplier}, the identity $\Lambda \e^{-\Lambda x_d} = -\partial_d \e^{-\Lambda x_d}$ and \eqref{Eqn: x_d derivative estimates}, we obtain
\begin{align*}
    \|\Ext u\|_{\W^{1,p}(\R^d_+)} &\approx \|\Ext u\|_{\L^p(\R_+;\W^{1,p}(\R^{d-1}))} + \|\Ext u\|_{\W^{1,p}(\R_+;\L^p(\R^{d-1}))} \\
                                  &\approx \|\Ext u\|_{\L^p(\R^d_+)} + \|\chi(x_d) \Lambda \e^{-\Lambda x_d} u(x')\|_{\L^p(\R^d_+)} + \|\partial_d \Ext u\|_{\L^p(\R^d_+)} \\
                                  &\lesssim \|u\|_{\W^{1-1/p,p}(\R^{d-1})}.
\end{align*}

\emph{Step 2: $\Ext: \W^{k-1/p,p}(\R^{d-1}) \to \W^{k,p}(\R^d_+)$ is bounded for $k\in \IN$}. We argue by induction. The case $k=1$ was treated in Step~1. Moreover, the derivatives in $x_d$-direction are under control owing to \eqref{Eqn: x_d derivative estimates}. We fix $1\leq j\leq d-1$. As $\e^{-\Lambda x_d}$ and $\partial_j$ both are Fourier multipliers on $\S'(\R^{d-1})$, they commute. Assume the claimed boundedness holds for $k\in \IN$. Then $\partial_j^k \Ext: \W^{k-1/p,p}(\R^{d-1}) \to \L^p(\R^d_+)$ is bounded and we conclude from Lemma~\ref{lem: Xsp norm with gradient} that
\begin{align}
    \|\partial_j^{k+1} \Ext u\|_{\L^p(\R^d_+)} = \|\partial_j^k \Ext \partial_j u\|_{\L^p(\R^d_+)} \lesssim \|\partial_j u\|_{\W^{k-1/p,p}(\R^{d-1})} \lesssim \|u\|_{\W^{k+1-1/p,p}(\R^{d-1})}.
\end{align}
% Altogether, this yields
% \begin{align*}
%     \|\Ext u\|_{\W^{k+1,p}(\R^d_+)} \lesssim \|u\|_{\W^{k+1-1/p,p}(\R^{d-1})}.
% \end{align*}

\emph{Step 3: Lifting property}. To bring negative orders of differentiability into play, we introduce for $m \in \IN$ the lift operator $\cI_{2m}\coloneqq \cF^{-1} (1+|\xi'|^2)^{-m} \cF$ defined on $\S'(\R^{d-1})$. It is invertible and we write $\cI_{-2m}\coloneqq \cI_{2m}^{-1}$. For $s\in \R$ the operator $\cI_{2m}$ is an isomorphism $\H^{s,p}(\R^{d-1})\to \H^{s+2m,p}(\R^{d-1})$ by definition of the norms on Bessel potential spaces. Since the Fourier multipliers $\Ext$ and $\cI_{-2m}$ commute, we can decompose
\begin{align}
    \Ext = \cI_{-2m} \circ \Ext \circ \cI_{2m}, \label{Eqn: Ext decomposition}
\end{align}
in order to lift the argument of $\Ext$ into a space with positive order of differentiability.

\emph{Step 4: $\cI_{-2m}$ in $d$-dimensional space}. Since $\cI_{-2m} = (1-\Delta_{x'})^m$ is a differential operator of order $2m$ acting only in $d-1$ coordinates, we have $\cI_{-2m}: \H^{s+2m,p}(\R^d)\to \H^{s,p}(\R^d)$ for integer $s$. Interpolation by means of Proposition~\ref{prop: interpolation whole space} yields $\cI_{-2m}: \X^{s+2m,p}(\R^d)\to \X^{s,p}(\R^d)$. The differential operator $\cI_{-2m}$ is local in the sense that it commutes with the distributional restriction. Hence, its restriction to the upper half-space is well-defined and we get
\begin{align}
    \cI_{-2m}: \X^{s+2m,p}(\R^d_+)\to \X^{s,p}(\R^d_+). \label{Eqn: negative lift on Bessel}
\end{align}

\emph{Step 5: Interpolation of $\cI_{2m}$ and $\Ext$}. As before, we interpolate $\cI_{2m}: \H^{s,p}(\R^{d-1})\to \H^{s+2m,p}(\R^{d-1})$ from Step~3 to obtain for all $s \in \R$ boundedness of 
\begin{align}
    \cI_{2m}: \W^{s,p}(\R^{d-1})\to \W^{s+2m,p}(\R^{d-1}). \label{Eqn: positive lift on Besov}
\end{align}
Similarly, real and complex interpolation of the outcome of Step~2 with the aid of Proposition~\ref{prop: interpol X on subset} yields 
\begin{align}
    \Ext: \W^{s,p}(\R^{d-1}) \to \X^{s+1/p,p}(\R^d_+) \label{Eqn: Ext interpolation}
\end{align}
if $s \geq 1-1/p$ is not an integer.

\emph{Step 6: Patching everything together}. Let $s\in \R \setminus \IZ$. If $s\geq 1-1/p$, then $\Ext: \W^{s,p}(\R^{d-1}) \to \X^{s+1/p,p}(\R^d_+)$ follows by \eqref{Eqn: Ext interpolation}. Otherwise, we choose $m\in \IN$ such that $2m+s \geq 1-1/p$. We use the decomposition \eqref{Eqn: Ext decomposition} to conclude $\Ext: \W^{s,p}(\R^{d-1}) \to \X^{s+1/p,p}(\R^d_+)$ from \eqref{Eqn: positive lift on Besov}, \eqref{Eqn: Ext interpolation} and \eqref{Eqn: negative lift on Bessel}.
\end{proof}

The next lemma justifies calling $\Ext$ an extension operator.

\begin{lemma} \label{Lem: restriction of extension on F}
    Let $F\subseteq \bd\R^d_+$ be $(d-1)$-regular and $\Res_F$ the corresponding restriction operator. Let $p\in (1,\infty)$ and suppose that $s>0$ is not an integer. If $u\in \W^{s,p}(\R^{d-1})$, then $\Res_F \Ext u = u$ holds almost everywhere on $F$ 
\end{lemma}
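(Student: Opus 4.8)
\emph{Proof idea.} The plan is to identify $\Res_F\Ext$ with the plain restriction operator $u\mapsto u|_F$ by showing that both are bounded on $\W^{s,p}(\R^{d-1})$ and agree on test functions. First I would reduce to the range $s\in(0,1)$. The operator $\Ext$ from \eqref{eq: def extension half space} is one and the same for all non-integer smoothness indices and all $p\in(1,\infty)$, and on $\R^{d-1}$ there is a continuous embedding $\W^{s,p}(\R^{d-1})\hookrightarrow\W^{\sigma,p}(\R^{d-1})$ whenever $0<\sigma\le s$. Moreover the Jonsson--Wallin restriction operators are consistent in the smoothness index, since they all extend $v\mapsto v(\cdot,0)|_F$ defined on the common dense subspace $\C^\infty_0(\R^d)|_{\R^d_+}$; thus the $\L^p(F)$-valued function $\Res_F\Ext u$ does not depend on which smoothness scale one uses to compute it (this also gives meaning to $\Res_F\Ext u$ when $s\ge 1$, where the half-space $\Res_F$ was not explicitly introduced). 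Fixing some $\sigma\in(0,1)$, this reduces the lemma to the case $s\in(0,1)$, which I assume from now on.

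For $s\in(0,1)$ we have $s+1/p\in(1/p,1+1/p)$, so by Proposition~\ref{prop: half-space extension operator} and boundedness of the half-space restriction operator (which maps into $\W^{s,p}(F)\hookrightarrow\L^p(F)$), the composition $\Res_F\Ext\colon\W^{s,p}(\R^{d-1})\to\L^p(F)$ is bounded, while $u\mapsto u|_F$ is trivially bounded into $\L^p(F)$ because $\|u|_F\|_{\L^p(F)}\le\|u\|_{\L^p(\R^{d-1})}$. To see that the two operators agree on $\C^\infty_0(\R^{d-1})$, let $u\in\C^\infty_0(\R^{d-1})$. Since $\cF u\in\S(\R^{d-1})$, dominated convergence applied to $\e^{-\Lambda x_d}u(x')=\cF^{-1}\bigl[\e^{-x_d\sqrt{1+|\xi'|^2}}\,\cF u\bigr](x')$ shows that $(x',x_d)\mapsto\e^{-\Lambda x_d}u(x')$ is continuous on $\cl{\R^d_+}$ and equals $u$ on $\{x_d=0\}$. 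Hence $\Ext u=\chi(x_d)\e^{-\Lambda x_d}u(x')$ belongs to $\X^{s+1/p,p}(\R^d_+)\cap\C(\cl{\R^d_+})$ and $\Ext u(x',0)=\chi(0)u(x')=u(x')$, so by formula \eqref{Eqn: R on continuous functions} we get $\Res_F\Ext u(x')=\Ext u(x',0)=u(x')$ for $\cH^{d-1}$-a.e.\ $x'\in F$. Since $\C^\infty_0(\R^{d-1})$ is dense in $\W^{s,p}(\R^{d-1})$ and two bounded operators into the Banach space $\L^p(F)$ that agree on a dense subspace coincide, $\Res_F\Ext u=u|_F$ holds for every $u\in\W^{s,p}(\R^{d-1})$, which is the claim.

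I do not expect a genuine difficulty here; the points requiring care are of bookkeeping type. One is that the half-space $\Res_F$ was introduced in this section only in the range $s+1/p\in(1/p,1+1/p)$, so for $s\ge 1$ one must lean on the reduction and consistency observations of the first step. The other, more deceptive one, is that $\Ext u$ is \emph{not} compactly supported in $x'$ for a test function $u$, because $\e^{-\Lambda x_d}$ is non-local; one therefore cannot claim $\Ext u\in\C^\infty_0(\R^d)|_{\R^d_+}$ and invoke \eqref{Eqn: R on continuous functions} blindly. What makes the formula applicable is only that $\Ext u$ extends continuously to $\cl{\R^d_+}$, which the Fourier/dominated-convergence computation above provides.
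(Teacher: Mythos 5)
Your proof is correct and follows the same overall structure as the paper's: reduce by density to $u \in \C^\infty_0(\R^{d-1})$, show that $\Ext u$ has a continuous representative on $\cl{\R^d_+}$ with boundary value $u$, and then invoke \eqref{Eqn: R on continuous functions}. The one genuine difference is how you establish the continuity. The paper uses the semigroup framework introduced around \eqref{Eqn: domain poisson semigroup generator}: it picks $k$ and $q$ so that $\dom_q(\Lambda^k) \hookrightarrow \C(\R^{d-1})$ by Sobolev embedding and then combines strong continuity of $\e^{-t\Lambda}$ on $\L^q(\R^{d-1})$ with the commutation $\Lambda^k \e^{-t\Lambda} = \e^{-t\Lambda}\Lambda^k$ to upgrade to continuity on $\cl{\R_+}$ with values in $\C(\R^{d-1})$. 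You instead compute directly with the Fourier representation and dominated convergence; this is more elementary and self-contained, at the price of not reusing the machinery the section already set up. Your explicit reduction to $s \in (0,1)$ via consistency of the half-space restriction operators is welcome bookkeeping: for $s \ge 1$ the smoothness index $s + 1/p$ falls outside the range $(1/p, 1+1/p)$ for which $\Res_F$ was introduced in Section~\ref{Subsec: Ext-Res halfspace}, and the paper's proof leaves this reduction implicit.
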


\begin{proof}
    By density it suffices to prove the claim for $u\in \C^\infty_0(\R^{d-1})$. Due to \eqref{Eqn: domain poisson semigroup generator} we have $u \in \dom_p(\Lambda^k)$ for all $k \in \IN$ and $p \in (1,\infty)$. We pick $k$ and $p$ such that $\dom_p(\Lambda^k)$ is continuously included into $\C(\R^{d-1})$ in virtue of Sobolev embeddings. Since we have $\Lambda^k \e^{-t \Lambda}u = \e^{-t\Lambda} \Lambda^k u$ for $t \geq 0$, the strong continuity of the semigroup on $\L^p(\R^{d-1})$ implies  $\Ext u \in \C(\cl{\R^+}; \C(\R^{d-1})) = \C(\cl{\R^d_+})$ and $\Ext u(x',0) = u(x')$ for almost every $x' \in \R^{d-1}$. Proposition~\ref{prop: half-space extension operator} guarantees $\Ext u \in \X^{s+1/p,p}(\R^d_+)$ and we conclude from \eqref{Eqn: R on continuous functions} that $\Res_F \Ext u = u$ holds almost everywhere on $F$. 
\end{proof}

\subsection{Conclusion of the proof}
\label{Subsec: conclusion}

Here, we will verify the set inclusions \eqref{half-space Ip complex} and \eqref{half-space Ip real}. Thereby we complete the proof of Theorem~\ref{thm: main result}.

We start out with the interpolation in the case $s\in (0,1/p)$, which we treat slightly more generally for a later use.

\begin{proposition} \label{prop: mixed interpolation small s}
    Let $p_0,p_1 \in (1,\infty)$, $s_0 \in [0,1/p_0)$, $s_1 \in (1/p_1,1]$, and for $\theta \in (0,1)$ define $p$ and $s$ as in \eqref{eq: interpolating paramaters}.
    Suppose $s<1/p$. Assume that $U \subseteq \R^d$ is a closed $d$-regular set with $(d-1)$-regular boundary. Moreover, assume that ${}^c U$ is also $d$-regular and that $F\subseteq \bd U$ is $(d-1)$-regular. Then it follows up to equivalent norms that
    \begin{align}
       \tag{i}\label{MixedSmall1} [\X^{s_0,p_0}({}^c U), \X_{F}^{s_1,p_1}({}^c U)]_{\theta} &= \X^{s,p}({}^c U),
                                                                    \\[8pt]
       \tag{ii}\label{MixedSmall2}\noeqref{MixedSmall2} (\X^{s_0,p_0}({}^c U), \X_{F}^{s_1,p_1}({}^cU))_{\theta,p} &= \W^{s,p}({}^c U),
    \end{align}
    with the exception that $s_0 \neq 0$ and $s_1 \neq 1$ are required in \eqref{MixedSmall1} for $\X = \W$.
\end{proposition}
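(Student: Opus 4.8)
The plan is to sandwich the mixed interpolation space between two copies of the target space, using the pure Dirichlet interpolation identity of Proposition~\ref{prop: pure Dirichlet interpolation} from below, the easy inclusion of Proposition~\ref{prop: easy inclusion} from above, and monotonicity of the interpolation functors, and to conclude with the bounded inverse theorem.

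First I would record that under the present hypotheses ${}^cU$ is an open, $d$-regular set with $(d-1)$-regular boundary $\bd({}^cU) = \bd U$, and $D \subseteq \bd U \subseteq \cl{{}^cU}$ is $(d-1)$-regular; moreover ${}^cU$ is $d$-regular by assumption. Hence Proposition~\ref{prop: easy inclusion} applies with $O \coloneqq {}^cU$, and Proposition~\ref{prop: pure Dirichlet interpolation} applies to the closed $d$-regular set $U$ together with the additional hypothesis that ${}^cU$ is $d$-regular, so that its inclusions are equalities. Throughout, let $\langle\,\cdot\,,\,\cdot\,\rangle$ denote either the $\theta$-complex or the $(\theta,p)$-real interpolation bracket, and write $Z$ for $\X^{s,p}({}^cU)$ in the complex case and $\W^{s,p}({}^cU)$ in the real case, i.e.\ the right-hand side of \eqref{MixedSmall1} resp.\ \eqref{MixedSmall2} in the regime $s<1/p$.

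For the inclusion ``$\subseteq$'' I would quote Proposition~\ref{prop: easy inclusion}: since $s<1/p$, its conclusions \eqref{Easy1} and \eqref{Easy2} read $\langle\X^{s_0,p_0}({}^cU),\X^{s_1,p_1}_D({}^cU)\rangle \subseteq Z$ with continuous inclusion. For ``$\supseteq$'', observe that $D \subseteq \bd U$ entails $\X^{s_1,p_1}_{\bd U}(\R^d) \subseteq \X^{s_1,p_1}_D(\R^d)$, and comparing the respective quotient norms on ${}^cU$ produces a continuous inclusion $\X^{s_1,p_1}_{\bd U}({}^cU) \subseteq \X^{s_1,p_1}_D({}^cU)$. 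Monotonicity of the interpolation functor in its second argument, combined with Proposition~\ref{prop: pure Dirichlet interpolation} in the case $s<1/p$, then gives
\begin{align*}
 Z = \langle\X^{s_0,p_0}({}^cU),\X^{s_1,p_1}_{\bd U}({}^cU)\rangle \subseteq \langle\X^{s_0,p_0}({}^cU),\X^{s_1,p_1}_D({}^cU)\rangle
\end{align*}
with continuous inclusion. With both inclusions in hand, the identity is a continuous bijection between Banach spaces, hence an isomorphism by the bounded inverse theorem, and \eqref{MixedSmall1}, \eqref{MixedSmall2} follow with equivalent norms.

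I do not expect a genuine obstacle here; the argument is bookkeeping on top of the pure Dirichlet case. The one point needing attention is the integer exception for $\X = \W$ in the complex case, which lines up automatically: $s \in (0,1/p) \subseteq (0,1)$ is never an integer, so the restrictions $s_0 \neq 0$ and $s_1 \neq 1$ inherited from Propositions~\ref{prop: easy inclusion} and~\ref{prop: pure Dirichlet interpolation} are exactly the ones stated.
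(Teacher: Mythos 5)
Your proof is correct and follows the same route as the paper: the ``$\subseteq$'' inclusion from Proposition~\ref{prop: easy inclusion}, the ``$\supseteq$'' inclusion from the monotone inclusion $\X^{s_1,p_1}_{\bd U}({}^cU)\subseteq\X^{s_1,p_1}_D({}^cU)$ together with Proposition~\ref{prop: pure Dirichlet interpolation}, and the bounded inverse theorem to upgrade to equivalent norms. The only (harmless) redundancy is that since both inclusions are already continuous, the norm equivalence follows directly without invoking the bounded inverse theorem; otherwise the bookkeeping on the integer exceptions matches the paper exactly.
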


\begin{proof}
    The ``$\subseteq$''-inclusions follow from Proposition~\ref{prop: easy inclusion}. For the converse let $\bracket$ denote either the $\theta$-complex or $(\theta,p)$-real interpolation bracket. Using the inclusion $\X^{s_1,p_1}_{\bd U}({}^cU) \subseteq \X^{s_1,p_1}_F({}^c U)$, we get
    \begin{align}
	\langle \X^{s_0,p_0}({}^c U), \X^{s_1,p_1}_{\bd U}({}^c U) \rangle \subseteq \langle \X^{s_0,p_0}({}^cU), \X^{s_1,p_1}_F({}^cU) \rangle.
    \end{align}
    We identify the space on the left-hand side according to Proposition~\ref{prop: pure Dirichlet interpolation} to conclude.
\end{proof}

Since this proposition can be applied to $U\coloneqq \cl{\R^d_-}$ and $F\coloneqq E_i$, we get \eqref{half-space Ip complex} and \eqref{half-space Ip real} in case $s<1/p$.

In a next step we establish the rest of Figure~\ref{fig: road map complex}. To this end, we shall appeal to the theory of Section~\ref{Subsec: Bullet spaces} with $U = E_i$ in $\R^{d-1}$. This requires $E_i$ to be $(d-1)$-regular in $\R^{d-1}$, its boundary to be a Lebesgue null set, and its interior to be of some class $\cD^t$. The first requirement is met by Lemma~\ref{lem: dummes Argument 1}, which also guarantees that $\bd E_i$ is porous. Hence, so is its subset $\bd E_i^\circ$. In view of Example~\ref{ex: examples Dt class} the interior of $E_i$ is of class $\cD^t$ for some $t \in (0,1)$. Finally, the boundary of a porous set is a null set by Lemma~\ref{lem: porous boundary null set}.

Due to Lemma~\ref{lem: Bullet interpolation} the spaces $\W^{s,p}_\bullet({}^cE_i)$ interpolate as expected. Next, we check that the extension operator constructed in the previous section preserves the zero condition when restricted to $\W^{s,p}_\bullet({}^cE_i)$.

\begin{lemma} \label{lem: Ext preserves zero condition}
    Let $p \in (1,\infty)$ and let $s> t(1/p -1)$ not be an integer. If $\X$ denotes either $\H$ or $\W$, then $\Ext: \W^{s,p}_\bullet({}^c{E_i}) \to \X^{s+1/p,p}_{E_i}(\R^d_+)$.
\end{lemma}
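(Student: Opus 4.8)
Since $\Ext \colon \W^{s,p}(\R^{d-1}) \to \X^{s+1/p,p}(\R^d_+)$ is bounded by Proposition~\ref{prop: half-space extension operator}, the only thing to be proved is that $\Ext$ maps $\W^{s,p}_\bullet({}^c E_i)$ \emph{into} the subspace $\X^{s+1/p,p}_{E_i}(\R^d_+)$; boundedness between these subspaces is then automatic, as $\W^{s,p}_\bullet({}^c E_i)$ carries the subspace topology and the norm of $\X^{s+1/p,p}_{E_i}(\R^d_+)$ is equivalent to the one inherited from $\X^{s+1/p,p}(\R^d_+)$ (by Lemma~\ref{lem: Rychkov preserves boundary conditions}, noting that $\R^d_+$ is open and $d$-regular and that $E_i \subseteq \cl{\R^d_+}$ is $(d-1)$-regular by Lemma~\ref{lem: dummes Argument 1}). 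So the plan is to show $\Res_{E_i}\Ext u = 0$ for every $u \in \W^{s,p}_\bullet({}^c E_i)$, where $\Res_{E_i}$ is the Jonsson--Wallin restriction to $E_i \subseteq \bd \R^d_+ \cong \R^{d-1}$; this is equivalent to $\Ext u \in \X^{s+1/p,p}_{E_i}(\R^d_+)$ in the sense of Definition~\ref{def: spaces on open sets}, once one identifies that space with $\{g \in \X^{s+1/p,p}(\R^d_+) : \Res_{E_i} g = 0\}$ -- a routine consequence of the construction of the half-space trace in Section~\ref{Subsec: Ext-Res halfspace} together with the independence of the Jonsson--Wallin trace on the whole-space extension used to realise it.

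First I would observe that any $u \in \W^{s,p}_\bullet({}^c E_i)$ vanishes Lebesgue-almost everywhere on $E_i$: for $s > 0$ this is the definition $\W^{s,p}_\bullet({}^c E_i) = \{f \in \W^{s,p}(\R^{d-1}) : f|_{E_i} = 0\}$, and in the Sickel range it follows from $u = (1-\mathds{1}_{E_i})f$, the two descriptions agreeing on their common range. As $E_i$ is full-dimensional in $\R^{d-1}$, the measure $\cH^{d-1}$ coincides on $E_i$ with $(d-1)$-dimensional Lebesgue measure, so $u = 0$ holds $\cH^{d-1}$-almost everywhere on $E_i$ as well. Next, in the range $s \in (0,1)$ -- the one actually used in Section~\ref{Subsec: conclusion}, where $s = s_1 - 1/p_1$, and the only one in which $\X^{s+1/p,p}_{E_i}(\R^d_+)$ is defined via $\Res_{E_i}$ -- I would apply Lemma~\ref{Lem: restriction of extension on F} with $F = E_i$ (admissible since $E_i$ is $(d-1)$-regular, $s$ is non-integer, and $u \in \W^{s,p}(\R^{d-1})$), which gives $\Res_{E_i}\Ext u = u$ $\cH^{d-1}$-almost everywhere on $E_i$. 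Combining the two facts yields $\Res_{E_i}\Ext u = 0$, i.e.\ $\Ext u \in \X^{s+1/p,p}_{E_i}(\R^d_+)$. When $s \le 0$ one has $s + 1/p \le 1/p$, there is no trace on $E_i$, the decoration $E_i$ on $\X^{s+1/p,p}_{E_i}(\R^d_+)$ is vacuous, and the assertion reduces to Proposition~\ref{prop: half-space extension operator}.

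I do not expect a genuine obstacle here. The two substantial inputs are already available: the consistency of $\Ext$ across $s \in \R \setminus \IZ$ and $p \in (1,\infty)$ (Proposition~\ref{prop: half-space extension operator}) and the fact that $\Ext$ genuinely extends, i.e.\ $\Res_F\Ext u = u$ almost everywhere on $F$ (Lemma~\ref{Lem: restriction of extension on F}). The only point that needs a little care is the bookkeeping of measures on the full-dimensional set $E_i$ -- the hypothesis ``$u$ vanishes on $E_i$'' is stated against Lebesgue measure whereas $\Res_{E_i}$ is built as an $\L^p(E_i,\cH^{d-1})$-valued operator, so one passes (trivially) between the two -- together with the purely notational identification of the two descriptions of $\X^{s+1/p,p}_{E_i}(\R^d_+)$ recorded above.
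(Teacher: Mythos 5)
Your proof is correct and follows essentially the same argument as the paper: apply Proposition~\ref{prop: half-space extension operator} to land in $\X^{s+1/p,p}(\R^d_+)$, then combine Lemma~\ref{Lem: restriction of extension on F} with the definition of $\W^{s,p}_\bullet({}^cE_i)$ to conclude $\Res_{E_i}\Ext u = u = 0$. The additional bookkeeping you include -- the coincidence of $\cH^{d-1}$ with Lebesgue measure on the full-dimensional set $E_i$, the explicit remark that for $s \le 0$ the subscript $E_i$ is vacuous and the claim reduces to Proposition~\ref{prop: half-space extension operator}, and the observation that boundedness is automatic once membership in the closed subspace is established -- is left implicit in the paper's brief proof but is accurate and helpful.
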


\begin{proof}
    Let $u\in \W^{s,p}_\bullet({}^cE_i)$. Due to Proposition~\ref{prop: half-space extension operator} we have $\Ext u \in \X^{s+1/p,p}(\R^d_+)$. By Lemma~\ref{Lem: restriction of extension on F} and the definition of $\W^{s,p}_\bullet({}^cE_i)$ we know that $\Res_{E_i} \Ext u = u = 0$ holds. This means $\Ext u \in \X^{s+1/p,p}_{E_i}(\R^d_+)$.
\end{proof}

Let now $p_0, p_1 \in (1,\infty)$, $s_0 \in [0,1/p_0)$, $s_1 \in (1/p_1,1]$, and $\theta \in (0,1)$. Let us recall
\begin{align*}
 \frac{1}{p} = \frac{1-\theta}{p_0} + \frac{\theta}{p_1}, \quad s = (1-\theta)s_0 + \theta s_1
\end{align*}
and that we assume $s>1/p$. By these restrictions on the parameters there are $q\in (1,\infty)$ and $\eps \in (0, \min\{1/q, t(1-1/q)\})$ such that the point $(1/q,-\varepsilon)^\top$ lies on the segment connecting $(1/p_1,s_1-1/p_1)^\top$ and $(1/p_0,s_0-1/p_0)^\top$ in the $(1/p,s)$-plane. Since we have by construction
\begin{align*}
  \begin{pmatrix} 1/p \\ s-1/p \end{pmatrix} = (1-\theta) \begin{pmatrix} 1/p_0 \\ s_0 - 1/p_0 \end{pmatrix} + \theta \begin{pmatrix} 1/p_1 \\ s_1-1/p_1 \end{pmatrix},
\end{align*}
we can fix $\eta\in (0,\theta)$ such that
\begin{align}
    \begin{pmatrix} 1/p \\ s-1/p \end{pmatrix} = (1-\eta) \begin{pmatrix} 1/q \\ -\varepsilon \end{pmatrix} + \eta \begin{pmatrix} 1/p_1 \\ s_1-1/p_1 \end{pmatrix}.
\end{align}
This yields identity $(\heartsuit)$ in Figure~\ref{fig: road map complex}. Adding both lines of the previous equation gives
\begin{align}
    s = (1-\eta)(1/q-\varepsilon)+\eta s_1.
\end{align}
We deduce
\begin{align}
    \Bigl(1-\frac{\theta-\eta}{1-\eta}\Bigr) s_0 + \frac{\theta-\eta}{1-\eta} s_1 = 1/q-\varepsilon.
\end{align}
In the following all function spaces are on $\R^d_+$ and we omit the dependence. Let $\langle \cdot\,, \cdot \rangle$ denote either the complex or the $(\cdot\,,p)$-real interpolation bracket. From Proposition~\ref{prop: mixed interpolation small s} and Proposition~\ref{prop: 1-sided reiteration} we deduce
\begin{align}
    \langle \X^{1/q-\varepsilon,q}, \X^{s_1,p_1}_{E_i} \rangle_\eta 
    = \langle[\X^{s_0,p_0}, \X^{s_1,p_1}_{E_i}]_\frac{\theta-\eta}{1-\eta}, \X^{s_1,p_1}_{E_i}\rangle_\eta
    = \langle \X^{s_0,p_0}, \X^{s_1,p_1}_{E_i} \rangle_\theta,
\end{align}
where $s_0 \neq 0$ and $s_1 \neq 1$ are required in case $\X = \W$. This establishes Figure~\ref{fig: road map complex} in case of complex interpolation. It also establishes the analogue that corresponds to real interpolation of $\H$-spaces. As for real interpolation of $\W$-spaces, we invoke the following reiteration theorem~\cite[Thm.~3.5.3]{BL}. 

\begin{proposition}
\label{prop: reiteration real}
Let $(X_0,X_1)$ be an interpolation couple. Let $p \in [1,\infty]$, $\theta_0, \theta_1 \in [0,1]$ with $\theta_0 \neq \theta_1$, and $\lambda \in (0,1)$. With $\theta \coloneqq (1-\lambda)\theta_0 + \lambda \theta_1$ it follows that up to equivalent norms
\begin{align*}
 ((X_0,X_1)_{\theta_0,p}, (X_0,X_1)_{\theta_1,p})_{\lambda,p} = (X_0, X_1)_{\theta,p},
\end{align*}
subject to the interpretation $(X_0,X_1)_{j,p} \coloneqq X_j$ in the endpoint cases $j \in \{0,1\}$.
\end{proposition}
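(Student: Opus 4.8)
The plan is to deduce the identity from two continuous inclusions
\begin{align*}
 (Y_0,Y_1)_{\lambda,p} \hookrightarrow (X_0,X_1)_{\theta,p} \hookrightarrow (Y_0,Y_1)_{\lambda,p}, \qquad Y_j \coloneqq (X_0,X_1)_{\theta_j,p},
\end{align*}
where $Y_j$ is read as $X_j$ when $\theta_j \in \{0,1\}$. By symmetry I would assume $\theta_0 < \theta_1$ and set $\alpha \coloneqq \theta_1 - \theta_0 > 0$; then $\theta = \theta_0 + \lambda\alpha = \theta_1 - (1-\lambda)\alpha$ and $0 \le \theta_0 < \theta < \theta_1 \le 1$. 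The whole mechanism is the change of variables $\tau = t^{\alpha}$ linking the $K$-functional of $(Y_0,Y_1)$ to that of $(X_0,X_1)$, combined with Hardy-type integral inequalities which in this homogeneous scaling are exact up to constants (by Fubini).

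\emph{Preliminary.} First I would record that each $Y_j$ lies in the classes $\mathcal C_K(\theta_j)$ and $\mathcal C_J(\theta_j)$ relative to $(X_0,X_1)$, i.e.
\begin{align*}
 K(t,y;X_0,X_1) \lesssim t^{\theta_j}\|y\|_{Y_j} \;\; (y \in Y_j), \qquad \|a\|_{Y_j} \lesssim t^{-\theta_j}J(t,a;X_0,X_1) \;\; (a \in X_0 \cap X_1),
\end{align*}
uniformly in $t>0$, where $J(t,a) = \max\{\|a\|_{X_0}, t\|a\|_{X_1}\}$. The first bound is standard from concavity, monotonicity and subadditivity of $t \mapsto K(t,y)$; the second follows from the elementary estimate $K(s,a) \le \min\{1, s/t\}\,J(t,a)$, whose $p$-th power integrates over $s \in (0,\infty)$ against $\tfrac{\d s}{s}$ to a $\theta_j$-dependent multiple of $(t^{-\theta_j}J(t,a))^p$, the integral converging precisely because $\theta_j \in (0,1)$. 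The endpoint cases $\theta_j \in \{0,1\}$ with $Y_j = X_j$ are immediate from $K(t,x) \le \min\{\|x\|_{X_0}, t\|x\|_{X_1}\}$ and $J(t,a) \ge \max\{\|a\|_{X_0}, t\|a\|_{X_1}\}$.

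\emph{First inclusion.} Using $Y_0 \in \mathcal C_K(\theta_0)$ and $Y_1 \in \mathcal C_K(\theta_1)$: for $x = y+z$ with $y \in Y_0$, $z \in Y_1$, split $y$ and $z$ near-optimally in $(X_0,X_1)$ at level $t$ and recombine, obtaining $K(t,x;X_0,X_1) \lesssim t^{\theta_0}\|y\|_{Y_0} + t^{\theta_1}\|z\|_{Y_1} = t^{\theta_0}\bigl(\|y\|_{Y_0} + t^{\alpha}\|z\|_{Y_1}\bigr)$; the infimum over decompositions gives $K(t,x;X_0,X_1) \lesssim t^{\theta_0}K(t^{\alpha},x;Y_0,Y_1)$. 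Inserting this into $\|x\|_{(X_0,X_1)_{\theta,p}}^p = \int_0^\infty (t^{-\theta}K(t,x;X_0,X_1))^p\,\tfrac{\d t}{t}$, using $\theta - \theta_0 = \lambda\alpha$, and substituting $\tau = t^{\alpha}$ turns the integral into a constant times $\|x\|_{(Y_0,Y_1)_{\lambda,p}}^p$, which is the inclusion $(Y_0,Y_1)_{\lambda,p} \hookrightarrow (X_0,X_1)_{\theta,p}$. \emph{Second inclusion.} Here I would switch to the $J$-description of the real method on both sides -- permissible by the equivalence of the $K$- and $J$-constructions, see \cite[Sec.~3.3]{BL} -- and pass between the $J$-norms. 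Given an almost norm-optimal $J$-representation $x = \int_0^\infty u(t)\,\tfrac{\d t}{t}$ for $(X_0,X_1)_{\theta,p}$, reparametrise $v(\tau) \coloneqq \alpha^{-1}u(\tau^{1/\alpha})$, so that $x = \int_0^\infty v(\tau)\,\tfrac{\d\tau}{\tau}$ with $v(\tau) \in X_0 \cap X_1 \subseteq Y_0 \cap Y_1$. Applying the $\mathcal C_J$-estimates at scale $t = \tau^{1/\alpha}$ and using $\alpha - \theta_1 = -\theta_0$ together with $\lambda\alpha + \theta_0 = \theta$ yields $\tau^{-\lambda}J(\tau,v(\tau);Y_0,Y_1) \lesssim t^{-\theta}J(t,u(t);X_0,X_1)$; raising to the power $p$, integrating, and substituting back bounds the $J$-norm of $x$ in $(Y_0,Y_1)_{\lambda,p}$ by its $J$-norm in $(X_0,X_1)_{\theta,p}$, i.e. $(X_0,X_1)_{\theta,p} \hookrightarrow (Y_0,Y_1)_{\lambda,p}$.

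I expect the only real obstacle to be the two structural facts fed into the argument: that $(X_0,X_1)_{\theta_j,p}$ is of class $\mathcal C_K(\theta_j) \cap \mathcal C_J(\theta_j)$, and above all the equivalence of the $K$- and $J$-functors of the real method, which encapsulates the ``fundamental lemma'' decomposing $K(2^\nu,x)$ into $J$-controlled atoms. Everything else reduces to the substitution $\tau = t^{\theta_1 - \theta_0}$, Hardy's inequality, and the arithmetic $\theta = \theta_0 + \lambda(\theta_1 - \theta_0)$; the only delicate bookkeeping is uniformity of the implicit constants as $\theta_j \to 0$ or $1$, which is exactly where the endpoint convention $(X_0,X_1)_{j,p} \coloneqq X_j$ is invoked. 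An essentially equivalent alternative would be to derive Holmstedt's formula for $K(\cdot,x;Y_0,Y_1)$ and substitute it directly.
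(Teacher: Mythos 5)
Your proof is correct and matches the standard argument; the paper does not supply its own proof of this proposition but cites it directly as Theorem~3.5.3 in Bergh--L\"ofstr\"om, whose proof proceeds exactly as you describe: one shows the spaces $Y_j$ are of class $\mathcal{C}(\theta_j)$ relative to $(X_0,X_1)$ and then transfers $K$- and $J$-norm estimates through the substitution $\tau = t^{\theta_1-\theta_0}$, invoking the equivalence of the $K$- and $J$-methods for the reverse inclusion. The only notable bookkeeping you handle correctly is the endpoint convention, which is precisely where the integral defining the $\mathcal{C}_J$-estimate would otherwise diverge.
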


Indeed, in combination with Proposition~\ref{prop: mixed interpolation small s} we can give
\begin{align}
     (\W^{1/q-\varepsilon,q}, \W^{s_1,p_1}_{E_i})_{\eta,p} 
    = ((\W^{s_0,p_0}, \W^{s_1,p_1}_{E_i})_{\frac{\theta-\eta}{1-\eta},p}, \W^{s_1,p_1}_{E_i})_{\eta,p} 
    = (\W^{s_0,p_0}, \W^{s_1,p_1}_{E_i})_{\theta,p}
\end{align}
without requiring $s_0 \neq 0$ or $s_1 \neq 1$. This completes Figure~\ref{fig: road map complex} in the remaining case.

With this at hand, we complete the proof of Theorem~\ref{thm: main result}. Let $\langle \cdot\,, \cdot \rangle$ denote either the complex or the $(\cdot\,,p)$-real interpolation bracket. With Lemma~\ref{Lem: restriction of extension on F} we derive $\Res(f-\Ext\Res f)=0$ for $f\in \X^{s,p}$, which means $f-\Ext\Res f\in \X^{s,p}_{\bd \R^d_+}$. We have
\begin{align}
     \langle\X^{s_0,p_0},\X^{s_1,p_1}_{\bd \R^d_+}\rangle_\theta \subseteq \langle\X^{s_0,p_0},\X^{s_1,p_1}_{E_i}\rangle_\theta,
\end{align}
where Proposition~\ref{prop: pure Dirichlet interpolation} identifies the left-hand space as $\X^{s,p}_{\bd \R^d_+}$ for complex interpolation and as $\W^{s,p}_{\bd \R^d_+}$ for real interpolation. From the decomposition
\begin{align*}
 f=(f-\Ext\Res f)+\Ext\Res f
\end{align*}
we conclude $f\in [\X^{s_0,p_0},\X^{s_1,p_1}_{E_i}]_\theta$ for $f\in \X^{s,p}_{E_i}$ in case of complex interpolation, which completes the proof of \eqref{Ip complex}, and $f\in (\X^{s_0,p_0}, \X^{s_1,p_1}_{E_i})_{\theta,p}$ for $f\in \W^{s,p}_{E_i}$, which shows \eqref{Ip real}.

%%%%%%%%%%%%%%%%%%%%%%%%%%%%%%%%%%%%%%%%%%%%%%%%%%%%%%%%%%%%%%%%%%%%%%%%%%%%%%%%%%%%%%%%%%%%%%%%%%%%%%%%%%%%%%%%%%%%%%%%%%%%%%%%%%%%%%%%%%%%%%%%%%%%%%%%%%%%%%%%%%%%
\section{A complex \texorpdfstring{($\W^{-1,p}_D$,$\W^{1,p}_D$)}{W1p-W-1p} interpolation formula}
\label{Sec: W-1p}

In this section we prove Theorem~\ref{thm: W1p-W-1p}. We begin by defining spaces of negative smoothness with boundary conditions on an open set.

\begin{definition}
\label{def: X negative smoothness}
Let $O \subseteq \R^d$ be open and $D \subseteq \cl{O}$ be $(d-1)$-regular. Let $p \in (1,\infty)$ and $s \in [0,1]$. For $\X$ either $\H$ or $\W$ define
\begin{align*}
 \X^{-s,p}(O) \coloneqq (\X^{s,p'}(O))^*
\end{align*}
and if $s > 1-1/p$ define
\begin{align*}
 \X_D^{-s,p}(O) \coloneqq (\X_D^{s,p'}(O))^*.
\end{align*}
\end{definition}

The second part of the definition is consistent with the case $O = \R^d$, see Section~\ref{Subsec: Not Spaces}.

We are concerned with interpolation spaces between $\W^{-1,p}_D(O)$ and $\W^{1,p}_D(O)$. These two spaces form an interpolation couple since we can naturally view $\W^{1,p}_D(O)$ as a subspace of $\W^{-1,p}_D(O)$ by extending the $\L^p(O) - \L^{p'}(O)$ duality. We also recall that as a consequence of Lemma~\ref{lem: testfunctions dense} the inclusion $\W^{1,p}_D(O) \subseteq \L^p(O)$ is dense. 

As for interpolation of dual spaces, we have the following principle~\cite[Cor.~4.5.2]{BL}, see also \cite[Cor.~2.15]{Bechtel} for a proper treatment of the spaces of conjugate-linear functionals indicated by a superscript asterisk.

\begin{proposition}
\label{prop: duality complex interpolation}
Let $(X_0,X_1)$ be an interpolation couple such that $X_0 \cap X_1$ is dense in both $X_0$ and $X_1$ and assume that $X_0$ is reflexive. For $\theta \in (0,1)$ it follows that with equal norms
\begin{align*}
 [X_1^*, X_0^*]_{1-\theta} = ([X_0, X_1]_\theta)^*.
\end{align*}
\end{proposition}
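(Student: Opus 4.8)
This is Calder\'on's duality theorem for the complex method, and the plan is to assemble it from its two classical halves after disposing of two formalities. The symmetry $[A_0,A_1]_\eta=[A_1,A_0]_{1-\eta}$ (reflect the strip by $z\mapsto 1-z$) turns the asserted identity into $([X_0,X_1]_\theta)^*=[X_0^*,X_1^*]_\theta$; and since conjugation $\ell\mapsto\overline{\ell(\cdot)}$ is an isometric anti-isomorphism between the linear and the conjugate-linear dual that intertwines the complex functor — one checks this by replacing an admissible analytic function $f$ by $z\mapsto\overline{f(\bar z)}$ — it suffices to work with ordinary duals, the conjugate-linear bookkeeping being exactly \cite[Cor.~2.15]{Bechtel}. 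So it remains to prove, with equal norms: \textbf{(A)} under the density hypothesis on $X_0\cap X_1$, the dual $([X_0,X_1]_\theta)^*$ coincides with the \emph{upper} Calder\'on space $[X_0^*,X_1^*]^\theta$; and \textbf{(B)} when $X_0$ is reflexive, hence $X_0^*$ is reflexive, the upper and lower complex interpolation spaces of the couple $(X_0^*,X_1^*)$ coincide.

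For \textbf{(A)} I would run the standard strip argument with the Calder\'on classes on $S=\{0<\Re z<1\}$: $\mathcal F=\mathcal F(X_0,X_1)$, the $(X_0+X_1)$-valued functions bounded and continuous on $\overline S$, analytic in $S$, with boundary traces in $C_0(\R;X_j)$ and norm $\max_j\sup_t\|f(j+it)\|_{X_j}$, so that $[X_0,X_1]_\theta=\{f(\theta):f\in\mathcal F\}$ with the quotient norm; and $\mathcal G=\mathcal G(X_0^*,X_1^*)$, the functions of at most linear growth, analytic in $S$, whose boundary traces have Lipschitz increments into $X_j^*$, so that $[X_0^*,X_1^*]^\theta=\{g'(\theta):g\in\mathcal G\}$. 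The inclusion $[X_0^*,X_1^*]^\theta\hookrightarrow([X_0,X_1]_\theta)^*$ is obtained by pairing: for $b'=g'(\theta)$ and $a=f(\theta)$, with $f$ chosen (by density of $X_0\cap X_1$ in both endpoints) to take values in $X_0\cap X_1$ so that $z\mapsto\langle f(z),g(z)\rangle$ is a genuine scalar function, a contour-integration and three-lines estimate that exploits the $C_0$-decay of $f$ and the Lipschitz bound on $g$ gives $|\langle a,b'\rangle|\le\|f\|_{\mathcal F}\|g\|_{\mathcal G}$; infimizing over representatives yields the contractive embedding. The reverse inclusion is the crux: given $\ell\in([X_0,X_1]_\theta)^*$, precompose with the metric surjection $\delta_\theta:\mathcal F\twoheadrightarrow[X_0,X_1]_\theta$, $f\mapsto f(\theta)$, to produce $\widetilde\ell\in\mathcal F^*$ of equal norm vanishing on $\ker\delta_\theta$; extend it by Hahn--Banach to $C_0(\R;X_0)\oplus C_0(\R;X_1)$, into which $\mathcal F$ embeds isometrically through its pair of boundary traces; represent the extension by a pair of $X_j^*$-valued regular Borel measures of bounded variation on the two lines; and reconstruct from these, via the Poisson/Cauchy kernel of the strip, a function $g\in\mathcal G(X_0^*,X_1^*)$ with $g'(\theta)$ representing $\ell$ and $\|g\|_{\mathcal G}\le\|\ell\|$.

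For \textbf{(B)} the inclusion $[Y_0,Y_1]_\theta\hookrightarrow[Y_0,Y_1]^\theta$ is contractive for any couple, and the converse is where reflexivity enters. Given $g\in\mathcal G(Y_0,Y_1)$, the boundary difference quotients $h^{-1}\bigl(g(j+i(t+h))-g(j+it)\bigr)$ are bounded in $Y_j$ uniformly in $h$; since $Y_0=X_0^*$ is reflexive one can extract weak limits and, handling measurability by a vector-valued Radon--Nikod\'ym / measurable-selection argument, obtain $g'(j+i\cdot)\in\L^\infty(\R;Y_j)$ and hence, by a Cauchy/Poisson extension back into the strip, a function $f\in\mathcal F(Y_0,Y_1)$ with $f(\theta)=g'(\theta)$ and $\|f\|_{\mathcal F}\le\|g\|_{\mathcal G}$. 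Chaining $([X_0,X_1]_\theta)^*=[X_0^*,X_1^*]^\theta=[X_0^*,X_1^*]_\theta=[X_1^*,X_0^*]_{1-\theta}$ then yields the claim, all identifications being isometric.

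The main obstacle is the hard half of \textbf{(A)}: identifying $\mathcal F(X_0,X_1)^*$ through boundary measures and carrying out the Poisson/Cauchy reconstruction that turns an abstract functional into a $\mathcal G$-representative — this is where the density of $X_0\cap X_1$ is genuinely consumed and where essentially all the analytic content sits. The reflexivity-driven collapse in \textbf{(B)} is the secondary difficulty, the delicate point being measurability of the weak derivative. The symmetry of the functor, the conjugate-linear duality, and the two easy inclusions are routine and can be quoted from \cite{BL} (together with \cite{Bechtel} for the conjugate-linear formulation).
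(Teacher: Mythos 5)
The paper does not actually prove this proposition: it is quoted verbatim from Calder\'on's duality theorem as stated in \cite[Cor.~4.5.2]{BL}, with \cite[Cor.~2.15]{Bechtel} supplying the conjugate-linear bookkeeping, and your outline is exactly the classical two-step argument behind that citation (dual of the lower space equals the upper space of the duals under the density hypothesis; upper equals lower for the dual couple because $X_0^*$ inherits reflexivity). So the approach is the same; the only caveat is that your sketch of step \textbf{(B)} invokes weak-limit extraction of the boundary difference quotients on \emph{both} lines, whereas only $Y_0=X_0^*$ is reflexive --- the classical argument instead uses that the difference quotients $g_h$ form a bounded family in $\mathcal{F}(Y_0,Y_1)$ with $g_h(\theta)\to g'(\theta)$ in $Y_0+Y_1$, so this point would need to be repaired if you intend the proof to be self-contained rather than quoted from \cite{BL}.
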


The idea of proof is to patch together the interpolation scale provided by Theorem~\ref{thm: main result} with its dual scale. This requires some overlap of interpolation scales. The following lemmas use some notions introduced in Section~\ref{Subsec: Symmetric interpolation}.

\begin{lemma}
\label{lem: negative s on O}
Let $O \subseteq \R^d$ be an open set with $(d-1)$-regular boundary. Let $p \in (1,\infty)$, $s \in (1/p-1,1/p)$, and let $\X$ denote either $\H$ or $\W$. There is a retraction $\Res: \X^{s,p}(\R^d) \to \X^{s,p}(O)$ with corresponding coretraction $\Ext: \X^{s,p}(O) \to \X^{s,p}(\R^d)$. These operators are the same for all $p$ and $s$.
\end{lemma}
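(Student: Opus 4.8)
The plan is to obtain the retraction/coretraction pair by duality from the zero-extension operator in the positive-smoothness regime. For the dual exponent $p'$ and dual smoothness $-s \in (1/p'-1, 1/p')$, note that $-s \in [0, 1/p')$ exactly when $s \in (1/p-1, 0]$, and $-s \in (1/p'-1, 0)$ when $s \in (0, 1/p)$; so for \emph{all} $s$ in the stated range, the value $-s$ lies in the interval $(1/p'-1, 1/p')$, which is precisely the union of the two regimes appearing in Proposition~\ref{prop: Sickel} (multiplier by $\mathds 1_O$ bounded on $\X^{\sigma,q}(\R^d)$ for $t(1/q-1)<\sigma<t/q$, here with $t$ close to $1$ since $\bd O$ is $(d-1)$-regular, cf. Example~\ref{ex: examples Dt class}). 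First I would record that, by Corollary~\ref{cor: zero extension bounded} together with its dual half (the dual operator $\mathds 1_O \varphi = \varphi \circ \mathds 1_O$ in Proposition~\ref{prop: Sickel}), the zero-extension operator $\Ext_0 \colon \X^{-s,p'}(O) \to \X^{-s,p'}(\R^d)$ and the restriction $|_O \colon \X^{-s,p'}(\R^d) \to \X^{-s,p'}(O)$ are both bounded, with $(|_O)\circ \Ext_0 = \mathrm{id}$ on $\X^{-s,p'}(O)$, and that these do not depend on $p'$ or $s$. Actually it is cleaner to phrase the whole thing on the primal side: since $s<1/p$, Corollary~\ref{cor: zero extension bounded} gives boundedness of $\Ext_0 \colon \X^{s,p}(O) \to \X^{s,p}(\R^d)$ directly when $s \in [0,1/p)$, and for $s \in (1/p-1,0)$ one uses instead the dual multiplier statement of Proposition~\ref{prop: Sickel}, applied to the $(d-1)$-regular boundary so that $O$ is of class $\cD^t$ for every $t \in (0,1)$; choosing $t$ close enough to $1$ covers the full range $s \in (1/p-1,0)$.

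The second step is to identify the resulting operators with a retraction/coretraction pair in the sense of Section~\ref{Subsec: Symmetric interpolation}. Set $\Res \coloneqq {}|_O \colon \X^{s,p}(\R^d) \to \X^{s,p}(O)$ and $\Ext \coloneqq \Ext_0 \colon \X^{s,p}(O) \to \X^{s,p}(\R^d)$. Boundedness of both was just established, and $\Res \Ext = {}|_O \circ \Ext_0 = \mathrm{id}$ on $\X^{s,p}(O)$ by the very definition of $\Ext_0$ as extension by zero followed by restriction. Hence $\Res$ is a retraction with coretraction $\Ext$. Independence of $p$ and $s$ is inherited from the corresponding statements: $|_O$ and $\Ext_0$ are defined pointwise (resp. as the same continuous extension of a pointwise operator), so they act consistently across the admissible parameter range, exactly as in Corollary~\ref{cor: zero extension bounded}.

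The only genuinely delicate point is the negative-smoothness range $s \in (1/p-1,0)$, where $\X^{s,p}(O) = (\X^{-s,p'}(O))^*$ is defined by duality rather than by restriction, and one must check that the zero-extension operator (a priori an operator on distributions) still makes sense and is bounded there. I would handle this by a duality argument: the adjoint of $|_O \colon \X^{-s,p'}(\R^d) \to \X^{-s,p'}(O)$ is, by Definition~\ref{def: X negative smoothness} and the standard identification of the dual of a quotient with a subspace of the dual, exactly an isometric embedding $\X^{s,p}(O) \hookrightarrow \X^{s,p}(\R^d)$, and one checks on test functions that it coincides with extension by zero; boundedness of $\mathds 1_O$ on $\X^{-s,p'}(\R^d)$ (Proposition~\ref{prop: Sickel}) dualizes to boundedness of the associated projection on $\X^{s,p}(\R^d)$, which pins down the restriction operator $\Res$ as its co-restriction. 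So the main obstacle is bookkeeping with the conjugate-linear duality (for which one may cite \cite{Bechtel}) to confirm that the abstractly defined dual operators really are the concrete zero-extension and restriction maps — once that is in place, the retraction identity $\Res\Ext = \mathrm{id}$ and the parameter-independence are immediate.
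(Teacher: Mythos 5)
Your proposal is correct and matches the paper's own argument: for $s\in[0,1/p)$ take $\Res={}|_O$ and $\Ext=\Ext_0$ via Corollary~\ref{cor: zero extension bounded}, and for $s\in(1/p-1,0)$ define the pair by duality from the zero-extension/restriction pair acting at the dual parameters $(-s,p')\in(0,1/p')\times(1,\infty)$, exactly as in your last paragraph. The detour through the dual multiplier statement of Proposition~\ref{prop: Sickel} is not needed (the paper only uses Corollary~\ref{cor: zero extension bounded} and the quotient-norm boundedness of $|_O$ on the dual side), but it is not incorrect; once you discard it, your argument is the paper's.
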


\begin{proof}
If $s \in [0,1/p)$, then due to Corollary~\ref{cor: zero extension bounded} we can take $\Res \coloneqq |_O$ and $\Ext \coloneqq \Ext_0$ the extension by $0$. By the usual identification of functions with distributions, these operators consistently act on $\X^{s,p}$ also when $s \in (1/p-1,0]$. Indeed, if $f \in \X^{s,p}(\R^d)$ then for all $\varphi \in \X^{-s,p'}(O)$ we set
\begin{align*}
 \langle f|_O, \varphi \rangle_{\X^{s,p}(O), \X^{-s,p'}(O)} \coloneqq \langle f, \Ext_0 \varphi \rangle_{\X^{s,p}(\R^d), \X^{-s,p'}(\R^d)},
\end{align*}
where $\langle \cdot\, , \cdot \rangle$ denotes the respective duality pairing. Well-definedness and boundedness of $|_O: \X^{s,p}(\R^d) \to \X^{s,p}(O)$ follows again from Corollary~\ref{cor: zero extension bounded}. Conversely, given $g \in \X^{s,p}(O)$, we let the zero extension $\Ext_0 g$ act on $\psi \in \X^{-s,p'}(\R^d)$ via
\begin{align*}
 \langle \Ext_0 g, \psi \rangle_{\X^{s,p}(\R^d), \X^{-s,p'}(\R^d)} \coloneqq \langle g, \psi|_O \rangle_{\X^{s,p}(O), \X^{-s,p'}(O)}.
\end{align*}
It is bounded since $|_O: \X^{-s,p'}(\R^d) \to \X^{-s,p'}(O)$ is bounded by definition of the quotient norm. Finally, $(\Ext_0 g)|_O = g$ follows by concatenating the two identities above.
\end{proof}

\begin{lemma}
\label{lem: interpolation on O around s=0}
Let $O \subseteq \R^d$ be an open set with $(d-1)$-regular boundary. Let $p \in (1,\infty)$, $s_0,s_1 \in (1/p-1,1/p)$, $\theta \in (0,1)$, and set $s \coloneqq (1-\theta)s_0 + \theta s_1$. If $\X$ denotes either $\H$ or $\W$, then up to equivalent norms
\begin{align*}
 [\X^{s_0, p}(O), \X^{s_1,p}(O)]_{\theta} = \X^{s,p}(O)
\end{align*}
with the exception that $s=0$ is only allowed if $\X=\H$.
\end{lemma}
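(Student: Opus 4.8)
The plan is to invoke the retraction--coretraction principle exactly as in the proof of Proposition~\ref{prop: interpol X on subset}, but now using the pair $(\Res,\Ext)$ supplied by Lemma~\ref{lem: negative s on O} rather than Rychkov's operator, so that the argument remains valid for negative smoothness down to $1/p-1$. First I would set $X_j\coloneqq \X^{s_j,p}(\R^d)$ and $Y_j\coloneqq \X^{s_j,p}(O)$ for $j=0,1$, with $\Res\coloneqq |_O$ and $\Ext\coloneqq \Ext_0$ the zero extension. Lemma~\ref{lem: negative s on O} (whose hypothesis $s\in(1/p-1,1/p)$ is met by both $s_0$ and $s_1$, since $p_0=p_1=p$ here) guarantees that $\Res\colon X_j\to Y_j$ is a retraction with coretraction $\Ext\colon Y_j\to X_j$, and crucially that these operators are \emph{the same} for all admissible $s$; this common action is what makes Proposition~\ref{prop: retraction-coretraction} applicable to the couple.

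Next I would compute the whole-space interpolation space. By Proposition~\ref{prop: interpolation whole space}\eqref{Ii} we have $[X_0,X_1]_\theta=\X^{s,p}(\R^d)$ up to equivalent norms, with the stated exception that for $\X=\W$ one needs $s$ to be an integer only if both $s_0,s_1$ are; since $s_0,s_1\in(1/p-1,1/p)$, the only integer that $s$ can possibly hit is $0$, and then one cannot have both $s_0$ and $s_1$ integers unless $s_0=s_1=0$ — hence the exclusion ``$s=0$ only if $\X=\H$'' in the statement, which matches exactly. With $X\coloneqq[X_0,X_1]_\theta=\X^{s,p}(\R^d)$ in hand, Remark~\ref{rem: retraction-coretraction} gives $[Y_0,Y_1]_\theta=\Res(X)$ with equivalent norms, provided $\Res(X)$ carries the quotient norm inherited from $X/\Rg(\Res)$. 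For $s\ge 0$ that is the defining quotient norm of $\X^{s,p}(O)$ in Definition~\ref{def: spaces on open sets}; for $s<0$ one has to note that the duality-based norm of $\X^{s,p}(O)$ from Definition~\ref{def: X negative smoothness} is, by Lemma~\ref{lem: negative s on O} and the open mapping theorem, equivalent to the quotient norm $\|f\|=\inf\{\|F\|_{\X^{s,p}(\R^d)}:F|_O=f\}$, so the hypothesis of Remark~\ref{rem: retraction-coretraction} still holds. This yields $[\X^{s_0,p}(O),\X^{s_1,p}(O)]_\theta=\X^{s,p}(O)$ up to equivalent norms.

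The main obstacle I anticipate is the bookkeeping around negative $s$: one must be confident that ``$\X^{s,p}(O)$'' as it appears in the statement (defined by duality for $s<0$) really is the space $\Res(\X^{s,p}(\R^d))$ with its quotient norm, and that the consistent action of $\Ext_0$ asserted in Lemma~\ref{lem: negative s on O} is compatible with this identification — this is precisely the content of the duality computations in the proof of Lemma~\ref{lem: negative s on O}, so it can be cited rather than redone. The only other point requiring a word is the integer-smoothness caveat for $\X=\W$, handled as above by observing that $s=0$ is the sole integer in range and it forces $s_0=s_1=0$.
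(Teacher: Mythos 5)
Your proposal is correct and follows essentially the same route as the paper: the paper's two-sentence proof invokes Proposition~\ref{prop: interpolation whole space} on $\R^d$ and then applies the retraction--coretraction principle (Proposition~\ref{prop: retraction-coretraction}) with the pair from Lemma~\ref{lem: negative s on O}, which is exactly what you do. Your additional bookkeeping — verifying that the quotient norm on $\Res(X)$ agrees with the duality norm on $\X^{s,p}(O)$ for $s<0$, and pinning down the integer-smoothness caveat for $\X=\W$ — is a correct and welcome expansion of the argument the paper leaves implicit.
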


\begin{proof}
The corresponding identities on $O = \R^d$ are due to Proposition~\ref{prop: interpolation whole space}. The conclusion follows from Proposition~\ref{prop: retraction-coretraction} applied with the retraction-coretraction pair from Lemma~\ref{lem: negative s on O}.
\end{proof}

With these tools at hand, we can give the

\begin{proof}[Proof of Theorem~\ref{thm: W1p-W-1p}]
We appeal to Wolff's result, Proposition~\ref{prop: Wolff}. All function spaces will be on $O$ and we omit the dependence. We fix some $s \in (0, \min\{1/p, 1-1/p\})$ and consider the following diagram.
\begin{center}
\begin{tikzpicture}[scale=0.7]
\coordinate[label=left:$\H^{-s,p}$] (X0) at (0,0);
\coordinate[label=left:$\L^{p}$] (Xtheta) at (3,0);
\coordinate[label=right:$\H^{s,p}$] (Xeta) at (5,0);
\coordinate[label=right:$\W^{1,p}_D$] (X1) at (9,0);
\coordinate[label=above:${[\cdot\,, \cdot]_{1/2}}$] (I1) at (3,0.7);
\coordinate[label=below:${[\cdot\,, \cdot]_{s}}$] (I1) at (5,-0.7);

\draw (0,0) -- (0,0.7);
\draw (0,0.7) -- (5,0.7);
\draw (5,0.7) -- (5,0);
\draw[dashed] (3,0) -- (3,0.7);

\draw (3,0) -- (3,-0.7);
\draw (3,-0.7) -- (9,-0.7);
\draw (9,-0.7) -- (9,0);
\draw[dashed] (5,-0.7) -- (5,0);

\fill (X0) circle (3pt);
\fill (Xtheta) circle (3pt);
\fill (Xeta) circle (3pt);
\fill (X1) circle (3pt);
\end{tikzpicture}
\end{center}
The $1/2$-interpolation is due to Lemma~\ref{lem: interpolation on O around s=0} and $s$-interpolation is due to Theorem~\ref{thm: main result}. Proposition~\ref{prop: Wolff} yields $\L^p = [\H^{-s,p}, \W^{1,p}_D]_{s/(1+s)}$. Therefore we can consider the diagram
\begin{center}
\begin{tikzpicture}[scale=0.7]
\coordinate[label=left:$\W^{-1,p}_D$] (X0) at (0,0);
\coordinate[label=left:$\H^{-s,p}$] (Xtheta) at (3,0);
\coordinate[label=right:$\L^p$] (Xeta) at (5,0);
\coordinate[label=right:$\W^{1,p}_D$] (X1) at (9,0);
\coordinate[label=above:${[\cdot\,, \cdot]_{1-s}}$] (I1) at (3,0.7);
\coordinate[label=below:${[\cdot\,, \cdot]_{s/(1+s)}}$] (I1) at (5,-0.7);

\draw (0,0) -- (0,0.7);
\draw (0,0.7) -- (5,0.7);
\draw (5,0.7) -- (5,0);
\draw[dashed] (3,0) -- (3,0.7);

\draw (3,0) -- (3,-0.7);
\draw (3,-0.7) -- (9,-0.7);
\draw (9,-0.7) -- (9,0);
\draw[dashed] (5,-0.7) -- (5,0);

\fill (X0) circle (3pt);
\fill (Xtheta) circle (3pt);
\fill (Xeta) circle (3pt);
\fill (X1) circle (3pt);
\end{tikzpicture}
\end{center}
where the $(1-s)$-interpolation follows from Theorem~\ref{thm: main result} by means of the duality principle of Proposition~\ref{prop: duality complex interpolation}. Another application of Proposition~\ref{prop: Wolff} completes the proof.
\end{proof}
%%%%%%%%%%%%%%%%%%%%%%%%%%%%%%%%%%%%%%%%%%%%%%%%%%%%%%%%%%%%%%%%%%%%%%%%%%%%%%%%%%%%%%%%%%%%%%%%%%%%%%%%%%%%%%%%%%%%%%%%%%%%%%%%%%%%%%%%%%%%%%%%%%%%%%%%%%%%%%%%%%%%
\section{Real interpolation via the trace method}
\label{Sec: Real}

Here, we present the proof of Theorem~\ref{thm: real interpolation via trace}.

\subsection{Road map}
\label{Subsec: Roadmap real}

The main new ingredient is Grisvard's trace characterization of real interpolation spaces~\cite[Thm.~5.12]{Grisvard-Interpolation} stated in Proposition~\ref{prop: Grisvard trace theorem} below.

For a Banach space $X$ we need the usual Bochner--Lebesgue space $\L^p(\R;X)$ of $X$-valued $p$-integrable functions on the real line and for $s > 0$ the respective (fractional) Sobolev spaces $\W^{s,p}(\R;X)$ that are defined as in the scalar case upon replacing absolute values by norms on $X$. For $s > 1/p$ such functions have a continuous representative and in that sense $\W^{s,p}(\R;X) \subseteq \C(\R; X)$ holds with continuous inclusion~\cite[Cor.~26]{Simon}. In particular, the pointwise evaluation $|_{t=0}: \W^{s,p}(\R;X) \to X$ is well-defined and bounded. All this was already used in \cite{Grisvard-Interpolation} and known at the time by different proofs. 

\begin{proposition}[Grisvard]
\label{prop: Grisvard trace theorem}
Let $X_0, X_1$ be Banach spaces such that $X_1 \subseteq X_0$ with dense and continuous inclusion. Let $p \in (1,\infty)$ and $s > 1/p$. Then
\begin{align*}
  \big(\L^p(\R; X_1) \cap \W^{s,p}(\R; X_0) \big)|_{t = 0} \subseteq \big(X_0,X_1\big)_{1-\frac{1}{sp}, p}.
\end{align*}
\end{proposition}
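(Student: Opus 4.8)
The plan is to verify directly that $g(0)$ lies in the target interpolation space by estimating Peetre's $K$-functional. Abbreviate $\vartheta\coloneqq 1-\tfrac{1}{sp}$, which lies in $(0,1)$ because $s>1/p$, and note that the vector-valued Sobolev embedding $\W^{s,p}(\R;X_0)\subseteq\C(\R;X_0)$ makes $g(0)\in X_0$ meaningful, with $\|g(0)\|_{X_0}\lesssim\|g\|_{\W^{s,p}(\R;X_0)}$. It thus suffices to establish
\[
 \int_0^\infty\bigl(t^{-\vartheta}\,K(t,g(0);X_0,X_1)\bigr)^p\,\frac{\d t}{t}\;\lesssim\;\|g\|_{\L^p(\R;X_1)}^p+\|g\|_{\W^{s,p}(\R;X_0)}^p ,
\]
and I would do so by exhibiting, for each $t>0$, a splitting $g(0)=a_0(t)+a_1(t)$ with $a_0(t)\in X_0$, $a_1(t)\in X_1$, calibrated so that the powers of $t$ cancel.

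Consider first $s\in(1/p,1)$. Put $u\coloneqq t^{1/s}$, let $a_1(t)\coloneqq\frac1u\int_0^u g(\tau)\,\d\tau$ and $a_0(t)\coloneqq g(0)-a_1(t)$, so $\|a_1(t)\|_{X_1}\le\frac1u\int_0^u\|g(\tau)\|_{X_1}\,\d\tau$. To control $a_0(t)$ I would telescope over dyadic scales: since $\frac1v\int_0^v g\to g(0)$ in $X_0$ as $v\to 0^+$, one has $a_0(t)=\sum_{j\ge 0}\bigl(A_{u2^{-j-1}}-A_{u2^{-j}}\bigr)$ with $A_v\coloneqq\frac1v\int_0^v g$, and a direct computation gives $A_{v/2}-A_v=\frac1v\int_0^{v/2}\bigl(g(\sigma)-g(\sigma+\tfrac{v}{2})\bigr)\,\d\sigma$, which by Hölder's inequality is bounded in $X_0$ by $C v^{-1/p}\,\omega(v/2)$, where $\omega(h)\coloneqq\|g(\cdot+h)-g(\cdot)\|_{\L^p(\R;X_0)}$. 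Setting $\psi(h)\coloneqq\omega(h)/h^s$ — so that $\int_0^\infty\psi(h)^p\,\frac{\d h}{h}\approx\|g\|_{\W^{s,p}(\R;X_0)}^p$ by Fubini in the Gagliardo seminorm — and using $s>1/p$ to sum the resulting geometric series, one obtains $\|a_0(t)\|_{X_0}\lesssim u^{s-1/p}\,\Psi(u)$ with $\Psi(u)\coloneqq\sum_{j\ge 0}2^{-j(s-1/p)}\psi(u2^{-j-1})$; since $\Psi$ is the convolution of $\psi$ with a finite measure on the multiplicative group $\bigl((0,\infty),\times\bigr)$, Young's inequality gives $\|\Psi\|_{\L^p((0,\infty),\d u/u)}\lesssim\|\psi\|_{\L^p((0,\infty),\d u/u)}$. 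Now $K(t,g(0))\le\|a_0(t)\|_{X_0}+t\,\|a_1(t)\|_{X_1}$, and the point of the choice $u=t^{1/s}$ is that then $u^{s-1/p}=t^\vartheta$ and $t^{1-\vartheta}=u^{1/p}$; substituting $t=u^s$, the weighted integral above collapses to a constant times $\int_0^\infty\Psi(u)^p\,\frac{\d u}{u}+\int_0^\infty\bigl(\frac1u\int_0^u\|g(\tau)\|_{X_1}\,\d\tau\bigr)^p\,\d u$, the first term being controlled by $\|g\|_{\W^{s,p}(\R;X_0)}^p$ and the second by $\|g\|_{\L^p(\R;X_1)}^p$ through the classical Hardy inequality.

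For $s=1$ this is Lions' classical argument: take $a_1(t)\coloneqq g(t)$ (legitimate for a.e.\ $t$) and $a_0(t)\coloneqq g(0)-g(t)=-\int_0^t g'(\sigma)\,\d\sigma$, whence $\|a_0(t)\|_{X_0}\le\int_0^t\|g'(\sigma)\|_{X_0}\,\d\sigma$ and Hardy's inequality with $\vartheta=1-1/p$ finishes the job. For $s>1$ I would reduce to $s=1$: a standard mixed-derivative embedding — itself a consequence of the interaction of real interpolation with vector-valued function spaces — yields $\L^p(\R;X_1)\cap\W^{s,p}(\R;X_0)\hookrightarrow\L^p(\R;X_1)\cap\W^{1,p}\bigl(\R;(X_0,X_1)_{1-1/s,p}\bigr)$, so the $s=1$ case applied to the couple $\bigl((X_0,X_1)_{1-1/s,p},X_1\bigr)$ places $g(0)$ in $\bigl((X_0,X_1)_{1-1/s,p},X_1\bigr)_{1-1/p,p}$, which by the reiteration theorem (Proposition~\ref{prop: reiteration real}) equals $(X_0,X_1)_{1-1/(sp),p}$ once the convex-combination parameter is computed.

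The step I expect to be the genuine obstacle is the bound at small $t$. A crude pointwise estimate $\|g(0)-g(\tau)\|_{X_0}\lesssim|\tau|^{s-1/p}\|g\|_{\W^{s,p}(\R;X_0)}$ would, after optimizing in $t$, produce only a bounded — hence not $\L^p(\d t/t)$-integrable — contribution, since $g(0)$ sits exactly on the borderline of the trace theory. One must therefore exploit that the \emph{localized} fractional smoothness of $g$ genuinely decays near $\tau=0$; encoding this through the $\L^p$-modulus $\omega(h)$ and then keeping the exponent bookkeeping tight, so that everything reduces to the finite quantities $\|\Psi\|_{\L^p(\d u/u)}$ and an unweighted Hardy inequality — which is precisely what the calibration $u=t^{1/s}$ delivers — is the technical core of the proof; for $s>1$ the only further ingredient is the mixed-derivative embedding.
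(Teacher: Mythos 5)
The paper does not prove this proposition --- it is quoted from Grisvard~\cite[Thm.~5.12]{Grisvard-Interpolation} --- so there is no internal argument to compare against; I therefore evaluate yours on its own merits. Your treatment of $s\in(1/p,1)$ and of $s=1$ is correct and complete: the telescoping $g(0)-A_u=\sum_{j\ge 0}(A_{u2^{-j-1}}-A_{u2^{-j}})$, the H\"older bound $\|A_{v/2}-A_v\|_{X_0}\lesssim v^{-1/p}\omega(v/2)$, the geometric summability coming from $s>1/p$, Young's inequality for the multiplicative convolution defining $\Psi$, the calibration $u=t^{1/s}$ (so that $u^{s-1/p}=t^\vartheta$ and $t^{1-\vartheta}=u^{1/p}$), and the final unweighted Hardy inequality all go through exactly as you describe, and the $s=1$ case is indeed Lions' classical argument.

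The genuine gap is the case $s>1$. The ``mixed-derivative embedding''
\begin{align*}
\L^p(\R;X_1)\cap\W^{s,p}(\R;X_0)\hookrightarrow\W^{1,p}\bigl(\R;(X_0,X_1)_{1-1/s,p}\bigr)
\end{align*}
is asserted, not proved, and the phrase ``a consequence of the interaction of real interpolation with vector-valued function spaces'' is not a justification: this embedding is essentially the commutation identity $\bigl(\W^{s,p}(\R;X_0),\L^p(\R;X_1)\bigr)_{1-1/s,p}=\W^{1,p}\bigl(\R;(X_0,X_1)_{1-1/s,p}\bigr)$, a non-trivial theorem in the Lions--Peetre/Grisvard circle of ideas and of essentially the same depth as the trace theorem you are establishing, so invoking it without proof or precise reference risks circularity. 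You should either prove it, cite it precisely, or --- probably cleanest --- extend your dyadic $K$-functional argument directly to $s\in(1,2)$ by working with the Gagliardo modulus of $g'$ and an averaging operator that annihilates affine functions in place of $A_v$; since the paper only ever invokes the proposition with smoothness parameter $s+1/p<1+1/p<2$, the range $(1/p,2)$ is all that is actually required in Section~\ref{Sec: Real}.
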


The strategy to obtain Theorem~\ref{thm: real interpolation via trace} is schematically displayed in Figure~\ref{fig: trace method}. Owing to Proposition~\ref{prop: easy inclusion} and the bounded inverse theorem, we only need to prove the set inclusion ``$\supseteq$'' in \eqref{C6}. 

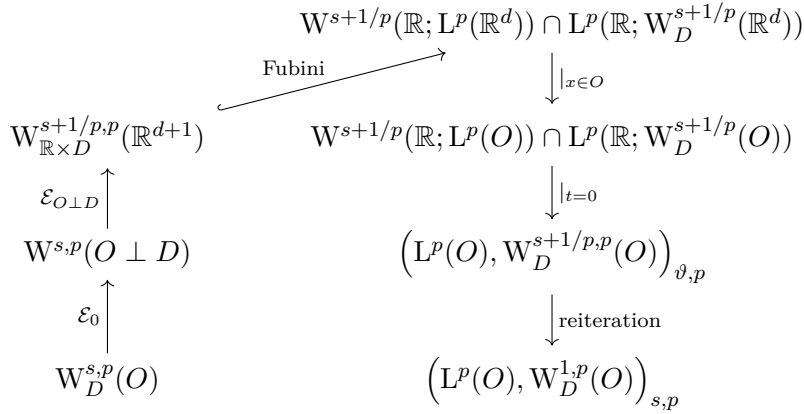
\begin{figure}[ht]
\centering
\[ \begin{tikzcd}
         & \W^{s+1/p}(\R; \L^p(\R^d)) \cap \L^p(\R; \W^{s+1/p}_D(\R^d)) \ar{d}{|_{x\in O}} \\
        \W^{s+1/p,p}_{\R \times D}(\R^{d+1}) \ar[hookrightarrow]{ru}{\text{Fubini}}             & \W^{s+1/p}(\R; \L^p(O)) \cap \L^p(\R; \W^{s+1/p}_D(O)) \ar{d}{|_{t = 0}}\\
        \W^{s,p}(O \perp D) \ar{u}{\Ext_{O \perp D}} & \left(\L^p(O), \W^{s+1/p,p}_D(O)\right)_{\vartheta,p} \ar{d}{\text{reiteration}} \\
        \W^{s,p}_D(O) \ar{u}{\Ext_0} & \left(\L^p(O), \W^{1,p}_D(O)\right)_{s,p}
\end{tikzcd}\]
\caption{Schematic presentation of the argument for obtaining the inclusion ``$\supseteq$'' in Theorem~\ref{thm: real interpolation via trace} for $s>1/p$. For $s<1/p$ the diagram would start with $\W^{s,p}(O)$ instead.}
\label{fig: trace method}
\end{figure}

The key observation is that functions in the starting space of Figure~\ref{fig: trace method} can be extended by zero to the set
\begin{align}
\label{def: D-cyl}
O \perp D := \big(O \times \{0\}\big) \cup \big(D \times \R\big),
\end{align}
without losing Sobolev regularity. We shall see that $O \perp D$ is, as expected, a $d$-regular subset of $\R^{d+1}$. By means of the Jonsson--Wallin operator $\Ext_{O \perp D}$ we can then extend to all of $\R^{d+1}$ and via a Fubini property we end up in a space suitable for Grisvard's result. Taking the trace yields the desired inclusion, up to applying reiteration techniques from Proposition~\ref{prop: reiteration real} in the final step.

Unless otherwise stated, we make the following 

\begin{assumption}
\label{ass: real}
The set $O\subseteq \R^d$ is open and $d$-regular. The Dirichlet part $D \subseteq \cl{O}$ is uniformly $(d-1)$-regular.
\end{assumption}

Only the final step will use the $(d-1)$-regularity of the full boundary $\bd O$ additionally assumed in Theorem~\ref{thm: real interpolation via trace}.

\subsection{Hardy's inequality}
\label{Subsec: Hardy}

In order to obtain the mapping property of the zero extension $\Ext_0$ in Figure~\ref{fig: trace method}, we establish a fractional Hardy inequality adapted to mixed boundary conditions that might be of independent interest. In contrast to related inequalities in \cite{Darmstadt-KatoMixedBoundary} we completely avoid the use of capacities.

\begin{definition}
\label{def: plump set}
A set $U \subseteq \R^d$ is \emph{plump} if there exists $\kappa \in (0,1)$ with the property:
\begin{align*}
 \forall x \in \cl{U}, r \leq \diam(U) \quad \exists y \in \B(x,r) \; : \; \B(y, \kappa r) \subseteq U.
\end{align*}
\end{definition}

\begin{remark}
\label{rem: plump set}
A comparison with Definition~\ref{def: porosity} yields first examples of plump sets. Namely, if $E \subseteq \R^d$ is uniformly porous, then ${}^c E$ is plump. This example can be modified to the effect that $E$ is bounded and (uniformly) porous and $Q \subseteq \R^d$ is an open cube containing $\cl{E}$: Still we have that $Q \setminus E$ is plump.
\end{remark}

We cite a result of Dyda--V\"ah\"akangas~\cite[Thm.~1]{Dyda-Vahakangas}.

\begin{proposition}[Dyda--V\"ah\"akangas]
\label{prop: DV}
Let $p \in (1,\infty)$ and $s \in (0,1)$. Suppose that $U \subseteq \R^d$ is a proper, plump, open set in $\R^d$. Assume one of the following conditions:
\begin{enumerate}
 \item $\cl{\dim}_{\AS}(\bd U) < d-sp$ and $U$ is unbounded.
 \item $\underline{\dim}_{\AS}(\bd U)>d-sp$ and either $U$ is bounded or $\bd U$ is unbounded.
\end{enumerate}
Then there exists a constant $c$ such that the inequality
\begin{align*}
 \int_U \frac{|f(x)|^p}{\dist(x, \bd U)^{sp}} \; \d x \leq c \int_U \int_U \frac{|f(x)-f(y)|^p}{|x-y|^{sp+d}} \; \d x \; \d y
\end{align*}
holds for all measurable functions $f$ for which the left-hand side is finite.
\end{proposition}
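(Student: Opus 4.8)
The plan is to deduce the inequality from a Whitney decomposition of $U$ combined with a chaining argument; the two alternative hypotheses (i) and (ii) enter only at the last bookkeeping step, but through genuinely different mechanisms. First I would fix a Whitney decomposition $\mathcal{W}$ of $U$ into dyadic cubes $Q$ with $\diam(Q) \leq \dist(Q,\bd U) \leq 4\,\diam(Q)$, so that $\dist(x,\bd U) \approx \ell(Q)$ uniformly for $x \in Q$. Since the $Q \in \mathcal{W}$ are essentially disjoint and cover $U$, one has $\int_U |f(x)|^p \dist(x,\bd U)^{-sp}\,\d x \approx \sum_{Q \in \mathcal{W}} \ell(Q)^{-sp}\int_Q |f|^p\,\d x$. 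Writing $f = (f - f_Q) + f_Q$ with $f_Q$ the mean of $f$ over $Q$, the standard fractional Poincaré inequality on a cube bounds $\ell(Q)^{-sp}\int_Q |f - f_Q|^p$ by $\int_Q\int_Q |f(x)-f(y)|^p |x-y|^{-d-sp}\,\d x\,\d y$, and summing over the disjoint cubes these oscillation terms are controlled by the right-hand side of the asserted inequality. It thus remains to bound $S \coloneqq \sum_{Q \in \mathcal{W}} \ell(Q)^{d-sp}|f_Q|^p$.

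The key mechanism for estimating $S$ is to transport the mean $f_Q$ towards the interior of $U$. Plumpness yields, for every $Q \in \mathcal{W}$, a chain $Q = Q_0, Q_1, \dots, Q_m$ of Whitney cubes with consecutive members adjacent and of comparable size, sidelengths increasing geometrically, and staying uniformly far from $\bd U$ on the scale of their current sidelength; when $U$ is bounded the chain terminates at a single central cube $Q^{c}$ of sidelength $\approx \diam(U)$, and when $U$ is unbounded the chains instead escape to infinity. Telescoping $|f_Q| \leq |f_{Q^{c}}| + \sum_k |f_{Q_k} - f_{Q_{k+1}}|$ and bounding $|f_{Q_k} - f_{Q_{k+1}}|^p$ by the oscillation integral over $Q_k \cup Q_{k+1}$, one reduces $S$, up to the term involving $|f_{Q^{c}}|$ — which vanishes in the unbounded case and is absorbed against $\|f\|_{\L^p}$ on a fixed region in the bounded case — to a sum of oscillation integrals, each appearing with a weight equal to the total $\ell(\cdot)^{d-sp}$-mass of the Whitney cubes whose chain runs through that pair.

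The main obstacle is to show that these weights are finite, and this is the only point where the Assouad-dimension hypotheses are used. Under (i), the number of Whitney cubes of sidelength $\approx 2^{-k}$ meeting a ball $\B(w,R)$ centred on $\bd U$ is $\lesssim (2^k R)^{\lambda}$ for any $\lambda > \overline{\dim}_{\AS}(\bd U)$, by covering $\bd U \cap \B(w,R)$ with $2^{-k}$-balls; choosing $\lambda < d - sp$, the weight of a coarse cube of sidelength $2^{-j}$ relative to that of a fine cube of sidelength $2^{-k}$ ($j \leq k$) times the number of such fine cubes it dominates is $\lesssim 2^{(k-j)(d-sp)}\cdot 2^{(j-k)\lambda} = 2^{(k-j)(d-sp-\lambda)}$, which sums to a convergent geometric series in $k-j$, yielding $S \lesssim [f]_{\W^{s,p}(U)}^p$. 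Under (ii), chaining all the way to the interior no longer converges, and one argues instead that $\underline{\dim}_{\AS}(\bd U) > d - sp$ forces a uniform Hausdorff-content lower bound $\cH^{\lambda}_{\infty}(\bd U \cap \B(w,r)) \gtrsim r^{\lambda}$ with $\lambda > d - sp$, hence, via Frostman's lemma, a uniform lower bound on the $(s,p)$-Riesz capacity density of $\bd U$: the set $\bd U$ is uniformly $(s,p)$-fat. For complements of uniformly fat sets the fractional Hardy inequality is classical: on each ball $\B(x, C\dist(x,\bd U))$ a capacity-positive portion of the ball lies in $\bd U$, so a capacitary (Maz'ya-type) fractional Poincaré inequality bounds $\ell(Q)^{d-sp}|f_Q|^p$ locally by oscillation integrals over a bounded-overlap family of enlarged balls, and summing gives $S \lesssim [f]_{\W^{s,p}(U)}^p$ once more. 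The bounded/unbounded dichotomies in the statement are exactly what is needed for these two arguments to run: the constant function $f \equiv 1$ already shows the inequality must fail for bounded $U$ under (i) and for unbounded $U$ with bounded boundary under (ii), and the hypotheses precisely rule these out while ensuring there is boundary mass at every relevant scale.
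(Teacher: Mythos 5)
The paper does not prove this proposition at all: it is quoted verbatim as \cite[Thm.~1]{Dyda-Vahakangas}, so there is no ``paper's own proof'' to compare against. What you have done is reconstruct a proof of the cited result, and your reconstruction should therefore be measured against the Dyda--V\"ah\"akangas argument itself rather than anything in this manuscript.

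Your sketch for case (i) is plausible in outline and follows a well-trodden Whitney-chaining route: decompose $U$ into Whitney cubes, absorb in-cube oscillations via fractional Poincar\'e, telescope the averages along a chain of geometrically growing cubes escaping to infinity, and swap the order of summation so that the upper Assouad bound yields a convergent geometric series. Two small corrections are needed in the bookkeeping. First, the count of Whitney cubes of sidelength $2^{-k}$ subordinate to a coarser cube of sidelength $2^{-j}$ is $\lesssim 2^{(k-j)\lambda}$, not $2^{(j-k)\lambda}$; what saves the argument is not the sign you wrote, but inserting a geometric weight $\theta^{-k(p-1)}$ from the H\"older step before summing, which one can afford precisely because $\lambda<d-sp$. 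Second, the claim that the contribution of the terminal average ``vanishes'' when $U$ is unbounded is not automatic from $f$ being merely measurable with finite left-hand side; one has to check that finiteness of $\sum_Q\ell(Q)^{-sp}\int_Q|f|^p$ together with $sp<d$ forces $|f_{Q_k}|\to 0$ along chains of growing sidelength, which works but must be said.

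Case (ii) is where you genuinely depart from the cited proof, and where there is a real gap. Dyda--V\"ah\"akangas deliberately construct a capacity-free, purely measure-theoretic argument; indeed, the reason the present authors cite their theorem rather than \cite{Darmstadt-KatoMixedBoundary} is precisely, as they write in Section~\ref{Subsec: Hardy}, to ``completely avoid the use of capacities''. Beyond being a different route, your capacity route as sketched does not close: a Maz'ya-type capacitary Poincar\'e inequality controls $\barint_B|u|^p$ for functions $u$ that \emph{vanish} quasi-everywhere on a capacity-fat subset of $B$, but the statement here is for all measurable $f$ defined \emph{only on $U$} with finite left-hand side, which neither extends $f$ to $\bd U$ nor gives quasi-everywhere vanishing there. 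The correct mechanism under (ii) is that fatness of $\bd U$ makes $\dist(\cdot,\bd U)^{-sp}$ non-integrable near $\bd U$, and finiteness of the left-hand side therefore forces a quantitative smallness of $f$ near $\bd U$; turning this into a local Poincar\'e estimate is exactly the content that needs a new argument and is not supplied by Frostman plus Maz'ya. Finally, even granting a fatness reduction, the passage ``$\underline{\dim}_{\AS}(\bd U)>d-sp$ implies uniform $(s,p)$-fatness'' is itself a chain of lemmas (uniform Hausdorff content lower bound at all scales and centres, Frostman, capacity lower bound) that should be stated and proved rather than asserted.
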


Upper and lower Assouad dimension have been introduced in Definition~\ref{def: Assouad dimension}. For uniformly $\ell$-regular sets they both equal $\ell$, see Proposition~\ref{prop: Assouad for Ahlfors}. With this at hand, we can state and prove the Hardy inequality alluded to above.

\begin{proposition}
\label{prop: fractional Hardy}
Let $p \in (1,\infty)$ and $s \in (0,1)$, $s \neq 1/p$. Under Assumption~\ref{ass: real} there is a constant $C>0$ such that the fractional Hardy inequality
\begin{align}
\label{eq: Hardy}
 \int_O \frac{|f(x)|^p}{\dist(x,D)^{sp}} \; \d x \leq C \|f\|_{\W^{s,p}(O)}^p
\end{align}
holds for all $f \in \W^{s,p}(O)$ if $s < 1/p$ and for all $f \in \W^{s,p}_D(O)$ if $s > 1/p$.
\end{proposition}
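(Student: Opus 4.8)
The plan is to deduce Proposition~\ref{prop: fractional Hardy} from the Dyda--V\"ah\"akangas inequality (Proposition~\ref{prop: DV}) applied to a cleverly chosen plump open set $U$ whose boundary equals (a suitable extension of) $D$, and then transfer the inequality from $\R^d$-wide functions to functions on $O$. First I would reduce to the case of functions defined on all of $\R^d$: given $f \in \W^{s,p}(O)$ for $s < 1/p$, Corollary~\ref{cor: zero extension bounded} (valid since $\bd O$ is $(d-1)$-regular under Assumption~\ref{ass: real}, hence the boundary is $(d-1)$-regular once we also invoke the extra hypothesis of Theorem~\ref{thm: real interpolation via trace} --- or more carefully, I would instead work directly on $O$ and extend by zero only across $D$, see below) lets us pass to $\Ext_0 f \in \W^{s,p}(\R^d)$; for $s > 1/p$ and $f \in \W^{s,p}_D(O)$ I would approximate by $\C_D^\infty(O)$-functions via Lemma~\ref{lem: testfunctions dense} and use that for such test functions the zero extension across $D$ stays in $\W^{s,p}$ with controlled norm. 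In either regime the point is that $\dist(x,D)$ on $O$ is comparable to $\dist(x,\bd U)$ for the chosen $U$, at least on the relevant region.

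The core construction is the choice of $U$. Since $D$ need not be closed nor all of $\bd O$, I would set $U := \R^d \setminus \cl{D}$ (or, when $O$ is bounded, $U := Q \setminus \cl{D}$ for a large open cube $Q \supseteq \cl{O}$, as in Remark~\ref{rem: plump set}), so that $\bd U \subseteq \cl{D}$ and $\dist(x,\bd U) = \dist(x, \cl{D}) = \dist(x,D)$ for $x \notin \cl{D}$, in particular for $x \in O$ (modulo the $\cH^d$-null set where $O$ meets $\cl{D}\setminus D$, which is negligible in the integral). Plumpness of $U$ is exactly the statement that $\cl{D}$ is porous --- but $\cl{D}$, being uniformly $(d-1)$-regular, is porous by the standard fact recorded in Appendix~\ref{Sec: Porous sets} (a uniformly $\ell$-regular set with $\ell < d$ is uniformly porous, and then ${}^c$ of it is plump by Remark~\ref{rem: plump set}). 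For the Assouad dimension hypotheses: by Proposition~\ref{prop: Assouad for Ahlfors}, $\cl{\dim}_{\AS}(\bd U) = \cl{\dim}_{\AS}(\cl{D}) = d-1$ since $D$ is uniformly $(d-1)$-regular. Hence case~(i) of Proposition~\ref{prop: DV} applies precisely when $d - 1 < d - sp$, i.e.\ $sp < 1$, i.e.\ $s < 1/p$; and case~(ii) applies when $d-1 > d-sp$, i.e.\ $s > 1/p$ (here I must check the bounded-or-unbounded-boundary side condition, which is handled by the cube trick when $O$, hence $D$, is bounded, and is automatic when $\bd U = \cl{D}$ is unbounded). This dichotomy is exactly the split $s \ne 1/p$ in the statement, and it explains why the two regimes require different function classes: for $s>1/p$ the inequality is false without the boundary condition (constants are a counterexample), matching the restriction $f \in \W^{s,p}_D(O)$.

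With $U$ in hand, apply Proposition~\ref{prop: DV} to $g := \Ext_0 f$ (zero-extended across $D$ only, or across all of $\cl{D}$): the left-hand side of the DV inequality dominates $\int_O |f(x)|^p \dist(x,D)^{-sp}\,\d x$ because $g = f$ on $O$ and $g = 0$ elsewhere makes the extra mass only help, while the right-hand Gagliardo double integral over $U$ is dominated by $\|g\|_{\W^{s,p}(\R^d)}^p \lesssim \|f\|_{\W^{s,p}(O)}^p$ (for $s<1/p$) or $\lesssim \|f\|_{\W^{s,p}_D(O)}^p$ (for $s>1/p$, via the density argument and the fact that zero-extension of $\C_D^\infty(O)$-functions across $D$ is norm-bounded in $\W^{s,p}$ --- this uses only that $D$ is $(d-1)$-regular and the explicit Gagliardo seminorm estimate splitting near/far from $D$). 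Taking infima/limits gives \eqref{eq: Hardy}.

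The main obstacle is the $s>1/p$ case: one must show that zero-extension across $D$ maps $\W^{s,p}_D(O)$ (equivalently its dense subspace $\C_D^\infty(O)$) boundedly into $\W^{s,p}(\R^d)$, or at least that the Gagliardo seminorm over $U = {}^c\cl{D}$ of the extension is controlled by $\|f\|_{\W^{s,p}_D(O)}$. Unlike the $s<1/p$ case this is \emph{not} covered by Corollary~\ref{cor: zero extension bounded}, and it genuinely uses the vanishing trace on $D$ together with $(d-1)$-regularity of $D$: the delicate term is $\iint_{x \in O, y \notin \cl{O}} |f(x)|^p |x-y|^{-sp-d}$, which one estimates by integrating out $y$ to produce $\int_O |f(x)|^p \dist(x, \bd O \setminus D)^{-\,\text{(something)}}$-type terms and a term involving $\dist(x,D)$ --- the latter being exactly what a fractional Hardy inequality would bound, so there is a potential circularity to navigate. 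I would break it by noting that DV only needs the inequality for functions with the left-hand side \emph{a priori finite}, and for $f \in \C_D^\infty(O)$ (compact support away from $D$) this finiteness is automatic, so one applies DV to such $f$ first, gets the Hardy bound with the \emph{intrinsic} $\W^{s,p}(O)$-seminorm on the right (no reference to the zero-extension across $D$ is then needed, because $f$ already has support away from $D$ and we can estimate the $U$-Gagliardo integral of its zero extension directly by the $O$-Gagliardo integral plus a harmless near-$\bd O$ piece controlled by the $(d-1)$-regularity of $\bd O$), and finally pass to general $f \in \W^{s,p}_D(O)$ by density since both sides are continuous for the $\W^{s,p}_D(O)$-norm.
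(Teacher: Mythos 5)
Your overall framework is close to the paper's: reduce to the Dyda--V\"ah\"akangas inequality on $U=\R^d\setminus\cl{D}$ (or a cube variant), use $\overline{\dim}_{\AS}(\cl{D})=\underline{\dim}_{\AS}(\cl{D})=d-1$ from Proposition~\ref{prop: Assouad for Ahlfors} to see that part~(i) governs $s<1/p$ and part~(ii) governs $s>1/p$, and approximate by $\C_D^\infty$ functions for a priori finiteness. You also correctly flag the zero-extension across $D$ as the delicate point for $s>1/p$. However, the resolution you propose does not work, and the paper takes a different, cleaner route there.

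The gap is in how you produce a function on $U$ from $f$. You zero-extend $f$ (or a test function $f\in\C_D^\infty(O)$) across $\bd O$ and estimate the $U$-Gagliardo seminorm of this extension by ``the $O$-Gagliardo integral plus a harmless near-$\bd O$ piece controlled by the $(d-1)$-regularity of $\bd O$.'' Two problems: first, Assumption~\ref{ass: real} does \emph{not} include $(d-1)$-regularity of $\bd O$ (that is only assumed in Theorem~\ref{thm: real interpolation via trace}, not in Proposition~\ref{prop: fractional Hardy}); second, even granting it, the cross term
\begin{align}
\int_O\int_{U\setminus O}\frac{|f(x)|^p}{|x-y|^{d+sp}}\;\d y\;\d x\;\approx\;\int_O \frac{|f(x)|^p}{\dist(x,\bd O)^{sp}}\;\d x
\end{align}
is a Hardy integral for the \emph{full} boundary $\bd O$, and for $s>1/p$ this is not bounded by $\|f\|_{\W^{s,p}(O)}^p$ unless $f$ vanishes on all of $\bd O$; a member of $\C_D^\infty(O)$ only vanishes near $D$. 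There is a genuine circularity here that zero extension cannot break. Separately, your claim $\bd(Q\setminus\cl{D})\subseteq\cl{D}$ is false: $\bd(Q\setminus\cl{D})=\bd Q\cup\cl{D}$, so $\dist(\cdot,\bd U)$ is \emph{not} $\dist(\cdot,D)$ on $O$; the paper handles this in the bounded case by using a cutoff $\eta$ supported in $2Q$ and splitting $\int_O=\int_{O\cap Q}+\int_{O\setminus Q}$, using that $D\subseteq\bd U$ gives $\dist(\cdot,\bd U)\le\dist(\cdot,D)$ on the first part and $\dist(\cdot,D)\gtrsim 1$ on the second.

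The paper sidesteps all zero-extension issues by exploiting that $\W^{s,p}_D(O)$ is \emph{by definition} a restriction space: given $f\in\W_D^{s,p}(O)$ take a genuine extension $F\in\W_D^{s,p}(\R^d)$ with comparable norm, establish the Hardy inequality on $U$ for the dense class $\C_D^\infty(\R^d)$ (finiteness of the left side is automatic there), extend by density/Fatou to all of $\W_D^{s,p}(\R^d)$, apply it to $F$, and then simply shrink the left-hand domain of integration from $U$ to $O$. The right-hand Gagliardo integral over $U$ is bounded by the one over $\R^d$, hence by $\|F\|_{\W^{s,p}(\R^d)}^p$, and taking the infimum over $F$ gives the quotient norm $\|f\|_{\W^{s,p}_D(O)}^p$. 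In the bounded-$D$ case one additionally needs Rychkov's operator and Lemma~\ref{lem: Rychkov preserves boundary conditions} to produce such an $F$ together with the cutoff. I suggest replacing your zero-extension step with this extension-and-restriction scheme.
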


\begin{proof}
In both cases we shall reduce the claim to Proposition~\ref{prop: DV} on some auxiliary set.

\emph{Case 1: $s<1/p$}. Since $D$ is uniformly $(d-1)$-regular, so is $\cl{D}$. Due to Propositions~\ref{prop: Assouad for Ahlfors} and \ref{prop: porosity through assouad} we have that $\cl{D}$ is uniformly porous and hence $U \coloneqq \R^d \setminus \cl{D}$ is plump. Since $(d-1)$-regular sets have empty interior, we conclude $\bd U = \cl{D}$, which has upper and lower Assouad dimension $d-1$. Part (i) of Proposition~\ref{prop: DV} yields for all measurable $f$ for which the left-hand side is finite
\begin{align}
\label{eq1: Hardy}
 \int_{\R^d} \frac{|f(x)|^p}{\dist(x, D)^{sp}} \; \d x \leq c \int_{\R^d} \int_{\R^d} \frac{|f(x)-f(y)|^p}{|x-y|^{sp+d}} \; \d x \; \d y \leq c \|f\|_{\W^{s,p}(\R^d)}^p.
\end{align}
Example~\ref{ex: examples Dt class} guarantees that $x \mapsto \dist(x,D)^{-sp}$ is locally integrable. Thus, \eqref{eq1: Hardy} applies to every $f \in \C_0^\infty(\R^d)$, a dense subspace of $\W^{s,p}(\R^d)$, and we can use Fatou's lemma to extend \eqref{eq1: Hardy} to all $f \in \W^{s,p}(\R^d)$. Restriction to $O$ yields \eqref{eq: Hardy} for $f \in \W^{s,p}(O)$.

\emph{Case 2: $s > 1/p$ and $D$ is unbounded}. Part (ii) of Proposition~\ref{prop: DV} applies to $U \coloneqq \R^d \setminus \cl{D}$ and we can argue as before, except that now we have \eqref{eq1: Hardy} \emph{a priori} for $f \in \C_D^\infty(\R^d)$, a dense class of $f \in \W_D^{s,p}(\R^d)$ in view of Lemma~\ref{lem: testfunctions dense}. Hence we get \eqref{eq: Hardy} for $f \in \W_D^{s,p}(O)$.

\emph{Case 3: $s>1/p$ and $D$ is bounded}. Let $Q$ be an open cube that contains $\cl{D}$. As before we obtain that $U \coloneqq 2Q \setminus \cl{D}$ is plump, where $2Q$ denotes the concentric cube with twice the sidelength. Moreover, $\bd U = \bd (2Q) \cup \cl{D}$ is uniformly $(d-1)$-regular as a finite union of sets with that property. Hence, it has lower Assouad dimension $d-1$. Part (ii) of Proposition~\ref{prop: DV} yields for all measurable $g$ for which the left-hand side is finite
\begin{align}
\label{eq2: Hardy}
 \int_{U} \frac{|g(x)|^p}{\dist(x, \bd U)^{sp}} \; \d x 
 \leq c \int_{U} \int_{U} \frac{|g(x)-g(y)|^p}{|x-y|^{sp+d}} \; \d x \; \d y 
 \leq c \|g\|_{\W^{s,p}(U)}^p.
\end{align}
This applies to $g \in \C_{\bd U}^\infty(U)$ and extends to $g \in \W^{s,p}_{\bd U}(U)$ as in Case~2.

Let now $f \in \W^{s,p}_D(O)$. Let us fix $\eta \in \C_0^\infty(2Q)$ equal to $1$ on $Q$ and let $\Ext$ be an extension operator for $O$ as in Proposition~\ref{prop: Rychkov}. We can bound
\begin{align*}
\int_O \frac{|f(x)|^p}{\dist(x,D)^{sp}} \; \d x
&\leq  \int_{O \cap Q} \frac{|\eta(x) \Ext f(x)|^p}{\dist(x, D)^{sp}} \; \d x + \int_{O \setminus Q} \frac{|f(x)|^p}{\dist(x,D)^{sp}} \; \d x\\
&\leq  \int_{U} \frac{|\eta(x) \Ext f(x)|^p}{\dist(x, \bd U)^{sp}} \; \d x + \int_{O \setminus Q} \frac{|f(x)|^p}{\dist(x,D)^{sp}} \; \d x \eqqcolon I_1 + I_2,
\end{align*}
where we have used $O \cap Q \subseteq U$ and $D \subseteq \bd U$ to obtain $I_1$. Since on ${}^cQ$ we have $\dist(\cdot\,,D) \geq \dist(D, \bd Q) > 0$, we control $I_2 $ by $\|f\|_{\L^p(O)}^p$. Next, $\Ext f \in \W^{s,p}_D(\R^d)$ follows from Lemma~\ref{lem: Rychkov preserves boundary conditions}. Pointwise multiplication by $\eta$ is bounded on $\W^{s,p}(\R^d)$ and maps $\W_D^{s,p}(\R^d) \to \W^{s,p}_{\bd U}(\R^d)$ since this is true for the respective dense subsets provided by Lemma~\ref{lem: testfunctions dense}. In conclusion, we have $\eta \Ext f \in \W^{s,p}_{\bd U}(\R^d)$. Hence, \eqref{eq2: Hardy} gives control on $I_1$ by $\|\eta \Ext f\|_{\W^{s,p}(\R^d)}^p$. The boundedness of $\Ext$ leads us to a desirable bound for $I_1$.
\end{proof}

\subsection{Details of the proof}
\label{Subsec: Details}

We are in a position to give a precise meaning to Figure~\ref{fig: trace method}. We begin with the zero extension part on the left.

The following lemma is a straightforward consequence of a product formula for Hausdorff measure~ \cite[Thm.\ 2.10.45]{Federer}. Full details are written out in \cite[Cor.~7.6]{Darmstadt-KatoMixedBoundary}.

\begin{lemma}
\label{lem: Omega-Odot-D is a d-set}
If $O \subseteq \R^d$ is $d$-regular and $D \subseteq \R^{d}$ is $(d-1)$-regular, then $D \times \R$, $O \times \{0\}$, and $O \perp D \subseteq \R^d$ are $d$-regular.
\end{lemma}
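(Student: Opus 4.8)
The plan is to deduce all three assertions from the product formula for Hausdorff measure \cite[Thm.~2.10.45]{Federer}, used in the two-sided form $\cH^{a+b}(A \times B) \approx \cH^a(A)\,\cH^b(B)$ for $A \subseteq \R^m$ and $B \subseteq \R^n$, with implicit constants depending only on $a,b,m,n$; the lower inequality holds in general, and the upper one applies as soon as $A$ carries finite $\cH^a$-measure. In our situation we shall only ever invoke this with $a \in \{d-1,d\}$ and $b \in \{0,1\}$, and the finiteness hypothesis is automatic because on balls of radius at most $1$ the sets $O$ and $D$ have finite $\cH^d$-, respectively $\cH^{d-1}$-measure by Ahlfors regularity. (Here and below all products live in $\R^{d+1}$, as in \eqref{def: D-cyl}.) Full bookkeeping is carried out in \cite[Cor.~7.6]{Darmstadt-KatoMixedBoundary}; I only indicate the structure.

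For $O \times \{0\}$ I would use that $x \mapsto (x,0)$ embeds $\R^d$ isometrically onto $\R^d \times \{0\} \subseteq \R^{d+1}$, so that $\cH^d(A \times \{0\}) = \cH^d(A)$ for every $A \subseteq \R^d$; then, given a ball $B = \B((x_0,0),r)$ of radius $r \leq 1$ centered in $O \times \{0\}$ (so $x_0 \in O$), one has $B \cap (O \times \{0\}) = (\B(x_0,r) \cap O) \times \{0\}$, whence $\cH^d(B \cap (O\times\{0\})) = \cH^d(\B(x_0,r) \cap O) \approx r^d$ by $d$-regularity of $O$. For $D \times \R$ I would fix a ball $B = \B((x_0,t_0),r)$ of radius $r \leq 1$ centered in $D \times \R$ (so $x_0 \in D$) and sandwich
\begin{align*}
 \big(\B(x_0,\tfrac{r}{\sqrt2}) \cap D\big) \times \big(t_0 - \tfrac{r}{\sqrt2}, t_0 + \tfrac{r}{\sqrt2}\big) \subseteq B \cap (D \times \R) \subseteq \big(\B(x_0,r) \cap D\big) \times (t_0-r,\, t_0+r).
\end{align*}
Applying the product formula with $a = d-1$, $b = 1$ to both sides, and using that $\B(x_0,\rho) \cap D$ has $\cH^{d-1}$-measure $\approx \rho^{d-1}$ for the admissible radii $\rho \in \{r/\sqrt2,\, r\}$ (both $\leq 1$, both centered in $D$), yields $\cH^d(B \cap (D\times\R)) \approx r^{d-1}\cdot r = r^d$.

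Finally, $O \perp D = (O \times \{0\}) \cup (D \times \R)$ is $d$-regular because the class of $d$-regular sets is stable under finite unions, exactly as recorded for $(d-1)$-regular sets in Example~\ref{ex: l-set}: the upper bound follows from subadditivity of $\cH^d$ together with the two previous cases, and the lower bound from the observation that any ball centered in $O \perp D$ is centered in at least one of the two constituents, for which the matching lower estimate has just been established. I expect no real obstacle here; the only point demanding care is verifying that the measurability and finiteness hypotheses of the product theorem are genuinely in force — this is precisely where $(d-1)$-regularity of $D$ enters, via $\cH^{d-1}(\B \cap D) < \infty$ at unit scale — and it is bookkeeping rather than a substantive difficulty.
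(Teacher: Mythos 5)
Your proposal is correct and follows essentially the same route as the paper, which simply delegates to the product formula of \cite[Thm.~2.10.45]{Federer} and the worked-out details in \cite[Cor.~7.6]{Darmstadt-KatoMixedBoundary}: isometric identification of $O\times\{0\}$ with $O$, the sandwich estimate for $D\times\R$ via the product formula, and stability of Ahlfors regularity under finite unions. The only point you glide over slightly is that the upper-bound half of finite-union stability requires the upper Ahlfors estimate for balls that are not centered in the given constituent (obtained by doubling the radius and recentering, using Lemma~\ref{lem: Ahlfors radius bound} to tolerate radius up to $2$), but the paper itself treats this as standard in Example~\ref{ex: l-set}, so this is not a genuine gap.
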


Since $O \perp D$ is a $d$-regular subset of $\R^{d+1}$, the fractional Sobolev spaces $\W^{s,p}(O \perp D)$ can be defined as in Section~\ref{Subsec: Not Dirichlet} and there is a corresponding Jonsson--Wallin theory in $\R^{d+1}$. In the following, we systematically use bold face to distinguish geometric objects such as points, balls, and Hausdorff measures in $\R^{d+1}$ from their counterparts in $\R^d$.

\begin{proposition}
\label{prop: Zero extension to Omega-Odot-D}
Let $p \in (1,\infty)$ and $s \in (0,1)$, $s \neq 1/p$. Under Assumption~\ref{ass: real} the zero extension operator
\begin{align*}
(\Ext_0 f) (x,t) \coloneqq \begin{cases}
				    f(x) & (\text{if $x\in O$, $t=0$}) \\
				    0 & (\text{if $x \in D$, $t \in \R$})
                                 \end{cases}
\end{align*}
is $\W^{s,p}(O) \to \W^{s,p}(O \perp D)$-bounded  if $s<1/p$ and $\W^{s,p}_D(O) \to \W^{s,p}(O \perp D)$-bounded  if $s>1/p$.
\end{proposition}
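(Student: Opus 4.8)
The plan is to reduce the boundedness of $\Ext_0$ on $O \perp D$ to the fractional Hardy inequality of Proposition~\ref{prop: fractional Hardy} together with the known mapping properties of the zero extension on the factor $\R$ (essentially a one-dimensional Hardy-type bound). First I would fix $f$ in the appropriate domain ($\W^{s,p}(O)$ if $s<1/p$, $\W^{s,p}_D(O)$ if $s>1/p$) and estimate $\|\Ext_0 f\|_{\W^{s,p}(O \perp D)}$ by splitting the Gagliardo double integral over $O \perp D = (O \times \{0\}) \cup (D \times \R)$ according to where the two arguments lie. Since $\Ext_0 f$ vanishes identically on the part $D \times \R$, the $\L^p(O \perp D)$-norm of $\Ext_0 f$ is just $\|f\|_{\L^p(O)}$, and the Gagliardo seminorm decomposes into (a) the contribution where both points are in $O \times \{0\}$, which is exactly $[f]_{\W^{s,p}(O)}^p$ after identifying $O \times \{0\}$ with $O$ (noting that the $(d+1)$-dimensional distance restricted to this slice is the $\R^d$-distance and the $d$-dimensional Hausdorff measure on the slice is Lebesgue measure on $O$); (b) the cross term, which after using the vanishing of $\Ext_0 f$ on $D \times \R$ becomes $\iint_{x \in O, (y,t) \in D \times \R} \frac{|f(x)|^p}{(|x-y|^2 + t^2)^{(sp+d+1)/2}}\,\d x\,\cH^{d-1}(\d y)\,\d t$ times $2$; and (c) the term with both points in $D \times \R$, which is zero.

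The main work is estimating the cross term (b). The plan is to first integrate out $t \in \R$: for fixed $x, y$ one has $\int_\R (|x-y|^2 + t^2)^{-(sp+d+1)/2}\,\d t \approx |x-y|^{-(sp+d)}$, which is the crucial exponent-matching identity (valid because $sp + d + 1 > 1$). This reduces (b) to $\int_O |f(x)|^p \big( \int_D |x-y|^{-(sp+d)} \,\cH^{d-1}(\d y) \big) \,\d x$. The inner integral over the $(d-1)$-regular set $D$ is then controlled, by a standard dyadic-annuli argument using Ahlfors regularity of $D$ (splitting $D$ into shells $2^{-k-1}\dist(x,D) \le |x-y| < 2^{-k}\dist(x,D)$ or rather $2^k \dist(x,D) \le |x-y| < 2^{k+1}\dist(x,D)$, summing the geometric series since $sp + d > d - 1$), by a constant times $\dist(x,D)^{-sp}$. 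Thus (b) is bounded by a constant times $\int_O |f(x)|^p \dist(x,D)^{-sp}\,\d x$, and here Proposition~\ref{prop: fractional Hardy} kicks in: this is $\lesssim \|f\|_{\W^{s,p}(O)}^p$ in the case $s < 1/p$ and $\lesssim \|f\|_{\W^{s,p}_D(O)}^p$ in the case $s > 1/p$. Collecting (a), (b), (c) and the $\L^p$-term yields the claimed boundedness; the case distinction $s \neq 1/p$ enters only through the Hardy inequality.

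The step I expect to be the main obstacle is the dyadic-annuli estimate for $\int_D |x-y|^{-(sp+d)}\,\cH^{d-1}(\d y) \lesssim \dist(x,D)^{-sp}$, which requires uniform $(d-1)$-regularity of $D$ rather than just local regularity — this is precisely why Assumption~\ref{ass: real} demands \emph{uniform} $(d-1)$-regularity, so that the comparability $\cH^{d-1}(\B(y,r) \cap D) \approx r^{d-1}$ holds for all radii up to $\diam(D)$ and the summation over all scales $2^k \dist(x,D)$ is legitimate even when $\dist(x,D)$ is large. Care is also needed to phrase everything in terms of the $(d+1)$-dimensional Hausdorff measure on $O \perp D$; here I would invoke Lemma~\ref{lem: Omega-Odot-D is a d-set} to know $O \perp D$ is $d$-regular in $\R^{d+1}$ so that $\W^{s,p}(O \perp D)$ is well-defined, and the product formula for Hausdorff measure cited before that lemma (Federer) to identify $\bm{\cH}^{d}$ restricted to the slices with Lebesgue measure on $O$ and with $\cH^{d-1} \otimes \mathcal{L}^1$ on $D \times \R$, respectively. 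Finally, for $s > 1/p$ one should double-check that $\Ext_0 f$ genuinely lies in $\W^{s,p}(O \perp D)$ and not merely has finite Gagliardo norm, but this is automatic once the norm estimate is in place since $\C_D^\infty(O)$ is dense in $\W^{s,p}_D(O)$ by Lemma~\ref{lem: testfunctions dense} and $\Ext_0$ is evidently well-defined on that class.
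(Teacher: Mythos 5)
Your decomposition of the Gagliardo double integral over $O\perp D$ into self--self, cross, and zero contributions, reduction of the cross term to $\int_O|f(x)|^p\dist(x,D)^{-sp}\,\d x$, and then invocation of Proposition~\ref{prop: fractional Hardy} is exactly the paper's strategy. The one implementation difference is that you integrate $t$ out first and then run a dyadic-shell sum over $D$ using $(d-1)$-regularity of $D$, whereas the paper estimates directly over $(d+1)$-dimensional dyadic annuli inside $D\times\R$ using its $d$-regularity from Lemma~\ref{lem: Omega-Odot-D is a d-set}; these two are equivalent.

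However, your exponents are off by one and the chain as written does not close. The Jonsson--Wallin seminorm on the $d$-regular set $O\perp D\subseteq\R^{d+1}$ carries $|\bm{x}-\bm{y}|^{d+sp}$, not $|\bm{x}-\bm{y}|^{d+1+sp}$: the rule is $\ell+sp$ for an $\ell$-set, independently of the ambient dimension, as in the definition of $\W^{s,p}(E)$ before Proposition~\ref{prop: JW} (where a $(d-1)$-set in $\R^d$ gets $(d-1)+sp$). With your $(sp+d+1)/2$ you correctly obtain $\int_\R(|x-y|^2+t^2)^{-(sp+d+1)/2}\,\d t\approx|x-y|^{-(sp+d)}$, but your own dyadic argument over $D$ then gives
\begin{equation}
\int_D|x-y|^{-(sp+d)}\,\cH^{d-1}(\d y)\approx\dist(x,D)^{(d-1)-(sp+d)}=\dist(x,D)^{-sp-1},
\end{equation}
one power too many for Hardy to absorb --- not $\dist(x,D)^{-sp}$ as you assert (note your convergence check $sp+d>d-1$ is indeed the right one \emph{for your incorrect exponent}, another hint of the mismatch). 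Fix the Gagliardo exponent to $(d+sp)/2$: the $t$-integral then produces $|x-y|^{-(d-1+sp)}$, and the $D$-integral yields $\dist(x,D)^{-sp}$ with the geometric sum converging because $sp>0$. With that correction, the proof is sound and lands precisely on Proposition~\ref{prop: fractional Hardy}.

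A minor remark: the Jonsson--Wallin seminorm restricts the double integral to $|\bm{x}-\bm{y}|<1$, which you do not use. Your untruncated inner integral still converges and dominates the truncated one, so nothing is lost; but the truncation is why the paper's proof of this proposition never leaves the regime of scales $\leq 1$, so this particular lemma only needs local, not uniform, $(d-1)$-regularity of $D$ --- the uniformity in Assumption~\ref{ass: real} is really consumed by the Hardy inequality.
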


\begin{proof}
Since the outer measure $E \mapsto \bm{\cH}^d(E \times \{0\})$ on $\R^d$ is a translation invariant Borel measure that assigns finite measure to the unit cube~\cite[\SS 27]{Yeh}, the induced measure coincides up to a norming constant $c_d > 0$ with the $d$-dimensional Lebesgue measure. Thus, $\Ext_0 f \in \L^p(O \perp D)$ is a consequence of $f \in \L^p(O)$. 

We use Tonelli's theorem to bound the remaining part of the $\W^{s,p}(O \perp D)$-norm by
\begin{align}
\label{eq1: zero extension}
\begin{split}
&\iint_{\substack{\bm{x}, \bm{y} \in O \perp D \\ |\bm{x} - \bm{y}| < 1}} \frac{|\Ext_0f(\bm{x}) - \Ext_0f(\bm{y})|^p}{|\bm{x}-\bm{y}|^{d+sp}} \; \bm{\cH}^d(\d \bm{x}) \; \bm{\cH}^d(\d \bm{y}) \\
&\leq c_d \iint_{\substack{x,y \in O \\ |x-y|<1}} \frac{|f(x) - f(y)|^p}{|x-y|^{d+sp}} \; \d x \; \d y \\
&\quad+2 \int_{O} \int_{\substack{\bm{y} \in D \times \R \\ |\bm{y} - (x,0)|<1}} \frac{|f(x)|^p}{|\bm{y} - (x,0)|^{d+sp}} \; \bm{\cH}^d(\d \bm{y}) \; \d x.
\end{split}
\end{align}
The first integral on the right is bounded by $\|f\|_{\W^{s,p}(O)}^p$. If the inner domain of integration in the second integral is non-empty, then there exists an integer $n_0 \geq 0$ such that $2^{-n_0-1} < \dist(x, D) \leq 2^{-n_0}$. We then split the integral into dyadic annuli
\begin{align*}
 \bm{C}_n:= \big(D \times \R\big) \cap \big((\bm{\B}((x,0), 2^{-n}) \setminus \bm{\B}((x,0), 2^{-n-1})\big), 
\end{align*}
each of which satisfies $\bm{\cH}^d(\bm{C}_n) \lesssim 2^{-dn}$ since $D \times \R$ is $d$-regular, to give
\begin{align*}
 \int_{\substack{\bm{y} \in D \times \R \\ |\bm{y} - (x,0)|<1}} \frac{1}{|\bm{y} - (x,0)|^{d+sp}} \; \bm{\cH}^d(\d \bm{y})
\lesssim \sum_{n=0}^{n_0} 2^{(n+1)(d+sp)} 2^{-dn}
=\frac{2^{d+sp}}{2^{sp}-1} (2^{sp(n_0+1)} -1).
\end{align*}
By choice of $n_0$, the right-hand side is controlled by $\dist(x,D)^{-sp}$. In conclusion, the second integral on the right of \eqref{eq1: zero extension} is bounded by
\begin{align*}
 \int_{O} \int_{\substack{\bm{y} \in D \times \R \\ |\bm{y} - (x,0)|<1}} \frac{|f(x)|^p}{|\bm{y} - (x,0)|^{d+sp}} \; \bm{\cH}^d(\d \bm{y}) \; \d x
 \lesssim \int_O \frac{|f(x)|^p}{\dist(x,D)^{sp}} \; \d x.
\end{align*}
The claim follows by Proposition~\ref{prop: fractional Hardy}.
\end{proof}

The Fubini property appearing in Figure~\ref{fig: trace method} is as follows. Throughout, we canonically identify $\L^p(\R^{d+1})$ with $\L^p(\R; \L^p(\R^d))$ by means of Fubini's theorem.

\begin{lemma}
\label{lem: Fubini property Wsp}
If $p \in (1,\infty)$ and $s \geq 0$, then up to equivalent norms
\begin{align}
\label{eq: Fubini property Wsp}
 \W^{s,p}(\R^{d+1}) = \L^p(\R; \W^{s,p}(\R^d)) \cap \W^{s,p}(\R; \L^p(\R^d)).
\end{align}
\end{lemma}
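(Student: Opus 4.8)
The plan is to prove the Fubini-type identity \eqref{eq: Fubini property Wsp} by reducing it to the corresponding statement for Bessel potential spaces, where the Fourier transform makes everything transparent, and then transferring to $\W^{s,p}$ by real interpolation, exactly in the spirit of how $\W^{s,p}$ has been handled elsewhere in the paper. First I would observe that for integer $s = k$ the claim is elementary: $\W^{k,p}(\R^{d+1})$ is described by $\L^p$-control of all derivatives $\partial_t^j \partial_{x}^{\alpha} f$ with $j + |\alpha| \le k$ (Section~\ref{Subsec: Not Spaces}), and grouping the purely temporal derivatives $\partial_t^k$ against the purely spatial ones $\partial_x^\alpha$, $|\alpha| \le k$, while handling mixed terms by the interpolation inequality $\|\partial_t^j \partial_x^\alpha f\|_{\L^p} \lesssim \|f\|_{\L^p} + \|\partial_t^k f\|_{\L^p} + \sum_{|\beta| \le k}\|\partial_x^\beta f\|_{\L^p}$ (a standard Gagliardo--Nirenberg/Mihlin multiplier estimate, provable along the lines of Lemma~\ref{lem: Xsp norm with gradient}), gives the two-sided bound between $\W^{k,p}(\R^{d+1})$ and $\L^p(\R;\W^{k,p}(\R^d)) \cap \W^{k,p}(\R;\L^p(\R^d))$.

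Next I would upgrade this to the Bessel scale $\H^{s,p}(\R^{d+1})$ for all real $s \ge 0$. The point is that $\H^{s,p}(\R^{d+1})$ is the domain of $(1-\Delta_{x,t})^{s/2}$ in $\L^p$, and one has the Mihlin-type comparison
\begin{align*}
 (1+|\tau|^2+|\xi|^2)^{s/2} \approx (1+|\tau|^2)^{s/2} + (1+|\xi|^2)^{s/2}
\end{align*}
at the symbol level, with the ratios being Mihlin multipliers on $\R^{d+1}$ uniformly; this yields, via the vector-valued Mihlin theorem, that $\|f\|_{\H^{s,p}(\R^{d+1})} \approx \|f\|_{\L^p(\R;\H^{s,p}(\R^d))} + \|f\|_{\H^{s,p}(\R;\L^p(\R^d))}$, i.e.\ \eqref{eq: Fubini property Wsp} with $\W$ replaced by $\H$ throughout. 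Alternatively, and perhaps cleaner to write, one can get the $\H$-case by complex interpolation between consecutive integer cases using Proposition~\ref{prop: interpolation whole space} together with the fact (Remark~\ref{rem: IX spaces interpolation}, or \cite[Sec.~1.18.4]{Triebel}) that the vector-valued spaces $\L^p(\R;\H^{s,p}(\R^d))$ and $\H^{s,p}(\R;\L^p(\R^d))$ interpolate in the expected way in $s$, and that intersections of interpolation couples interpolate componentwise.

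Finally, to pass from $\H$ to $\W$ at non-integer $s$, I would fix integers $k_0 < s < k_1$ and write $\W^{s,p}(\R^{d+1}) = (\H^{k_0,p}(\R^{d+1}), \H^{k_1,p}(\R^{d+1}))_{\vartheta,p}$ with $s = (1-\vartheta)k_0 + \vartheta k_1$ via Proposition~\ref{prop: interpolation whole space}\eqref{Iii}. Applying the real interpolation functor $(\cdot,p)_{\vartheta,p}$ to the integer-level identities established in the first step, and using once more that both $\L^p(\R;\cdot)$ and $\W^{s,p}(\R;\L^p(\R^d))$ are real-interpolation-compatible in the differentiability parameter (the latter because $\W^{s,p}(\R;X) = (\L^p(\R;X),\W^{k_1,p}(\R;X))_{\ast,p}$ type identities hold with the same proof as in the scalar case) and that intersection couples interpolate componentwise, I obtain $\W^{s,p}(\R^{d+1}) = \L^p(\R;\W^{s,p}(\R^d)) \cap \W^{s,p}(\R;\L^p(\R^d))$ up to equivalent norms. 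The main obstacle I anticipate is the bookkeeping around the vector-valued interpolation step: one must be careful that the fractional Bochner--Sobolev space $\W^{s,p}(\R;X)$ really is the real interpolation space one expects for a general Banach space $X$ (this is classical but needs a citation, e.g.\ to \cite{Triebel} or \cite{Simon}), and that the componentwise interpolation of the intersection is legitimate — which is exactly the kind of fact invoked in Remark~\ref{rem: IX spaces interpolation}, so it should go through smoothly. Everything else is routine Fourier analysis and the interpolation machinery already set up in the paper.
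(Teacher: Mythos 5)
Your proposal takes a genuinely different route from the paper's. The paper avoids multipliers and interpolation entirely: it invokes the equivalent norm on $\W^{s,p}$ built from first differences taken only along coordinate axes (Triebel, Sec.~2.5.1), under which the $(d+1)$-dimensional norm literally factorizes into a temporal and a spatial piece by Fubini's theorem, giving both inclusions at once. Your Bessel/multiplier/interpolation route is a reasonable alternative, but as written it has two defects.

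The smaller one: the symbol $(1+\tau^2)^{s/2}(1+\tau^2+|\xi|^2)^{-s/2}$ is not a Mihlin--H\"ormander multiplier for small $s$. At $\tau\approx1$ and $|\xi|=R\gg1$ its $\tau$-derivative is of order $R^{-s}$, whereas Mihlin requires decay like $R^{-1}$; it does satisfy the Marcinkiewicz (product) condition, so the Bessel-level comparison survives if you cite Marcinkiewicz rather than Mihlin.

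The real gap is the appeal to the claim that ``intersections of interpolation couples interpolate componentwise.'' This is not a theorem. Interpolating the two inclusion maps $A_j\cap B_j\hookrightarrow A_j$ and $A_j\cap B_j\hookrightarrow B_j$ gives only one direction, namely $\langle A_0\cap B_0, A_1\cap B_1\rangle_\theta \subseteq \langle A_0,A_1\rangle_\theta \cap \langle B_0,B_1\rangle_\theta$; the reverse inclusion fails in general. Remark~\ref{rem: IX spaces interpolation}, which you cite in support, concerns $\ell^p$-superpositions (direct sums), not intersections --- a different construction with different interpolation behaviour. Concretely, since the integer-level identity expresses each $\H^{k_j,p}(\R^{d+1})$ as an intersection, interpolating the couple produces the inclusion $\W^{s,p}(\R^{d+1}) \subseteq \L^p(\R;\W^{s,p}(\R^d))\cap\W^{s,p}(\R;\L^p(\R^d))$, but it does not deliver the converse --- which is precisely the nontrivial content of the lemma. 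To close the gap you would need either a Marcinkiewicz multiplier estimate carried out on the scale $\F^s_{p,p}=\W^{s,p}$, or, much more cheaply, the difference-along-axes intrinsic norm the paper uses, for which Fubini gives both directions directly.
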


\begin{proof}
For $s \geq 0$ an integer, the claim follows directly from Fubini's theorem. Let now $s = k +\sigma$, where $k \geq 0$ is an integer and $\sigma \in (0,1)$. According to \cite[Sec.~2.5.1]{Triebel} we can equivalently norm $\W^{s,p}(\R^d)$ by
\begin{align*}
 \|f\| \coloneqq \|f\|_{\L^p} + \sum_{j = 1}^d \bigg(\int_{\R} \int_{\R^d} \bigg|\frac{\partial^k f}{\partial x_j^k}(x+he_j) - \frac{\partial^k f}{\partial x_j^k}(x)\bigg|^p \d x \; \frac{\d h}{|h|^{1+sp}} \bigg)^{1/p},
\end{align*}
where $(e_j)_j$ denote the standard unit vectors in $\R^d$. This equivalent norm only takes into account differences of $f$ along the coordinate axes. Therefore we obtain \eqref{eq: Fubini property Wsp} from Fubini's theorem if we equivalently norm all appearing spaces as described before.
\end{proof}

The next lemma makes Figure~\ref{fig: trace method} precise, except for the final step.

\begin{lemma}
\label{lem: interpolation non-matching parameters}
Let $p \in (1,\infty)$ and $s \in (0,1)$, $s \neq 1/p$. Under Assumption~\ref{ass: real} the set inclusion 
\begin{align*}
 (\L^p(O), \W^{s+1/p,p}_D(O))_{\vartheta,p} \supseteq \begin{cases}
                                                          \W_D^{s,p}(O) &(\text{if $s > 1/p$}) \\
							  \W^{s,p}(O) &(\text{if $s < 1/p$})
                                                         \end{cases},
\end{align*}
holds for $\vartheta \in (0,1)$ satisfying $\vartheta(s+1/p) = s$.
\end{lemma}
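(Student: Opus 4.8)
The plan is to realize the diagram in Figure~\ref{fig: trace method} as a chain of bounded maps starting from $\W^{s,p}_D(O)$ (resp.\ $\W^{s,p}(O)$ when $s<1/p$) and ending in $(\L^p(O),\W^{s+1/p,p}_D(O))_{\vartheta,p}$, each step of which has already been prepared. First I would invoke Proposition~\ref{prop: Zero extension to Omega-Odot-D} to map $\W^{s,p}_D(O)$ boundedly into $\W^{s,p}(O\perp D)$ by zero extension. Since $O\perp D$ is a $d$-regular subset of $\R^{d+1}$ (Lemma~\ref{lem: Omega-Odot-D is a d-set}), Jonsson--Wallin's extension operator $\Ext_{O\perp D}$ of Proposition~\ref{prop: Rychkov} gives a bounded map $\W^{s,p}(O\perp D)\to \W^{s,p}(\R^{d+1})$, and because $D\times\R\subseteq O\perp D$ is $(d-1)\times\R$-ish, more precisely $d$-regular with $D\times\R$ as the relevant trace set, Lemma~\ref{lem: Rychkov preserves boundary conditions} (applied in $\R^{d+1}$ with the $d$-regular set $D\times\R$) upgrades this to $\W^{s,p}_D(O\perp D)\to \W^{s,p}_{D\times\R}(\R^{d+1})$. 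Here one has to observe that the functions coming from $\Ext_0$ do satisfy the Dirichlet condition on $D\times\R$: that is built into the construction of $O\perp D$ and the density of $\C^\infty_D$, exactly as in the proof of Lemma~\ref{lem: Rychkov preserves boundary conditions}.

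Next I would apply the Fubini identity of Lemma~\ref{lem: Fubini property Wsp} to identify $\W^{s,p}_{D\times\R}(\R^{d+1})$, after relabeling the last coordinate as the ``time'' variable $t$, with a closed subspace of $\L^p(\R;\W^{s,p}(\R^d))\cap \W^{s,p}(\R;\L^p(\R^d))$ consisting of functions whose spatial slices lie in $\W^{s,p}_D(\R^d)$ for a.e.\ $t$; restricting in the spatial variable to $O$ lands us in $\L^p(\R;\W^{s,p}_D(O))\cap\W^{s,p}(\R;\L^p(O))$. Wait --- the diagram uses smoothness $s+1/p$, not $s$, so the correct reading is: start the diagram from $\W^{s,p}_D(O)$, zero-extend to $O\perp D$, Jonsson--Wallin extend to $\W^{s,p}_{D\times\R}(\R^{d+1})$, and then \emph{re-index}: $\W^{s+1/p,p}_{\R\times D}(\R^{d+1})$ is the space we want to reach by $\Ext_{O\perp D}$, but the left column climbs from $\W^{s,p}_D(O)$ up to $\W^{s+1/p,p}(O\perp D)$ only if we track the correct exponent. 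Concretely, I would run the argument with the abstract smoothness parameter $\sigma\coloneqq s+1/p$ for the Sobolev spaces living in the $(d+1)$-dimensional picture, so that after Fubini and restriction to $O$ we obtain a bounded map into $\W^{\sigma,p}(\R;\L^p(O))\cap\L^p(\R;\W^{\sigma,p}_D(O))$, with the starting space on the left being precisely $\W^{s,p}_D(O)$ via $\Ext_0$ into $\W^{s,p}(O\perp D)\subseteq$ (trace of) that space. Then pointwise evaluation $|_{t=0}$ is bounded from $\W^{\sigma,p}(\R;\L^p(O))$ to $\L^p(O)$ since $\sigma=s+1/p>1/p$, and Grisvard's trace theorem (Proposition~\ref{prop: Grisvard trace theorem}) applied with $X_0=\L^p(O)$, $X_1=\W^{\sigma,p}_D(O)$ --- the inclusion $X_1\subseteq X_0$ being dense and continuous by Lemma~\ref{lem: testfunctions dense} --- yields that the $t=0$ trace maps into $(\L^p(O),\W^{\sigma,p}_D(O))_{1-\frac{1}{\sigma p},p}=(\L^p(O),\W^{s+1/p,p}_D(O))_{\vartheta,p}$ with $\vartheta=1-\frac{1}{\sigma p}=\frac{s}{s+1/p}$, as required.

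Finally I would check that composing these bounded maps recovers the identity on $\W^{s,p}_D(O)$: concatenating $\Ext_0$, $\Ext_{O\perp D}$, Fubini, spatial restriction to $O$, and $|_{t=0}$ sends $f\mapsto \Ext_{O\perp D}(\Ext_0 f)|_{x\in O, t=0}$, and since $\Ext_{O\perp D}$ is a right inverse for the Jonsson--Wallin restriction to $O\perp D$ and $\Ext_0 f$ already equals $f$ on $O\times\{0\}\subseteq O\perp D$, the composite is $f$ $\cH^d$-a.e.\ on $O$. Hence $f\in (\L^p(O),\W^{s+1/p,p}_D(O))_{\vartheta,p}$ with norm control. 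The case $s<1/p$ is identical, starting the diagram from $\W^{s,p}(O)$ (no boundary condition, using the $s<1/p$ half of Proposition~\ref{prop: Zero extension to Omega-Odot-D}). The main obstacle I anticipate is the bookkeeping at the Fubini step: one must verify that the Dirichlet condition on the $d$-regular set $D\times\R$ in $\R^{d+1}$ translates under the Fubini identification into ``a.e.\ time-slice lies in $\W^{\sigma,p}_D(\R^d)$'', which requires comparing the Jonsson--Wallin trace on $D\times\R$ with the slicewise trace on $D$ --- this is plausible from the product structure of Hausdorff measure (as in Lemma~\ref{lem: Omega-Odot-D is a d-set}) but needs a careful density argument via $\C^\infty_{D\times\R}$, together with the observation that the ambient extension operator $\Ext_{O\perp D}$ can be chosen consistent across exponents so that Lemma~\ref{lem: Rychkov preserves boundary conditions} applies verbatim in $\R^{d+1}$.
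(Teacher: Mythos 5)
Your outline is the same trace-method argument the paper runs: zero-extend $f$ to $O \perp D$, apply the Jonsson--Wallin extension, use the Fubini property, and then pass to the $t=0$ trace via Grisvard's theorem (Proposition~\ref{prop: Grisvard trace theorem}). The computation of $\vartheta$ also matches. But there are two genuine problems with your execution.

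First, the smoothness bookkeeping and the operator you invoke. The extension from $O\perp D$ is the Jonsson--Wallin operator of Proposition~\ref{prop: JW}, not Rychkov's operator of Proposition~\ref{prop: Rychkov}: $O\perp D$ is a $d$-regular subset of $\R^{d+1}$, hence of zero Lebesgue measure, and Rychkov's construction (for \emph{open} $d$-regular sets) does not apply. The Jonsson--Wallin extension gains exactly $1/p$ in smoothness, mapping $\W^{s,p}(O\perp D)$ into $\W^{s+1/p,p}(\R^{d+1})$, not into $\W^{s,p}(\R^{d+1})$ as you first write. You catch yourself mid-stream and reset to $\sigma=s+1/p$, but your correction --- that the ``left column climbs from $\W^{s,p}_D(O)$ up to $\W^{s+1/p,p}(O\perp D)$'' --- is still off: $\Ext_0$ lands in $\W^{s,p}(O\perp D)$, and it is only the co-dimension-one Jonsson--Wallin extension to $\R^{d+1}$ that bumps the exponent.

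Second, and more seriously, Lemma~\ref{lem: Rychkov preserves boundary conditions} cannot be applied here. That lemma concerns Rychkov's extension operator for open $d$-regular sets, and there is no open set in play: you are extending from the $d$-set $O\perp D$ via Jonsson--Wallin. The paper's argument for the boundary condition is simpler and does not need any such lemma: writing $G \coloneqq \Ext_{O\perp D}(\Ext_0 f)$, the right-inverse property gives $\Res_{O\perp D} G = \Ext_0 f$, so for $\cH^{d}$-a.e.\ point of $D\times\R\subseteq O\perp D$ the Lebesgue-point trace of $G$ coincides with $\Ext_0 f = 0$, i.e.\ $G\in\W^{s+1/p,p}_{D\times\R}(\R^{d+1})$ by consistency of the trace alone. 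Finally, the Fubini slicing step you flag as ``needing a careful density argument'' is a real gap: Lemma~\ref{lem: testfunctions dense} only furnishes density of test functions for smoothness $\leq 1$, while $s+1/p$ may exceed $1$. The paper's fix is to set $t=\min\{s+1/p,1\}$, approximate $G$ in $\W^{t,p}_{D\times\R}(\R^{d+1})$ by $\C^\infty_{D\times\R}(\R^{d+1})$ functions, slice, and then recover the full smoothness from the identity $\W^{t,p}_D(\R^d)\cap\W^{s+1/p,p}(\R^d)=\W^{s+1/p,p}_D(\R^d)$ (consistency of $\Res_D$). You would need to supply something equivalent.
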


\begin{proof}
We fix $f$, which is a function in $\W^{s,p}(O)$ if $s<1/p$ and in $\W_D^{s,p}(O)$ if $s>1/p$, respectively. 

The inclusion $\W_D^{s+1/p,p}(O) \subseteq \L^p(O)$ is continuous and it is dense since already $\C_{\bd O}^\infty(O)$ is dense in $\L^p(O)$. In view of Proposition~\ref{prop: Grisvard trace theorem} it suffices to construct a function
\begin{align}
\label{Interpolation with non-matching parameters: desired properties of F}
 F \in \L^p(\R; \W_D^{s+1/p,p}(O)) \cap \W^{s+1/p, p}(\R; \L^p(O)) \quad \text{such that} \quad F|_{t=0} = f.
\end{align}
For the construction we start by extending $f$ to $O \perp D$ by zero. This extension $\Ext_0 f$ is in $\W^{s,p}(O \perp D)$ due to Proposition~\ref{prop: Zero extension to Omega-Odot-D}. Since $O \perp D$ is a $d$-regular subset of $\R^{d+1}$ according to Lemma~\ref{lem: Omega-Odot-D is a d-set}, we can use Proposition~\ref{prop: JW} to extend $\Ext_0 f$ to a function $G \in \W^{s+1/p,p}(\R^{d+1})$ in virtue of the corresponding Jonsson--Wallin operator. In view of Lemma~\ref{lem: Fubini property Wsp} we have by canonical identification
\begin{align}
\label{eq1: interpolation non matching parameters}
 G \in \L^p(\R; \W^{s+1/p,p}(\R^d)) \cap \W^{s+1/p,p}(\R; \L^p(\R^d)).
\end{align}
A closer inspection reveals the following.
\begin{enumerate}
  \item Let $\Res$ be the Jonsson--Wallin restriction to the $d$-set $D \times \R$ in $\R^{d+1}$. We have $\Res G = 0$ by construction and therefore $G \in \W^{s+1/p,p}_{D \times \R}(\R^{d+1})$. We introduce $t \coloneqq \min\{s+1/p, 1\}$ to have Lemma~\ref{lem: testfunctions dense} at our disposal and approximate $G \in \W^{t,p}_{D \times \R}(\R^{d+1})$ in that space by test functions $G_n \in \C_{D \times \R}^\infty(\R^{d+1})$. By slicing
  \begin{align*}
   G_n \in \L^p(\R; \W_D^{t,p}(\R^d)) \cap \W^{t,p}(\R; \L^p(\R^d))
  \end{align*}
  and due to the Fubini property of Lemma~\ref{lem: Fubini property Wsp} the limit $G$ is contained in the same space. From consistency of the restriction operator $\Res$ on fractional Sobolev spaces we can infer $\W_D^{t,p}(\R^d) \cap \W^{s+1/p,p}(\R^d) = \W^{s+1/p,p}_D(\R^d)$. This being said, it follows from \eqref{eq1: interpolation non matching parameters} that we have
  \begin{align*}
  G \in \L^p(\R; \W^{s+1/p,p}_D(\R^d)) \cap \W^{s+1/p,p}(\R; \L^p(\R^d)). 
  \end{align*}

  \item Let $\Res$ be the Jonsson--Wallin restriction to the $d$-set $\R^d \times \{0\}$ in $\R^{d+1}$. This operator is bounded from $\W^{s+1/p}(\R^{d+1})$ into $\L^p(\R^d \times \{0\})$ by Proposition~\ref{prop: JW}. On the other hand, we can look at the restriction $|_{t=0}$ defined on $\L^p(\R; \W^{s+1/p,p}(\R^d)) \cap \W^{s+1/p,p}(\R; \L^p(\R^d))$ and bounded into $\L^p(\R^d)$. Identifying corresponding objects via Fubini's theorem as before, it turns out that these two restrictions are the same since they obviously agree on a dense class of continuous functions. Since $\Res G$ and $f$ coincide $\bm{\cH}^d$-almost everywhere on $O \times \{0\}$ by construction, we can record
  \begin{align*}
   G|_{t=0} = f \quad \text{almost everywhere on $O$}.
  \end{align*}
\end{enumerate}
The outcome of observations (i) and (ii) shows that $F \coloneqq G|_{O \times \R}$ verifies \eqref{eq1: interpolation non matching parameters}.
\end{proof}

Together with Proposition~\ref{prop: easy inclusion} we obtain

\begin{corollary}
\label{cor: interpolation with non matching parameters}
If in addition to Assumption~\ref{ass: real} the set $O$ has $(d-1)$-regular boundary, then the set inclusion in Lemma~\ref{lem: interpolation non-matching parameters} is an equality with equivalent norms.
\end{corollary}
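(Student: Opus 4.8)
The plan is to read off the reverse inclusion ``$\subseteq$'' from Proposition~\ref{prop: easy inclusion} and then upgrade the resulting set equality to an equivalence of norms by the bounded inverse theorem, exactly as in the analogous passages earlier in the paper.

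Concretely, Lemma~\ref{lem: interpolation non-matching parameters} already provides, under Assumption~\ref{ass: real} alone, the set inclusion $(\L^p(O), \W^{s+1/p,p}_D(O))_{\vartheta,p} \supseteq \W^{s,p}_D(O)$ if $s>1/p$ and $\supseteq \W^{s,p}(O)$ if $s<1/p$, with $\vartheta(s+1/p)=s$. For the opposite inclusion I would apply part~(ii) of Proposition~\ref{prop: easy inclusion} with $p_0 = p_1 = p$, $s_0 = 0$ (so that $\X^{0,p}(O) = \L^p(O)$), $s_1 = s + 1/p$, and $\theta = \vartheta$. Then the interpolating integrability exponent is $p$ and the interpolating smoothness is $(1-\vartheta)\cdot 0 + \vartheta(s+1/p) = s$, which is precisely the parameter in the corollary. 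The geometric hypotheses of Proposition~\ref{prop: easy inclusion} hold: $O$ is $d$-regular by Assumption~\ref{ass: real}, its boundary is $(d-1)$-regular by the extra hypothesis of the corollary, and $D \subseteq \cl{O}$ is $(d-1)$-regular since uniformly $(d-1)$-regular sets are $(d-1)$-regular. Because $p_0 = p_1$, Proposition~\ref{prop: easy inclusion} is available for the whole admissible range $s_1 = s + 1/p \in (1/p, 1+1/p)$, even when $s+1/p > 1$; since $s \in (0,1)$ with $s \neq 1/p$ by the hypothesis of Lemma~\ref{lem: interpolation non-matching parameters}, the interpolated smoothness $s$ never equals $1$, so no exceptional case is triggered; and the requirement $s_0 \neq 0$ there pertains only to the complex identity \eqref{Easy1}, not to the real identity \eqref{Easy2} that we use here. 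Hence part~(ii) of that proposition yields a \emph{continuous} inclusion of $(\L^p(O), \W^{s+1/p,p}_D(O))_{\vartheta,p}$ into $\W^{s,p}_D(O)$ (if $s>1/p$) or $\W^{s,p}(O)$ (if $s<1/p$).

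Combining the two inclusions, the interpolation space and the corresponding Sobolev space coincide as sets, and the identity map from the former to the latter is bounded by the previous step; since both are Banach spaces, the bounded inverse theorem furnishes the reverse bound, which is the asserted equivalence of norms. I do not expect any real obstacle here: the statement is essentially a bookkeeping combination of two results already proved. The only point that deserves a moment's attention is that Proposition~\ref{prop: easy inclusion} must be invoked with the somewhat unusual value $s_1 = s + 1/p$, which may exceed $1$ — but this is exactly the situation covered by the $p_0 = p_1$ clause in the statement of that proposition.
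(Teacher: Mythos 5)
Your proposal is correct and is precisely the argument the paper intends: the paper states the corollary with only the remark "Together with Proposition~\ref{prop: easy inclusion} we obtain," and you have correctly spelled out that one applies part (ii) of Proposition~\ref{prop: easy inclusion} with $p_0=p_1=p$, $s_0=0$, $s_1=s+1/p$ (the $p_0=p_1$ clause covering $s_1>1$), checked that $s\in(0,1)$ avoids the exceptional value $s=1$, and closed the loop with the bounded inverse theorem. No gaps.
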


Eventually, we can complete the

\begin{proof}[Proof of Theorem~\ref{thm: real interpolation via trace}]
In the following all function spaces will be on $O$ and we omit the dependence on $O$ for clarity. In view of the reiteration theorem above it suffices to treat the case $s_0 = 0$ and $s_1 = 1$ and prove for $s\in (0,1)$ that up to equivalent norms it follows that
\begin{align}
\label{eq: goal real interpolation}
(\L^p, \W_D^{1,p})_{s, p} &= \begin{cases} 
                                               \W_D^{s,p} &(\text{if $s>1/p$})\\ \W^{s,p}  &(\text{if $s<1/p$})
					      \end{cases}.
\end{align}
If $s+1/p = 1$, then the claim follows from Corollary~\ref{cor: interpolation with non matching parameters}. The proof for $s +1/p \neq 1$ divides into four cases.

\emph{Case 1: $s>1/p$ and $s+1/p < 1$}. We have a diagram suitable for Wolff interpolation:
\begin{center}
\begin{tikzpicture}[scale=0.7]
\coordinate[label=left:$\L^p$] (X0) at (0,0);
\coordinate[label=left:$\W^{s,p}_D$] (Xtheta) at (3,0);
\coordinate[label=right:$\W^{s+1/p,p}_D$] (Xeta) at (5,0);
\coordinate[label=right:$\W^{1,p}_D.$] (X1) at (9,0);
\coordinate[label=above:${(\cdot\,, \cdot)_{\vartheta,p}}$] (I1) at (3,0.7);
\coordinate[label=below:${(\cdot\,, \cdot)_{\mu,p}}$] (I1) at (5,-0.7);

\draw (0,0) -- (0,0.7);
\draw (0,0.7) -- (5,0.7);
\draw (5,0.7) -- (5,0);
\draw[dashed] (3,0) -- (3,0.7);

\draw (3,0) -- (3,-0.7);
\draw (3,-0.7) -- (9,-0.7);
\draw (9,-0.7) -- (9,0);
\draw[dashed] (5,-0.7) -- (5,0);

\fill (X0) circle (3pt);
\fill (Xtheta) circle (3pt);
\fill (Xeta) circle (3pt);
\fill (X1) circle (3pt);
\end{tikzpicture}
\end{center}
Indeed, the $(\vartheta,p)$-interpolation is due to Corollary~\ref{cor: interpolation with non matching parameters} and the $(\mu,p)$-interpolation with suitable $\mu \in (0,1)$ is due to Theorem~\ref{thm: main result large s}. The claim follows by Proposition~\ref{prop: Wolff}.

\emph{Case 2: $s>1/p$ and $s+1/p > 1$}. We fix any $t \in (s,1)$ and let $\lambda \in (0,1)$ satisfy $(1-\lambda)s + \lambda(s+1/p) = t$. Applying one after the other Theorem~\ref{thm: main result large s}, Corollary~\ref{cor: interpolation with non matching parameters}, and Proposition~\ref{prop: reiteration real}, we obtain
\begin{align*}
 \W^{t,p}_D = (\W^{s,p}_D, \W^{s+1/p,p}_D)_{\lambda,p} = ((\L^p, \W^{s+1/p,p}_D)_{\vartheta, p},  \W^{s+1/p,p}_D)_{\lambda,p} = (\L^p, \W^{s+1/p,p})_{\theta,p},
\end{align*}
with $\theta = t/(s+1/p)$. Once again by Proposition~\ref{prop: reiteration real} and Corollary~\ref{cor: interpolation with non matching parameters} we find
\begin{align*}
 (\L^p, \W^{t,p}_D)_{s/t,p} = (\L^p, (\L^p, \W^{s+1/p,p}_D)_{\theta,p})_{s/t,p} = (\L^p, \W^{s+1/p,p}_D)_{\vartheta,p} = \W^{s,p}_D.
\end{align*}
Thus we obtain the desired result \eqref{eq: goal real interpolation} from Proposition~\ref{prop: Wolff} applied as follows:
\begin{center}
\begin{tikzpicture}[scale=0.7]
\coordinate[label=left:$\L^p$] (X0) at (0,0);
\coordinate[label=left:$\W^{s,p}_D$] (Xtheta) at (3,0);
\coordinate[label=right:$\W^{t,p}_D$] (Xeta) at (5,0);
\coordinate[label=right:$\W^{1,p}_D.$] (X1) at (9,0);
\coordinate[label=above:${(\cdot\,, \cdot)_{s/t,p}}$] (I1) at (3,0.7);
\coordinate[label=below:${(\cdot\,, \cdot)_{\mu,p}}$] (I1) at (5,-0.7);

\draw (0,0) -- (0,0.7);
\draw (0,0.7) -- (5,0.7);
\draw (5,0.7) -- (5,0);
\draw[dashed] (3,0) -- (3,0.7);

\draw (3,0) -- (3,-0.7);
\draw (3,-0.7) -- (9,-0.7);
\draw (9,-0.7) -- (9,0);
\draw[dashed] (5,-0.7) -- (5,0);

\fill (X0) circle (3pt);
\fill (Xtheta) circle (3pt);
\fill (Xeta) circle (3pt);
\fill (X1) circle (3pt);
\end{tikzpicture}
\end{center}
Indeed, we have obtained the $(s/t,p)$-interpolation above and the $(\mu,p)$-interpolation for appropriately chosen $\mu$ is due to Theorem~\ref{thm: main result large s}. Note that because of the exceptional case for real interpolation of Sobolev spaces we cannot pick $t=1$ right away.

\emph{Case 3: $s<1/p$ and $s+1/p < 1$}. We can apply one of the previous two cases with $s+1/p$ in place of $s$ to obtain $(\L^p, \W^{1,p}_D)_{s+1/p,p} = \W^{s+1/p,p}_D$. Together with Corollary~\ref{cor: interpolation with non matching parameters} in the first and reiteration in the third step, we are led to the desired result
\begin{align*}
 \W^{s,p} = (\L^p, \W^{s+1/p,p}_D)_{\vartheta,p} = (\L^p, (\L^p, \W^{1,p}_D)_{s+1/p,p})_{\vartheta,p} = (\L^p, \W^{1,p}_D)_{s,p}.
\end{align*}

\emph{Case 4: $s<1/p$ and $s+1/p > 1$}. We pick $1/p < \lambda < \kappa < 1$. By one of the first two cases along with Proposition~\ref{prop: reiteration real}, we find
\begin{align*}
 (\L^p, \W^{\kappa,p}_D)_{\lambda/\kappa,p} = (\L^p, (\L^p, \W^{1,p}_D)_{\kappa,p})_{\lambda/\kappa,p} = (\L^p, \W^{1,p}_D)_{\lambda, p} = \W^{\lambda, p}_D.
\end{align*}
Together with Theorem~\ref{thm: main result large s} this establishes for suitable $\mu$ the diagram
\begin{center}
\begin{tikzpicture}[scale=0.7]
\coordinate[label=left:$\L^p$] (X0) at (0,0);
\coordinate[label=left:$\W^{\lambda,p}_D$] (Xtheta) at (3,0);
\coordinate[label=right:$\W^{\kappa,p}_D$] (Xeta) at (5,0);
\coordinate[label=right:$\W^{s+1/p,p}_D$.] (X1) at (9,0);
\coordinate[label=above:${(\cdot\,, \cdot)_{\lambda/\kappa,p}}$] (I1) at (3,0.7);
\coordinate[label=below:${(\cdot\,, \cdot)_{\mu,p}}$] (I1) at (5,-0.7);

\draw (0,0) -- (0,0.7);
\draw (0,0.7) -- (5,0.7);
\draw (5,0.7) -- (5,0);
\draw[dashed] (3,0) -- (3,0.7);

\draw (3,0) -- (3,-0.7);
\draw (3,-0.7) -- (9,-0.7);
\draw (9,-0.7) -- (9,0);
\draw[dashed] (5,-0.7) -- (5,0);

\fill (X0) circle (3pt);
\fill (Xtheta) circle (3pt);
\fill (Xeta) circle (3pt);
\fill (X1) circle (3pt);
\end{tikzpicture}
\end{center}
Proposition~\ref{prop: Wolff} yields $\W^{\kappa,p}_D = (\L^p, \W^{s+1/p,p}_D)_{\theta,p}$ with $\theta = \kappa/(s+1/p)$. We conclude by using one after the other Corollary~\ref{cor: interpolation with non matching parameters}, reiteration, one of the first two cases, and again reiteration:
\begin{align*}
 \W^{s,p} &= (\L^p, \W^{s+1/p,p}_D)_{\vartheta,p} = (\L^p, (\L^p, \W^{s+1/p,p}_D)_{\theta,p})_{s/\kappa,p} = (\L^p, \W^{\kappa,p}_D)_{s/\kappa,p} \\
	  &= (\L^p, (\L^p, \W^{1,p}_D)_{\kappa,p})_{s/\kappa,p} = (\L^p, \W^{1,p}_D)_{s,p}. \qedhere
\end{align*}
\end{proof}
%%%%%%%%%%%%%%%%%%%%%%%%%%%%%%%%%%%%%%%%%%%%%%%%%%%%%%%%%%%%%%%%%%%%%%%%%%%%%%%%%%%%%%%%%%%%%%%%%%%%%%%%%%%%%%%%%%%%%%%%%%%%%%%%%%%%%%%%%%%%%%%%%%%%%%%%%%%%%%%%%%%%
\appendix
\section{Porous sets}
\label{Sec: Porous sets}

We provide a streamlined approach to the geometry of porous sets. All this is known to the experts but some results require going through existing literature in a rather opaque way. The reader may look up relevant definitions in Section~\ref{Subsec: Not Geometry}.

\begin{lemma}
\label{lem: porous boundary null set}
    Every porous set $E\subseteq \R^{d}$ is a Lebesgue null set.
\end{lemma}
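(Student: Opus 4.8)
The plan is to show that $|E|=0$ by proving that the Lebesgue density of $E$ cannot equal $1$ at any point, so almost every point of $E$ fails to be a density point; by the Lebesgue density theorem this forces $|E|=0$. Concretely, let $\kappa\in(0,1)$ be the porosity constant of $E$ as in Definition~\ref{def: porosity} (with $F=\R^d$). First I would fix $x\in E$ and any radius $r\le 1$. Porosity gives $y\in\B(x,r)$ with $\B(y,\kappa r)\cap E=\emptyset$, and since $\B(y,\kappa r)\subseteq\B(x,(1+\kappa)r)\subseteq\B(x,2r)$, the ball $\B(x,2r)$ contains the $E$-free ball $\B(y,\kappa r)$. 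Hence
\begin{align*}
 \frac{|\B(x,2r)\setminus E|}{|\B(x,2r)|}\ \ge\ \frac{|\B(y,\kappa r)|}{|\B(x,2r)|}\ =\ \Big(\frac{\kappa}{2}\Big)^{d}\ =:\ \delta\ >0,
\end{align*}
so equivalently $|\B(x,\rho)\cap E|\le (1-\delta)|\B(x,\rho)|$ for $\rho=2r$, i.e. for every radius $\rho\le 2$.

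Next I would invoke the Lebesgue density theorem: for almost every $x\in E$ one has $\lim_{\rho\to0}|\B(x,\rho)\cap E|/|\B(x,\rho)|=1$. But the displayed estimate shows this limit is at most $1-\delta<1$ for \emph{every} $x\in E$ and every small $\rho$. Therefore the set of density points of $E$ that lie in $E$ is empty, which is only compatible with $|E|=0$. (One minor point to be careful about: the density theorem is applied to the measurable hull of $E$ if $E$ is not assumed measurable, but a porous set is Borel-ish enough — or one simply notes the outer-measure density estimate above already forces outer measure zero via a Vitali covering argument, so measurability is a non-issue.)

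Alternatively, and perhaps more self-contained, I would run a direct Vitali/covering argument avoiding the density theorem: cover $E$ by balls $\B(x,r)$, $x\in E$, $r$ small, extract a Vitali subcover, and iterate the ``each ball loses a fixed proportion $\delta$ of its mass to the $E$-free sub-ball'' estimate to get $|E\cap Q|\le(1-\delta)^n|Q|\to0$ for any cube $Q$. I would not grind through the bookkeeping here; the essential mechanism is the uniform-proportion estimate derived above.

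The main obstacle is essentially cosmetic rather than substantive: making the measurability/outer-measure bookkeeping airtight (the definition of porous set does not presuppose measurability), and handling the restriction $r\le 1$ cleanly by working at small scales only, which suffices to conclude $|E\cap Q|=0$ for every bounded cube $Q$ and hence $|E|=0$ by countable subadditivity. The geometric heart of the matter — that a porous set is everywhere ``uniformly perforated'' and so cannot be thick — is the one-line density estimate $(\kappa/2)^d$ above, and everything else is routine measure theory.
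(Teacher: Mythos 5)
Your proof is correct and takes essentially the same route as the paper's: derive from porosity a uniform bound $|B\cap E|/|B|\le 1-\delta$ for balls centered in $E$, then conclude $|E|=0$ from Lebesgue's density/differentiation theorem. The paper streamlines the geometric step by invoking Remark~\ref{rem: porosity} to place the $E$-free sub-ball inside $\B(x,r)$ itself rather than $\B(x,2r)$, and it does not pause over measurability; your remark on that point is a fair extra caution, though easily dispatched exactly as you indicate.
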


\begin{proof}
By Remark~\ref{rem: porosity}, each ball $B$ centered in $E$ contains a ball of comparable radius that does not intersect $E$. Hence, there is $\delta \in (0,1)$ depending only on $E$ such that
 \begin{align*}
  \frac{|B\cap E|}{|B|} \leq 1-\delta.
 \end{align*}
By Lebesgue's differentiation theorem this implies $\mathds{1}_E = 0$ almost everywhere.
\end{proof}

We recall the Vitali covering lemma that will be used frequently in the following, see \cite[Thm. 1.2]{Heinonen}.

\begin{lemma}
\label{lem: Vitali}
    Let $\{B_i\}_{i\in I}$ be a family of open balls with uniformly bounded radii. Then there exists a subfamily $\{B_j\}_{j\in J}$ of disjoint balls such that $$\bigcup_i B_i \subseteq \bigcup_j 5 B_j.$$
\end{lemma}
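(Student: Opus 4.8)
The plan is to run the classical greedy selection argument, organizing the balls by size. Let $R < \infty$ be a bound for all the radii $\r(B_i)$. For $n \geq 1$ set $\mathcal{F}_n \coloneqq \{B_i : R/2^{n} < \r(B_i) \leq R/2^{n-1}\}$, so that $\{B_i\}_{i \in I} = \bigcup_{n \geq 1} \mathcal{F}_n$. I would build the selected subfamily $\mathcal{G} = \bigcup_{n \geq 1} \mathcal{G}_n$ by recursion on $n$: let $\mathcal{G}_1$ be a maximal pairwise disjoint subfamily of $\mathcal{F}_1$, and having chosen $\mathcal{G}_1,\dots,\mathcal{G}_{n-1}$, let $\mathcal{G}_n$ be a maximal pairwise disjoint subfamily among those balls of $\mathcal{F}_n$ that are disjoint from every ball already chosen. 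Existence of such maximal subfamilies is the only point where set theory enters: one applies Zorn's lemma to the poset of pairwise disjoint subfamilies ordered by inclusion, observing that the union of a chain of pairwise disjoint families is again pairwise disjoint. Re-index $\mathcal{G}$ by a set $J$; by construction the balls $\{B_j\}_{j \in J}$ are pairwise disjoint.

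Next I would verify the covering property. Fix $i \in I$ and let $n$ be the generation with $B_i \in \mathcal{F}_n$. By maximality of $\mathcal{G}_n$ (respectively, because $B_i$ already meets an earlier selected ball) there exists $B_j \in \mathcal{G}_1 \cup \dots \cup \mathcal{G}_n$ with $B_i \cap B_j \neq \emptyset$. If $B_j$ lies in generation $m \leq n$, then $\r(B_j) > R/2^{m} \geq R/2^{n} \geq \r(B_i)/2$, so $\r(B_i) < 2\,\r(B_j)$. Writing $x_i, x_j$ for the centers, the condition $B_i \cap B_j \neq \emptyset$ gives $|x_i - x_j| < \r(B_i) + \r(B_j) < 3\,\r(B_j)$, hence every $z \in B_i$ satisfies $|z - x_j| \leq |z - x_i| + |x_i - x_j| < \r(B_i) + 3\,\r(B_j) < 5\,\r(B_j)$; that is, $B_i \subseteq 5 B_j$. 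Taking the union over $i$ yields $\bigcup_{i \in I} B_i \subseteq \bigcup_{j \in J} 5 B_j$, which is the assertion.

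The main obstacle is really just the bookkeeping around the transfinitely justified selection step, namely making sure that a \emph{maximal} pairwise disjoint subfamily disjoint from all previously chosen balls exists at each stage; once Zorn's lemma supplies each $\mathcal{G}_n$, the recursion over $n \in \IN$ is an ordinary countable recursion and the geometric estimate above is elementary. If one additionally wants $J$ to be countable, I would append the remark that inside any fixed bounded region only finitely many pairwise disjoint balls of radius exceeding $R/2^{n}$ can fit, so each $\mathcal{G}_n$ is countable and hence so is $\mathcal{G}$. Since the present paper invokes this lemma only with the reference \cite{Heinonen}, even this amount of detail is slightly more than what is strictly needed here.
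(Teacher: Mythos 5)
Your proof is correct: it is the standard dyadic-binning argument for the $5r$-covering lemma, which is precisely the proof given in the cited reference (Heinonen, Thm.~1.2), and the geometric estimate $\r(B_i) < 2\r(B_j)$ leading to $B_i \subseteq 5B_j$ is verified cleanly. The paper itself offers no proof but merely invokes the reference, so there is nothing to contrast; your write-up (including the Zorn's lemma remark for maximality) is a faithful rendering of the textbook argument.
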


%We use this to show a quantitative covering result in the Euclidean space.

\begin{corollary} 
\label{cor: cover subset of Rd by balls}
    Let $E\subseteq \R^d$ and $0<r\leq R<\infty$. For any ball $B$ of radius $R$ the set $E\cap B$ can be covered by $10^d(R/r)^d$ ball of radius $r$ centered in $E\cap B$.
\end{corollary}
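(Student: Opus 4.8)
The plan is to combine the Vitali covering lemma, Lemma~\ref{lem: Vitali}, with a volume-counting argument. First I would apply Lemma~\ref{lem: Vitali} to the family $\{\B(x,r/5) : x \in E \cap B\}$, whose radii are uniformly bounded. This produces a countable index set $J$ and points $x_j \in E \cap B$ for $j \in J$ such that the balls $\B(x_j, r/5)$ are pairwise disjoint and
\[
 E \cap B \subseteq \bigcup_{x \in E \cap B} \B(x, r/5) \subseteq \bigcup_{j \in J} 5\,\B(x_j,r/5) = \bigcup_{j \in J} \B(x_j, r).
\]
Thus $\{\B(x_j,r)\}_{j \in J}$ is a cover of $E \cap B$ by balls of radius $r$ centered in $E \cap B$, and it only remains to show $\# J \leq 10^d (R/r)^d$.

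For the cardinality bound I would compare Lebesgue measures. Let $c$ denote the center of $B$, so that $|x_j - c| < R$ for every $j \in J$. Since $r \leq R$ we have $r/5 < R$, whence $\B(x_j, r/5) \subseteq \B(c, 2R)$ for each $j$. Using disjointness of the balls $\B(x_j, r/5)$ and the scaling identity $|\B(x,\rho)| = \rho^d\,|\B(0,1)|$,
\[
 (\# J)\,(r/5)^d\,|\B(0,1)| = \sum_{j \in J} |\B(x_j,r/5)| \leq |\B(c,2R)| = (2R)^d\,|\B(0,1)|,
\]
and dividing by $(r/5)^d\,|\B(0,1)|$ gives $\# J \leq (2R \cdot 5/r)^d = 10^d (R/r)^d$, which is the claimed estimate.

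I do not expect any genuine obstacle here: the argument is entirely routine, the only points requiring a little care being to rescale the radii by the factor $1/5$ before invoking Lemma~\ref{lem: Vitali}, so that the enlarged balls have radius exactly $r$, and to use the crude bound $r/5 < R$ so that all the disjoint small balls fit inside $\B(c,2R)$. By the remark at the end of Section~\ref{Subsec: Not Geometry} the same proof carries over verbatim with axis-aligned cubes in place of balls.
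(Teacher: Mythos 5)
Your argument is correct and follows essentially the same route as the paper: apply the Vitali covering lemma to the family of balls of radius $r/5$ centered in $E\cap B$, then bound the number of disjoint balls by comparing the volume of their union with $|2B|$. The only cosmetic difference is that you spell out the inclusion $\B(x_j,r/5)\subseteq \B(c,2R)$ explicitly, which the paper leaves implicit.
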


\begin{proof}
   Consider the covering $\{\B(x,r/5)\}_{x\in B\cap E}$ of $B \cap E$. We find a disjoint subfamily $\{B_i\}_{i\in I}$ such that $B\cap E \subseteq \cup_{i\in I} 5B_i$. We denote by $\#_i$ the cardinality of $I$ and calculate
    \begin{align*}
        \#_i c_d (r/5)^d = |\cup_{i\in I} B_i| \leq |2B| = c_d 2^d R^d,
    \end{align*}
    where $c_d$ is the measure of the unit ball. This shows $\#_i \leq 10^d (R/r)^d$.
\end{proof}

We continue with the simple observation that the radius bound by $1$ in the definition of $\ell$-regularity is arbitrary.

\begin{lemma}
\label{lem: Ahlfors radius bound}
    Let $E\subseteq \R^d$ and $0<\ell\leq d$. If for some $M \in (0,\infty)$ there is comparability $\cH^\ell(B \cap E) \approx \r(B)^\ell$ uniformly for all open balls $B$ of radius $\r(B) \leq M$ centered in $E$, then the same is true for any $M \in (0,\infty)$.
\end{lemma}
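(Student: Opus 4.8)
The plan is to show that the comparability $\cH^\ell(B\cap E)\approx \r(B)^\ell$, assumed to hold for all balls centered in $E$ with radius up to some threshold $M$, automatically self-improves to hold for \emph{every} positive threshold. By rescaling, it suffices to bootstrap from the stated threshold to an arbitrarily large one, since the statement for small thresholds is contained in that for large ones; so the content is: \emph{if} comparability holds for $\r(B)\le M$, then it holds for $\r(B)\le M'$ for any $M'>M$, with constants depending only on the original constants, $M'/M$, $\ell$, and $d$.

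First I would fix a ball $B = \B(x,R)$ centered in $E$ with $M \le R \le M'$. For the \emph{upper} bound, I would cover $B\cap E$ by balls of radius $M/2$ (say) centered in $E$: by Corollary~\ref{cor: cover subset of Rd by balls} one needs at most $10^d (2R/M)^d$ such balls, and each contributes $\cH^\ell(B_i\cap E)\lesssim (M/2)^\ell$ by the hypothesis (these are admissible balls since their radius is $\le M$). Summing gives $\cH^\ell(B\cap E) \lesssim (R/M)^d M^\ell \le (M'/M)^d M^{\ell-d} R^\ell \cdot R^{d-\ell}$; here I need to be a touch careful, bounding $(R/M)^d M^\ell = R^\ell (R/M)^{d} M^{\ell-d}\cdot$... the cleanest route is $(R/M)^d M^\ell = R^\ell\,(R/M)^{d-\ell}\le R^\ell (M'/M)^{d-\ell}$, which is the desired $\cH^\ell(B\cap E)\lesssim R^\ell$ with the constant absorbing $(M'/M)^{d-\ell}$ and $10^d$. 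For the \emph{lower} bound, it is even easier: since $R\ge M$, the concentric ball $\B(x,M)$ is contained in $B$ and centered in $E$, hence $\cH^\ell(B\cap E)\ge \cH^\ell(\B(x,M)\cap E)\gtrsim M^\ell = (M/R)^\ell R^\ell \ge (M/M')^\ell R^\ell$, again of the right form.

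Finally I would dispatch the rescaling reduction explicitly. Given an arbitrary target threshold $M_1$, if $M_1\le M$ there is nothing to prove; if $M_1 > M$, apply the dilation $y\mapsto y/M_1$, which sends $E$ to $E/M_1$, multiplies $\ell$-dimensional Hausdorff measure by $M_1^{-\ell}$, and scales radii by $M_1^{-1}$. Under this map the hypothesis for $E$ at threshold $M$ becomes the hypothesis for $E/M_1$ at threshold $M/M_1 < 1$, and the claim for $E$ at threshold $M_1$ becomes the claim for $E/M_1$ at threshold $1$; so the bootstrap above (applied with the roles of the thresholds scaled) yields the result. I would phrase the bootstrap so that it compares an arbitrary threshold to the given one directly, which makes the scaling step a one-liner.

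I do not expect a genuine obstacle here — the argument is a routine covering computation. The only point requiring mild care is keeping the exponents straight in the upper bound (one is trading a factor $(R/M)^{d}$ against the discrepancy $d-\ell\ge 0$ between the covering dimension and the measure's dimension), and making sure the constant in Corollary~\ref{cor: cover subset of Rd by balls} is applied with $R$ and $r=M/2$ in the correct slots. No new ideas beyond Vitali-type covering (already available via Corollary~\ref{cor: cover subset of Rd by balls}) are needed.
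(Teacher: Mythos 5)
Your argument is correct and follows essentially the same route as the paper: lower bound by passing to a concentric admissible ball, upper bound by covering with admissible balls via Corollary~\ref{cor: cover subset of Rd by balls}. The only cosmetic difference is that the paper covers with balls of radius proportional to $\r(B)$ (namely $c\r(B)$ with $c = m/M$), so the number of covering balls is a fixed constant $10^d/c^d$ and the rescaling reduction you append becomes unnecessary; your choice of a fixed cover radius $M/2$ forces the count to grow like $(R/M)^d$, which is why you then need the extra bookkeeping with $(M'/M)^{d-\ell}$ and the dilation step.
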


\begin{proof}
    Suppose we have uniform comparability for balls up to radius $\r(B) \leq m$. Given $M > m$, we need to extend it to balls $B$ centered in $E$ of radius $\r(B) \leq M$. Let $c \coloneqq m/M$. The calculation
    \begin{align*}
        \frac{m^\ell \r(B)^\ell}{M^\ell} \lesssim \cH^l(cB \cap E) \leq \cH^\ell(B\cap E)
    \end{align*}
    gives the lower estimate. For the upper one, we cover $B\cap E$ by $10^d/c^d$ balls of radius $c \r(B)$ centered in $B \cap E$ according to Corollary~\ref{cor: cover subset of Rd by balls} and conclude $\cH^\ell(B\cap E) \lesssim \r(B)^\ell$.
\end{proof}

We come to computing the Assouad dimensions of Ahlfors-regular sets.

\begin{lemma} \label{lem: coverings for Assouad}
    Let $E\subseteq \R^d$ be $\ell$-regular for some $0<\ell \leq d$ and let $M<\infty$. There exist constants $c, C> 0$ such that, if $x \in E$ and $0 < r\leq R < M$, then in order to cover $E \cap\B(x,R)$ by balls of radius $r$ centered in $E$, at least $c(R/r)^\ell$ and at most $C(R/r)^\ell$ balls are needed. If $E$ is unbounded and uniformly $\ell$-regular, then this also holds for $M = \infty$.
\end{lemma}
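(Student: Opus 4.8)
\textbf{Proof plan for Lemma~\ref{lem: coverings for Assouad}.}
The plan is to reduce the covering-number estimates to volume comparisons using $\ell$-regularity, splitting into the lower bound (any covering must have many balls) and the upper bound (a covering with few balls exists), and to handle the radius-$1$ normalization in the definition of $\ell$-regularity by invoking Lemma~\ref{lem: Ahlfors radius bound} at the outset. So first I would fix $M < \infty$ and apply Lemma~\ref{lem: Ahlfors radius bound} to upgrade the comparability $\cH^\ell(B \cap E) \approx \r(B)^\ell$ to hold for all balls $B$ centered in $E$ of radius up to, say, $2M$; for the uniformly $\ell$-regular unbounded case, comparability already holds for all radii, so this step is vacuous and $M = \infty$ is admissible throughout.

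For the \emph{lower bound}, suppose $E \cap \B(x,R)$ is covered by $N$ balls $\B(x_k, r)$ with $x_k \in E$. Then $\cH^\ell(E \cap \B(x,R)) \leq \sum_{k=1}^N \cH^\ell(E \cap \B(x_k,r)) \lesssim N r^\ell$, while the left-hand side is $\gtrsim R^\ell$ by $\ell$-regularity (here $R < M \leq 2M$, so comparability applies). This gives $N \gtrsim (R/r)^\ell$, i.e.\ the constant $c$. For the \emph{upper bound}, I would first use Corollary~\ref{cor: cover subset of Rd by balls} to cover $E \cap \B(x,R)$ by at most $10^d (R/r)^d$ balls of radius $r$ centered in $E \cap \B(x,R)$ — but this has the wrong exponent $d$ in place of $\ell$. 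To fix the exponent, run the standard greedy/maximal disjoint argument directly: pick a maximal collection of points $x_1, \dots, x_N \in E \cap \B(x,R)$ that are pairwise $r$-separated; by maximality the balls $\B(x_k, r)$ cover $E \cap \B(x,R)$, while the balls $\B(x_k, r/2)$ are pairwise disjoint and each satisfies $\cH^\ell(E \cap \B(x_k,r/2)) \gtrsim r^\ell$ and is contained in $\B(x, R + r) \subseteq \B(x, 2R)$, so $N r^\ell \lesssim \cH^\ell(E \cap \B(x,2R)) \lesssim R^\ell$, giving $N \lesssim (R/r)^\ell$ and the constant $C$. (One just needs $2R < 2M$, which holds.)

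The main obstacle is purely bookkeeping: making sure every ball to which $\ell$-regularity is applied has radius within the range where comparability is available, which is exactly why the Lemma~\ref{lem: Ahlfors radius bound} step with the enlarged cutoff $2M$ is placed first, and why in the unbounded uniformly $\ell$-regular case no cutoff is needed at all. No genuinely new idea is required beyond the maximal-separated-set trick, which simultaneously produces a covering (via maximality) and a disjoint packing (via separation) with matched $\ell$-dimensional mass estimates.
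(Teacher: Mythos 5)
Your proof is correct and follows essentially the same strategy as the paper's: both reduce the covering-number bounds to $\cH^\ell$-volume comparisons using $\ell$-regularity, invoke Lemma~\ref{lem: Ahlfors radius bound} to extend comparability past the radius-$1$ cutoff, and obtain the lower bound by subadditivity over the covering and the upper bound by packing disjoint balls of comparable radius into $\B(x,2R)$. The only cosmetic difference is in producing the efficient covering for the upper bound: the paper extracts a disjoint subfamily from $\{\B(y,r/5)\}_{y\in E\cap\B(x,R)}$ via the Vitali covering lemma and uses the $5$-dilates as the cover, whereas you use a maximal $r$-separated net whose $r$-balls cover by maximality and whose $(r/2)$-balls pack disjointly -- these are interchangeable implementations of the same idea.
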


\begin{proof}
    Let $\{B_i\}_{i\in I}$ be \emph{some} cover of $E\cap\B(x,R)$ by balls of radius $r$. We use Lemma~\ref{lem: Ahlfors radius bound} to calculate
    \begin{align}
        R^\ell \lesssim \cH^\ell(B(x,R) \cap E) \leq \cH^\ell(\cup_{i\in I} B_i \cap E) \leq \sum_{i\in I} \cH^\ell(B_i\cap E) \lesssim \#_i r^\ell,
    \end{align}
    which shows $\#_i \gtrsim (R/r)^\ell$ and gives the constant $c$. As for $C$, we select a subfamily of disjoint balls $B_j$ from the covering $\{\B(x,r/5)\}_{x\in B\cap E}$ of $B\cap E$. Then we estimate, using Lemma~\ref{lem: Ahlfors radius bound},
    \begin{align}
    \label{eq: counting disjointed families}
        \#_j (r/5)^\ell \lesssim \sum_{j\in J} \cH^\ell(B_j \cap E) \leq \cH^\ell(2B \cap E) \lesssim (2R)^\ell
    \end{align}
    and conclude $\#_j \lesssim (R/r)^\ell$.
\end{proof}

\begin{proposition}
\label{prop: Assouad for Ahlfors}
    Let $E\subseteq \R^d$ be uniformly $\ell$-regular. It follows that $\underline{\dim}_\AS(E)= \overline{\dim}_\AS(E) =\ell$.
\end{proposition}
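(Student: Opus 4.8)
The plan is to deduce the claim directly from Lemma~\ref{lem: coverings for Assouad}, which already furnishes, for every finite $M$, two-sided covering bounds of the precise order $(R/r)^\ell$ for covers of $E\cap\B(x,R)$ by $r$-balls centered in $E$ whenever $0<r\le R<M$; and in the unbounded uniformly $\ell$-regular case this even holds with $M=\infty$. So the main work is simply matching the quantifiers: recall that Definition~\ref{def: Assouad dimension} restricts attention to scales $0<r<R<2\diam(E)$, so taking $M=2\diam(E)$ (or $M=\infty$ when $\diam(E)=\infty$) places us exactly in the regime covered by Lemma~\ref{lem: coverings for Assouad}.

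First I would treat the bounded case. Pick $M\coloneqq 2\diam(E)<\infty$ and apply Lemma~\ref{lem: coverings for Assouad} to obtain constants $c,C>0$ such that for all $x\in E$ and $0<r\le R<2\diam(E)$, at least $c(R/r)^\ell$ and at most $C(R/r)^\ell$ balls of radius $r$ centered in $E$ are needed to cover $E\cap\B(x,R)$. The upper bound shows $\ell\in\overline{\AS}(E)$, hence $\overline{\dim}_\AS(E)\le\ell$; the lower bound shows $\ell\in\underline{\AS}(E)$, hence $\underline{\dim}_\AS(E)\ge\ell$. Since trivially $\underline{\dim}_\AS(E)\le\overline{\dim}_\AS(E)$ always holds — any $\lambda\in\underline{\AS}(E)$ and $\lambda'\in\overline{\AS}(E)$ satisfy $c(R/r)^\lambda\le C(R/r)^{\lambda'}$ for all $r<R$, forcing $\lambda\le\lambda'$ — we get the chain $\ell\le\underline{\dim}_\AS(E)\le\overline{\dim}_\AS(E)\le\ell$, and equality follows.

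For the unbounded case, $\diam(E)=\infty$, so the scale restriction in Definition~\ref{def: Assouad dimension} reads $0<r<R<\infty$; the ``unbounded and uniformly $\ell$-regular'' clause of Lemma~\ref{lem: coverings for Assouad} supplies exactly the covering bounds for $M=\infty$, and the argument of the previous paragraph goes through verbatim. I do not anticipate any real obstacle here: everything substantive is already packed into Lemma~\ref{lem: coverings for Assouad} and its preparatory lemmas (notably the scale-independence Lemma~\ref{lem: Ahlfors radius bound} and the Vitali-type packing estimate). The only point requiring a moment's care is the bookkeeping with the upper bound $2\diam(E)$ versus $\diam(E)$ in the definitions, which is harmless since Lemma~\ref{lem: coverings for Assouad} is stated for any finite $M$. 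I would write the proof in three or four sentences along exactly these lines.

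\begin{proof}[Proof of Proposition~\ref{prop: Assouad for Ahlfors}]
First observe that $\underline{\dim}_\AS(E)\le\overline{\dim}_\AS(E)$ always holds: if $\lambda\in\underline{\AS}(E)$ and $\lambda'\in\overline{\AS}(E)$, then for all $0<r<R<2\diam(E)$ there are constants with $c(R/r)^{\lambda}\le C(R/r)^{\lambda'}$, which forces $\lambda\le\lambda'$. Hence it suffices to show $\ell\in\underline{\AS}(E)$ and $\ell\in\overline{\AS}(E)$. If $E$ is bounded, apply Lemma~\ref{lem: coverings for Assouad} with the finite value $M\coloneqq 2\diam(E)$; if $E$ is unbounded, then $\diam(E)=\infty$ and we apply the same lemma in its $M=\infty$ version, which is available since $E$ is unbounded and uniformly $\ell$-regular. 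In either case we obtain constants $c,C>0$ such that, for every $x\in E$ and all scales $0<r\le R<2\diam(E)$, covering $E\cap\B(x,R)$ by balls of radius $r$ centered in $E$ requires at least $c(R/r)^\ell$ and at most $C(R/r)^\ell$ such balls. The lower bound gives $\ell\in\underline{\AS}(E)$, hence $\underline{\dim}_\AS(E)\ge\ell$, and the upper bound gives $\ell\in\overline{\AS}(E)$, hence $\overline{\dim}_\AS(E)\le\ell$. Combining with the inequality from the first sentence yields $\ell\le\underline{\dim}_\AS(E)\le\overline{\dim}_\AS(E)\le\ell$, so all three quantities equal $\ell$.
\end{proof}
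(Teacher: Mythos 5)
Your proof is correct and takes essentially the same route as the paper: combine the two-sided covering bounds of Lemma~\ref{lem: coverings for Assouad} (with $M=2\diam(E)$ in the bounded case, $M=\infty$ in the unbounded uniformly regular case) to place $\ell$ in both $\underline{\AS}(E)$ and $\overline{\AS}(E)$, then close the sandwich with the elementary inequality $\underline{\dim}_\AS(E)\le\overline{\dim}_\AS(E)$. The paper compresses the case distinction into the phrase that the lemma ``precisely asserts'' the required memberships; your version simply makes that bookkeeping explicit.
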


\begin{proof}
    We can rephrase Lemma~\ref{lem: coverings for Assouad} in the language of Definition~\ref{def: Assouad dimension}. It precisely asserts that $\ell \in \underline{\AS}(E) \cap \overline{\AS}(E)$. Hence, we get $\underline{\dim}_{\AS}(E) \geq \ell$ and  $\overline{\dim}_{\AS}(E) \leq \ell$. The claim follows since $\underline{\dim}_{\AS}(E) \leq \overline{\dim}_{\AS}(E)$ holds for any set $E$. Indeed, given $\lambda \in \underline{\AS}(E)$ and $\mu \in \overline{\AS}(E)$ we have $(R/r)^\lambda \lesssim (R/r)^\mu$ for all $0<r<R<\diam(E)$ and hence $\lambda \leq \mu$.
%     By Lemma~\ref{lem: coverings for Assouad} we get that $l\in \AS(E)$. Moreover, the lemma yields that there is a constant $\tilde C>0$ such that we need at least $\tilde C(R/r)^\ell$ many balls of radius $r>0$ to cover $B\cap E$, where $B$ is a ball of radius $R \geq r$ centered in $E$.
% 
%     For the sake of contradiction assume that $\dim_\AS(E)<s<\ell$ and let $C\geq 1$ denote a constant such that we need at most $C(R/r)^s$ balls of radius $r$ to cover $B\cap E$. Then we choose $r$ small enough such that $\tilde C (R/r)^{\ell-s} > C$. Then
%     \begin{align}
%         \tilde C (R/r)^\ell = \tilde C (R/r)^{\ell-s}(R/r)^s > C (R/r)^s
%     \end{align}
%     yields a contradiction.
\end{proof}

We turn to porosity. The following result was already mentioned in Section~\ref{Subsec: Intro geometry}.

\begin{lemma} \label{lem: lower dimensional implies porous}
    Let $E \subseteq F\subseteq \R^d$. If $F$ is $\ell$-regular and $E$ is $m$-regular with $0<m<\ell\leq d$, then $E$ is porous in $F$. Likewise, if $\overline{\dim}_{\AS}(E) < \underline{\dim}_{\AS}(F)$, then $E$ is uniformly porous in $F$.
\end{lemma}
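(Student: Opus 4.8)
The plan is to prove both claims by the same mechanism: a ball centered in $E$ of radius $r$ contains "too few" points of $E$ relative to what a ball of radius $r$ centered in $F$ would require, so a definite fraction of the ball must be available inside $F \setminus E$. I will treat the two statements in parallel since the $\ell$-/$m$-regular case is a special case of the Assouad-dimension case by Proposition~\ref{prop: Assouad for Ahlfors}; still, it may be cleanest to prove the regular case first by a direct counting argument and then observe that the Assouad version needs only a cosmetic change.

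First I would fix $x \in E$ and $r \le 1$ (resp.\ $r \le \diam(E)$ in the uniform case) and consider the ball $\B(x,r)$. Using Remark~\ref{rem: porosity}-style reasoning is not available here since $E$ need not itself be porous a priori; instead I count coverings. Pick $\lambda \in \overline{\dim}_{\AS}(E)$ and $\mu \in \underline{\dim}_{\AS}(F)$ with $\lambda < \mu$ (in the regular case, $\lambda = m$, $\mu = \ell$). Choose a small dyadic scale $\rho = 2^{-N} r$ with $N$ to be fixed. By the defining property of $\overline{\dim}_{\AS}(E)$, the set $E \cap \B(x,r)$ can be covered by at most $C_E (r/\rho)^\lambda = C_E 2^{N\lambda}$ balls of radius $\rho$ centered in $E$. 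On the other hand, by the defining property of $\underline{\dim}_{\AS}(F)$, covering $F \cap \B(x,r/2)$ (note $x \in E \subseteq F$) requires at least $c_F (r/\rho)^\mu = c_F 2^{N\mu}$ balls of radius $\rho$ centered in $F$. Since $\lambda < \mu$, for $N$ large enough (depending only on $\lambda, \mu, C_E, c_F$, hence on $E$ and $F$) we have $c_F 2^{N\mu} > C_E 2^{N\lambda}$. Therefore there is a ball $\B(y_0, \rho)$ with $y_0 \in F \cap \B(x,r/2)$ that is needed in the $F$-cover but whose center's $\rho$-ball is not covered by the $E$-cover in the following quantitative sense: I would rather argue that the $\rho$-neighborhood of $E \cap \B(x,r)$ is contained in the union of the $C_E 2^{N\lambda}$ balls of radius $2\rho$, and a volume/packing estimate (Corollary~\ref{cor: cover subset of Rd by balls}) shows these cannot cover all of $F \cap \B(x,r/2)$ once $2^{N(\mu-\lambda)}$ is large; pick $y \in F \cap \B(x,r/2)$ outside this neighborhood. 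Then $\B(y, \rho) \cap E = \emptyset$, and with $\kappa := \rho / r = 2^{-N}$ we have exhibited the required point, since $y \in \B(x,r) \cap F$ and $\B(y, \kappa r) \cap E = \emptyset$. In the uniform case the identical computation runs for all $r \le \diam(E)$ using the uniform regularity/Assouad bounds, giving uniform porosity in $F$.

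The main obstacle I anticipate is making the "too few balls to cover" step honestly quantitative: going from "the minimal $\rho$-cover of $F \cap \B(x,r/2)$ by $F$-centered balls is large" and "the minimal $\rho$-cover of $E \cap \B(x,r)$ by $E$-centered balls is small" to "there is a point of $F$ at distance $\ge \rho$ from $E$." The cleanest route is: the $\rho$-neighborhood $N_\rho(E) \cap \B(x,r)$ is covered by doubling the small $E$-cover, i.e.\ by $\le C_E 2^{N\lambda}$ balls of radius $2\rho$; if these also covered $F \cap \B(x,r/2)$, they would constitute an $F$-relevant cover at scale $2\rho$, contradicting the lower bound $\ge c_F (r/2\rho)^\mu = c_F' 2^{N\mu}$ once $N$ is large. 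Hence some $y \in F \cap \B(x,r/2)$ lies outside $N_\rho(E)$, which is exactly $\B(y,\rho) \cap E = \emptyset$. One must only be slightly careful that the covers' centers lie in the right sets and that all radii stay $\le 1$ (resp.\ $\le \diam$), which is automatic since $\rho \le r$; I would also invoke Proposition~\ref{prop: Assouad for Ahlfors} to reduce the first sentence of the lemma to the second, so that only one argument is written out in full.
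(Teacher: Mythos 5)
Your core counting argument is correct and is essentially the paper's argument: cover $E$ at a much smaller scale $\rho$ by too few balls, observe that their doubles (which are centered in $E\subseteq F$) cannot cover $F$ inside the concentric half-ball, and take $y\in F$ outside all the doubled balls. The paper does this with $r=\kappa R$, a fixed $E$-cover of $\B(x,2R)$, and a cardinality comparison with the $F$-cover of $\B(x,R)$; your dyadic $\rho=2^{-N}r$ and neighborhood phrasing is the same mechanism with cosmetic changes, and your handling of the ``$\B(y,\rho)$ misses all of $E$, not just $E\cap\B(x,r)$'' subtlety (via $\rho<r/2$) is fine.

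However, the suggested shortcut of reducing the first sentence of the lemma to the second via Proposition~\ref{prop: Assouad for Ahlfors} does not work and should be dropped. That proposition pins down the Assouad dimensions only for \emph{uniformly} $\ell$-regular sets, whereas the first sentence assumes mere $\ell$- and $m$-regularity, which constrains covering numbers only at scales $\leq 1$ and says nothing about the global, all-scale quantities $\overline{\dim}_{\AS}(E)$ and $\underline{\dim}_{\AS}(F)$. This is also visible in the conclusions: the first sentence only asserts porosity (a condition for $r\leq 1$), the second asserts \emph{uniform} porosity; a genuine reduction of the former to the latter would yield a stronger conclusion than is warranted by the hypotheses. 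What you want instead is exactly what your direct argument does: for the regular case invoke the covering bounds of Lemma~\ref{lem: coverings for Assouad} with the scale restriction $R\leq 1$, and for the Assouad case run the identical computation with the restriction removed. (Also a small notational slip: $\lambda$ should be taken in $\overline{\AS}(E)$, not ``in $\overline{\dim}_{\AS}(E)$'' --- the latter is a number.)
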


\begin{proof}
    We begin with the first claim. Lemma~\ref{lem: coverings for Assouad} yields some $C\geq 1$ such that, if $x \in E$ and  $0<r\leq R\leq 1$, then at most $C(2R/r)^m$ balls of radius $r$ centered in $E$ are needed to cover $E \cap\B(x,2R)$. It also yields some $c > 0$ such that at least $c(R/(2r))^\ell$ balls of radius $2r$ centered in $F$ are needed to cover $F\cap\B(x,R)$. We use this observation with $r = \kappa R$, where $\kappa \in (0,1)$ satisfies $c/(2\kappa)^\ell > C(2/\kappa)^m$. This is possible due to $m < \ell$. 
    
    Let $\{B_i\}_{i \in I}$ be a family of $\#_i \leq C(2/\kappa)^m$ balls of radius $r$ centered in $E$ that cover $E \cap\B(x,2R)$. By choice of $\kappa$ the balls $\{2B_i\}_{i \in I}$ cannot cover $F \cap\B(x,R)$. Pick $y \in F \cap\B(x,R)$ that is not contained in any of the $2B_i$. By construction we have $\B(y,r) \subseteq \R^d \setminus \cup_{i \in I} B_i$ but due to $r < R$ we also have $\B(y,r) \subseteq \B(x, 2R)$ and hence $E \cap \B(y,r) \subseteq \cup_{i} B_i$. Thus, we must have $E \cap \B(y,r) = \emptyset$ and conclude that $E$ is porous in $F$.
    
    The proof of the second claim is identical, but we do not assume $R \leq 1$ and have the covering properties for some $m \in \cl{\AS}(E)$ and $\ell \in \underline{\AS}(F)$ with $m < \ell$ by assumption.
\end{proof}

\begin{lemma} \label{lem: porosity implies covering property}
    If $E\subseteq \R^d$ is porous, then there exist $C\geq 1$ and $0<s<d$ such that, given $x \in E$ and $0 < r < R\leq 1$, there is a covering of $E\cap B(x,R)$ by $C(R/r)^s$ balls of radius $r$ centered in $E$. Moreover, if $E$ is uniformly porous, then $\cl{\dim}_\AS(E)<d$.
\end{lemma}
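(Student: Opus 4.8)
The plan is to run an \emph{iterate the gap} argument on dyadic cubes: porosity forces a fixed fraction of the dyadic children of any cube to miss $E$ at \emph{every} scale, and this self-similar deficit propagates into a covering bound with exponent strictly below $d$. The uniform statement then falls out by removing the upper restriction on the scale.

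\textbf{Reduction.} Since the notions of porosity and Assouad dimension are unchanged when balls are replaced by axis-aligned cubes (the observation following Definition~\ref{def: Assouad dimension}), and a ball $\B(x,R)$ with $x\in E$ sits inside a cube of sidelength $2R$ centered at $x$, it suffices to produce an integer $m_0\ge 1$ and an exponent $s\in(0,d)$ such that for every cube $Q\subseteq\R^d$ with $\ell(Q)\le 1$ (sidelength) and every $0<h\le\ell(Q)$, the set $E\cap Q$ is covered by at most $N_0(\ell(Q)/h)^s$ cubes of sidelength $h$, with $N_0$ a fixed constant. Passing back, each covering cube that meets $E$ is replaced by a ball of radius $h\sqrt d$ centered at one of its points of $E$, those disjoint from $E$ are discarded, and the enclosing cube of $\B(x,R)$ is split into dyadic children of sidelength $\le 1$; this only changes the constant by a dimensional factor absorbed into $C$ and relabels $h$ as $r\approx h\sqrt d$.

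\textbf{Counting step (the heart).} By Remark~\ref{rem: porosity} applied with $F=\R^d$, transported to cubes by inscribing a ball in $Q$, taking the hole-ball it provides, and inscribing a cube in that hole-ball, every cube $Q$ with $\ell(Q)\le 1$ contains a subcube $Q'$ with $\ell(Q')\ge c_\kappa\ell(Q)$ and $Q'\cap E=\emptyset$, where $c_\kappa=\kappa/(4\sqrt d)$. Choose $m_0$ so large, depending only on $d$ and $\kappa$, that $2^{-m_0}\le c_\kappa/3$; an elementary length comparison in each coordinate shows that then at least one dyadic child of $Q$ of generation $m_0$ lies inside $Q'$, hence misses $E$. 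So at most $2^{m_0 d}-1$ of the generation-$m_0$ children of $Q$ meet $E$. Put $N:=\max\{2,\,2^{m_0 d}-1\}$, so $2\le N<2^{m_0 d}$ because $d\ge 2$, and set $s:=(\log_2 N)/m_0\in(0,d)$. Iterating the counting step $k$ times, the number of generation-$km_0$ children of $Q$ meeting $E$ is at most $N^{k}$. Given $0<h\le\ell(Q)$, pick $k$ with $\ell(Q)2^{-(k+1)m_0}<h\le\ell(Q)2^{-km_0}$, cover $E\cap Q$ by the $\le N^{k+1}$ generation-$(k+1)m_0$ children meeting $E$ (each has sidelength $<h$, hence lies in a cube of sidelength $h$), and use $2^{km_0}\le\ell(Q)/h$ to bound their number by $N^{k+1}\le N(\ell(Q)/h)^s$. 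This is the covering, with $N_0:=N$.

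\textbf{Uniform case, and the obstacle.} If $E$ is \emph{uniformly} porous, the uniform version of Remark~\ref{rem: porosity} supplies the hole property with the same $c_\kappa$ for all cubes with $\ell(Q)\le\diam(E)$, so the argument above runs verbatim for all scales $0<r<R<2\diam(E)$; by Definition~\ref{def: Assouad dimension} this is exactly the assertion $s\in\overline{\AS}(E)$, whence $\overline{\dim}_{\AS}(E)\le s<d$. The only genuinely substantive point is the counting step; within it the single thing to watch is that $Q'$ need not be aligned with the dyadic grid of $Q$, so one must verify by the length comparison that it still swallows a whole generation-$m_0$ child — which is precisely where the calibration $2^{-m_0}\le c_\kappa/3$ is used. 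Everything else (the ball/cube conversions, the centering in $E$, the passage from dyadic generations to arbitrary radii, and the trivial case $N_0\le 1$ where $E\cap Q$ collapses to a point) is routine bookkeeping.
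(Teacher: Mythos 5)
Your proof is correct and takes essentially the same route as the paper's: pass to cubes, use the strengthened porosity of Remark~\ref{rem: porosity} to locate a hole sub-cube of comparable size that swallows an entire cell of a fixed uniform subdivision, iterate to get a geometric decay in the number of cells meeting $E$, and read off the exponent $s<d$. The only cosmetic difference is the choice of subdivision (the paper splits each cube into $(2n)^d$ cells per step and gets $s=\log((2n)^d-1)/\log(2n)$, you use dyadic generations of depth $m_0$ and get $s=\log_2(2^{m_0 d}-1)/m_0$); the mechanism, including the cube/ball bookkeeping and the uniform case, is identical.
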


\begin{proof}
    We only show the porous case since the uniform case again just follows by dropping all restrictions on the radii. In the following all cubes are closed and axis-aligned. As indicated in Section~\ref{Subsec: Not Geometry}, we can equivalently replace balls by cubes and radii by side lengths in the definition of porosity and Assouad dimension. Likewise, it suffices to establish the claim of the lemma with cubes. 
    
    In view of Remark~\ref{rem: porosity} we can fix $n\in \IN$ such that for every cube $Q \subseteq \R^d$ there is a cube $Q' \subseteq Q \setminus E$ of sidelength $\ell(Q') = \ell(Q)/n$. We fix a cube $Q$ centered in $E$ of side length $R \leq 1$. Let $0<r\leq R$ and fix $k\in \IN$ such that $R/(2n)^{k+1} \leq r < R/(2n)^k$. We claim that we can cover $Q$ by $((2n)^d-1)^{k+1}$ closed cubes of side length $R/(2n)^{k+1}$. Put $s\coloneqq \log((2n)^d-1)/\log(2n)<d$. Then
    \begin{align}
        ((2n)^d-1)^{k+1} = (2n)^s (2n)^{ks} < (2n)^s (R/r)^s
    \end{align}
    shows the assertion.

    For the claim we start with $k=1$. There is a cube $Q' \subseteq Q\setminus E$ of side length $R/n$. Then there is a cube $Q''$ in the grid of $(2n)^{d}$ cubes with sidelength $R/(2n)$ covering $Q$ that is contained in $Q'$. This means that we only need $(2n)^d-1$ cubes of side length $R/(2n)$ to cover $E$. We conclude by applying this argument inductively on each cube of the previous covering.
\end{proof}

Combining the uniform cases of the two preceding lemmas lets us re-obtain a result of Luukkainen~\cite[Thm~5.2]{Luukkainen}. Note that $\underline{\dim}_{\AS}(\R^d) = d$ due to Proposition~\ref{prop: Assouad for Ahlfors}.

\begin{proposition}
\label{prop: porosity through assouad}
A set $E \subseteq \R^d$ is uniformly porous if and only if its upper Assouad dimension is strictly less than $d$.
\end{proposition}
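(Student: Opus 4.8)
The plan is to read off both implications directly from the two preceding lemmas, using only that $\underline{\dim}_{\AS}(\R^d) = d$. The latter is the observation recorded just before the statement: $\R^d$ is uniformly $d$-regular, so Proposition~\ref{prop: Assouad for Ahlfors} gives $\underline{\dim}_{\AS}(\R^d) = \overline{\dim}_{\AS}(\R^d) = d$.

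For the ``only if'' direction, suppose $E$ is uniformly porous. Then the ``moreover'' clause of Lemma~\ref{lem: porosity implies covering property} applies verbatim and yields $\overline{\dim}_{\AS}(E) < d$, which is exactly the claim. For the ``if'' direction, suppose $\overline{\dim}_{\AS}(E) < d$. Since $d = \underline{\dim}_{\AS}(\R^d)$, we are in the situation $\overline{\dim}_{\AS}(E) < \underline{\dim}_{\AS}(F)$ with $F = \R^d$, so the second claim of Lemma~\ref{lem: lower dimensional implies porous} shows that $E$ is uniformly porous in $\R^d$. By Definition~\ref{def: porosity}, being uniformly porous in $\R^d$ is precisely being uniformly porous. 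This closes the equivalence.

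There is essentially no obstacle here beyond correctly invoking the right halves of the two lemmas (their ``uniform'' cases, i.e. the statements without the $r,R \leq 1$ restriction) and noting that the ambient space $\R^d$ has full lower Assouad dimension; all the work has already been done in Lemmas~\ref{lem: lower dimensional implies porous} and~\ref{lem: porosity implies covering property}.
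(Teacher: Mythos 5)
Your proof is correct and takes exactly the route the paper intends: the paper's own justification is the one‑line remark preceding the proposition, namely to combine the uniform cases of Lemmas~\ref{lem: lower dimensional implies porous} and~\ref{lem: porosity implies covering property} while noting $\underline{\dim}_{\AS}(\R^d)=d$ via Proposition~\ref{prop: Assouad for Ahlfors}. You have simply spelled this out, and nothing is missing.
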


We can use the non-uniform cases to show that some open sets are of class $\cD^t$. The argument is a slight adaption of \cite[Thm.~4.2]{Lehrbaeck-Tuominen_NoteOneDimensions}.

\begin{proposition}
\label{prop: examples Dt}
    Let $O\subseteq \R^d$ be open. If $\bd O$ is porous, then $O \in \cD^t$ for some $t\in (0,1)$. If $\bd O$ is $\ell$-regular for some $0<\ell<d$, then $O \in \cD^t$ for all $t \in (0, \max\{1, d -\ell\})$.
\end{proposition}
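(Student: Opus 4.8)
The plan is to estimate the integral in \eqref{eq: Sickel condition} by a dyadic decomposition of the region $\B(x,r) \setminus \bd O$ according to the distance to $\bd O$. Fix $x \in \bd O$ and $0 < r \leq 1$. For $n \geq 0$ set $A_n \coloneqq \{y \in \B(x,r) : 2^{-n-1} r < \dist(y, \bd O) \leq 2^{-n} r\}$, so that $\B(x,r) \setminus \bd O = \bigcup_{n \geq 0} A_n$ up to a null set and on $A_n$ one has $\dist(y,\bd O)^{-t} \approx (2^n/r)^t$. Hence
\begin{align*}
 r^{t-d} \int_{\B(x,r) \setminus \bd O} \dist(y,\bd O)^{-t} \; \d y \approx r^{-d} \sum_{n \geq 0} 2^{nt}\, |A_n|,
\end{align*}
and everything reduces to a good bound on $|A_n|$. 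First I would observe that $A_n$ is contained in a neighbourhood of $\bd O \cap \B(x,2r)$ of width $\approx 2^{-n} r$, so a covering of $\bd O \cap \B(x,2r)$ by $N_n$ balls of radius $2^{-n} r$ yields $|A_n| \lesssim N_n (2^{-n} r)^d$ by placing a ball of radius $\approx 2^{-n} r$ around each covering centre; thus the whole matter is to control $N_n$.

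In the porous case I invoke Lemma~\ref{lem: porosity implies covering property}: $\bd O$ is porous (here one may pass to the closure, which is still porous by the remark following Definition~\ref{def: l-set}, or argue directly), so there are $C \geq 1$ and $0 < \sigma < d$ with $N_n \leq C (2^n)^\sigma$ for $0 < 2^{-n} r \leq r \leq 1$. Therefore
\begin{align*}
 r^{-d} \sum_{n \geq 0} 2^{nt}\, |A_n| \lesssim r^{-d} \sum_{n \geq 0} 2^{nt}\, 2^{n\sigma} (2^{-n} r)^d = \sum_{n \geq 0} 2^{n(t+\sigma-d)},
\end{align*}
which is a finite constant independent of $x$ and $r$ provided $t < d - \sigma$; since $\sigma < d$ we may pick such $t$ in $(0,1)$, and moreover the bound does not depend on $r$, so $O \in \cD^t$. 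In the $\ell$-regular case I use Lemma~\ref{lem: coverings for Assouad} instead, which gives $N_n \lesssim 2^{n\ell}$ (with $R = 2r \leq 2$, $M = 2$), and the same geometric series converges as soon as $t + \ell - d < 0$, i.e. $t < d - \ell$; combined with the constraint $t \in (0,1)$ from Definition~\ref{def: Sickel class} this yields $O \in \cD^t$ for all $t \in (0, \min\{1, d-\ell\})$, which is the asserted range (the statement's ``$\max$'' should read ``$\min$'').

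The routine points are the comparability $\dist(y,\bd O)^{-t} \approx (2^n/r)^t$ on $A_n$ and the measure bound $|A_n| \lesssim N_n (2^{-n} r)^d$, both of which are immediate. The only genuine content is that porosity supplies a covering exponent strictly below $d$, which is exactly Lemma~\ref{lem: porosity implies covering property}, so the main obstacle is simply to make the dyadic bookkeeping and the covering-to-volume passage precise; there is no serious difficulty beyond that. One mild point of care: when $A_n$ is non-empty for arbitrarily large $n$ one should check the sum is genuinely over $2^{-n} r$ bounded by $1$, which it is since $r \leq 1$, so Lemma~\ref{lem: porosity implies covering property} applies to every term.
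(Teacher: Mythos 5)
Your proof is correct and follows essentially the same route as the paper's: a dyadic decomposition of $\B(x,r)\setminus\bd O$ into shells $A_n$ according to the distance to $\bd O$, a bound on $|A_n|$ via the covering number $N_n$ of $\bd O$ supplied by Lemma~\ref{lem: porosity implies covering property} (respectively Lemma~\ref{lem: coverings for Assouad} in the $\ell$-regular case), and summing the resulting geometric series, which converges precisely when $t+\sigma-d<0$. The paper parametrizes with $u=d-t$ and writes $E_j=\{y\in B:\dist(y,\bd O)\leq r2^{-j}\}$, $A_j=E_j\setminus E_{j+1}$, but this is the same decomposition. You also correctly spot that the statement's ``$\max\{1,d-\ell\}$'' should read ``$\min\{1,d-\ell\}$'' (indeed the paper's proof produces $t=d-u$ with $u\in(\max\{\ell,d-1\},d)$, so $t\in(0,\min\{1,d-\ell\})$; the ``$1$'' appearing as the right endpoint of the $u$-range in the paper's last sentence is itself a typo for ``$d$''). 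One small point you share with the paper's exposition and should tidy up if you write this out in full: the nearest boundary point to $y\in A_n\subseteq\B(x,r)$ need only lie in $\B(x,2r)$, so the covering lemma is applied with outer radius $2r$, which can exceed $1$ when $r\in(1/2,1]$; this is harmless (e.g.\ split $\B(x,2r)\cap\bd O$ into boundedly many balls of radius $\leq 1$, or extend Lemma~\ref{lem: porosity implies covering property} to $R\leq 2$ at the cost of a larger constant via Corollary~\ref{cor: cover subset of Rd by balls}), but should be mentioned.
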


\begin{proof}
    If $\bd O$ is porous, then we pick $C\geq 1$ and $0<s<d$ according to Lemma~\ref{lem: porosity implies covering property} such that for each $j\geq 0$ and for any ball $B$ with radius $r\leq 1$ centered in $\bd O$ we can cover $B\cap \bd O$ by at most $C2^{js}$ balls of radius $r2^{-j}$. If $\bd O$ is $\ell$-regular, then Lemma~\ref{lem: coverings for Assouad} guarantees that we can take $s = \ell$. In any case, fix $\max(s,d-1)<u<d$. Put $E_j\coloneqq \{x\in B: \dist(x,\bd O) \leq r2^{-j}\}$ and $A_j\coloneqq E_j \setminus E_{j+1}$. By construction, the covering property for $B\cap \bd O$ implies that we can cover $E_j$ by at most $C2^{js}$ balls of radius $r2^{-(j-1)}$. The $d$-regularity of Lebesgue measure then implies
    \begin{align}
        |A_j| \leq |E_j| \lesssim 2^{js} r^d 2^{d-jd}. \label{eq: measure Aj}
    \end{align}
    We use that $\{A_j\}_{j\geq 0}$ is a disjoint cover of $B \setminus \bd O$, comparability $\dist(x,\bd O) \approx r2^{-j}$ on $A_j$, estimate \eqref{eq: measure Aj}, and $s<u$ to calculate
    \begin{align}
        \int_{B\setminus \bd O} \dist(y,\bd E)^{u-d} \d y &\leq \sum_j \int_{A_j} \dist(y,\bd O)^{u-d} \d y
                                                             \lesssim \sum_j |A_j| 2^{dj-u j} r^{u-d}\\
                                                             &\lesssim \sum_j r^u 2^{j(s-u)} \lesssim r^u.
    \end{align}
    Setting $t \coloneqq d-u\in (0,1)$, we write this in the form
    \begin{align}
        \sup_{x\in \bd O} \sup_{0<r\leq 1} r^{t-d} \int_{\B(x,r)\setminus \bd O} \dist(y,\bd O)^{-t} < \infty,
    \end{align}
    which just means that $O \in \cD^t$. In the case of $\ell$-regular boundary, every $u \in (\max\{\ell,d-1\},1)$ and thus every $t \in (0,\max\{1, d-\ell\})$ was admissible in the proof.  
\end{proof}

\section{Intrinsic characterizations}
\label{Sec: Intrinsic characterizations}

Although perfectly suited for interpolation questions, the function spaces $\X^{s,p}(O) = \X^{s,p}(\R^d)|_O$ lack an \emph{intrinsic characterization} through a norm that only uses information on $O$. The problem of finding such characterizations has a long history and we refer for instance to \cite{Hajlasz-Koskela-Tuominen, Shvartsman, Rychkov, Triebel-intrinsic, Triebel, ET, JW,Zhou} and references therein. Here, we only mention two results that are of particular importance for putting our paper into context of work on mixed boundary value problems~\cite{RobertJDE, TripleMitrea-Brewster, Disser, Disser-Meyries-Rehberg,HJKR, Rehberg-terElstADE, ABHR}.

The following is the full-dimensional case in \cite[Thm.VI.1]{JW}.

\begin{proposition}
\label{prop: JW trace on dset}
Let $O \subseteq \R^d$ be an open $d$-regular set and let $p \in (1,\infty)$ and $s \in (0,1)$. Then $\W^{s,p}(O)$ is up to equivalent norms the space of those $f \in \L^p(O)$ for which
\begin{align*}
 \|f\| \coloneqq \|f\|_{\L^p(O)} + \bigg(\iint_{O \times O} \frac{|f(x) - f(y)|^p}{|x-y|^{d+ sp}} \; \d x \; \d y \bigg)^{1/p} < \infty.
\end{align*}
\end{proposition}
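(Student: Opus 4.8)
~\textbf{Proof strategy for Proposition~\ref{prop: JW trace on dset}.}

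The plan is to reduce the statement to the whole-space norm equivalence for $\W^{s,p}(\R^d)$ together with the Jonsson--Wallin restriction/extension machinery available for $d$-regular sets. Denote temporarily by $\widetilde{\W}^{s,p}(O)$ the space of $f \in \L^p(O)$ with finite intrinsic norm $\|f\|$ above; we must show $\W^{s,p}(O) = \widetilde{\W}^{s,p}(O)$ with equivalent norms, where the left-hand space carries the quotient norm $\inf\{\|F\|_{\W^{s,p}(\R^d)} : F|_O = f\}$.

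For the inclusion $\W^{s,p}(O) \subseteq \widetilde{\W}^{s,p}(O)$ I would take any extension $F \in \W^{s,p}(\R^d)$ of $f$. Using the equivalent Gagliardo-type norm on $\W^{s,p}(\R^d)$ recorded in Section~\ref{Subsec: Not Spaces} (which for $s \in (0,1)$ and $k=0$ is exactly $\|F\|_{\L^p} + (\iint_{\R^d\times\R^d} |F(x)-F(y)|^p |x-y|^{-d-sp})^{1/p}$), one simply restricts both integrations to $O \times O$, getting $\|f\| \le \|F\|_{\W^{s,p}(\R^d)}$ up to the fixed norm-equivalence constant; passing to the infimum over $F$ yields $\|f\| \lesssim \|f\|_{\W^{s,p}(O)}$. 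This direction is routine and uses only the geometry-free comparison of norms on $\R^d$.

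The substance is the converse inclusion $\widetilde{\W}^{s,p}(O) \subseteq \W^{s,p}(O)$: given $f$ with $\|f\| < \infty$ we must produce $F \in \W^{s,p}(\R^d)$ with $F|_O = f$ and $\|F\|_{\W^{s,p}(\R^d)} \lesssim \|f\|$. Here I would invoke the Jonsson--Wallin extension operator for the $d$-regular set $\cl O$ (equivalently, the construction underlying Proposition~\ref{prop: JW}, applied in the ``full-dimensional'' case $\ell = d$, where the intrinsic Besov norm on the $d$-set degenerates to precisely the Gagliardo norm appearing in the statement — this is the content of \cite[Thm.~VI.1]{JW}). Concretely, $\cl O$ being $d$-regular, there is a bounded extension operator from the intrinsically normed space on $\cl O$ into $\W^{s,p}(\R^d)$ that is a right inverse for restriction; since $\cl O \setminus O$ is an $\cH^d$-null set the intrinsic norm on $\cl O$ coincides with $\|\cdot\|$ on $O$, and restricting the extended function back to $O$ recovers $f$ almost everywhere. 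This gives the missing estimate and closes the equivalence. The main obstacle is that this converse direction is genuinely the Jonsson--Wallin extension theorem in the borderline case $\ell = d$ and is not derivable from the soft retraction/coretraction formalism used elsewhere in the paper — one has to quote \cite[Thm.~VI.1]{JW} (or reprove the Whitney-type extension with the Gagliardo seminorm) rather than bootstrap from Rychkov's operator, which is not available at the level of a purely intrinsic norm. Everything else is bookkeeping with the standard equivalent norms from Section~\ref{Subsec: Not Spaces} and the null-set remark following Definition~\ref{def: l-set}.
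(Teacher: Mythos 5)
Your proposal is correct and follows precisely the route the paper takes: the paper's proof consists of the single remark preceding the statement, namely that Proposition~\ref{prop: JW trace on dset} is the full-dimensional case $\ell = d$ of \cite[Thm.~VI.1]{JW}, applied to the $d$-set $\cl{O}$ (which is $d$-regular with $\cl{O}\setminus O$ an $\cH^d$-null set, so the intrinsic norm on $\cl{O}$ coincides with $\|\cdot\|$). You correctly identify both the easy direction (restricting the Gagliardo norm from $\R^d$ to $O \times O$) and the fact that the converse genuinely requires the Jonsson--Wallin Whitney-type extension and cannot be bootstrapped from Rychkov's operator, which acts on the quotient-normed space rather than the intrinsically normed one.
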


\begin{remark}
\label{rem: JW trace on dset}
If in the setting above $D \subseteq \cl{O}$ is $(d-1)$-regular, then $\W^{s,p}_D(O)$ is the closure of $\C_D^\infty(O)$ for the intrinsic norm $\|\cdot\|$. This follows from Lemma~\ref{lem: testfunctions dense}.
\end{remark}

Probably most important result of the above type concerns $\W^{1,p}_D(O)$.

\begin{proposition}
\label{prop: W1pE local definition}
Suppose $O \subseteq \R^d$ is an open set, $D \subseteq \cl{O}$ is $(d-1)$-regular, and $O$ satisfies a uniform Lipschitz condition around $\cl{\partial O \setminus D}$. Then $\W^{1,p}_D(O)$ can equivalently be normed by
\begin{align*}
 \|f\| \coloneqq \Big( \|f\|_{\L^p(O)}^p + \|\nabla f\|_{\L^p(O)}^p \Big)^{1/p}
\end{align*}
and in fact it is the closure of $\C_D^\infty(O)$ for this norm.
\end{proposition}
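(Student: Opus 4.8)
The plan is to reduce the statement about $\W^{1,p}_D(O)$ to the already-established structural results, using the localization machinery of Section~\ref{Subsec: Localization} and the intrinsic characterization for pure Dirichlet conditions on the half-space. Write $\|\cdot\|_*$ for the proposed norm $(\|f\|_{\L^p(O)}^p + \|\nabla f\|_{\L^p(O)}^p)^{1/p}$ and let $\W^{1,p}_{D,*}(O)$ denote the closure of $\C_D^\infty(O)$ with respect to $\|\cdot\|_*$; we must show $\W^{1,p}_D(O) = \W^{1,p}_{D,*}(O)$ with equivalent norms. The inclusion $\W^{1,p}_D(O) \hookrightarrow \W^{1,p}_{D,*}(O)$ is essentially formal: by definition $\W^{1,p}(O) = \W^{1,p}(\R^d)|_O$, so $\|\cdot\|_* \lesssim \|\cdot\|_{\W^{1,p}(O)}$, and by Lemma~\ref{lem: testfunctions dense} the test functions $\C_D^\infty(O)$ are dense in $\W^{1,p}_D(O)$, so the identity on $\C_D^\infty(O)$ extends to a bounded injection. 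The content is the reverse estimate $\|f\|_{\W^{1,p}(O)} \lesssim \|f\|_*$ for $f \in \C_D^\infty(O)$, because once this holds, the two completions coincide.

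For the reverse estimate I would use the localization partition of unity $(\eta_j)_{j \in J}$, the charts $\Phi_i$, and the cut-offs $\chi_i$ from Section~\ref{Subsec: Localization}. Given $f \in \C_D^\infty(O)$, decompose $f = \sum_{j \in J} \eta_j f$. The interior piece $\eta_0 f$ has compact support inside $O$, hence extends by zero to an element of $\W^{1,p}(\R^d)$ with $\|\eta_0 f\|_{\W^{1,p}(\R^d)} \lesssim \|f\|_*$ via the product rule and the bound $\|\eta_0\|_\infty + \|\nabla \eta_0\|_\infty \leq C'$. For each boundary piece $\eta_i f$, transport it to the half-space by $\Phi_i$: the function $g_i \coloneqq \chi_i \cdot ((\eta_i f) \circ \Phi_i^{-1})$ is smooth on $\cl{\R^d_+}$, has compact support in $(-1,1)^{d-1} \times (-1,1)$, and its support stays at positive distance from $E_i = \Phi_i(D) \cup (\R^{d-1} \setminus (-1,1)^{d-1})$, exactly as computed in the proof of Lemma~\ref{lem: E and R localization BC}. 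So $g_i \in \W^{1,p}_{E_i}(\R^d_+)$ with $\|g_i\|_{\W^{1,p}(\R^d_+)} \lesssim \|\nabla(\eta_i f)\|_{\L^p} + \|\eta_i f\|_{\L^p}$ by the bi-Lipschitz change of variables (the chain and product rules, using uniform control of the bi-Lipschitz constants and of $\chi_i$). Summing in $\ell^p$ using the finite-overlap property (iii) gives $\sum_{i} \|g_i\|_{\W^{1,p}(\R^d_+)}^p \lesssim \|f\|_*^p$. It remains to reconstruct a global extension: since the half-space $\W^{1,p}_{E_i}(\R^d_+)$ agrees with the intrinsically-normed space (this is Proposition~\ref{prop: W1pE local definition} for $O = \R^d_+$, which is the classical half-space case, or alternatively Proposition~\ref{prop: JW trace on dset} combined with Corollary~\ref{cor: zero extension bounded}), the zero extension of $g_i$ past $\bd \R^d_+$ into $\R^d_+$ followed by the Jonsson--Wallin extension $\Ext_{E_i}$, or more simply a reflection-type extension, produces $G_i \in \W^{1,p}(\R^d)$ with comparable norm; pulling back by $\Phi_i$ and summing against $\eta_i$ recovers an $\W^{1,p}(\R^d)$-extension of $f$ controlled by $\|f\|_*$.

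Alternatively, and perhaps more cleanly, I would apply the retraction--coretraction pair $(\Ext, \Res)$ of \eqref{localization E}--\eqref{localization R} directly: Lemma~\ref{lem: E and R localization BC} already gives $\Ext : \W^{1,p}_D(O) \to \IW^{1,p}_E(O)$ and $\Res : \IW^{1,p}_E(O) \to \W^{1,p}_D(O)$ bounded, and inspecting those proofs shows that $\Ext$ is in fact bounded from $(\C_D^\infty(O), \|\cdot\|_*)$ into the $\ell^p$-superposition of spaces that are intrinsically normed on each component (the half-space components by the known half-space result, the $O_0$-component trivially since it has compact support). Then $\Res$ reassembles a genuine $\W^{1,p}(\R^d)$-extension, giving $\|f\|_{\W^{1,p}(O)} = \|f|_O\|_{\W^{1,p}(O)} \lesssim \|\Ext f\|_{\IW^{1,p}} \lesssim \|f\|_*$. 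The ``in fact it is the closure of $\C_D^\infty(O)$'' clause then follows from Lemma~\ref{lem: testfunctions dense} together with the norm equivalence just proved.

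The main obstacle is the half-space base case: one needs that on $\R^d_+$, a smooth function vanishing near the relevant portion of $\bd \R^d_+$ (namely near $E_i$) admits a $\W^{1,p}(\R^d)$-extension controlled by its intrinsic $\W^{1,p}(\R^d_+)$-norm. For the genuinely flat boundary $\bd\R^d_+$ this is classical (even--odd reflection, or composition of the zero extension across $E_i$ from Corollary~\ref{cor: zero extension bounded} with the Jonsson--Wallin extension $\Ext_{E_i}$ of Proposition~\ref{prop: JW}), but it must be phrased so that the resulting extension genuinely lies in $\W^{1,p}(\R^d)$ and not merely in a space defined by restriction — i.e. one must verify there is no loss of the distributional-gradient property across $E_i$. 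This is where the $(d-1)$-regularity of $E_i$ (Lemma~\ref{lem: dummes Argument 1}) and the fact that such sets carry no $\W^{1,p}$-capacity obstruction enter; the bookkeeping of the bi-Lipschitz constants throughout the summation, while routine, is the other place where care is required.
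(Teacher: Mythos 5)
Your overall strategy coincides with the paper's: prove the easy continuous inclusion $\W^{1,p}_D(O)\subseteq\W^{1,p}_{D,*}(O)$ formally and via Lemma~\ref{lem: testfunctions dense}, then prove the reverse estimate by the localization machinery of Section~\ref{Subsec: Localization}, transporting each $\eta_i f$ to the half-space via $\Phi_i$, extending across $\bd\R^d_+$, reassembling via $\Res$, and concluding with Lemma~\ref{lem: testfunctions dense} again. The paper does exactly this by defining $\mathcal{T}:f\mapsto\Res((\Ext f)_0,(\sigma(\Ext f)_i)_i)$ with $\sigma$ the \emph{even reflection} across $\{x_d=0\}$, then quoting the proofs of Lemmas~\ref{lem: E and R localization} and \ref{lem: E and R localization BC} for boundedness into $\W^{1,p}_D(\R^d)$.

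The one place where you are not on firm ground is the half-space extension step. You correctly flag that this is ``the main obstacle,'' but of the three mechanisms you float, only the parenthetical ``reflection-type extension'' actually works, and it is the one the paper uses. The other two have real problems: (a) ``the zero extension of $g_i$ past $\bd\R^d_+$ \ldots followed by the Jonsson--Wallin extension $\Ext_{E_i}$'' — zero extension across $\bd\R^d_+$ of a function that does not vanish on the non-Dirichlet part of the boundary produces a jump discontinuity and hence leaves $\W^{1,p}(\R^d)$, and $\Ext_{E_i}$ from Proposition~\ref{prop: JW} extends \emph{traces on $E_i$}, not $\W^{1,p}(\R^d_+)$-functions, so the composition is not meaningful; (b) invoking Corollary~\ref{cor: zero extension bounded} — that result is restricted to $s<1/p$ and is simply unavailable at $s=1$. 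You also appeal to ``Proposition~\ref{prop: W1pE local definition} for $O=\R^d_+$'' inside its own proof; for the half-space with a general $(d-1)$-regular $E_i\subseteq\bd\R^d_+$ this is not obviously pre-established, so you should not lean on it. The clean fix, which is what the paper does, is to build the reflection $\sigma$ directly into the coretraction $\Ext$: after transporting to the half-space, reflect evenly to get an $\W^{1,p}(\R^d)$-function with comparable norm, observe that the support of the reflected function still avoids $E_i$ (since $E_i\subseteq\bd\R^d_+$ and reflection across $\bd\R^d_+$ preserves distance to it), and then reassemble. No intrinsic characterization on the half-space, no capacity argument, and no Jonsson--Wallin extension are needed. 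If you replace your hedged option list with the reflection argument and state explicitly that $\mathcal{T}f$ is a $\W^{1,p}(\R^d)$-extension of $f$ vanishing near $D$, your proof matches the paper's.
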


\begin{proof}
Let $X$ be the closure of $\C_D^\infty(O)$ for the norm $\|\cdot\|$. Clearly we have $\W^{1,p}_D(O) \subseteq X$ with continuous inclusion. For the converse inclusion we argue as in \cite[Lem.~3.2]{ABHR}, using the localization formalism of Section~\ref{Subsec: Localization} and in particular the maps $\Ext$ and $\Res$ defined in \eqref{localization E} and \eqref{localization R}. We let $\sigma$ be the extension of functions from $(0,1) \times (-1,1)^{d-1}$ to $(-1,1)^{d}$ by even reflection and consider the operator
\begin{align*}
 \mathcal{T}: f \mapsto \Res ((\Ext f)_0, (\sigma (\Ext f)_i)_{i \in I}).
\end{align*}
By construction $\mathcal{T}f|_O = f$ holds for $f \in X$. It follows from the proof of Lemma~\ref{lem: E and R localization} that $\mathcal{T}: X \to \W^{1,p}(\R^d)$ is bounded. The proof of Lemma~\ref{lem: E and R localization BC} reveals that $\mathcal{T}$ in fact maps $X$ into $\W^{1,p}_D(\R^d)$, which implies $X \subseteq \W^{1,p}_D(O)$.
\end{proof}
%%%%%%%%%%%%%%%%%%%%%%%%%%%%%%%%%%%%%%%%%%%%%%%%%%%%%%%%%%%%%%%%%%%%%%%%%%%%%%%%%%%%%%%%%%%%%%%%%%%%%%%%%%%%%%%%%%%%%%%%%%%%%%%%%%%%%%%%%%%%%%%%%%%%%%%%%%%%%%%%%%%%

\end{document}